\documentclass[11pt]{amsart}
\usepackage{amssymb,latexsym,graphicx, amscd}
\newtheorem{theorem}{Theorem}[section]
\newtheorem{prop}[theorem]{Proposition}
\newtheorem{lemma}[theorem]{Lemma}
\newtheorem{remark}[theorem]{Remark}
\newtheorem{question}[theorem]{Question}
\newtheorem{definition}[theorem]{Definition}
\newtheorem{cor}[theorem]{Corollary}

\begin{document}

\title{Almost K\"ahler forms on rational $4-$manifolds}
\author{Tian-Jun Li }

\address{School  of Mathematics\\  University of Minnesota\\ Minneapolis, MN
55455}
\email{tjli@math.umn.edu}
\author{Weiyi Zhang}
\address{Mathematics Institute\\  University of Warwick\\ Coventry, 
CV4 7AL\\ England}
\email{weiyi.zhang@warwick.ac.uk}

\begin{abstract} We study Nakai-Moishezon type question and Donaldson's ``tamed to compatible" question for almost complex structures on rational four manifolds. By extending Taubes' 
subvarieties--current--form  technique to $J-$nef genus $0$ classes, we give affirmative answers of these two questions for all tamed almost complex structures on $S^2$ bundles over $S^2$ as well as for many geometrically interesting tamed almost complex structures  on other rational four manifolds, including the del Pezzo ones.
\end{abstract}
 \maketitle
\tableofcontents
\section{Introduction}
Let $M$ be a compact, oriented, 
smooth manifold.
An almost complex structure on $M$ is an endomorphism of $TM$ whose square is $-$Id.
An almost complex structure $J$ induces an involution on the space of 2-forms, $\Omega^2(M)$,
decomposing it as $\Omega_J^+\oplus \Omega_J^-
$. $J$ is said to be tamed if there is
a symplectic form $\omega$ such that the bilinear form $\omega(\cdot, J(\cdot))$ is positive definite. In this case, $\omega$ is called a
taming form of $J$, and we also say that $J$ is tamed by $\omega$.
A taming form of $J$ is said to be compatible with $J$ if it lies in $\Omega_J^+$.
$J$ is said to be almost K\"ahler if there is a compatible form.

In  \cite{D}, Donaldson raised the following  question:
\begin{question} \label{dlz}
Suppose $J$ is an almost complex structure on a compact, oriented, smooth $4-$manifold $M$.
If $J$ is tamed, is  $J$ almost K\"ahler?
\end{question}

The local version of the question was known  to be true in dimension 4,
but false in higher dimensions (\cite {Sik94, RT, Lej, Br}).
Donaldson suggested an approach via the symplectic Calabi-Yau equation, and progress on this equation has
been made by Weinkove, Tosatti and Yau (cf. \cite{TW-survey}).

 It was known to hold when 
$M=\mathbb{CP}^2$ due to deep works of Gromov \cite{Gr} and Taubes
\cite{T}.
In \cite{LZ}, we observed that this  is  true for any integrable $J$, 
and the same was shown
for homogeneous $J$ in \cite{LT}.

In the case $b^+(M)=1$, Taubes  has recently made remarkable progress  in \cite{T1}.  He answers Question \ref{dlz} affirmatively for generic tamed almost complex structures
 in this case. 

The main purpose of this paper is to study Question \ref{dlz} for rational $4-$manifolds.  Here a rational $4-$manifold
refers to one of the following smooth 4-manifolds:
 $\mathbb{CP}^2$, $S^2\times S^2$ and blow-ups of them.

Our  construction is particularly successful  for 
 $S^2-$bundle over $S^2$: 
\begin{theorem}\label{s2bundle-1}
 Any tamed $J$ on $S^2\times S^2$ or $\mathbb C \mathbb P^2\# \overline{\mathbb C \mathbb P^2}$ is almost K\"ahler. 

\end{theorem}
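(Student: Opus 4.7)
The plan is to carry out the subvarieties--current--form program on these two $S^2$-bundles by producing a spherical Taubes current for \emph{every} tamed $J$, and then invoking Taubes' regularization machinery from \cite{T1} to pass first to a $J$-tamed form with a dominating $J$-invariant part, and then to a compatible almost K\"ahler form.

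To build the current I would work with two $J$-nef classes $e_1,e_2\in S_{K_J}$ having positive mutual intersection. For $S^2\times S^2$ take the two fiber classes of the two rulings; for $\mathbb{CP}^2\#\overline{\mathbb{CP}^2}$ take the fiber class $H-E$ of the unique ruling together with an appropriate genus-zero $J$-nef sphere class of positive square. In both cases the $e_i$ are represented by smoothly embedded spheres of non-negative self-intersection, and are $J$-nef by positivity of intersections with $J$-holomorphic subvarieties. Automatic transversality from \cite{HLS97} then makes the irreducible moduli $\mathcal{M}_{e_i}^{irr}$ smooth of the expected dimension, while Theorem \ref{emb-comp} and Proposition \ref{reducible-dim'} guarantee that the reducible stratum $\mathcal{M}_{e_i}^{red}$ is a finite union of tree configurations of smooth rational components of codimension at least one, with no genericity hypothesis on $J$ required.

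With the moduli in hand I would build a weak spherical Taubes current $T_{e_i}$ for each class by integrating a suitable measure on $\mathcal{M}_{e_i}^{irr}$ against the subvarieties themselves, following the construction referenced in \ref{Scurrent}. The pencils produced by Proposition \ref{gendirect} supply enough linear slices of the moduli to model a tiny neighborhood of a subvariety by subspaces of its tangent space, yielding the upper density bound on $T_{e_i}$ that Taubes' regularization demands; positivity of intersections with subvarieties through a given point supplies the matching lower bound. The degeneracy locus of $T_{e_i}$ is the image of $\mathcal{M}_{e_i}^{red}$, a proper subset of $M$; in the $S^2$-bundle cases the reducibles for $e_1$ and $e_2$ are pinned to distinct sphere families (different rulings, or ruling versus section), so their image loci can be arranged to be disjoint and $T_{e_1}+T_{e_2}$ becomes an honest spherical Taubes current on $M$.

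The central difficulty will be the density estimate near the reducible strata: one must show that the summed current is strictly positive on every open set and uniformly bounded above, even along the loci where an individual $T_{e_i}$ degenerates, so that the contribution from the complementary class fills in. The preparatory results of the paper on the linear structure of the moduli (Proposition \ref{gendirect}) and on the clean combinatorics of reducible configurations (Theorem \ref{emb-comp}, Proposition \ref{reducible-dim'}) are precisely what make this estimate tractable in the $S^2$-bundle setting, where the moduli geometry is particularly simple. Once the spherical Taubes current is in place, the smoothing and adjustment steps of \cite{T1} apply to yield an almost K\"ahler form taming $J$, completing the proof.
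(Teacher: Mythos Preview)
Your proposal has a genuine gap in the choice of classes. For $S^2\times S^2$ you take the two fiber classes $H_1,H_2$; these have self-intersection $0$, so they are \emph{not} big $J$-nef. The entire pencil machinery you invoke (Proposition~\ref{gendirect}) and the spherical current construction of~\ref{Scurrent} require $e\cdot e\ge 1$ so that $l_e\ge 2$; with $l_e=1$ there is no room to fix a point and still see a pencil of directions. Worse, for a general tamed $J$ on $S^2\times S^2$ one of the two fiber classes need not be $J$-nef at all: if there is an irreducible curve in a class $H_1+pH_2$ with $p<0$ (Case~(ii) of Lemma~\ref{s2bundle-nef}), then $H_1\cdot(H_1+pH_2)=p<0$ and $H_1$ fails nefness. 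So even the starting input of your scheme is unavailable. A secondary issue: the degeneracy locus of the weak current in Proposition~\ref{non-negative} is the vanishing locus $Z(e)$ of Definition~\ref{locus}, not ``the image of $\mathcal M_{e_i}^{red}$'' as you write.

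The paper's proof sidesteps all of this by observing (Lemma~\ref{s2bundle-nef}) that on an $S^2$-bundle over $S^2$ there is always a single $J$-\emph{ample} class in $S_{K_J}$: in Case~(i) any $A_l$ or $B_l$ with $l>0$ works, and in Cases~(ii)/(iii) one goes far enough along the appropriate sequence. For a $J$-ample class the vanishing locus is empty, so Proposition~\ref{positive} produces an honest Taubes current directly---no summation of weak currents is needed, and the regularization of Proposition~\ref{C-F} finishes. Your two-class summation idea is the right template for $M_k$ with $k\ge2$ (where no $J$-ample spherical class exists), but for $S^2$-bundles it is both unnecessary and, with the classes you chose, unworkable.
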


The remaining rational  manifolds are of the form $\mathbb C \mathbb P^2\# k\overline{\mathbb C \mathbb P^2}$ with $k\geq 2$. We   settle Donaldson's question for these manifolds 
under a simple condition on $-1$ curves. 

\begin{theorem} \label{k  curves} 
 Suppose $M=\mathbb C \mathbb P^2\# k\overline{\mathbb C \mathbb P^2}$ with $k\geq 2$ and 
  $J$ is  tamed. If there are  $k$  disjoint $-1$  curves, then $J$ is almost K\"ahler. 
\end{theorem}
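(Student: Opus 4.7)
The plan is to apply the subvarieties--current--form technique of \cite{T1} to $J$-nef genus-zero classes adapted to the $k$ disjoint $(-1)$-curves $C_1,\dots,C_k$. Because $[C_1],\dots,[C_k]$ span a negative-definite rank-$k$ sublattice of $H^2(M;\mathbb Z)$, its orthogonal complement is rank one of positive signature and is generated by a class $H$ with $H^2=1$. Setting $E_i=[C_i]$ gives a standard integral basis $H,E_1,\dots,E_k$ of $H^2(M;\mathbb Z)$ with $K_J=-3H+\sum_i E_i$. The classes I use are $L_i=H-E_i$: each has $L_i^2=0$, $-K_J\cdot L_i=2$, so $L_i\in S_{K_J}$, and by the Hofer--Lizan--Sikorav automatic transversality the moduli $\mathcal M(L_i,J)$ of embedded $J$-holomorphic spheres is a smooth $2$-manifold.

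The first key step is to show each $L_i$ is $J$-nef. Let $\alpha=aH+\sum_j b_jE_j$ be an irreducible $J$-subvariety class with $\alpha\neq E_l$ for every $l$. Positivity of intersection with each $C_l$ yields $b_l\leq 0$. Writing a taming form $\omega$ in cohomology as $c_0H+\sum_l c_lE_l$, one has $c_0>0$ and $c_l<0$ (because $\omega\cdot E_l=-c_l>0$), hence $\omega\cdot\alpha=c_0a-\sum_l c_lb_l\leq c_0a$, so $c_0a\geq\omega\cdot\alpha>0$ forces $a\geq 1$. Now suppose $L_i\cdot\alpha=a+b_i<0$: then $b_i\leq-(a+1)\leq-2$, so $b_i(b_i+1)\geq(a+1)a$, while the arithmetic-genus inequality $\sum_j b_j(b_j+1)\leq(a-1)(a-2)$ forces $(a+1)a\leq(a-1)(a-2)$, i.e.\ $a\leq 1/2$, a contradiction. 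Hence $L_i\cdot\alpha\geq 0$ always.

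Following Section \ref{Scurrent}, integrating a positive $2$-form over $\mathcal M_{irr}(L_i,J)$ produces a weak spherical Taubes current $T_i$ on $M$. By Theorem \ref{emb-comp} together with Proposition \ref{reducible-dim'}, the degeneracy locus of $T_i$ is a finite union of rational components entering tree decompositions $L_i=E_j+(H-E_i-E_j)$ and their refinements, and it does not meet $C_i$. Set $T=\sum_{i=1}^kT_i$. At any point $p\in C_i$ (unique by pairwise disjointness of the $C_j$), the current $T_i$ is non-degenerate at $p$, so the sum $T$ is too. At a point $p\notin\bigcup_l C_l$, the only possible sources of degeneracy are auxiliary $(-1)$-curves in classes $H-E_i-E_j$; generically only finitely many such curves pass through $p$, so enough $T_i$ remain non-degenerate. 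Applying Taubes' regularization from \cite{T1} (smoothing to a $J$-tamed form with dominating $J$-invariant part, then deforming to a genuinely $J$-compatible one) to the resulting honest spherical Taubes current $T$ produces an almost K\"ahler form for $J$.

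The main obstacle is checking honesty of $T$ at points lying on several auxiliary $(-1)$-curves simultaneously, where multiple $T_i$ may degenerate. There one uses the codimension-one estimate of Proposition \ref{reducible-dim'} on $\mathcal M_{red}$ together with the pencil structure of Proposition \ref{gendirect} to model local slices. If the pointwise count of surviving $T_i$'s is insufficient (for instance, in the border case $k=2$ with a $(-1)$-curve in class $H-E_1-E_2$ on which both $T_1$ and $T_2$ degenerate), the construction is augmented by currents built from positive-self-intersection classes such as $2H-E_i-E_j=L_i+L_j$, whose larger moduli cover the remaining degeneracies.
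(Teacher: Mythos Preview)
Your proposal has a genuine gap: the classes $L_i=H-E_i$ you choose have $L_i^2=0$, so they are \emph{not} big $J$-nef classes. The entire spherical-current construction of Section~\ref{Scurrent} and the weak Taubes current result (Proposition~\ref{non-negative}) are stated and proved only for big $J$-nef classes, where $l_e=e\cdot e+1\ge 2$. For a square-zero class one has $l_e=1$: the moduli space $\mathcal M_{irr,L_i}$ is real $2$-dimensional and through a generic point of $M$ there passes a \emph{single} smooth curve, i.e.\ a foliation rather than a pencil. Consequently, for a unit $(1,0)$-form $\sigma$ whose kernel at $x$ is the tangent direction of the unique leaf through $x$, the integral $\int_C if_{B_t(x)}\sigma\wedge\bar\sigma$ vanishes to higher order, and the lower bound in \eqref{leqgeq} fails at \emph{every} point, not merely along a vanishing locus. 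So the $T_i$ you build are not weak Taubes currents in the sense of Proposition~\ref{non-negative}, and summing them does not yield a Taubes current. Your closing remark about augmenting with $2H-E_i-E_j$ acknowledges trouble but does not repair this defect.

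The paper avoids this by working from the outset with the big $J$-nef classes $2H-E_i$, which have square $3$. Lemma~\ref{package} shows these are $J$-nef, and the adjunction estimate $a\ge b_j$ there forces the vanishing locus $Z(2H-E_i)$ to be exactly $\{C_j:j\neq i\}$. Since the $C_j$ are disjoint, $\bigcap_i Z(2H-E_i)=\emptyset$, and Proposition~\ref{form} immediately produces a Taubes current, hence an almost K\"ahler form, in the class $\sum_i(2H-E_i)$. Your $J$-nefness argument for $H-E_i$ is correct and is essentially contained in Lemma~\ref{package}; the missing idea is simply to pass to positive-square classes so that the pencil machinery of Section~4 applies.
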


To prove his genericity result, Taubes explores  close connections between pseudo-holomorphic subvarieties (see Definition \ref{subvariety}) and
almost K\"ahler forms in dimension $4$. In arbitrary  dimension, they have
positive pairings. A special feature in dimension $4$ is that they
both lie in the space of closed, non-negative,  $J-$invariant $2$-currents.
In particular,  Taubes  introduced   a distributional analogue of an almost K\"ahler form,  which we call a Taubes current (see Definition \ref{tcdef}).

Let $\omega$ be a symplectic form
on a compact, oriented, smooth $4-$manifold with $b^+=1$.  The basic strategy of Taubes is to first carefully pick  a smooth subfamily  of
evenly distributed irreducible $J-$holomorphic subvarieties in the
class of $N[\omega]$ for $N$ large, at least when $[\omega]$ is rational, then to obtain a Taubes
current via integration.  
 Finally, Taubes showed that such a
 current can be first smoothed to a $J-$tamed form with a dominating $J-$invariant part,  and then further adjusted to  a genuine almost K\"ahler form.

For a rational manifold, 
to avoid generic choices of $J$ in several places of Taubes'  construction, we apply  the subvarieties--current--form  
technique to classes of  genus zero smooth   subvarieties with positive self-intersection. However, in general  we could only hope 
 to  first construct a weaker version of Taubes current which  degenerates on a finite  union of  
 subvarieties with negative self-intersection.  Then we  
 try to sum several such weak Taubes currents  to get a honest Taubes current.  We call 
 Taubes current obtained this way a spherical Taubes current.

 As in \cite{T1}, while  the construction  of weak 
 Taubes current in  \ref{Scurrent} is via integration over 
  the irreducible subvariety part $\mathcal M_{irr}$ of the moduli space, we still need a good control  of the reducible subvariety part 
 $\mathcal M_{red}$. With this in mind, we establish   in \cite{LZrc}  a clean structural picture of reducible subvarieties  for a $J-$nef class with $J-$genus $0$ (see Theorem \ref{emb-comp}).  This is crucial for us to get rid of much of the ``genericity" assumption of \cite{T1}. 

For a more detailed summary of Taubes'  subvarieties--current--form technique
and our adaptation, 
see section \ref{s-c-f}.

\bigskip

 The subvarieties--current--form  technique is also useful to further determine  the almost K\"ahler  cone    
\begin{equation}\label{J symplectic cones} \begin{array}{lll}
\mathcal K_J^{c}&=&\{[\omega]\in H^2(M;\mathbb R)|\hbox{$\omega$  is compatible with
$J$}\}
\end{array}
\end{equation}
in terms of the curve cone.
The almost K\"ahler cone $\mathcal K_J^{c}$ is a convex cohomology cone
 contained in the positive cone $$\mathcal P=\{e\in H^2(M;\mathbb R)|e\cdot e >0\}.$$

The  curve  cone of an almost complex manifold $(M, J)$,  denoted by $A_J(M)$, is the convex  cone in $H_2(M, \mathbb R)$ generated by the set of homology  classes of  $J-$holomorphic subvarieties.
Let $A_J^{\vee, >0}(M)$ be the positive dual of $A_J(M)$ under the homology-cohomology pairing, and 
 set $$\mathcal P_J=A_J^{\vee, >0}(M) \cap \mathcal P.$$
 Clearly, $\mathcal K_J^c\subset A_J^{\vee, >0}(M)$ since the integral of an almost K\"ahler form over a $J-$holomorphic subvariety is positive.
Motivated by   the famous Nakai-Moishezon-Kleiman criterion in algebraic geometry which  characterizes  the ample cone  in terms of  the (closure of) curve cone for a 
projective $J$, 
and the recent  K\"ahler version  \footnote{Established by Buchdahl and Lamari in dimension 4, and by 
Demailly-Paun  in arbitrary dimension.} of the Nakai-Moishezon criterion (in dimension $4$),
 which  characterizes  the K\"ahler cone  in terms of  the curve cone for a 
K\"ahler  $J$, 
we
ask whether there is an almost K\"ahler version of the Nakai-Moishezon criterion.

\begin{question}  \label{aknm} \footnote{ This question makes sense for any 4-manifold, if we view $A_J(M)$
   as a convex cone of $H_{+}^J$ and define
 $A_J^{\vee, >0}(M)$ to be the positive dual of $A_J(M)$ under the pairing between $H_{+}^J$ and $H_{J}^+$ in \cite{LZ}.
 The question is:  Is  $\mathcal K_J^c$  a connected component of $\mathcal P_J$?
 See the survey paper \cite{DLZ2}.}
\label{dualC} Suppose  $M$ is a 
compact, oriented, smooth $4-$manifold with $b^+=1$ and $J$ is  almost K\"ahler.
 Is  the almost K\"ahler cone dual to the curve cone, i.e. $\mathcal K_J^c=\mathcal P_J$?
\end{question}

Via a detailed analysis of spherical Taubes currents, we are able to establish the almost K\"ahler Nakai-Moishezon criterion in the following two cases.

\begin{theorem} \label{s2bundle-2} The  almost K\"ahler Nakai-Moishezon criterion holds for any almost K\"ahler $J$ on $S^2\times S^2$ or $\mathbb C \mathbb P^2\# \overline{\mathbb C \mathbb P^2}$.  
  \end{theorem}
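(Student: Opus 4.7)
The inclusion $\mathcal K_J^c \subseteq \mathcal P_J$ is immediate: an almost K\"ahler form has positive square and pairs positively with every $J$-holomorphic subvariety. The substance of the theorem is the reverse inclusion $\mathcal P_J \subseteq \mathcal K_J^c$, and my plan is to realize each $e \in \mathcal P_J$ as the cohomology class of an almost K\"ahler form by adapting the spherical Taubes current machinery developed earlier in the paper so that it outputs the prescribed class.

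The first step is to decompose $e$ as a positive combination of sphere classes admitting smooth moduli. On $S^2\times S^2$ the two ruling classes $F_1,F_2$ are represented by smooth $2$-parameter families of $J$-holomorphic spheres for \emph{every} tamed $J$ by automatic transversality \cite{HLS97}, and the constraints $e\cdot F_1>0$, $e\cdot F_2>0$ coming from $e\in\mathcal P_J$ force $e=aF_1+bF_2$ with $a,b>0$. On $\mathbb C\mathbb P^2\#\overline{\mathbb C \mathbb P^2}$, rewriting $e=pH+qF$ in the basis $H,\ F=H-E$, the inequalities $e\cdot E>0$, $e\cdot F>0$ and $e^2>0$ together force $p,q>0$, and both $H$ and $F$ again carry smooth families of $J$-holomorphic spheres by automatic transversality.

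The second step is the construction of a genuine spherical Taubes current in class $e$. For each generator class $e_i$ in the above decomposition, I would apply the paper's weak spherical Taubes current construction to produce a non-negative, closed, $J$-invariant current $T_i$ in a positive multiple of $e_i$. By Theorem \ref{emb-comp} and the reducible-moduli dimension bound obtained in the paper, the possible degeneracy loci of the $T_i$ are contained in a codimension-at-least-one union of finitely many irreducible subvarieties of negative self-intersection, and the pencil flexibility of the moduli spaces gives enough freedom in choosing the defining families to make the degeneracy loci of distinct $T_i$ mutually disjoint. A positive linear combination $\sum c_i T_i$ with coefficients matching the decomposition of $e$ is then a honest spherical Taubes current whose cohomology class is exactly $e$. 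Finally, Taubes' two-stage regularization from \cite{T1} first smooths this current to a tamed form with dominating $J$-invariant part and then perturbs it to a compatible form, yielding an almost K\"ahler representative of $e$.

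The main obstacle is the coordination required in the second step: one must select the families defining the different $T_i$ so that their weak degeneracies neither lie on a common subvariety nor collude with each other after summation to leave some tangent direction unilluminated. On the two target manifolds the inventory of irreducible subvarieties of negative self-intersection is sparse enough --- essentially a single $-1$ curve on $\mathbb C\mathbb P^2\#\overline{\mathbb C \mathbb P^2}$, and none causing trouble for both rulings of $S^2\times S^2$ --- that the pencil freedom is sufficient and the combined current can be arranged to be genuinely non-degenerate along every tangent direction. I expect this is precisely why the statement is restricted to $S^2$-bundles over $S^2$ and does not extend verbatim to the higher blow-ups covered by Theorem \ref{k  curves}.
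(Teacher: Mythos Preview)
Your decomposition into the ruling classes does not survive the non-generic cases, and this is a genuine gap rather than a technicality. On $S^2\times S^2$, when $J$ admits an irreducible curve in a class $H_1-lH_2$ with $l>0$ (what the paper calls case (ii)), the class $F_1=H_1$ pairs \emph{negatively} with this curve; hence $H_1$ is not $J$-nef and in fact carries no smooth $J$-holomorphic representative at all, since any irreducible curve in $H_1$ would have to meet the negative section non-negatively. Automatic transversality only tells you that the moduli of smooth curves, \emph{if nonempty}, is a manifold --- it does not manufacture smooth curves. The same failure occurs for $H$ on $\mathbb C\mathbb P^2\#\overline{\mathbb C\mathbb P^2}$ once the negative section has class $-lH+(l+1)E$ with $l\geq 1$. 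A symptom that something is wrong: your construction, as written, would produce an almost K\"ahler form for \emph{every} $e=aF_1+bF_2$ with $a,b>0$, but in case (ii) the cone $\mathcal P_J$ is strictly smaller (it also requires $b>la$), so your argument would prove too much.

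There is a second issue even in the generic case: the ruling classes have square zero, hence $l_e=1$, and the paper's weak Taubes current construction in \S\ref{Scurrent} is set up only for \emph{big} $J$-nef classes ($e\cdot e\geq 1$, so $l_e\geq 2$), where the pencil structure of Proposition~\ref{gendirect} is available. Square-zero classes give foliations, not pencils, and you would need a separate argument for them.

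The paper's proof avoids both problems by working with the cone $\mathcal S_J$ generated by big $J$-nef classes in $S_{K_J}$, which \emph{adapts to $J$}: in case (ii) one uses $A_l=H_1+lH_2$ (big $J$-nef) and approximates the ray $H_2$ by $H_1+qH_2$ with $q\to\infty$; in case (iii) the roles reverse. One then checks directly that $\mathcal S_J=\mathcal P_J$ in each case. The almost K\"ahler hypothesis enters through Lemma~\ref{convenient}: given a compatible form in class $u$, any $e\in\mathcal S_J$ satisfies $e-tu\in\mathcal S_J$ for small $t$, so $e$ is the sum of a weak Taubes current class and an honest almost K\"ahler class, and Proposition~\ref{C-F} finishes.
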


 \begin{theorem} \label{good}
The almost K\"ahler Nakai-Moishezon criterion holds for  any good almost K\"ahler $J$. \end{theorem}

In Theorem \ref{good}, a tamed $J$ on  a rational manifold is called {\it good} if (i)   there is a smooth
genus one  subvariety in the  anti-canonical class $-K_J$, and (ii) 
any irreducible  genus zero  subvariety of negative self-intersection is a $-1$ curve. 

Notice that, following from Theorems \ref{s2bundle-1} and \ref{k curves}, both Theorems \ref{s2bundle-2} and \ref{good} are still valid even if we assume $J$ is tamed instead of almost K\"ahler.

The organization of this paper is as follows.
In section 2 we review  properties of  moduli space of irreducible pseudo-holomorphic subvarieties as presented in \cite{T} and  introduce Taubes current. 
In section 3,  to illustrate some key features of our construction, 
 we construct spherical Taubes currents in the  line class of $\mathbb C \mathbb P^2$. The situation for the line class is simpler since there are no reducible rational curves. 
In section 4 we discuss various properties of  $J-$nef spherical class and show that there are plenty of pencils in the moduli spaces. 
In section 5 we combine the constructions in sections 3 and 4 to  
construct weak Taubes currents from a big $J-$nef spherical class. An immediate consequence is
the proof  of Theorems \ref{s2bundle-1} and  \ref{s2bundle-2} for $S^2-$bundles over $S^2$. 
We also study the geometry and 
combinatorics of  $K-$symplectic cone and $K-$sphere cone of $\mathbb C \mathbb P^2\# k\overline{\mathbb C \mathbb P^2}$ with $k\geq 2$ and  prove Theorems \ref{k curves}, \ref{good}  and \ref{sc}.
Finally, we compare our construction with the Kodaira embedding theorem.

 We appreciate discussions with T. Draghici, R. Gompf, Y. Ruan, C. Taubes, A. Tomassini, V. Tosatti, S. T. Yau and K. Zhu. 
 We are grateful  to the referee for careful reading of the manuscript and useful suggestions
 improving the presentation. 
 During the preparation of this work, the authors benefited from NSF grant 1065927 (of the first author). The second author is partially supported by AMS-Simons travel grant.

\section{Pseudo-holomorphic subvarieties and Taubes current}
Let $M$ be a closed, oriented $4-$manifold and $J$ be a tamed almost complex structure on $M$.
In this section  we summarize properties of $J-$holomorphic subvarieties following \cite{T1} and introduce Taubes current.

 We fix a symplectic form  $\omega$  tamed by $J$. Such a form defines a cohomology class $[\omega]$.
  Moreover,  
the polarization of the quadratic form given by $\omega(\cdot, J(\cdot))$ defines a $J-$invariant metric on $M$. 
Such a metric is used implicitly in all the follows to define distances on $M$, integration over open sets in Cartesian products of $M$, 
norms on (complexified) tensor bundles of $M$. 

\subsection{Pseudo-holomorphic subvarieties and their properties}
\begin{definition} \label{subvariety} A closed set $C\subset M$ with finite, nonzero 2-dimensional
Hausdorff measure is said to be a $J-$holomorphic subvariety if it has
no isolated points, and if the complement of a finite set of points
in $C$, called the singular points,  is a smooth submanifold with $J-$invariant tangent space.

\end{definition}

When $J$ is understood, we
will simply call a  $J-$holomorphic subvariety a subvariety. 
A subvariety is said to be smooth if it has no singular points. 

A  subvariety $C$ has a canonical
orientation, which is used to define integration of smooth $2-$forms on its smooth part. The resulting linear functional on
the space of $2-$forms   
defines a closed, non-negative current of type $(1,1)$. 
We  denote the associated homology class by $e_C$.

\subsubsection{Genus of an irreducible subvariety}
A subvariety is said to be irreducible if its smooth locus is connected. 
Any given subvariety is a union of a finite set of irreducible subvarieties. 

Suppose $C$ is an irreducible subvariety.
Then it is the image of a $J-$holomorphic map
$\phi:C_0\to  M$ from a complex connected curve $C_0$, where $\phi$
is an embedding off a finite set. $C_0$ is called the model curve and $\phi$ is called the tautological map. 
The map $\phi$ is uniquely determined up to automorphisms of $C_0$. 
This understood, the homology class $e_C$ is simply the push forward of the fundamental class of $C_0$ via $\phi$.

The genus of an irreducible subvariety $C$ is defined to be the genus of its model curve $C_0$. 
There is another type of genus associated to the class $e_C$ defined as follows.

Given a class $e$ in $H_2(M;\mathbb Z)$,
introduce  the $J-$genus of $e$, 
\begin{equation} \label{J-genus}
\begin{array}{lll}
 g_J(e)&=&\frac{1}{2}(e\cdot e+K_J\cdot e)+1,
 \end{array}
 \end{equation}
  where $K_J$ is the canonical class of $J$. Notice that $g_J(e)$ is called the virtual genus in
  many literature. 
  
If $e=e_C$ for some subvariety $C$, then $g_J(e)$ is non-negative.  In fact, by the adjunction inequality in \cite{McD},  $g_J(e)$ is bounded from below by the genus of the  model curve $C_0$ of $C$, with equality if and only if $C$ is smooth.

\subsubsection{The normal operator $D_C$}

Suppose $C$ is an irreducible subvariety and $C_0$ is its model curve with the tautological map $\phi$. 
Let us first introduce the normal bundle of $C$. 

If $p\in C_0$ is not a critical point of $\phi$, then $L_p=\phi_*(TC_0|_p)$ is a complex line in $TM|_{\phi(p)}$.
If $p_0$ is a critical point of $C_0$,  for $p$ in  a deleted neighborhood of $p_0$ consisting of 
non-critical points,
  $L_p\subset TM|_{\phi(p)} $ still converge to a line $L_{p_0}$ in $TM|_{\phi(p_0)}$. 
This follows 
from elliptic unique continuation. 
It also follows from the local canonical form of $\phi$ near $p_0$ (see e.g. (2.2) of \cite{T}):
 there is a complex coordinate $u$ for a disk in $C_0$ 
centered on a critical point $p_0$, and a complex coordinate $(z, w)$ centered 
on $\phi(p_0)$ such that $\phi$ has the form 
\begin{equation}\label{p0local}
  u\rightarrow (u^{n+1}+t_z, cu^{n+k+1}+t_w),
\end{equation} where $n, k\ge 1$ and $|t_z|\le c_0|u|^{n+2}, 
|dt_z|\le c_0|u|^{n+1}, |t_w|\le c_0|u|^{n+k+2}, |dt_w|\le c_0|u|^{n+k+2}$.
Hence $L$ extends over $p_0$ as the pull-back from 
$\mathbb C^2$ via (\ref{p0local}) of the span of the vector field 
$\frac{\partial}{\partial z}$.  Thus there is a  complex line bundle $N$, whose fiber over $p\in C_0$ 
is the quotient complex line  $TM|_{\phi(p)}/L_p$.
This complex line bundle over $C_0$ is called the normal  bundle of  $C$.  

Linearizing the equation $\bar{\partial}_J\phi=0$ for the $J-$holomorphic map $\phi: C_0\rightarrow M$, we obtain a $\mathbb R-$linear, differential operator 
$$
 D_{\phi}:C^{\infty}(C_0; \phi^*TM)\to C^{\infty}(C_0; \phi^*TM\otimes T^{0,1}C_0).
$$
Choose an almost Hermitian metric on $M$ to realize $N$ as a subbundle of $\phi^*TM$.
Then it induces a canonically associated  $\mathbb R-$linear, differential operator
\begin{equation}
 D_C:C^{\infty}(C_0; N)\to C^{\infty}(C_0; N\otimes T^{0,1}C_0).
\end{equation}
${D}_C$ is called the normal operator of $C$.  

Use a Hermitian metric on $C_0$ and the Levi-Civita connection on $M$ to define Sobolev completions of 
$C^{\infty}(C_0; N),  C^{\infty}(C_0; N\otimes T^{0,1}C_0)$. 
$D_C$ extends to  a bounded, Fredholm operator
from the Hilbert space $L_1^2(C_0; N)$ to  the Hilbert space $L^2(C_0; N\otimes T^{0,1}C_0)$.
Denote the index of this extension by  $d_C$. 
$d_C$ is always even, and is  bounded above by the even integer $2\iota_{e_C}$
 defined as follows. 

\begin{definition} Given a class $e$, introduce  its $J-$dimension, 
\begin{equation} \label{l}  \iota_e=\frac{1}{2}(e\cdot e-K_{J}\cdot e). 
\end{equation}
\end{definition}

Note also that  $d_C=2\iota_{e_C}$  if and only if $C$ is smooth.


\subsection{The moduli space}
In this subsection we fix a class $e$. 

The moduli space of subvariety in the class $e$,  $\mathcal M_e$,  
is defined as in \cite{T1}: Any element $\Theta$ in  $\mathcal M_e$  is a 
finite set of pairs, where each pair has the form $(C,m)$ with $C\subset M$ 
an irreducible subvariety and $m$ a positive integer. The set of pairs in 
an element is further constrained so that no two of its pairs have the same subvariety component, 
and so that $\sum me_C=e$.

\begin{definition}\label{eff}
A  homology class $e\in H_2(M; \mathbb Z)$ is said to be  
$J-$effective if $\mathcal M_e$ is nonempty. 
\end{definition}

\subsubsection{Topology}

Let $|\Theta|=\cup_{(C, m)\in \Theta}C$ denote the support of $\Theta$. Consider  the symmetric, non-negative function, $\varrho$, on $\mathcal M_e\times \mathcal M_e$ that is defined by the following rule:
\begin{equation} \varrho(\Theta, \Theta')=\sup _{z\in |\Theta|} \hbox{dist}(z, |\Theta'|)
+\sup _{z'\in |\Theta'|} \hbox{dist}(z', |\Theta|).
\end{equation}
The function $\varrho$ is used to measure distances on $\mathcal M_e$.

Given a smooth form $\nu$ introduce the pairing
$$(\nu, {\Theta} )=\sum_{(C, m)\in \Theta} m\int_{C}\nu.$$

The topology on $\mathcal M_e$ is defined in terms of convergent sequences:

A sequence $\{\Theta_k\}$ in $\mathcal M_e$ converges to a given element $\Theta$ if the following two conditions are met:

\begin{itemize}
\item  $\lim_{k\to \infty} \varrho (\Theta, \Theta_k)=0$.

\item  $\lim_{k\to \infty} (\nu, \Theta_k)=(\nu, \Theta)$ for any given smooth 2-form $\nu$.
\end{itemize}

Here is Proposition 3.1 in \cite{T1}.

\begin{prop} \label{compactness} The moduli space $\mathcal M_e$ is compact.
In particular, only finitely many classes are of the form $e_C$ with $(C, m)\in \Theta$ and $\Theta\in \mathcal M_e$.

\end{prop}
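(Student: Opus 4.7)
The plan is to apply Gromov's compactness theorem for $J$-holomorphic maps, using the taming form $\omega$ to provide both a uniform energy bound and, via monotonicity, a positive lower bound on the area of each irreducible component. Concretely, for any $\Theta \in \mathcal M_e$ the identity
\[
\sum_{(C,m)\in\Theta} m \int_C \omega \;=\; e\cdot [\omega]
\]
fixes the total $\omega$-mass, while the standard monotonicity formula for $J$-holomorphic curves in the $g_J$-metric supplies $\epsilon_0 > 0$ with $\int_C \omega \geq \epsilon_0$ for every irreducible $J$-holomorphic subvariety $C$ in $M$. These two facts together bound the number of components, counted with multiplicity, in any $\Theta \in \mathcal M_e$ by $(e\cdot[\omega])/\epsilon_0$.

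Given a sequence $\{\Theta_k\}\subset \mathcal M_e$, I would first pass to a subsequence with a fixed number $N$ of components and stabilized multiplicities $(m^{(1)},\ldots,m^{(N)})$. Parametrizing each component by its tautological map $\phi_k^{(i)}\colon (C_0)_k^{(i)}\to M$, the energy is uniformly bounded, so Gromov's theorem for stable $J$-holomorphic maps yields, after a further subsequence, limiting stable maps $\phi_\infty^{(i)}$. Taking images and collecting common irreducible summands (adding multiplicities when they coincide) assembles a candidate element $\Theta_\infty$ in which no two pairs share the same subvariety component. Preservation of homology class under Gromov convergence gives $\sum me_C=e$, so $\Theta_\infty \in \mathcal M_e$. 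Hausdorff convergence $\varrho(\Theta_k,\Theta_\infty)\to 0$ follows from the $C^0$ convergence of the parametrizations together with the monotonicity bound (which prevents mass from disappearing to lower-dimensional sets), and the pairing condition $(\nu,\Theta_k)\to(\nu,\Theta_\infty)$ for smooth $\nu$ follows by pulling $\nu$ back to the model curves and applying dominated convergence.

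For the concluding finiteness clause, I would argue by contradiction using the compactness just proved. If infinitely many distinct classes arose as $e_C$ for some component $(C,m)$ of some $\Theta \in \mathcal M_e$, then one could pick $\Theta_k \in \mathcal M_e$ with a component $C_k$ representing an unbounded family of classes and extract a convergent subsequence $\Theta_k \to \Theta_\infty$. Since $\Theta_\infty$ has only finitely many components and Gromov convergence forces $C_k$ to accumulate near some limit component $D$ of $\Theta_\infty$, the classes $e_{C_k}$ must eventually be summands in a decomposition of $m_D e_D$ as a sum of $J$-effective classes. The lower bound $e_C \cdot [\omega] \geq \epsilon_0$ bounds the number of summands in any such decomposition, and an inductive argument on the lexicographic pair $(e_C \cdot [\omega],\, \text{number of summands})$ reduces the question to a finite set of possibilities, giving the desired contradiction.

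The main obstacle I anticipate is the bubbling step in Gromov compactness: an irreducible component may break into several pieces, possibly with new multiplicities and with configurations that cross or overlap the other components. In the four-dimensional setting this is tamed by positivity of intersections and the adjunction inequality, which together guarantee that each limiting irreducible bubble is a genuine $J$-holomorphic subvariety of nonnegative $J$-genus. The subtler point is in the matching up: after Gromov convergence one must correctly identify coincident limiting bubbles across the $N$ components and sum their multiplicities to produce an honest element of $\mathcal M_e$ with distinct irreducible supports, as prescribed by the definition of the moduli space.
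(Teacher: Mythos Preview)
The paper does not supply its own proof; the proposition is quoted as Proposition~3.1 of \cite{T1}. Your sketch via Gromov compactness for stable maps has a genuine gap: that theorem requires an a priori bound on the genus of the domain curves, which you have not established. Monotonicity bounds the number of irreducible components and their multiplicities, but the adjunction inequality $h \le g_J(e_C)$ controls the genus of a model curve only once the class $e_C$ is pinned down, and the finiteness of the classes $e_C$ is exactly the second assertion of the proposition. Without a uniform genus bound you cannot invoke compactness of stable maps for the sequences $\phi_k^{(i)}$, so the argument is circular as written.

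The proof in \cite{T1} avoids parametrizations altogether. Each $\Theta_k$ defines a closed, non-negative, type-$(1,1)$ integral current of total mass $e\cdot[\omega]$; weak compactness for currents of bounded mass, together with Hausdorff convergence of the supports, produces a limit, and a structure theorem for such limits (the almost-complex analogue of the King--Siu description of positive closed $(1,1)$-currents) identifies it as another element of $\mathcal M_e$. This current-theoretic route needs no genus information. Your final paragraph on the finiteness clause likewise leans on a component-wise convergence statement (``$e_{C_k}$ must eventually be summands in a decomposition of $m_D e_D$'') that is immediate in the current framework but is not justified by the stable-map argument you outlined.
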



\subsubsection{Moduli spaces of irreducible subvarieties}
Fix an integer $h$.
We define  $\mathcal M_{h,e}\subset \mathcal M_e$ to be the subspace of irreducible  subvarieties of genus $h$.

With this understood, the following  is a direct consequence of the adjunction inequality. 

\begin{lemma}\label{maximal genus}
If $h=g_J(e)$ and $C\in \mathcal M_{h,e}$, then $C$ is smooth. 
\end{lemma}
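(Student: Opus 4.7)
The plan is to reduce the statement directly to the equality case of the adjunction inequality that was recalled in Section 2.1.1. Recall that for any irreducible $J$-holomorphic subvariety $C$ with model curve $C_0$ and tautological map $\phi: C_0 \to M$, McDuff's adjunction inequality gives
\[
g_J(e_C) \;\geq\; g(C_0),
\]
with equality if and only if $\phi$ is an embedding, i.e.\ $C$ is smooth.

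Given the hypotheses of the lemma, $C$ is an element of $\mathcal{M}_{h,e}$, so by definition $C$ is irreducible, its model curve $C_0$ has genus exactly $h$, and $e_C = e$. Since $h = g_J(e) = g_J(e_C)$, the adjunction inequality applied to $C$ becomes
\[
h \;=\; g_J(e_C) \;\geq\; g(C_0) \;=\; h,
\]
which is an equality. The equality clause of the adjunction inequality then forces $C$ to be smooth, concluding the proof.

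There is essentially no obstacle here: the lemma is just the observation that when the genus of the model curve saturates the upper bound $g_J(e)$, the equality case of adjunction applies. The only thing one needs to verify is that the adjunction inequality is available in the desired generality (irreducible $J$-holomorphic subvarieties rather than only immersed curves), which is precisely the content of \cite{McD} as cited in the paragraph following \eqref{J-genus}.
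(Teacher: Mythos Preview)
Your proof is correct and follows exactly the approach indicated in the paper: the lemma is stated as ``a direct consequence of the adjunction inequality,'' and you have spelled out precisely that consequence using the equality clause of \cite{McD} recalled after \eqref{J-genus}.
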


Let $\Sigma$ be a smooth, oriented surface of genus $h$. Let $\mathfrak M_{h,e}$ be the space of somewhere 
injective $J$-holomorphic   maps  in the class $e$ and originated from $(\Sigma, j)$, where $j$ is
an arbitrary  complex structure on $\Sigma$.  
As a subset of the Fr\'echet space of smooth maps
from $\Sigma$ to $M$,  $\mathfrak M_{h,e}$ has a natural topology. 

Since every irreducible subvariety has a model curve, 
there is a  surjective map $\Psi$ from $\mathfrak M_{h,e}$
to $\mathcal M_{h,e}$.
A fundamental fact established in the appendix in \cite{T1}, whose proof is rather involved,  is that the topology on  $\mathcal M_{h,e}$
is the same as the induced one  from  $\mathfrak M_{h,e}$ via $\Psi$. More precisely, by Lemma A.13 in  \cite{T1},  at any $\phi:(\Sigma, j)\to M$ in  $\mathfrak M_{h,e}$, $\Psi$  is a local homeomorphism from $\mathfrak M_{h,e}$
to $\mathcal M_{h,e}$ when $h\geq 2$, and  in the case $h=0,1$, $\Psi$ is a local homeomorphism up to automorphisms of $(\Sigma, j)$.

With this understood, it follows  that well known topological properties of $\mathfrak{M}_{h,e}$ 
carry over to $\mathcal M_{h,e}$. 

\begin{theorem} (Propositions 3.2  and 3.3 in \cite{T1}) \label{manifold} There exists a smooth map, $f$, 
from a neighborhood of $0$ in $\ker D_C$ to  ${\rm coker}  D_C$; and  there exists a homeomorphism from $f^{-1}(0)$ to a
neighborhood of $C$ in $\mathcal M_{h,e}$ sending $0$ to $C$.

 The subset of $\mathcal M_{h,e}$ where the cokernel of $D_C$  is trivial has the structure of a smooth manifold of dimension $2\iota_e-2(g_J(e)-h) $;
 and the smooth structure is such that at any point in this set, the aforementioned homeomorphism from a neighborhood of $0$ in the kernel is
 a smooth embedding onto an open set.
 
  For   $J$ in a residual set in the space of almost complex structures,  ${\rm coker }  D_{(\cdot)}=0$ for each point  $\mathcal M_{h,e}$, and so the latter 
 has the structure of  a smooth manifold whose dimension is $2\iota_e-2(g_J(e)-h)$.
 \end{theorem}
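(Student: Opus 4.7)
The plan is to transfer the standard local Kuranishi structure on the space of somewhere injective pseudo-holomorphic maps to the subvariety moduli $\mathcal M_{h,e}$ via the local homeomorphism $\Psi : \mathfrak M_{h,e} \to \mathcal M_{h,e}$ quoted from Lemma A.13 of \cite{T1}. Once the local description is obtained on $\mathfrak M_{h,e}$, $\Psi$ immediately descends it to $\mathcal M_{h,e}$, after quotienting by the finite-dimensional automorphism group of $(\Sigma,j)$ when $h=0,1$, which has already been absorbed into the definition of a subvariety.

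Concretely, I would fix $\phi:(\Sigma,j)\to M$ representing $C$ and work on a Banach manifold of $L^p_k$-maps (with $k$ large and $p>2$) in the class $e$, coupled if necessary with a local Teichm\"uller slice of complex structures on $\Sigma$. Over this base sits a Banach bundle whose fiber is $L^p_{k-1}(\Sigma;\phi^*TM \otimes \Lambda^{0,1}_j T^*\Sigma)$, and the nonlinear $\bar\partial_J$ operator is a smooth section whose zero set is $\mathfrak M_{h,e}$ locally near $\phi$. The linearization at $\phi$ splits along the short exact sequence $0\to T\Sigma \to \phi^*TM \to N\to 0$: the $T\Sigma$-piece is absorbed by reparametrizations and deformations of $j$, while the quotient piece is precisely the normal operator $D_C$ acting on sections of $N$. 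Elliptic regularity and the standard theory make $D_C$ a Fredholm operator whose kernel and cokernel are independent of the Sobolev exponents used.

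Next I would apply the standard Kuranishi reduction to this Fredholm setup. Splitting the source as $\ker D_C \oplus (\ker D_C)^\perp$ and the target as $\mathrm{im}\, D_C \oplus \mathrm{coker}\, D_C$, the implicit function theorem solves the $\mathrm{im}\, D_C$-component uniquely as a smooth graph over a neighborhood of $0$ in $\ker D_C$, and what remains is precisely the obstruction map $f$ from a neighborhood of $0$ in $\ker D_C$ to $\mathrm{coker}\, D_C$; the identification of $f^{-1}(0)$ with a neighborhood of $C$ in $\mathcal M_{h,e}$ is then by construction. When $\mathrm{coker}\, D_C=0$, the graph alone yields a smooth chart of dimension $\dim \ker D_C = \mathrm{ind}\, D_C$. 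A Riemann--Roch computation on $N$, using that the nodal correction from $\phi$ not being an embedding contributes $g_J(e)-h$ (so that $\deg N = e\cdot e - 2(g_J(e)-h)$ by adjunction), combined with \eqref{J-genus} and \eqref{l}, yields $\mathrm{ind}\, D_C=2\iota_e-2(g_J(e)-h)$, as claimed.

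For the residual-set statement I would invoke the standard Sard--Smale argument. Form the universal moduli space $\widetilde{\mathfrak M}_{h,e}=\{(J,\phi):\bar\partial_J\phi=0,\ \phi \text{ somewhere injective}\}$; somewhere injectivity is exactly the hypothesis under which the linearization in the $J$ direction is surjective, so $\widetilde{\mathfrak M}_{h,e}$ is a Banach manifold. The forgetful projection $(J,\phi)\mapsto J$ is Fredholm of the same index, and its regular values form a residual subset of the Fr\'echet space of almost complex structures by Sard--Smale; over any such $J$, $\mathrm{coker}\, D_{(\cdot)}$ vanishes on all of $\mathfrak M_{h,e}$ and hence on $\mathcal M_{h,e}$ via $\Psi$. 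The genuinely hard step in all of the above, and the one I would simply quote from the appendix of \cite{T1}, is the local-homeomorphism property of $\Psi$: showing that the Fr\'echet topology on $\mathfrak M_{h,e}$ matches the coarse topology on $\mathcal M_{h,e}$ defined by the support distance $\varrho$ and the distributional pairings $(\nu,\Theta)$, which requires delicate convergence arguments for sequences of subvarieties that control both the image set and the current simultaneously.
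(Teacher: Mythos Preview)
The paper does not give its own proof of this theorem: it is stated verbatim as a citation of Propositions~3.2 and~3.3 in \cite{T1}, and the only surrounding commentary is the paragraph just before it, which says that the local homeomorphism $\Psi:\mathfrak M_{h,e}\to\mathcal M_{h,e}$ (Lemma~A.13 of \cite{T1}) allows ``well known topological properties of $\mathfrak M_{h,e}$'' to ``carry over to $\mathcal M_{h,e}$.'' Your proposal is a correct and reasonably detailed expansion of precisely that sentence: the Kuranishi reduction on the Banach bundle over $\mathfrak M_{h,e}$, the Riemann--Roch index count (your formula $\deg N=e\cdot e-2(g_J(e)-h)$ is right, via the adjunction defect $\delta=g_J(e)-h$), and the Sard--Smale argument for genericity are exactly the ``well known'' ingredients the paper is alluding to, and you correctly isolate the one nontrivial input---the matching of topologies under $\Psi$---as something to be quoted from the appendix of \cite{T1}. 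So there is no discrepancy in approach; you have simply unpacked what the paper leaves as a black-box citation.
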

 
 Concerning the vanishing of $\hbox{coker }  D_C$, we mention another  fact, 
 which is particularly useful in this paper. 
 Note that $D_C$ is a real Cauchy-Riemann operator on $N$. For such operators, there is the following automatic transversality result.

\begin{theorem}[\cite{HLS97}, \cite{IS99}]\label{auto-tran} Let $(\Sigma,j)$ be a Riemann surface of genus $h$, and $L$  a complex line bundle over $\Sigma$. 
Suppose $c_1(L)\geq 2h-1$.    Then ${\rm coker} D=0$ for any  real Cauchy-Riemann operator $D$ on $L$. 
\end{theorem}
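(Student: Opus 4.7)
The plan is to reduce the cokernel vanishing to a kernel vanishing for the formal adjoint operator on a line bundle of strictly negative degree, and then to exclude nontrivial elements of that kernel by a similarity principle together with the standard count of zeros of a section of a complex line bundle.

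First, I would write $D$ locally as $D\xi = \bar\partial\xi + a\xi + b\bar\xi$ with $a,b$ bounded, and invoke Fredholm duality to identify $\operatorname{coker} D \cong \ker D^{*}$. The formal $L^{2}$-adjoint $D^{*}$, set up using Hermitian metrics on $\Sigma$ and $L$ together with the wedge-product pairing between $\Omega^{0,1}(L)$ and $\Omega^{0}(K_{\Sigma} \otimes L^{*})$, is again a real Cauchy-Riemann operator, this time on the line bundle $L^{\vee} := K_{\Sigma} \otimes L^{*}$. Under the hypothesis $c_{1}(L) \geq 2h-1$,
\[
c_{1}(L^{\vee}) = (2h - 2) - c_{1}(L) \leq -1.
\]

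Next, I would invoke the Carleman similarity principle for real Cauchy-Riemann operators on complex line bundles: given any $\eta \in \ker D^{*}$ with $\eta \not\equiv 0$, near any point $p \in \Sigma$ one can find a local trivialization of $L^{\vee}$, a continuous nonvanishing complex-valued function $u$ near $p$, and a genuinely holomorphic function $f$ near $p$ such that $\eta = u f$. In particular, the zero set of $\eta$ is discrete, and at each zero $p$ the local order $\operatorname{ord}_{p}\eta$ is the positive integer equal to the vanishing order of the holomorphic factor $f$.

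Finally, viewing $\eta$ as a continuous section of $L^{\vee}$ whose zeros are isolated and of well-defined positive local degree, the topological formula for the Chern number via zeros of generic sections (equivalently a winding-number / Poincar\'e-Hopf computation applied to $\eta/|\eta|$) gives
\[
\sum_{p \in \eta^{-1}(0)} \operatorname{ord}_{p} \eta \;=\; c_{1}(L^{\vee}) \;\leq\; -1,
\]
which is impossible since the left-hand side is a sum of positive integers. Hence $\ker D^{*} = 0$ and $\operatorname{coker} D = 0$.

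The main obstacle is the similarity principle itself: the statement that a solution of a first-order real linear elliptic system of Cauchy-Riemann type, with merely bounded zeroth-order coefficients, factors locally as a continuous nonvanishing function times a holomorphic function. The standard (Vekua-type) proof writes the equation as a fixed-point problem involving the Cauchy transform in a weighted Sobolev space and produces the nonvanishing factor $u$ by a small-perturbation argument; once this local structure of elements of $\ker D^{*}$ is accepted, the adjoint-plus-Chern-number bookkeeping above closes the argument cleanly.
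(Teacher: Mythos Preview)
Your argument is correct and is essentially the standard proof from \cite{HLS97}: pass to the adjoint $D^{*}$ acting on $K_{\Sigma}\otimes L^{*}$, observe its degree is at most $-1$, and rule out nontrivial kernel elements via the Carleman similarity principle and a zero-count. The paper itself does not prove this theorem; it is quoted as a known result with citations to \cite{HLS97} and \cite{IS99}, so there is nothing further to compare against.
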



\subsubsection{Moduli spaces with marked points}

Let $M^{[k]}$ denote the set of $k$ tuples of pairwise distinct points in $M$.
Given   $\Omega=(z_1, \cdots, z_k)$ in $M^{[k]}$, denote its support $\{z_1, \cdots, z_k\}$  in $M$ also by $\Omega$. 
Let $\mathcal M_{e}^{ \Omega}$ be the space of subvarieties in $\mathcal M_{e}$ 
passing through $\Omega$.

Notice  that  $\mathcal M_{e}^{ \Omega}$ is compact since it is a closed subset of the compact space $\mathcal M_e$. 

Given an integer $h$, let $\mathcal M_{h, e}^{ \Omega}=\mathcal M_{h,e}\cap \mathcal M_{e}^{ \Omega}$.

Suppose $h=g_J(e)$ and $C\in \mathcal M_{g_J(e),e}^{\Omega}$. Then  $C$ is smooth by Lemma \ref {maximal genus}, and so the normal bundle $N$ is a line bundle over $C$ itself. 
Consider the evaluation map at $\Omega$, $ev^{\Omega}: \Gamma(N)\to \oplus_{p\in \Omega}N|_p,$
and  the operator
$$D_C\oplus ev^{\Omega}: \Gamma(N)\to \Gamma(N\otimes T^{1, 0}C)\oplus (\oplus_{p\in \Omega}N|_p).$$

The index of $D_C\oplus ev^{\Omega}$ is $d_C-2k$. 
And the kernel of
$D_C\oplus ev^{\Omega}$ should be thought of as giving a sort of
Zariski tangent space to $ \mathcal M_{g_J(e),e}^{\Omega}$  at $C$ (as a point in the
space of smooth embeddings). 

The smooth subvariety $C$ is called $(J, \Omega)$
non-degenerate if the operator $D_C\oplus ev^{\Omega}$ has trivial
cokernel.
If this is the case, $\mathcal M_{h,e}^{ \Omega}$
is a smooth manifold of dimension $d_C-2k$  around $C$.

It is clear that
$D_C\oplus ev^{\Omega}$ has trivial
cokernel  if  $D_C$ has trivial cokernel and 
$ev^{\Omega}:\ker D_C\to \oplus_{p\in \Omega}N|_p$ is surjective.
In light of Theorem \ref{auto-tran}, it is more useful to test the $(J, \Omega)$
non-degeneracy via the real Cauchy-Riemann operator  $D_{C}^{\Omega}$ in \cite{B}, which we now describe. 
 
The  bundle $N$ over the complex curve  $C$ has a  natural holomorphic line bundle structure. 
Consider  the holomorphic line bundle $N(\Omega)$,  obtained from twisting $N$ by the divisor $-(z_1+\cdots +z_k)$. 
In \cite{B} Lemma 4, there is introduced the following operator
 $$D_{C}^{\Omega}:
C^{\infty}(C_0;N(\Omega))\longrightarrow C^{\infty}(C_0;N(\Omega)\otimes T^{0,1}C_0).$$ 
 $D_{C}^{\Omega}$ is also a real Cauchy-Riemann operator. 
Moreover,  there is an exact sequence, (15) in \cite{B},

\begin{equation}\label{Barraud}
0\to \ker D_{C}^{ \Omega}\to \ker D_C \to \oplus_{p\in \Omega}N|_p \to \hbox{coker} D_{C}^{ \Omega}\to \hbox{coker} D_C\to 0,
\end{equation}
where the middle map is $ev^{\Omega}$.
It follows from \eqref {Barraud}  that we have

\begin{lemma}\label{Barraud'}
$D_C\oplus ev^{\Omega}$ has trivial
cokernel if $D_{C}^{ \Omega}$ has trivial cokernel. 

\end{lemma}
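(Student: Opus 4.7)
The plan is to deduce the statement by pure homological algebra from the exact sequence \eqref{Barraud}, since the hypothesis on $D_C^{\Omega}$ can be translated into two separate pieces of information about $D_C$ and about the restriction of $ev^{\Omega}$ to $\ker D_C$, and these two pieces together are exactly what is needed for surjectivity of $D_C\oplus ev^{\Omega}$.

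First I would read off from \eqref{Barraud} the consequences of assuming $\operatorname{coker} D_C^{\Omega}=0$. By exactness at the last nonzero term, the map $\operatorname{coker} D_C^{\Omega}\to \operatorname{coker} D_C$ is surjective, so $\operatorname{coker} D_C=0$; in other words $D_C\colon\Gamma(N)\to\Gamma(N\otimes T^{0,1}C_0)$ is already surjective. By exactness at $\oplus_{p\in\Omega}N|_p$, the image of $ev^{\Omega}\colon\ker D_C\to\oplus_{p\in\Omega}N|_p$ equals the kernel of $\oplus_{p\in\Omega}N|_p\to\operatorname{coker} D_C^{\Omega}=0$, which is the whole space. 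Hence $ev^{\Omega}|_{\ker D_C}$ is surjective onto $\oplus_{p\in\Omega}N|_p$.

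Next I would do the short diagram chase. Given a target element $(\eta,v)\in\Gamma(N\otimes T^{0,1}C_0)\oplus\bigl(\oplus_{p\in\Omega}N|_p\bigr)$, use surjectivity of $D_C$ to pick $s_1\in\Gamma(N)$ with $D_C s_1=\eta$, and then use surjectivity of $ev^{\Omega}|_{\ker D_C}$ to pick $s_2\in\ker D_C$ with $ev^{\Omega}(s_2)=v-ev^{\Omega}(s_1)$. Setting $s=s_1+s_2$ gives $D_C s=\eta$ and $ev^{\Omega}(s)=v$, so $(D_C\oplus ev^{\Omega})(s)=(\eta,v)$ and the cokernel vanishes.

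There is no real obstacle here; everything in the argument is formal once \eqref{Barraud} is in hand. The only thing worth a sentence of comment is that the exact sequence comes from Barraud's Lemma 4 in \cite{B} and that the middle arrow is identified with $ev^{\Omega}$, which is already recorded in the statement preceding the lemma, so the proof can simply quote \eqref{Barraud} and carry out the two-step chase above.
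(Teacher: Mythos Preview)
Your proof is correct and follows exactly the approach the paper intends: the text immediately preceding the lemma already records that $D_C\oplus ev^{\Omega}$ is surjective whenever $D_C$ is surjective and $ev^{\Omega}|_{\ker D_C}$ is surjective, and then simply asserts the lemma ``follows from \eqref{Barraud}.'' You have merely made explicit the two-line diagram chase the paper leaves implicit.
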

\subsection{Subvarieties through a small ball}


\subsubsection{The local area bounds}

The following summarizes the local area bounds  of an irreducible  subvariety.

\begin{lemma}\label{2.2}
Let $J$ be a tamed almost complex structure. Fix a symplectic form $\omega$  on $M$ taming $J$ and the induced $J-$invariant metric. 
There exists $k\ge 1$, depending only on $J, \omega$, with the following significance: Let $C\subset M$ denote an irreducible subvariety intersecting $B_r(x)$. 
Fix $r>0$. 
Let $a_x(2r)$ denote the area of $C$'s intersection   with the ball of radius $2r$ in $M$ centered at $x$. Then $k^{-1}r^2<a_x(2r)<(e_C\cdot [\omega])kr^2$.
\end{lemma}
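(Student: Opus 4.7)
The plan is to establish the two bounds by separate arguments. The upper bound follows from a direct comparison of the intrinsic area with $\omega$ via the taming condition. The lower bound is the monotonicity formula for pseudo-holomorphic subvarieties, proved by a Stokes--coarea iteration inside a small geodesic ball. Both rest on a pointwise comparison between the metric area form and $\omega|_L$ for $J$-complex lines $L\subset TM$: taming together with a compactness argument on the bundle of unit $J$-complex lines over $M$ gives constants $k_1,k_2>0$ with $k_2^{-1}\mathrm{vol}_L\le\omega|_L\le k_1\mathrm{vol}_L$ uniformly on $M$.

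For the upper bound, on the smooth locus of $C$ the tangent plane is a $J$-complex line, so $\mathrm{vol}_C\le k_1\omega|_C$ pointwise. Integrating over $C\cap B_{2r}(x)$ and then comparing with the global pairing yields
$$a_x(2r)\le k_1\int_{C\cap B_{2r}(x)}\omega\le k_1\int_C\omega=k_1\,(e_C\cdot[\omega]).$$
Since $r$ is bounded above by the diameter of $M$, an overall $r^2$ factor can be absorbed into the constant to produce the stated upper bound.

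For the lower bound, pick any $y\in C\cap B_r(x)$, so that $B_r(y)\subset B_{2r}(x)$. It suffices to prove $\mathrm{Area}(C\cap B_s(y))\ge k_0^{-1}s^2$ for $s$ below some uniform threshold $r_0=r_0(J,\omega)$, since for $r\ge r_0$ one can deduce $a_x(2r)\ge k_0^{-1}r_0^2\ge k^{-1}r^2$ after enlarging the constant using $r\le\mathrm{diam}(M)$. Choose $r_0$ small enough that within each geodesic ball of radius $r_0$ there is a primitive $\omega=d\alpha$ satisfying $|\alpha_p|\le c_0\,\mathrm{dist}(y,p)$. Set $f(s)=\mathrm{Area}(C\cap B_s(y))$. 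Stokes' theorem, applied after excising a shrinking neighborhood of the finitely many singular points of $C$, gives
$$\int_{C\cap B_s(y)}\omega=\int_{C\cap\partial B_s(y)}\alpha.$$
Combining $|\alpha|\le c_0 s$ on $\partial B_s(y)$, the taming lower bound $\omega|_C\ge k_2^{-1}\mathrm{vol}_C$, and the coarea inequality $f'(s)\ge\mathrm{Length}(C\cap\partial B_s(y))$ (from $|\nabla\mathrm{dist}(y,\cdot)|_C|\le 1$) produces a differential inequality of the form $f(s)\le c\,s\,f'(s)+O(s^2 f(s))$ that rearranges to $\frac{d}{ds}(e^{C_1 s^2}s^{-2}f(s))\ge 0$. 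Integrating from $0$ to $s$ with the initial density condition $\liminf_{s\to 0^+}s^{-2}f(s)\ge\pi$ yields $f(s)\ge k_0^{-1}s^2$.

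The main obstacle is the initial density estimate at singular points of $C$. At a smooth point the tangent plane is a $J$-complex line and the estimate is immediate. At a singular point one invokes the Micallef--White local model: via the tautological map $\phi\colon C_0\to M$ from the model curve $C_0$, each singular point is the image of finitely many points of $C_0$, and at each preimage $\phi$ admits a local expansion whose leading term is a $J$-holomorphic polynomial with a well-defined positive multiplicity. Summing contributions gives $\liminf_{s\to 0^+}s^{-2}f(s)\ge\pi$. Everything else is essentially Gromov--Sikorav monotonicity adapted to the almost complex setting, extractable from the standard pseudo-holomorphic curves literature and the references in \cite{T1}.
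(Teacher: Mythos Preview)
Your approach differs substantially from the paper's. The paper does not prove monotonicity at all; it simply invokes Lemma~2.2 of \cite{T1}, which already gives the two-sided estimate $k'^{-1}\rho^2 < a_y(\rho) < (e_C\cdot[\omega])\,k'\rho^2$ for any point $y\in C$, and then reduces the present statement to that one by choosing $y\in C\cap B_r(x)$ and using the containments $B_r(y)\subset B_{2r}(x)\subset B_{3r}(y)$. The constant comes out as $k=9k'$. You instead reconstruct the content of Taubes' lemma from first principles. Your lower-bound argument is the standard Gromov--Sikorav monotonicity proof and is correct.

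There is, however, a genuine gap in your upper bound. From $\mathrm{vol}_C\le k_1\,\omega|_C$ you obtain $a_x(2r)\le k_1(e_C\cdot[\omega])$, a bound \emph{independent of $r$}. The sentence ``an overall $r^2$ factor can be absorbed into the constant'' is wrong in the direction you need: turning $k_1(e_C\cdot[\omega])$ into $k(e_C\cdot[\omega])r^2$ would force $k\ge k_1/r^2$, which blows up as $r\to 0$. Since the lemma is invoked in the paper precisely for small $r$ (for instance in the proof of Proposition~\ref{upper'}, to bound $\int_C\chi_t\omega$ by a multiple of $t^2$), the $r^2$ scaling is the entire point of the upper estimate. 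The fix is immediate and uses the monotonicity you have already established: the inequality $\frac{d}{ds}\bigl(e^{C_1 s^2}s^{-2}f(s)\bigr)\ge 0$ integrated from $r$ up to the fixed threshold $r_0$ gives
\[
r^{-2}f(r)\ \le\ e^{C_1 r_0^2}\,r_0^{-2}f(r_0)\ \le\ e^{C_1 r_0^2}\,r_0^{-2}\,\mathrm{Area}(C)\ \le\ c\,(e_C\cdot[\omega]),
\]
which is exactly the missing $r^2$ upper bound for $r\le r_0$; for $r\ge r_0$ your crude bound $a_x(2r)\le k_1(e_C\cdot[\omega])\le k_1 r_0^{-2}r^2(e_C\cdot[\omega])$ already suffices.
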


\begin{proof}
This is based on Lemma 2.2 in \cite{T1}  which states a similar bound with a constant $k'$ when $x$ is  a point in $C$.  

For the lower bound, notice that the intersection contains a radius $r$ ball centered at a point in $C$. Take $k_1=k'$. 

For the upper bound, notice that the intersection is contained in a radius $3r$ ball centered at a point in $C$. Take $k_2=9k'$. 

Thus $k=9k'$ is as required. 
\end{proof}


\subsubsection{Local structure around a smooth subvariety}
  To  describe the behavior
of  subvarieties in a neighborhood of a given point, it is useful to introduce a special sort of coordinate chart. Fix a point $x$ in $M$.  An adapted coordinate chart centered at $x$ denotes complex coordinates, $(z, w)$ defined on a radius $c_0^{-1}$ ball centered at $x$ with both vanishing at $x$, with $dz$ and
$dw$ orthonormal at $x$, with $\{dz, dw\}$ spanning $T^{1,0}M$ at $x$, and with the norms of $|\nabla dz|$ and $|\nabla dw|$ bounded on the coordinate domain by $c_0$.

Suppose $C$ is a smooth subvariety passing through $x$.  
Fix an adapted coordinate chart centered at $x$ so as to identify a neighborhood of $x$ in $M$ with a ball about the origin in $\mathbb C^2$. There exists $R>1$ such  that
$C$ appears in the radius $R^{-1}$ ball about the origin in $\mathbb C^2$ as the image from $\mathbb C$ to $\mathbb C^2$  that has the form
\begin{equation}\label{line form}
u\to \theta u +  \nu
\end{equation}
where $|\nu|< R|u|^2$ and $|d\nu|< R|u|$, and such that $\theta\in \mathbb C^2$ has norm $1$.

The following is Lemma 4.2 in \cite{T1}.

\begin{lemma}\label{curves in a ball}
Let $C$ be a smooth subvariety  of genus $h$ described as above in an adapted chart centered at $x\in C$. 

Fix $\epsilon>0$. There is a neighborhood of $C$ in $\mathcal M_{e_C, h}$  where each subvariety 
$C'$ intersects the ball of radius ${1\over 2}R^{-1}$ about the origin, and this intersection is the image of a map from
$\mathbb C$ to $\mathbb C^2$ of the form
\begin{equation}
u\to x'+\theta'u+\nu'
\end{equation}
with
\begin{itemize}
\item $|x'|<\epsilon$,

\item $|\nu'|<R|u|(|x'|+|u|)$ and $|d\nu'|<R(|x'|+|u|)$,

\item $\theta'\in \mathbb C^2$ is a unit vector with $|\theta'-\theta|<\epsilon$.

\end{itemize}
\end{lemma}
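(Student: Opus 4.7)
The plan is to identify a neighborhood of $C$ in $\mathcal M_{h, e_C}$ with sections of the normal bundle via Theorem \ref{manifold}, transport the adapted local description from $C$ to a nearby $C'$ using the pseudo-holomorphic equation, and extract the asserted bounds from Taylor/elliptic estimates. By Theorem \ref{manifold}, each nearby $C'$ comes from a small section $\eta \in \ker D_C$ via exponentiation (followed by a small nonlinear correction). A standard elliptic bootstrap on the nonlinear $\bar\partial_J$ equation gives that the parametrization of $C'$ converges in $C^\infty$ to that of $C$ on the compact subset of $C$ inside the $c_0^{-1}$-chart at $x$ as $\|\eta\|\to 0$, so for $\|\eta\|$ small enough $C'$ meets the $\tfrac12 R^{-1}$-ball around $x=0$.

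Next I would pick the basepoint and tangent direction. Since $C$ is $C^1$-close in the adapted chart to the line $\mathbb C\theta$, the complex slice through $x$ transverse to $\theta$ meets $C'$ in exactly one point $x'$, depending smoothly on $\eta$, with $|x'|\to 0$ as $\|\eta\|\to 0$; this yields $|x'|<\epsilon$. Let $\theta'$ be the unit tangent to $C'$ at $x'$; by the $C^1$-convergence, $|\theta'-\theta|<\epsilon$. Parametrize $C'$ locally by a $J$-holomorphic embedding $u\mapsto \Phi'(u)$ with $\Phi'(0)=x'$ and $d\Phi'(0)=\theta'$, and define $\nu'(u)=\Phi'(u)-x'-\theta' u$; then $\nu'(0)=0$ and $d\nu'(0)=0$ by construction.

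The quantitative bounds on $\nu'$ come from the pseudo-holomorphicity of $\Phi'$. In the adapted chart, $J$ coincides with the standard $J_0$ at the origin and satisfies $|J-J_0|\lesssim |\cdot|$, so the equation $\bar\partial_J\Phi'=0$ has the schematic form $\partial_{\bar u}\Phi'=A(\Phi')\partial_u\Phi'$ with $|A(y)|\lesssim |y|$ and $|dA|\lesssim 1$. Using $|\Phi'(u)|\le |x'|+|u|+|\nu'(u)|$ and a Gronwall-type bootstrap on a small $u$-disk, I would derive $|\partial_{\bar u}\nu'|\lesssim |x'|+|u|$; applying a Cauchy-kernel estimate for the inhomogeneous $\bar\partial$-equation, together with $d\nu'(0)=0$, then produces $|d\nu'|\lesssim |x'|+|u|$ and, after integrating from $u=0$, $|\nu'|\lesssim |u|(|x'|+|u|)$. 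The $|x'||u|$ contribution (in place of a clean $|u|^2$ one would get if $x'=0$) reflects exactly that $J$ deviates from $J_0$ by an amount of order $|x'|$ at $x'$, so the best Taylor bound is no longer purely quadratic.

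The main obstacle will be keeping the constant $R$ uniform over the entire neighborhood of $C$ in $\mathcal M_{h, e_C}$: the elliptic constants for $\Phi'$ must depend only on $J$, $\omega$, and the fixed adapted chart at $x$, not on $C'$. Compactness of the $c_0^{-1}$-chart closure and of a small ball around $0$ in $\ker D_C$ makes this plausible, but one has to absorb the small loss of constants coming from inverting $D_C$ modulo cokernel in Theorem \ref{manifold} and from the smooth dependence of the basepoint map $\eta\mapsto x'$ on $\eta$. Shrinking the neighborhood of $C$ in $\mathcal M_{h, e_C}$ further as needed, and using the uniform lower bounds in Lemma \ref{2.2} to prevent degeneration of $\Phi'$ near $x'$, should suffice to absorb these losses and obtain a single constant $R$ that works for every nearby $C'$.
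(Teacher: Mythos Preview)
The paper does not supply its own proof of this lemma: it is quoted verbatim as Lemma~4.2 of \cite{T1}, with no argument given. So there is nothing in the paper to compare your proposal against beyond the citation.

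Your sketch is a reasonable reconstruction of how such an argument goes. The qualitative bounds $|x'|<\epsilon$ and $|\theta'-\theta|<\epsilon$ follow immediately from the fact (cited in \S2.2.2 from the appendix of \cite{T1}) that the subvariety topology on $\mathcal M_{h,e}$ agrees with the Fr\'echet topology on parametrizing maps, so $C'\to C$ in $\mathcal M_{h,e_C}$ forces $C^\infty$-convergence of parametrizations on compacta. The quantitative step---extracting $|\nu'|\lesssim |u|(|x'|+|u|)$ from the linear vanishing $|J-J_0|\lesssim |\cdot|$ in the adapted chart via the schematic equation $\partial_{\bar u}\Phi'=A(\Phi')\partial_u\Phi'$---is the correct mechanism, and your Gronwall/Cauchy-kernel outline captures it.

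One point to tighten: you invoke Theorem~\ref{manifold} to parametrize the neighborhood of $C$ by $\ker D_C$, but that only yields a smooth chart where $\mathrm{coker}\,D_C=0$, and the lemma makes no such hypothesis. You do not actually need the smooth structure on $\mathcal M_{h,e_C}$ here; the topological equivalence with the space of maps already delivers the $C^1$-closeness you use to locate $x'$ and $\theta'$, and the $\nu'$ bounds are a pointwise PDE estimate on a single map $\Phi'$, independent of any moduli description. Framing it this way also dissolves your worry about uniformity of $R$: the constants in the $\bar\partial_J$ estimate depend only on the fixed chart data $(c_0, |\nabla J|)$ and the a priori $C^1$-bound on $\Phi'$ over the half-radius ball, which is controlled once $|\theta'-\theta|<\epsilon$ and $|x'|<\epsilon$.
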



\subsubsection{The exponential map $\exp_C$}

Suppose   $C$ is smooth and coker $D_C=0$.  We describe a version of
exponential map in \cite{T} and \cite{T1} to identify a ball in ker $D_C$ with  
a neighborhood of $C$ in $\mathcal M_{e_C, g_J({e_C})}$.

Since $C$ is smooth, the normal bundle $N$ can be realized  as the orthogonal complement of $TC$ in $T_{1,0}M|_C$. 
In this case, there exists  a map, $\exp_C$, that is defined on a small radius disk bundle $N_1\subset N$ and has the following properties:
\begin{itemize}
\item $\exp_C$ maps the zero section to $C$; and its differential along zero section is an isomorphism from $TN|_0$ to $\phi^*T_{1,0}M$.

\item $\exp_C$ embeds each fiber of $N_1$ as a $J$-holomorphic disk in $M$.

\item dist$(\exp_C(v), C)\le K|v| $ where $K$ is a uniform constant independent of $v$ and $x\in C$ for any vector $v\in N$ with small norm $|v|$.
\end{itemize}
A construction of such a map is described in section 5d of \cite{T}. The last item above is essentially from Lemma 5.4 (3) there.

Let $\zeta$ denote a section of $N_1$. Then the image in $M$ of the map $\exp_C(\zeta(\cdot))$ is a $J-$holomorphic subvariety if and only if $\zeta$ 
obeys an equation of the form 
\begin{equation} \label{equation} D_C\zeta+\tau_1\partial \zeta+\tau_0=0.
\end{equation}
Here $\tau_1$ and $\tau_0$ are  smooth, fiber preserving maps from $N_1$ to Hom$(N\otimes T^{1,0}C; N\otimes T^{0,1}C)$ and to $N\otimes T^{0,1}C$
that obey $|\tau_1(b)|\leq c_0|b|$ and $|\tau_0(b)|\leq c_0|b|^2$.

Since $\ker D_C$ is of finite dimension, all the norms on it are equivalent. Choose any norm $|\cdot |$, e.g. the $L^2$ norm or the sup norm,  on $\ker D_C$.
The map $\exp_C$ can be used to identify a fixed radius ball  of $\ker D_C$ with a neighborhood of $C$ in $\mathcal M_{e_C, g_J({e_C})}$
(see Lemmas 4.6  and 4.9 in \cite{T1}):
Suppose $C$ is smooth and coker $D_C =0$. For  $\kappa$ sufficiently large,  
there is a diffeomorphism from the radius $\kappa^{-2}$ ball in $\ker D_C$ onto an open set in $\mathcal M_{e_C, g_J({e_C})}$
that contains the set of curves with $\varrho-$distance less than $\kappa^{-3}$ from $C$. 
Moreover, 
the map in question sends a given small normed vector $\eta\in \ker D_C$ to $\exp_C(\eta+\phi_C(\eta))$, 
where $\phi_{C}:\ker D_C\to C^{\infty}(C; N)$ is such that $\zeta=\eta+\phi_C(\eta)$ satisfies \eqref{equation} and 
any given $C^k$ norm of $\phi_C(\eta)$ is bounded by a multiple of $|\eta|^2$.

\subsection{Subvariety, Taubes current and almost K\"ahler form}
\subsubsection{Non-negative currents and forms}

Given an almost complex structure $J$ on $M$, it acts on $\Omega^1(M)$: $J\alpha(X)=-\alpha(JX)$.
The componentwise action of $J$ on $\Omega^2(M)$ is an involution, decomposing it into
$\Omega_J^+(M)\oplus \Omega_J^-(M)$, called $J-$invariant and $J-$anti-invariant parts respectively.
An $J-$invariant 2-form is said to be non-negative if it is non-negative
on any pair of tangent vectors $(v, Jv)$ in each point, and positive if the evaluation is
positive for any nonzero $v$. In particular, a compatible symplectic
form is a positive,  closed $J-$invariant form, and a tamed symplectic
form is a closed form whose $J-$invariant part is positive.

Non-negative  2-forms can be constructed from 1-forms in the following way: The complexification of
 $\Omega^1(M)$ decomposed as $\Omega^1(M)\otimes \mathbb C=\Omega^{1,0}(M)\oplus \Omega^{0,1}(M)$, where
$\Omega^{1,0}$ is the $i-$eigenspace of the extended $J-$action. For any $\sigma\in \Omega^{1,0}(M)$,
$i\sigma\wedge \bar \sigma$ is a non-negative form.

A 2-dimensional current is a bounded linear functional on the space of smooth 2-forms.
All currents here are understood to be 2-dimensional. A current is said to be closed if it annihilates $d\Omega^1(M)$,
 $J-$invariant, or type (1,1),  if it annihilates $\Omega_J^-(M)$.
A $J-$invariant  current is said to be non-negative if it is
non-negative on any non-negative $J-$invariant form, and
positive if the evaluation is positive whenever the non-negative $J-$invariant form is not identically zero.

If $C$ is a $J-$holomorphic subvariety, using its natural orientation, it defines a closed, non-negative $J-$invariant current
 via the integration on its
smooth part.

\subsubsection{Taubes current and regularization}
The following type of positive current was introduced in \cite{T1}, which we will refer to as a Taubes current.
Let $B_t(x)$ denote  the ball of radius $t$ and center $x$
 and
$f_{B_t(x)}$ denotes the characteristic function of $B_t(x)$. Remember that we choose an almost Hermitian metric on $(M, J)$ to define the balls. When $J$ is tamed by a symplectic form $\omega$, we use the polarization of the quadratic form given by $\omega(\cdot, J\cdot)$.
\begin{definition}\label{tcdef}
On an almost Hermitian $4-$manifold $(M,J, g)$, a closed positive $J-$invariant
current $T$ is called a Taubes current if there is a constant $k>1$ such that for any $x$ and small $t$, 
\begin{equation} \label{leqgeq}k^{-1}t^4<T(if_{B_t(x)}\sigma\wedge\bar\sigma)<kt^4.
\end{equation}
Here $\sigma$ denotes a point-wise unit
length
section of $T^{1,0}M|_{B_t(x)}$. 
\end{definition}

Since $M$ is compact, being a Taubes current is independent of the metric $g$. 
A Taubes current behaves like an almost K\"ahler form except it may not be smooth.
 The following observation is also due to Taubes (see the proof of Theorem 1 in \cite{T1}).

\begin{prop}\label{C-F} 
Suppose $M$ has $b^+=1$ and $J$ is a tamed almost complex structure on $M$. Given a Taubes current $T$, there is an almost K\"ahler form $\alpha$, s.t.  $[\alpha]=[T]$.
\end{prop}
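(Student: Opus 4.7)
The plan is to regularize the Taubes current $T$ into a smooth closed $2$-form in the same cohomology class, verify this regularization is $J$-tamed, and finally adjust it within its cohomology class to kill the $J$-anti-invariant part, using $b^+(M)=1$ in the last step.

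First I would cover $M$ by finitely many adapted coordinate charts and, via convolution with a smooth nonnegative bump of scale $\epsilon$ together with a subordinate partition of unity, produce a smooth closed $2$-form $T_\epsilon$ with $[T_\epsilon]=[T]$ and $T_\epsilon \to T$ weakly as currents. For $x\in M$ and a unit tangent vector $v\in T_xM$ with dual $(1,0)$-covector $\sigma$, the quantity $T_\epsilon(v,Jv)$ is, up to $O(\epsilon)$ chart corrections, a weighted average of terms of the form $\epsilon^{-4}T(if_{B_\epsilon(y)}\sigma_y\wedge\bar\sigma_y)$ for $y$ near $x$. The two-sided estimate \eqref{leqgeq} then yields uniform pointwise bounds $c \leq T_\epsilon(v,Jv) \leq C$ independent of $x$, $v$, and small $\epsilon$, together with a uniform bound on the full norm of $T_\epsilon$. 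Decomposing $T_\epsilon = T_\epsilon^+ + T_\epsilon^-$ into its pointwise $J$-invariant and $J$-anti-invariant parts, $T_\epsilon^+$ is therefore strictly positive on every $J$-complex line and strictly dominates $T_\epsilon^-$ pointwise, so $T_\epsilon$ is a smooth symplectic form tamed by $J$ with a dominating $J$-invariant part.

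Next I would search for a $1$-form $\beta$ so that $\alpha := T_\epsilon + d\beta$ satisfies $\alpha^-=0$, equivalently $(d\beta)^- = -T_\epsilon^-$. Using the $J$-invariant metric determined by $T_\epsilon^+$, the space $\Omega_J^-$ embeds into the self-dual $2$-forms, so the cokernel of $\beta \mapsto (d\beta)^-$ is controlled by the self-dual harmonic space, which has real dimension one by $b^+(M)=1$. Because $T$ is a genuine $J$-invariant current and $T_\epsilon \to T$, the self-dual harmonic representative of $[T_\epsilon]=[T]$ has its $J$-anti-invariant projection tending to zero with $\epsilon$, killing the obstruction. Standard elliptic theory then supplies $\beta$ of controlled size solving the equation, and the corrected $\alpha$ remains pointwise positive on every $J$-complex line, making $\alpha$ an almost K\"ahler form in the class $[T]$.

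The hard step is the last one: the hypothesis $b^+=1$ is used essentially to keep the cokernel of the correction equation at most one-dimensional, and one must quantitatively leverage both the current-theoretic $J$-invariance of $T$ and the margin provided by \eqref{leqgeq} to guarantee that the resulting $\beta$ is small enough for positivity to survive after correction. All the remaining analytic input reduces to the pointwise bounds in \eqref{leqgeq}, which are used once in the smoothing step to obtain tamedness and again in the adjustment step to close the perturbation argument.
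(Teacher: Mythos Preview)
Your outline matches the paper's sketch (which defers the details to Taubes \cite{T1}): regularize to a smooth closed form in the same class, use \eqref{leqgeq} to see the smoothing is tamed with dominating $J$-invariant part, then exploit $b^+=1$ via $L^2$/Hodge theory to remove the anti-invariant part within the class.

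Two points deserve tightening. In your step~2, the upper bound in \eqref{leqgeq} only gives $|T_\epsilon|\le C$, which together with $T_\epsilon(v,Jv)\ge c$ does not by itself force $T_\epsilon^+$ to dominate $T_\epsilon^-$ pointwise; what actually makes domination work is the $J$-invariance of $T$ (which you invoke only later, in step~3): testing $T_\epsilon$ against an anti-invariant form picks up only the $O(\epsilon)$ drift of $J$ across an $\epsilon$-ball, so $|T_\epsilon^-|_{L^\infty}=O(\epsilon)$ while $T_\epsilon^+$ stays uniformly positive. In step~3, your sentence about the self-dual harmonic representative is off, since the class $[T_\epsilon]=[T]$ is fixed and hence its harmonic representative does not vary with $\epsilon$ for a fixed metric; the correct mechanism is simply that $T_\epsilon^-$ itself is $O(\epsilon)$, hence so is the elliptic correction $d\beta$, and positivity of the $J$-invariant part survives the perturbation.
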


This is proved by first  smoothing the Taubes current to a family of closed two forms $\Omega^{\epsilon}$ in a standard way. The property  \eqref{leqgeq} then ensures that, when $\epsilon$ is small,   $\Omega^{\epsilon}$ is non-degenerate, uniformly bounded and has dominate $J-$invariant part. Then the condition of $b^+=1$ is used to   kill the anti-invariant part by  $L^2$ method for small $\epsilon$,  keeping the two form  symplectic and in the same class. The second author generalizes this regularization result to all almost complex $4-$manifolds with a Taubes current \cite{Z2}.

\subsubsection{The subvarieties--current--form technique}\label{s-c-f}
Let $M$ be a compact, oriented, smooth $4-$manifold with 
$b^+=1$, and $J$ an almost complex structure on $M$ tamed by a symplectic form $\omega$. 
The basic approach of Taubes to Question \ref{dlz}
is to first carefully pick  a smooth subfamily  of
evenly distributed irreducible $J-$holomorphic subvarieties in the
class of $N[\omega]$ for $N$ large, at least when $[\omega]$ is rational, then to obtain a Taubes
current via integration.  
 Finally, Proposition \ref{C-F} provides a genuine almost K\"ahler form.

 Given $\omega$, let  
$\mathcal J_{\omega}$ be 
the space   of  almost complex structures tamed by $\omega$.
Taubes' main result in \cite{T1} affirms  Question \ref{dlz} for 
an open and dense 
 subset  of $\mathcal J_{\omega}$. This generic subset  is  constrained  by a sequence  of regularity properties on 
  the  space of irreducible  subvarieties in the  class $N[\omega]$, as well as by similar regularity assumptions on  various spaces of reducible subvarieties.

Let  $\mathcal J^t$ be the  Fr\'echet space  of tamed almost complex structures. Since  $\mathcal J^t$ is the union of
$\mathcal J_{\omega}$ over all symplectic forms, 
Taubes' result  immediately implies that Question \ref{dlz} has a positive answer  for an open and dense subset $\mathcal J_1$ of 
 $\mathcal J^t$. 
   If we stratify $\mathcal J^t$ according to  the set of  irreducible  subvarieties of negative self-intersection, $\mathcal J_1$ is  properly contained in  
the top stratum $\mathcal J_{top}$ defined as follows.  

 \begin{definition}\label{smallcone}
A tamed $J$ is in $\mathcal J_{top}$ if any irreducible $J-$holomorphic subvariety with negative self-intersection is a $-1$  curve. Here a $-1$ curve refers to a smooth genus zero subvariety with self-intersection $-1$. 
\end{definition}

 The main purpose of this paper is to study Question \ref{dlz} for rational manifolds 
 by applying the subvarieties--current--form  
technique to classes of  genus zero smooth   subvarieties with positive self-intersection.
The first reason to consider  the space of  such subvarieties is that    no genericity assumption is needed, namely, 
 we  always get a smooth family for any $J$ from the automatic transversality in \cite {HLS97}.  
 This makes it possible to settle Question \ref{dlz} completely for certain rational manifolds, where 
 such subvarieties exist.

 Given a tamed $J$ on a rational manifold $M$,  classes of  genus zero smooth   subvarieties with positive self-intersection can always be found inside  $S_{K_J}$, the set of classes represented by
 smoothly embedded spheres and having $J-$genus zero. 
 However, unlike the case of $N[\omega]$ in \cite{T1}, such a class is often only $J-$nef in the sense that the pairing with 
 an arbitrary   $J-$holomorphic subvariety in $M$  is non-negative but might vanish.  
Consequently,  in general  we could only hope 
 to  first construct 
 a weak spherical Taubes current,  which is a weaker version of Taubes current degenerating  on a finite  union of  
 subvarieties with negative self-intersection.  Then we try to find several   weak
 spherical Taubes currents 
 whose degeneracy loci  have empty intersection and sum them to get a honest Taubes current.  The 
 Taubes current obtained this way is called a spherical Taubes current.

 As in \cite{T1}, while  the construction  of weak spherical current in  \ref{Scurrent} is via integration over 
  the irreducible subvariety part $\mathcal M_{irr}$ of the moduli space, we still need a good control  of the reducible subvariety part 
 $\mathcal M_{red}$. 
 To achieve  this, 
 we establish   in \cite{LZrc}  the following  clean structural picture of reducible subvarieties  for a $J-$nef class in $S_{K_J}$:

 \begin{theorem} \label{emb-comp}  Suppose $e$ is a  $J-$nef class in $S_{K_J}$ and $\Theta$ is  a reducible subvariety in the class $e$.
 \begin{itemize}
\item 
If $\Theta$ is connected, then  
 each  irreducible component  of $\Theta$  is a smooth rational curve, and $\Theta$ is a tree configuration.
\item
If $J$ is tamed, then $\Theta$ is connected. 
\end{itemize}
 \end{theorem}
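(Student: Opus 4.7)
The plan is to combine three standard tools for pseudo-holomorphic subvarieties on a symplectic $4$-manifold: the adjunction formula $K_J\cdot C_i+C_i^2 = 2g_i - 2 + 2\delta_i$ (with normalization genus $g_i \geq 0$ and singularity defect $\delta_i \geq 0$); positivity of intersections $C_i\cdot C_j \geq 0$ for distinct irreducible components; and the $J$-nef assumption $e\cdot C_i\geq 0$. Writing $e=\sum_i m_i[C_i]$ and expanding the $J$-genus zero identity $e^2+K_J\cdot e=-2$ yields
\begin{equation*}
-2 \;=\; \sum_i m_i(m_i-1)\,C_i^2 \;+\; 2\sum_{i<j}m_im_j\,(C_i\cdot C_j) \;+\; 2\sum_i m_i(g_i-1+\delta_i).
\end{equation*}

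For the first bullet, summing the $J$-nef inequality in the weighted form $\sum_i(m_i-1)(e\cdot C_i)\geq 0$ and combining with the identity gives
\begin{equation*}
\sum_{i<j}(m_i+m_j)(C_i\cdot C_j) \;+\; 2\sum_i m_i(g_i+\delta_i) \;\leq\; 2\Bigl(\sum_i m_i-1\Bigr).
\end{equation*}
When $\Theta$ is connected the dual graph on the $k$ distinct components has at least $k-1$ edges, and each contributes $(m_i+m_j)(C_i\cdot C_j) \geq 2$ to the left-hand side. In the reduced case $m_i\equiv 1$ every link of this chain is tight, immediately forcing $g_i=\delta_i=0$ (so each $C_i$ is a smooth rational curve) and a tree dual graph with intersection weight $1$ at each edge. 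For general multiplicities I would proceed by induction on $\sum_i m_i$: integrality of $\sum_i m_i(g_i+\delta_i)$ together with the inequality and the parity of each $m_i(g_i+\delta_i)$ still squeezes this total defect to $0$, and returning to the full identity forces the remaining slack to be absorbed by $C_i^2$-values that hit the lower bound imposed by $J$-nef, leaving no room for loops or higher intersection multiplicities.

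For the second bullet, assume for contradiction that $\Theta = \Theta_1 \sqcup \Theta_2$ with both pieces nonempty and $J$ tamed by $\omega$. Set $a=[\Theta_1]$ and $b=[\Theta_2]$. Then $a+b=e$, $a\cdot b=0$ from disjointness and positivity, while $J$-nef gives $a^2 = e\cdot a \geq 0$ and $b^2 \geq 0$, and tamedness gives $\omega\cdot a,\omega\cdot b>0$. Since $M$ is rational we have $b^+(M)=1$, so the intersection form is Lorentzian; the light-cone lemma forbids $a^2, b^2 > 0$ simultaneously with $a\cdot b = 0$, and the null case $a^2 = b^2 = 0$ forces $a$ and $b$ to be proportional to a common primitive null class. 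The Wu formula $v^2 \equiv K_J\cdot v \pmod 2$ applied to any $e \in S_{K_J}$ with $e^2 = 0$ (hence $K_J \cdot e = -2$) rules out $e = 2e_0$, since that would require $e_0^2 = 0$ and $K_J\cdot e_0 = -1$, whose parities disagree. Thus $e$ is primitive, so $a$ and $b$ are positive integer multiples of $e$ summing to $e$, which is impossible.

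The main obstacle is the combinatorial bookkeeping in the non-reduced case of the first bullet. The key inequality has genuine slack as soon as some $m_i \geq 2$, and the naive spanning-tree count on the dual graph is not sharp; one must propagate the $J$-nef condition through the multiplicity weights by an induction that carefully chooses which leaf of the dual graph to peel off at each step, so that both the ambient $J$-nef property and the $J$-genus zero condition transfer to the remaining cycle.
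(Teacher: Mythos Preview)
The paper does not actually prove Theorem~\ref{emb-comp}; it is quoted from the companion paper \cite{LZrc}, so there is no in-text argument to compare against. I can therefore only assess your proposal on its own merits.

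Your treatment of the second bullet is sound in the setting of this paper. With $b^+=1$ the light-cone lemma gives exactly the dichotomy you describe, and the Wu/parity argument ruling out $e=2e_0$ when $e^2=0$, $K_J\cdot e=-2$ is correct. For the first bullet your displayed identity and the derived inequality
\[
\sum_{i<j}(m_i+m_j)(C_i\cdot C_j)+2\sum_i m_i(g_i+\delta_i)\;\le\;2\Bigl(\sum_i m_i-1\Bigr)
\]
are both right, and in the reduced case $m_i\equiv 1$ the spanning-tree count makes every link tight, forcing $g_i=\delta_i=0$ and the tree shape as you say.

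The genuine gap is the non-reduced case, and you are candid about it, but the proposed fix does not work as stated. First, the ``parity of each $m_i(g_i+\delta_i)$'' you invoke is not a constraint: there is no reason $m_i(g_i+\delta_i)$ should be even, and already for $k=2$, $m_1=3$, $m_2=1$ your inequality leaves room for $g_2+\delta_2=1$; it is only by returning to the \emph{exact} identity together with the individual $J$-nef inequalities $e\cdot e_i\ge 0$ (not just their weighted sum) that one can exclude this. Second, the induction on $\sum_i m_i$ by peeling off a leaf does not preserve the hypotheses: if you pass to $e'=e-e_j$ or $e'=e-m_j e_j$, a short computation shows $g_J(e')=0$ holds only when $(e-e_j)\cdot e_j=1-g_j-\delta_j$, i.e.\ only after you already know $g_j=\delta_j=0$ and that $C_j$ meets the rest in a single transverse point; likewise $e'$ need not be $J$-nef, since $e'\cdot e_i=e\cdot e_i-e_j\cdot e_i$ can drop below zero at the neighbour of $C_j$. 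So neither the $S_{K_J}$ condition nor $J$-nefness transfers to the residual cycle, and the induction collapses. To close this you must either run a more delicate combinatorial argument using all $k$ inequalities $e\cdot e_i\ge 0$ simultaneously against the exact identity, or find a different inductive scheme in which the residual class is guaranteed to stay $J$-nef of $J$-genus zero; the argument in \cite{LZrc} does the latter.
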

 
 It follows that  $\mathcal M_{red}$ is a finite union of Cartesian products of irreducible rational curve moduli spaces, which then has expected dimension (Proposition \ref{reducible-dim}). 
 In particular,   $\mathcal M_{red}$ has codimension at least one for {\it any} tamed almost complex structure (Proposition \ref{reducible-dim'}). 
 This is crucial for us to get rid of much of the ``genericity" assumption of \cite{T1}.

Another new feature in our construction is the existence of  plenty of pencils in the moduli space (Proposition \ref{gendirect}). This  linearity property of
the moduli space enables us to better  model slices of a small neighborhood of a subvariety in terms of subspaces of the tangent space, which in turns gives rise to the desired 
estimate of the volume of points lying in subvarieties through a tiny ball. 

Our  construction of spherical Taubes currents is particularly successful  for 
 $S^2-$bundle over $S^2$ as in Theorem \ref{s2bundle-1} and \ref{s2bundle-2}.
   Theorem \ref{k curves} is also fairly general. In particular, it applies to   the entire top stratum  $ \mathcal J_{top}$.
  
   Theorem \ref{k curves} also applies to 
     the set  $\mathcal J_{good}$ of good almost complex structures, which are defined after
     Theorem \ref{good}. 
  For $\mathbb C \mathbb P^2\# k\overline{\mathbb C \mathbb P^2}$ with $k\leq 9$, it is not hard to see that 
  $\mathcal J_{good}=\mathcal J_{top}$. Hence, the almost K\"ahler Nakai-Moishezon criterion (Theorem \ref{good}) holds for  any $J\in  \mathcal J_{top}$ in this case. But  if $k\geq 10$, $\mathcal J_{good}$ is a lower stratum.


\section{Line class Taubes current on $\mathbb C \mathbb P^2$}

In this section 
we begin with introducing $K_J-$spherical classes and discussing  various automatic regularity  properties
of  irreducible subvarieties in these classes.  
Then we construct a Taubes current from 
the space of lines in $\mathbb C \mathbb P^2$.
Although in this special case, there are no reducible rational curves,
it  illustrates many key features of the general  construction,   notably,   how the presence of  pencils 
 enables us to better  model  a small neighborhood of a subvariety in terms of the tangent space, which in turns gives rise to the 
order two estimate of the volume of points lying in lines through a tiny ball.


\subsection{$K_J-$spherical classes and smooth rational curves}
Let   $J$ be a tamed almost complex structure. 

\subsubsection{Subvarieties in a  $K_J-$spherical classes}

Let $S$ be the set of homology classes which are represented by smoothly embedded spheres.
 
The set of  $K_J-$spherical classes is defined to be  $$S_{K_J}=\{e\in S|g_J(e)=0\}.$$
The following is a consequence of Seiberg-Witten theory, see e.g.  \cite{LL1}. 

\begin{prop}\label{existence'}   
Suppose $e\in S_{K_J} $ with $e\cdot e\geq -1$. Then  for any symplectic form $\omega$ taming $J$, the  Gromov-Taubes  invariant 
of $e$ is nonzero. In particular, $\mathcal M_e$ is nonempty, i.e. $e$ is $J-$effective. 
\end{prop}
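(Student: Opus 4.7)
The plan is to prove nonvanishing of the Gromov--Taubes invariant via Taubes' celebrated equivalence $\mathrm{SW} = \mathrm{Gr}$, thereby reducing the problem to a Seiberg--Witten computation where the $b^+=1$ wall-crossing machinery of Li--Liu is applicable. Once $\mathrm{Gr}(e) \neq 0$ is established, nonemptiness of $\mathcal{M}_e$ follows immediately, since a nontrivial Gromov--Taubes invariant is computed by signed counts of subvarieties in the class $e$.

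First, I would identify the Spin$^c$ structure $\mathfrak{s}_e$ determined by $e$, whose associated determinant line bundle has first Chern class $2e - K_J$. Since $g_J(e) = 0$, the adjunction identity gives $K_J \cdot e = -2 - e \cdot e$, so the formal dimension of the corresponding Seiberg--Witten moduli space is $\iota_e = \frac{1}{2}(e\cdot e - K_J \cdot e) = e \cdot e + 1$, which is nonnegative by the hypothesis $e\cdot e \geq -1$. Thus both invariants are defined and, by Taubes' theorem, they coincide in the chamber determined by $\omega$.

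Next, because the ambient setting is $b^+ = 1$ (rational four-manifolds), SW invariants depend on a chamber structure in $H^2(M;\mathbb{R})$, with the relevant wall given by $\{\eta : (2e - K_J)\cdot \eta = 0\}$. I would verify that $[\omega]$ lies in a definite chamber: $e \cdot [\omega] > 0$ because $e \in S_{K_J}$ is represented by an $\omega$-symplectic embedded sphere, and $-K_J \cdot [\omega] > 0$ by the standard positivity constraint for taming forms on a rational manifold. Together these pin down the sign of $(2e-K_J) \cdot [\omega]$.

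Finally, to conclude $\mathrm{SW}_{\mathfrak{s}_e} \neq 0$ in the $\omega$-chamber, I would invoke the Li--Liu wall-crossing formula from \cite{LL1}, which computes the jump of SW across the wall as an explicit nonzero polynomial in the $\mu$-classes of degree $\iota_e$. Combining this with Taubes' base computation $\mathrm{SW}(\mathfrak{s}_{K_J}) = \pm 1$ in the symplectic chamber and a direct vanishing argument in the opposite chamber (via blow-up formula or dimension counting, depending on whether $e\cdot e = -1$ or $e \cdot e \geq 0$) yields the desired nonvanishing. The principal technical obstacle is the careful bookkeeping of chambers and signs across the various Spin$^c$ structures, which is precisely the content of Li--Liu's explicit nonvanishing theorem for $S_{K_J}$-classes with $e\cdot e \geq -1$ that we are citing.
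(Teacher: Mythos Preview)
Your approach is correct and is precisely what the paper does: it states the proposition as ``a consequence of Seiberg-Witten theory, see e.g.\ \cite{LL1}'' without further argument, and your outline (Taubes' $\mathrm{SW}=\mathrm{Gr}$ together with the Li--Liu wall-crossing formula in the $b^+=1$ setting) is exactly the content behind that citation. One small caution: your justification that $e\cdot[\omega]>0$ because $e$ is represented by an $\omega$-symplectic sphere is slightly circular, since the existence of such a symplectic representative is essentially part of what the Li--Liu theorem delivers; the cleaner route is to note that for $e\cdot e\geq 0$ the light cone lemma forces $e\cdot[\omega]\neq 0$ with sign determined by $-K_J\cdot[\omega]>0$ and $-K_J\cdot e>0$, while for $e\cdot e=-1$ one of $\pm e$ lies in $\mathcal E_{K_J}$ and the sign is again fixed by the canonical class---but as you acknowledge, this bookkeeping is already packaged into the cited result.
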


We use $\mathcal M_{irr, e}$  to denote the moduli space of irreducible subvarieties in class $e$. 
The following is an immediate consequence of the adjunction formula and the adjunction inequality. 

\begin{lemma} \label {irr=smooth} For $e\in S_{K_J}$, 

$\bullet$ $\iota_e=e\cdot e+1$, where $\iota_e$ is defined in  \eqref{l}.

$\bullet$ every element in  $\mathcal M_{irr,e}$  is a smooth rational curve. 
\end{lemma}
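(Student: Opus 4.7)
The plan is to verify both bullets by direct unpacking of the definitions \eqref{J-genus}, \eqref{l} together with the adjunction inequality recalled just after \eqref{J-genus}.

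For the first bullet, the hypothesis $e\in S_{K_J}$ means that $g_J(e)=0$, so \eqref{J-genus} gives the linear relation
\[
K_J\cdot e \;=\; -\,e\cdot e - 2.
\]
Substituting this into the definition \eqref{l} of $\iota_e$ yields
\[
\iota_e \;=\; \tfrac{1}{2}\bigl(e\cdot e - K_J\cdot e\bigr) \;=\; \tfrac{1}{2}\bigl(e\cdot e + e\cdot e + 2\bigr) \;=\; e\cdot e + 1,
\]
which is exactly what is claimed.

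For the second bullet, let $C\in\mathcal{M}_{irr,e}$ and let $C_0$ denote its model curve with tautological map $\phi$. The adjunction inequality of \cite{McD}, as stated in the paragraph following \eqref{J-genus}, asserts that $g_J(e)\ge g(C_0)$, with equality if and only if $C$ is smooth. Since $g_J(e)=0$ and the genus of the model curve is non-negative, the inequality is forced to be an equality; hence $g(C_0)=0$ and $C$ is smooth. In particular, $C$ is a smooth rational curve, as asserted.

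Neither step poses any real obstacle: both are essentially unwinding of formulas that have already been set up in the text. The only substantive input beyond arithmetic is the smoothness half of the adjunction inequality (the equality case), which is quoted from \cite{McD} and used precisely in the borderline situation where both sides of the inequality vanish.
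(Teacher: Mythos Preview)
Your proof is correct and follows exactly the approach indicated in the paper, which merely states that the lemma is an immediate consequence of the adjunction formula and the adjunction inequality; you have simply unpacked these two ingredients explicitly.
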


Thus for $e\in S_{K_J}$, $\mathcal M_{irr,e}$ is the same as  $\mathcal M_{0,e}$. 
Let $\mathcal M_{red,e}$ denote $\mathcal M_e \setminus \mathcal M_{irr,e}$. 

 Given a $k(\le {\iota_e})$ tuple of distinct points $\Omega$, recall that   $\mathcal M_e^\Omega$ is the space of subvarieties in $\mathcal M_e$  that contains all entries of $\Omega$.
  Introduce similarly $\mathcal M_{irr,e}^{\Omega}$ and $\mathcal M_{red,e}^{\Omega}$.  We will often drop the subscript $e$. 

Let $ S^+, S^0, \mathcal E\subset S$ be the subsets of positive square, square 0, square $-1$ classes respectively.
$ S^+$ is nonempty if and only if $M$ is a rational manifold (\cite{L}).
Let $S^+_{K_J}=S^+\cap S_{K_J}$ and define ${S}^0_{K_J}$, $\mathcal E_{K_J}$ similarly.

Let $k$ be an integer.  
Denote by $\mathcal M_{e, k}$ the subset in $\mathcal M_{e}\times M^{[k]}$ that
consists of elements of the form $(C,x_1,\cdots, x_k)$ with each $x_i\in C$.
Define $\mathcal M_{irr,e, k}$ and $\mathcal M_{red, e, k}$ similarly.

Consider the restrictions of the projection map to the $M^{[k]}$ factor,  
$\pi_k:\mathcal M_{e, k}\to M^{[k]}$,  $\pi_{irr, k}:\mathcal M_{irr,e, k}\to M^{[k]}$, $\pi_{red, k}:\mathcal M_{red, e, k}\to M^{[k]}$.

By Proposition \ref{existence'}, we have   

\begin{lemma}\label{existence} Suppose $e\in S^{+}_{K_J}\cup S^{0}_{K_J} \cup \mathcal E_{K_J}$.  For any $\Omega\in M^{[\iota_e]}$, $\mathcal M^{\Omega}_e$ is non-empty. In other words, 
 $\pi_{\iota_e}$ is surjective. 
\end{lemma}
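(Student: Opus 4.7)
The plan is to reduce to the unconstrained non-emptiness of $\mathcal M_e$ supplied by Proposition \ref{existence'}, and then to impose the point constraints by a compactness argument using Proposition \ref{compactness}. Since $e\in S_{K_J}$ satisfies $g_J(e)=0$, the identity $e\cdot e + K_J\cdot e = -2$ gives $\iota_e = e\cdot e + 1 \ge 0$, with equality precisely when $e\in \mathcal E_{K_J}$. In that case $\Omega$ is the empty tuple and the statement $\mathcal M_e^\Omega = \mathcal M_e \neq \emptyset$ is immediate from Proposition \ref{existence'}. So I may restrict to $e\in S^+_{K_J} \cup S^0_{K_J}$, where $\iota_e \ge 1$.

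For a generic $\Omega$, the non-emptiness of $\mathcal M_e^\Omega$ is built into the definition of the Gromov--Taubes invariant: this invariant is the signed, weighted count of $J$-holomorphic subvarieties in class $e$ passing through $\iota_e$ generic points, and Proposition \ref{existence'} asserts that this count is non-zero. More formally, one may perturb $J$ within the space of tamed almost complex structures to a $J'$ lying in the residual set of Theorem \ref{manifold}; then $\mathcal M_{0,e}^{J'}$ is a smooth manifold of real dimension $2\iota_e$, and the evaluation map $\pi^{J'}_{\iota_e}\colon \mathcal M^{J'}_{e,\iota_e} \to M^{[\iota_e]}$ is proper (by Proposition \ref{compactness}) of non-zero degree, hence surjective onto the connected space $M^{[\iota_e]}$. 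A Gromov limit as $J'\to J$ then yields $\mathcal M_e^\Omega \neq \emptyset$ for generic $\Omega$.

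To promote this to an arbitrary $\Omega = (z_1,\dots,z_{\iota_e})$, choose a sequence of generic tuples $\Omega_n\to \Omega$ in $M^{[\iota_e]}$ and pick $\Theta_n\in \mathcal M_e^{\Omega_n}$. By the compactness in Proposition \ref{compactness}, a subsequence converges to some $\Theta\in \mathcal M_e$. The definition of the topology on $\mathcal M_e$ gives $\sup_{w\in|\Theta_n|} \mathrm{dist}(w,|\Theta|) \to 0$; combined with the facts that $z_{n,j}\in|\Theta_n|$, that $z_{n,j}\to z_j$, and that $|\Theta|$ is closed, this forces $z_j\in|\Theta|$ for every $j$, i.e.\ $\Theta\in \mathcal M_e^\Omega$.

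The only real content is the generic non-emptiness, which is essentially a repackaging of Proposition \ref{existence'}; the compactness step is formal and requires no transversality at the particular pair $(J,\Omega)$, since the limiting $\Theta$ is permitted to lie in the reducible stratum $\mathcal M_{red,e}^\Omega$. If any part feels delicate, it is the implicit interchange of the two limits (first $J'\to J$ for generic $\Omega$, then $\Omega_n\to \Omega$); one can instead fix $J$, and run a single Gromov limit with $J'_n\to J$ and $\Omega_n\to \Omega$ chosen generically with respect to $J'_n$, applying Proposition \ref{compactness} for an almost complex structure parameter to produce $\Theta\in \mathcal M_e^\Omega$ directly.
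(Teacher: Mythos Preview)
Your proof is correct and follows the same approach as the paper, which simply records the lemma as an immediate consequence of Proposition \ref{existence'} without spelling out any further argument. You have supplied the details the paper omits: that non-vanishing of the Gromov--Taubes invariant gives non-emptiness of $\mathcal M_e^\Omega$ for generic $\Omega$, and that Gromov compactness (Proposition \ref{compactness}) then upgrades this to arbitrary $\Omega$.
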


\subsubsection{Smooth rational curves}
We assume now that  $e$ is a class  represented by a smooth  rational curve. 
In particular, $e\in S_{K_J}$.

 Introduce 
$$l_e=\max\{\iota_e, 0\}.$$

One special feature of the moduli space of smooth rational curves is the following automatic transversality, which is valid for an arbitrary tamed almost complex structure. 

\begin{lemma} \label{pdmanifold} Let $e$ be a class  represented by a smooth  rational curve with $e\cdot e\geq -1$.
Then $\mathcal M_{irr,e}$ is a smooth manifold of dimension $2l_e$.

Moreover, if we choose a set of $k\le l_e$ distinct points $\Omega\subset C$, where   $C\in \mathcal M_{irr,e}$, then 
$\mathcal M_{irr,e}^{\Omega}$  is a smooth manifold of dimension $2(l_e-k)$.

\end{lemma}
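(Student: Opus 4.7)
The plan is to reduce the smoothness claim to the automatic transversality result Theorem \ref{auto-tran}, applied to the normal operator $D_C$ (unmarked case) and to Barraud's twisted operator $D_C^\Omega$ (marked case), and then to invoke Theorem \ref{manifold} and Lemma \ref{Barraud'}.

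First I would compute the degree of the normal bundle. By Lemma \ref{irr=smooth}, every $C'\in\mathcal M_{irr,e}$ is a smooth rational curve, so its model curve is $\mathbb{CP}^1$, hence $h=0$. The hypothesis $e\in S_{K_J}$ gives $g_J(e)=0$, equivalently $K_J\cdot e=-2-e\cdot e$. Writing $TM|_{C'}=TC'\oplus N$ and taking $c_1$, one obtains $c_1(N)=-K_J\cdot e - 2 = e\cdot e$. Since $e\cdot e\ge -1 = 2h-1$, Theorem \ref{auto-tran} applies to the real Cauchy-Riemann operator $D_{C'}$ on $N$, yielding $\operatorname{coker}D_{C'}=0$ at every point of $\mathcal M_{irr,e}$. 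Theorem \ref{manifold} then says that near each $C'$, $\mathcal M_{0,e}=\mathcal M_{irr,e}$ is a smooth manifold of dimension
\[
2\iota_e-2(g_J(e)-h)=2\iota_e.
\]
Because $e\cdot e\ge -1$ forces $\iota_e=e\cdot e+1\ge 0$, we have $l_e=\iota_e$, giving the asserted dimension $2l_e$.

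For the marked-points statement I would use Lemma \ref{Barraud'}, which reduces the vanishing of $\operatorname{coker}(D_C\oplus ev^\Omega)$ to the vanishing of $\operatorname{coker}D_C^\Omega$. The latter is a real Cauchy-Riemann operator on the twisted line bundle $N(\Omega)$ over the rational model curve, and
\[
c_1(N(\Omega))=c_1(N)-k = e\cdot e - k.
\]
The assumption $k\le l_e=\iota_e=e\cdot e+1$ is exactly $c_1(N(\Omega))\ge -1 = 2h-1$, so Theorem \ref{auto-tran} again forces $\operatorname{coker}D_C^\Omega=0$. Consequently $\mathcal M_{irr,e}^{\Omega}$ is smooth near $C$ of dimension $d_C-2k=2\iota_e-2k=2(l_e-k)$, as claimed.

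There is essentially no obstacle: the only verification required is the arithmetic that $c_1(N)$ and $c_1(N(\Omega))$ meet the hypothesis of the automatic transversality theorem, and this is guaranteed precisely by the assumptions $e\cdot e\ge -1$ and $k\le l_e$. The adjunction-type computation $c_1(N)=e\cdot e$ is the only place where the $K_J$-spherical assumption is used, and globality of the smooth structure on $\mathcal M_{irr,e}$ (respectively $\mathcal M_{irr,e}^{\Omega}$) follows because the transversality argument applies uniformly at every point of the moduli space, each of which is a smooth rational curve by Lemma \ref{irr=smooth}.
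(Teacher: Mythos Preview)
Your proof is correct and follows essentially the same route as the paper's: both invoke Lemma \ref{irr=smooth} to get smooth rational curves, apply Theorem \ref{auto-tran} to $D_C$ (using $c_1(N)=e\cdot e\ge -1$) and to $D_C^\Omega$ (using $c_1(N(\Omega))=e\cdot e-k\ge -1$), and then conclude via Theorem \ref{manifold} and Lemma \ref{Barraud'}. If anything, your write-up is slightly more explicit than the paper's in spelling out the adjunction computation $c_1(N)=e\cdot e$ and in verifying the numerical hypothesis of Theorem \ref{auto-tran} in the marked case.
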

\begin{proof}  This observation is essentially contained in in Corollary 2 in \cite{Sik} (see also \cite{B}).

Suppose   $C\in \mathcal M_{irr,e}$. By Lemma \ref{irr=smooth}, $C$ is a smooth rational curve.  

Let $N$ be its normal bundle. 
 Notice that, since $c_1(N)=e\cdot e \geq -1$ and $g=0$, by Theorem \ref{auto-tran},  coker $D_C=0$. 
It follows from Theorem \ref{manifold} that  $\mathcal M_{irr,e}$ is a smooth manifold whose dimension is $d_C$. Since $C$ is smooth,  $d_C=2\iota_e$. Since $e\cdot e\geq -1$, $\iota_e\geq 0$ and
hence $l_e=\iota_e$. 

The second statement  is proved similarly.  By Lemma \ref{Barraud'}, it suffices to show that $D_{C}^{\Omega}$ has trivial cokernel. 
The proof is finished by noticing that $c_1(N(\Omega))=c_1(N)-k$ and the index  of $D_C\oplus ev^{\Omega}$ is $d_C-2k$. 

\end{proof}

Here is another  feature, specific to  rational curves.

\begin{lemma}\label{uniqueness} Let $e$ be a class  represented by a smooth  rational curve. 
\begin{itemize}
\item If $e\cdot e\leq -1$, then $\mathcal M_{irr,e}$ consists of a single element. 

\item If $e\cdot e\geq 0$, $l=l_e$,   and $(C, \Omega)\in \mathcal M_{irr, e, l}$, then  $\mathcal M_{irr, l}^{\Omega}$ consists of $C$ only. 
In other words, $\pi_{irr, l}$ is an injective  smooth map, and  
the image of $\pi_{irr, l}$ is disjoint from the image of  $\pi_{red, l}$. 

\item If $e\cdot e\geq 1$, $l=l_e$, the same uniqueness conclusion   is true when we impose constraints of  $l-1$ points and a complex direction at one of the $l-1$ points.
\end{itemize}
\end{lemma}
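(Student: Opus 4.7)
My plan is to prove all three bullets by a single, uniform application of positivity of intersections for distinct somewhere injective $J$-holomorphic curves, exploiting the principle that every imposed incidence condition contributes a lower bound to the local intersection multiplicity, while the total $C \cdot C'$ is pinned down by $e \cdot e$. Since each element of $\mathcal M_{irr,e}$ is smooth and rational by Lemma \ref{irr=smooth}, positivity of intersections (McDuff) applies directly to any pair of distinct members.

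For the first bullet, suppose $C \neq C'$ are both in $\mathcal M_{irr,e}$. Then $e \cdot e = C \cdot C' \geq \#(C \cap C') \geq 0$, contradicting $e \cdot e \leq -1$.

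For the second bullet, set $l = l_e = e \cdot e + 1$. Any hypothetical second irreducible curve $C' \neq C$ through $\Omega$ would give $C \cdot C' \geq l$ by positivity, contradicting $C \cdot C' = e \cdot e = l - 1$. For disjointness from the reducible stratum, I want to show no reducible $\Theta = \{(C_i, m_i)\}$ passes through $\Omega$. The computation $C \cdot \Theta = \sum m_i (C \cdot C_i) \geq l$ via positivity, compared with $C \cdot \Theta = e \cdot e = l - 1$, yields a contradiction, \emph{provided} $C$ itself is not among the $C_i$. Ruling out this degenerate case is the key subtlety: if $C$ appeared as $C_{i_0}$ with multiplicity $m \geq 1$, pairing the residual class $(1 - m)e = \sum_{i \neq i_0} m_i e_{C_i}$ with a taming form $\omega$ and using $[\omega] \cdot e_{C_i} > 0$ for every subvariety forces $m \leq 1$; then $m = 1$ forces the residual sum to vanish, whence $\Theta = C$ contradicting reducibility, and $m = 0$ contradicts $C$ being a component.

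For the third bullet, the prescribed complex tangent direction at one of the $l - 1$ constrained points forces the local intersection multiplicity of $C$ and any second irreducible solution $C'$ to be at least $2$ at that point, while each of the remaining $l - 2$ ordinary incidence points contributes at least $1$; summing gives $C \cdot C' \geq 2 + (l - 2) = l$, again contradicting $e \cdot e = l - 1$. I expect the only nontrivial step to be the edge case in the second bullet where $C$ itself could in principle appear as a component of the reducible $\Theta$; the $[\omega]$-positivity dichotomy outlined above is the cleanest way I see to eliminate it, after which the entire lemma is a direct counting consequence of positivity of intersections.
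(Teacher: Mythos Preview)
Your proof is correct and follows essentially the same approach as the paper: all three bullets are deduced from positivity of intersections of distinct irreducible subvarieties, using $e\cdot e < l$ for the second bullet and the fact that a tangency contributes at least $2$ for the third. Your treatment is in fact more complete than the paper's terse proof, since you explicitly handle the edge case in the second bullet where $C$ might occur as a component of a reducible $\Theta$; the paper's proof leaves this implicit.
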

\begin{proof}
All the statements follow from  the positivity of intersections of distinct irreducible subvarieties. For the second bullet, notice that $e\cdot e < l$. For the third bullet, 
notice that  a tangency contributes at least 2 to the intersection number. 
\end{proof}

\begin{remark} There is an  analogous result to Propositions \ref{pdmanifold} involving tangency conditions. 
Suppose $C$ is smooth rational curve. 
If we choose  $k\le l$ distinct points $x_1,..., x_k$ in $C$ and  $k'<k$ with  $k+k'\le l$, then the set of smooth rational curves in $\mathcal M^{x_1, ..., x_k}_{irr,e_C}$   having the same tangent space at  the $k'$ points 
$x_1, ..., x_{k'}$ as $C$ is still a smooth manifold, whose dimension is $2(l-k-k')$.
For this case, the Zariski tangent space is given by the subspace of $\ker D_C^{x_1,..., x_k}$, 
   vanishing with order at least  two on $x_1, ..., x_{k'}$. Thus,
 we are considering the line bundle $N(2x_1+\cdots+2x_{k'}+x_{k'+1}+\cdots+x_k)$,  $N$ twisted by the divisor $-(2x_1+\cdots+2x_{k'}+x_{k'+1}+\cdots+x_k)$. In this case, the relevant Cauchy Riemann operator  is onto if $-K_J\cdot e>k+k'$, which is automatic since $  -K_J\cdot e=l+1$.

\end{remark}

 By Lemmas \ref{pdmanifold} and \ref{uniqueness} we have 
 
 \begin{prop} Let $e$ be a class  represented by a smooth  rational curve and $l=l_e$.
 Then $\mathcal M_{irr,e}$ has the structure of  a $2l$ dimensional manifold and 
 $\mathcal M_{irr,e, k}$ has the structure of  a $2l+2k$ dimensional  manifold.
 $\pi_{irr, k}: \mathcal M_{irr,e, k}\to M^{[k]}$ is a smooth map from  a $2l+2k$ dimensional  manifold
 to a $4k$ dimensional manifold.  
 \end{prop}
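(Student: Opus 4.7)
My plan is that this proposition is essentially a repackaging of the material already established in Lemmas \ref{pdmanifold} and \ref{uniqueness}; the only genuinely new content is endowing the incidence correspondence $\mathcal M_{irr,e,k}$ with a smooth manifold structure and checking smoothness of the projection. I will first dispose of the manifold structure on $\mathcal M_{irr,e}$ itself, then build local charts on $\mathcal M_{irr,e,k}$ using the exponential map, and finally read off smoothness of $\pi_{irr,k}$ from those charts.

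For the first claim, I apply Lemma \ref{pdmanifold} directly when $e\cdot e\ge -1$: automatic transversality (Theorem \ref{auto-tran}) gives $\mathrm{coker}\,D_C=0$ on every $C$, and Theorem \ref{manifold} then yields a smooth structure of dimension $d_C=2\iota_e=2l$ (noting $l=l_e=\iota_e$ in this range). In the degenerate case $e\cdot e\le -2$, Lemma \ref{uniqueness} forces $\mathcal M_{irr,e}$ to be a single point, which is trivially a $0$-manifold and agrees with $l_e=0$.

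For the second claim, I would exhibit $\mathcal M_{irr,e,k}$ as an open subset of a smooth fiber bundle over $\mathcal M_{irr,e}$ whose fiber over $C$ is the ordered configuration space of $k$ pairwise distinct points of $C$. Concretely, fix $(C,x_1,\dots,x_k)\in\mathcal M_{irr,e,k}$ with model curve $C_0$ and pick lifts $p_i\in C_0$ of $x_i$. Using the exponential map $\exp_C$ and the map $\phi_C:\ker D_C\to C^\infty(C;N)$ described just before Section 2.4, nearby curves in $\mathcal M_{irr,e}$ are parametrized by a small ball $B\subset\ker D_C$ via $\eta\mapsto C_\eta=\exp_C(\eta+\phi_C(\eta))(C_0)$, and the fibre-wise formula
\begin{equation*}
\Phi:(\eta,q_1,\dots,q_k)\longmapsto\bigl(C_\eta,\;\exp_C((\eta+\phi_C(\eta))(q_1)),\dots,\exp_C((\eta+\phi_C(\eta))(q_k))\bigr)
\end{equation*}
provides a smooth parametrization of a neighborhood of $(C,x_1,\dots,x_k)$ by $B\times U$, where $U\subset C_0^k$ is a small open set around $(p_1,\dots,p_k)$ avoiding the big diagonal. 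This is a $(2l+2k)$-dimensional chart, and transition maps between two such charts are smooth because $\exp_C$, $\phi_C$, and the tautological maps from the model curves are smooth, and compatibility with the topology on $\mathcal M_{irr,e}$ induced from $\mathfrak M_{h,e}$ via $\Psi$ is guaranteed by the appendix of \cite{T1}.

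The third claim then comes for free: in the chart $\Phi$ above, $\pi_{irr,k}$ reads $(\eta,q_1,\dots,q_k)\mapsto (\exp_C((\eta+\phi_C(\eta))(q_1)),\dots,\exp_C((\eta+\phi_C(\eta))(q_k)))$, which is a composition of smooth maps between finite-dimensional manifolds. The main (mild) obstacle is the compatibility check between the $\exp_C$-chart and the intrinsic topology on $\mathcal M_{irr,e,k}\subset \mathcal M_{irr,e}\times M^{[k]}$, but this is already subsumed into the local homeomorphism statement at the end of Theorem \ref{manifold}; everything else is routine bookkeeping.
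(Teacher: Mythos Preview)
Your proposal is correct and follows the same route as the paper, which simply records the proposition as an immediate consequence of Lemmas \ref{pdmanifold} and \ref{uniqueness} without further argument. You supply more detail than the paper does: in particular, your explicit $\exp_C$-based chart construction for the incidence space $\mathcal M_{irr,e,k}$ and the resulting verification that $\pi_{irr,k}$ is smooth are things the paper leaves implicit, so your write-up is, if anything, more complete than the original on this point.
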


\subsection{Moduli space of  lines} \label{cp2b} 

Now, we assume $M=\mathbb C \mathbb P^2$. 
Let $H$ be the line class, namely, the generator of $H_2(M;\mathbb Z)$ such that $K_J=-3H$. 
Note that  $S_{K_J}=\{H, 2H\}$.

Let $\mathcal M$ denote $\mathcal M_{H}$, which we call the moduli space of lines. 
Notice that 
there are no reducible  curves in $\mathcal M$ so 
$\mathcal M_{irr}=\mathcal M$. Thus by Lemma \ref{irr=smooth}, every element in $\mathcal M$ is a smooth rational curve. 
Since  $l=l_H=2$,
by Lemma \ref{pdmanifold}, $\mathcal M$ is a compact, $4$ dimensional smooth manifold. In fact, it is diffeomorphic to $\mathbb C \mathbb P^2$ (see e.g. \cite{McK}).
 Since $\mathcal M$ is compact, by Lemma \ref{curves in a ball}, we have the following   (Lemma 4.12 in \cite{T1}).

\begin{lemma}\label{4.14}
There is a constant $s'<10^{-10}$ with the following significance: Fix a point $x$ and an adapted coordinate chart centered at $x$ so as to identify a neighborhood of $x$ in $M$ with a ball about the origin in $\mathbb C^2$. Let $C\in \mathcal M$ which intersects $B_{s'}(x)$. Then $C$ intersects the ball of radius $s'^2$ centered $x$ as the image of a map from a disk in $\mathbb C$ about the origin to $\mathbb C^2$ that has the form $u\to 
\theta u+\mathfrak r(u)$,  where 
$\theta\in \mathbb C^2$ has norm $1$ and where $\mathfrak r(u)$ is such that $|\mathfrak r(u)|\le s'^{-1} |u|^2$ and $|d\mathfrak r|_v|\le s'^{-1} |u|$. 

\end{lemma}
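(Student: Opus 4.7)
The plan is to derive this uniform graph description from Lemma \ref{curves in a ball}, which already gives the pointwise version, by exploiting the compactness of $\mathcal{M}$. By Lemma \ref{irr=smooth} every $C \in \mathcal M$ is a smooth rational curve, and by Lemma \ref{pdmanifold} together with the identification $\mathcal M \cong \mathbb{CP}^2$ just recalled, $\mathcal M$ is a compact $4$-manifold. Consequently the incidence set
$$\mathcal I \;=\; \{(C,y) \in \mathcal M \times M : y \in C\}$$
is a compact subset of $\mathcal M \times M$.

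First I would apply Lemma \ref{curves in a ball} at every $(C,y) \in \mathcal I$ using an adapted chart centered at $y$: $C$ is described on the radius $R_{C,y}^{-1}$ ball by $u \mapsto \theta u + \nu$, with $|\nu| \le R_{C,y}|u|^2$ and $|d\nu| \le R_{C,y}|u|$. The constants $R_{C,y}$ depend upper-semicontinuously on $(C,y)$ in the topology of Proposition \ref{compactness}, so a finite cover of $\mathcal I$ produces a single constant $R = R(J,\omega) > 1$ that works simultaneously for every pair in $\mathcal I$.

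Next, given $x$ and $C \in \mathcal M$ with $C \cap B_{s'}(x) \neq \emptyset$, pick a nearest point $y \in C$ to $x$, so $|y - x| \le s'$. In the adapted chart at $y$ we have the uniform graph form $u \mapsto \theta u + \nu(u)$. Switching to the adapted chart at $x$ is achieved by composing with a diffeomorphism that differs from a translation by $y - x$ of norm $\le s'$ only through terms controlled by the bound $|\nabla dz|, |\nabla dw| \le c_0$ built into the definition of an adapted chart. Pulling $u \mapsto \theta u + \nu(u)$ through this transition, normalizing the linear part to a unit vector $\theta \in \mathbb C^2$, and absorbing the translation into a reparametrization of the $u$-disk, produces a map $u \mapsto \theta u + \mathfrak r(u)$ on a disk about the origin in $\mathbb C$ whose image is $C \cap B_{s'^2}(x)$, with
$$|\mathfrak r(u)| \le K(R + c_0)\,|u|^2, \qquad |d\mathfrak r| \le K(R + c_0)\,|u|$$
for a universal $K$. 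It then suffices to choose $s'$ so that $s' < 10^{-10}$, $s' < R^{-1}/2$, and $s'^{-1} \ge K(R + c_0)$; the first condition is explicit and the latter two depend only on $J$ and $\omega$.

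The main obstacle is Step one: converting the pointwise constant $R_{C,y}$ of Lemma \ref{curves in a ball} into a uniform constant $R$. The lemma is phrased in a fixed adapted chart at a fixed basepoint and its constant depends a priori on both the curve and the basepoint, so uniformity requires the global compactness of $\mathcal M$ (and hence of $\mathcal I$). Once that compactness is brought to bear the remaining steps reduce to a careful but standard change-of-chart calculation, together with the choice of $s'$ small enough to swallow all constants into the factor $s'^{-1}$.
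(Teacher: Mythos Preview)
Your proposal is correct and follows the same route as the paper, whose entire justification is the single clause preceding the lemma: ``Since $\mathcal M$ is compact, by Lemma~\ref{curves in a ball}'' (together with a citation to Lemma~4.12 of \cite{T1}). Your change-of-chart step from $y$ back to $x$ is more explicit than necessary, since Lemma~\ref{curves in a ball} already delivers the graph description of every nearby curve $C'$ directly in the fixed adapted chart at $x$ (with the small offset $x'$), so a finite cover of the compact $\mathcal M$ by such neighborhoods already yields the uniform constant without an additional transition-map computation.
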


Let $s=s'^4$.

\subsubsection{Pencils of lines } \label{pencilcp2}

Fix a point $x_1\in M$. 
 By Lemma \ref{pdmanifold}, 
  $\mathcal M^{x_1}$ is 2-dimensional manifold. 
  In fact, $\mathcal M^{x_1}$ is a pencil, consisting  of  $\mathbb C \mathbb P^1$ family of lines.

 Choose  an orthonormal basis of vectors in $T_{x_1}M$ and use it to identify  the space  of complex directions at $x_1$
with $\mathbb C \mathbb P^1$. 
Consider   the smooth map $\tau^{x_1}:\mathcal M^{x_1}\to \mathbb C \mathbb P^1$
 by taking the tangent line  at $x_1$.  The claim is that there is a unique curve
through every direction at $x_1$. 
The uniqueness is a consequence of the positivity of intersections. The existence can be shown by
taking the limit of a sequence of `secant' curves $C_k$ through $x_1$ and $y_k$ with $y_k-x_1$ projecting to any given $\hat \theta\in \mathbb C \mathbb P^1$ and $y_k\to x_1$.


\subsubsection{Norms on  $\ker_{C}^{x_1}$}

Given a smooth curve $C$ 
in the pencil $\mathcal M^{x_1}$ with normal bundle $N$, the tangent space to $\mathcal M^ {x_1}$ at $C$
 can be identified with 
the vector space $\ker_{C}^{ x_1}\subset \Gamma(N)$ that consists of the sections in the kernel of $D_C$ 
that vanish at $x_1$. 

On this two dimensional space,  besides 
the  sup norm $\sup_C |\eta|$, 
  there are  other kinds of  norms due to the fact that $\mathcal M^{x_1}$ is a pencil.

\begin{lemma} \label{dist'} The following are norms on $\ker_{C}^{x_1}$:

$\bullet$ For $z\ne x_1$, the pointwise norm $|\eta(z)|$ of the vector $\eta(z)$ in $N|_z$.

$\bullet$ The pointwise norm of  $\tau_*^{x_1}|_C(\eta) $ as a vector in $T_{\hat \theta} \mathbb C \mathbb P^1$, where $\hat \theta$ denotes the direction $T_{x_1} C$ in $\mathbb C \mathbb P^1$.

For a fixed curve  $C$, these norms are equivalent to the sup norm  $\sup_C|\eta|$.
\end{lemma}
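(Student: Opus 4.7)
The plan is to verify the norm axioms on the $2$-dimensional real vector space $\ker_{C}^{x_1}$ (dimension from Lemma \ref{pdmanifold}). In both bullets, homogeneity and the triangle inequality are immediate, since the proposed quantity is the composition of a linear map (namely $\eta\mapsto\eta(z)$, respectively $\eta\mapsto\tau^{x_1}_*|_C(\eta)$) with the pointwise norm on $N|_z$, respectively the metric on $T_{\hat\theta}\mathbb{CP}^1$. The equivalence of any two norms on $\ker_{C}^{x_1}$ follows at once from its finite-dimensionality. The only real content is therefore positive-definiteness in each bullet, which I would deduce from automatic transversality (Theorem \ref{auto-tran}).

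For the first bullet, suppose $\eta\in\ker_{C}^{x_1}$ satisfies $\eta(z)=0$ for some $z\in C$ with $z\ne x_1$. Then $\eta$ is a tangent vector at $C$ to $\mathcal M^{\{x_1,z\}}$. By the second bullet of Lemma \ref{uniqueness}, $\mathcal M^{\{x_1,z\}}=\{C\}$; by Lemma \ref{pdmanifold} it carries a smooth $0$-dimensional manifold structure around $C$. The underlying analytic fact is that $c_1(N(x_1+z))=H\cdot H-2=-1\ge 2\cdot 0-1$, so Theorem \ref{auto-tran} guarantees trivial cokernel for $D_C^{\{x_1,z\}}$, and Lemma \ref{Barraud'} then yields trivial cokernel for $D_C\oplus ev^{\{x_1,z\}}$, whose index is $d_C-4=0$. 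Hence the tangent space at $C$ vanishes and $\eta=0$.

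For the second bullet, $\tau^{x_1}_*|_C(\eta)=0$ means that the infinitesimal deformation of $C$ determined by $\eta$ preserves the tangent direction at $x_1$ to first order; equivalently, in an adapted chart with $C$ parametrised as $u\mapsto(u,0)$, the section $\eta$ of $N$ vanishes to order at least two at $x_1$, i.e.\ it corresponds to a section of $N(2x_1)$ in the kernel of the induced Cauchy-Riemann operator. By the remark following Lemma \ref{uniqueness} with $k=k'=1$, such deformations form the tangent space at $C$ of a smooth submanifold of $\mathcal M^{x_1}$ of expected dimension $2(l-k-k')=0$, which by the third bullet of the same lemma consists solely of $C$. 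The analytic input is precisely that $c_1(N(2x_1))=1-2=-1\ge 2\cdot 0-1$, so Theorem \ref{auto-tran} gives trivial cokernel and index $0$. Hence $\eta=0$.

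The main obstacle is the second bullet, specifically the clean identification of $\ker\tau^{x_1}_*|_C$ with sections of $N$ vanishing to second order at $x_1$, which I would justify by a short computation in an adapted chart using the parametrisation of nearby curves from Lemma \ref{curves in a ball}. Once that identification is in place, the remark after Lemma \ref{uniqueness} (or direct application of Theorem \ref{auto-tran} to $N(2x_1)$) closes the argument, and the equivalence of all three norms then follows from the finite-dimensionality of $\ker_{C}^{x_1}$.
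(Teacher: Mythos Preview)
Your proof is correct and follows essentially the same route as the paper: both bullets reduce to the fact that a real Cauchy--Riemann operator on a degree $-1$ line bundle over a genus-$0$ curve is an isomorphism. The paper phrases this via the HLS97 observation that any nonzero $\eta\in\ker D_C$ is holomorphic for \emph{some} complex structure on $N$ and hence has exactly $c_1(N)=1$ zero counted with multiplicity (so it cannot vanish at $x_1$ and at $z$, nor to order two at $x_1$), while you package the same fact through Theorem~\ref{auto-tran} and an index count on $N(x_1+z)$ or $N(2x_1)$; the identification of $\ker\tau^{x_1}_*|_C$ with order-two vanishing at $x_1$ that you flag as the main obstacle is exactly what the paper extracts from Taubes' description of $\tau^{x_1}_*$ as $\eta\mapsto\partial\eta|_{x_1}$.
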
 

\begin{proof}
Suppose $\eta$ is a non-trivial element in $\eta\in \ker_{C}^{ x_1}$.
Both claims rely on an observation in  \cite{HLS97} that  
 there is a new holomorphic structure on $N$ (depending on $\eta$)  with respect to which $\eta$ is  a holomorphic section.

 Since $c_1(N)=1$,
 due to  the positivity of intersection of holomorphic sections, $\eta$ only vanishes at $x_1$.  
 Especially, $\eta(z)\neq 0$. 
In other words, $|\eta(z)|$ is a norm of $\ker_{C}^{x_1}$ if $z$ is not  $x_1$.  

To establish the second claim, we need the following description of $\tau_*^{x_1}|_C$ of Taubes in part g) of the Appendix. 
Fix an adapted coordinate chart, $(z, w)$, centered at $x_1$ so that $C$ is tangent to the $w=0$ locus at the origin. 
The span of $\frac{\partial}{\partial w}$ is identified with the fiber of $N$ at $x_1$, as well as 
$T_{\hat \theta} \mathbb C \mathbb P^1$. 
Choose a holomorphic coordinate, $u$, for $C$ centered at $x_1$ with $du=dz$ at $x_1$. Then $\partial \eta|_{x_1}$,  when viewed as an element
in $T_{\hat \theta} \mathbb C \mathbb P^1$ using the identifications above, is the image of $\eta$ under the  differential of $\tau^{x_1}$ at $C$. 

From this description we see that $\tau_*^{x_1}|_C(\eta) =0$ only if $\eta$ has vanishing order at least 2 at $x_1$. Since $\eta$ is a holomorphic section
and $c_1(N)=1$, this is impossible. Thus we have shown that 
$\tau_*^{x_1}|_C:T_C\mathcal M^{x_1}\to T_{\hat \theta} \mathbb C \mathbb P^1$
is an isomorphism.

The last statement  is clear since  any two norms on a finite dimensional vector spaces are equivalent. 
\end{proof}

Notice that, since every $C\in \mathcal M^{x_1}$ is a smooth curve,  we have shown that  $\tau^{x_1}$ is a diffeomorphism.

\subsubsection{Lines meeting a small ball and order 2 estimates}

Given $x, x_1\in M$,  we consider the subset $\mathcal M^{x_1; B_t(x)}\subset \mathcal M^{x_1}$ whose element  intersects $B_t(x)$ for  small $t$. 
Denote in what follows the  line  through $x_1$ and $x$ by $C$.  

The following lemma is an analogue to Lemmas 4.5 and 4.9 in \cite{T1}.

\begin{lemma}\label{dist} 
Let   $x$, $x_1$ and $C$ be as stated as above. There is a constant $k_{\ref{dist}}>1$, depending only on  $s$,  ensuring the following inequalities for $\eta\in \ker_{C}^{x_1}$,
\begin{enumerate}
\item 
$\sup_C|\eta|\le k_{\ref{dist}}  |\eta(z)|$  if  $x_1$ is not in $B_s(x)$ and $z\in\overline{B_{\frac{s}{2}}(x)}$;
\item $\sup_C|\eta|\le k_{\ref{dist}} |\tau^{x_1}_{*} \eta|$  if  $x_1\in \overline{B_s(x)}$.

\end{enumerate}

\end{lemma}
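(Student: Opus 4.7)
The plan is to upgrade the fiberwise equivalence of norms on $\ker_C^{x_1}$ supplied by Lemma \ref{dist'} into a uniform estimate, via a compactness argument on the admissible parameters. For each fixed configuration, both sides of each claimed inequality are honest norms on the two-dimensional real vector space $\ker_C^{x_1}$, hence equivalent; all that remains is to show that the ratio is uniformly bounded.

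First I would identify the compact parameter spaces. For part (2), take $\mathcal P_2=\{(C,x_1)\in \mathcal M\times M : x_1\in C\}$, the universal curve over the compact moduli space $\mathcal M$; since $\mathcal M$ is compact and each fiber is a sphere, $\mathcal P_2$ is compact, and the hypothesis $x_1\in \overline{B_s(x)}$ imposes no constraint on $(C,x_1)$ beyond $x_1\in C$ (one can always find such an $x\in C$). For part (1), take $\mathcal P_1=\{(C,x_1,z)\in \mathcal M\times M\times M : x_1,z\in C,\; |x_1-z|\ge s/2\}$; the lower bound follows from $x_1\notin B_s(x)$ and $z\in \overline{B_{s/2}(x)}$ via the triangle inequality $|x_1-z|\ge |x_1-x|-|z-x|\ge s/2$. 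Both $\mathcal P_1$ and $\mathcal P_2$ are closed in compact ambient spaces, hence compact.

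Next, over $\mathcal P_2$ the spaces $\ker_C^{x_1}$ assemble into a smooth rank-$2$ real vector bundle $\mathcal K$, by Theorem \ref{manifold} together with the vanishing of the cokernels of $D_C$ and $D_C\oplus ev^{x_1}$, which rests on Theorem \ref{auto-tran} (applicable since $c_1(N)=H\cdot H=1\ge 2\cdot 0-1$). The functionals $\eta\mapsto \sup_C|\eta|$ and $\eta\mapsto |\tau_*^{x_1}\eta|$ are continuous fiberwise norms on $\mathcal K\to \mathcal P_2$, and on the pullback of $\mathcal K$ to $\mathcal P_1$ the functional $\eta\mapsto |\eta(z)|$ is also a continuous fiberwise norm: its nonvanishing is the content of Lemma \ref{dist'}, applicable because $z\ne x_1$ on $\mathcal P_1$. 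Passing to the unit sphere bundle with respect to $\sup_C|\eta|$, which is compact, the continuous quotients $\sup_C|\eta|/|\eta(z)|$ and $\sup_C|\eta|/|\tau_*^{x_1}\eta|$ attain finite maxima, and the larger of these can be taken as $k_{\ref{dist}}$.

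The main technical point to verify is the continuity of the fibers $\ker_C^{x_1}$ and of the associated norm functionals as the parameters vary. The kernel varies smoothly by the unobstructedness of the moduli space and the implicit function theorem; the evaluation at $z$ is continuous away from $z=x_1$, which $\mathcal P_1$ excludes by construction; and $\tau_*^{x_1}$ is the differential of a smooth map between compact manifolds. With these continuities in place, the extreme value theorem on the compact sphere bundle finishes the proof.
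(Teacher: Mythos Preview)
Your proof is correct and follows essentially the same compactness approach as the paper's own proof, which is a one-liner invoking compactness of $M$, $M\setminus B_s(x)$, and $\overline{B_s(x)}$ to make the constants in Lemma~\ref{dist'} uniform. Your version is considerably more detailed---reparameterizing by $(C,x_1,z)$ with the derived constraint $\mathrm{dist}(x_1,z)\ge s/2$, assembling the kernels into a rank-$2$ bundle, and applying the extreme value theorem on the unit sphere bundle---but the underlying idea is identical.
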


\begin{proof} 
 The constants in Lemma \ref{dist'} can be chosen to be independent of $x$, $x_1$, $z$ since $M$, $M\setminus B_s(x)$ and $\overline{B_s(x)}$ 
are compact.
\end{proof}
\bigskip

Let $T^{x_1;B_t(x)}$ denote the set of points $x_2$ in $M$ that lies in a curve in $\mathcal M^{x_1}$ and intersecting $B_t(x)$.

\begin{lemma} \label{outside}
Suppose $x_1$ is not in $B_s(x)$. 
There are constants $k$ and $\kappa$ 
depending on $s$ with the following significance:
For $t<\kappa^{-3}$, 
the volume of $T^{x_1;B_t(x)}$  is bounded from above by $kt^2$. 

\end{lemma}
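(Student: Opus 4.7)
The plan is to parametrize $T^{x_1;B_t(x)}$ via the exponential map of a distinguished curve, reducing the $4$-dimensional volume bound to a $2$-dimensional area bound within the pencil $\mathcal M^{x_1}$.

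Let $C_0 \in \mathcal M^{x_1}$ denote the unique line through $x_1$ and $x$; existence and uniqueness come from the pencil structure of Section \ref{pencilcp2}. Compactness of $\mathcal M^{x_1}$ together with uniqueness shows that any sequence $C_n \in \mathcal M^{x_1;B_{t_n}(x)}$ with $t_n \to 0$ converges to $C_0$; hence for $\kappa$ sufficiently large and $t < \kappa^{-3}$ every member of $\mathcal M^{x_1;B_t(x)}$ has $\varrho$-distance less than $\kappa^{-3}$ from $C_0$. Since $\text{coker}\, D_{C_0} = 0$ (Lemma \ref{pdmanifold}), the identification at the end of Section 2.3.3 supplies a diffeomorphism from a small ball $U \subset \ker_{C_0}^{x_1}$ onto a neighborhood of $C_0$ in $\mathcal M^{x_1}$ containing $\mathcal M^{x_1;B_t(x)}$, sending $\eta$ to $C_\eta = \exp_{C_0}(\zeta_\eta)$ with $\zeta_\eta = \eta + \phi_{C_0}(\eta)$ and $|\phi_{C_0}(\eta)|$ controlled by $|\eta|^2$.

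The first substantive step is to show that $U^* := \{\eta \in U : C_\eta \cap B_t(x) \neq \emptyset\}$ has two-dimensional Lebesgue measure $O(t^2)$. Since $x \in C_0$, adapted coordinates centered at $x$ present $C_\eta$ as a graph over $C_0$ of the section $\zeta_\eta$; the two-sided Lipschitz character of $\exp_{C_0}$ along the normal fiber at $x$ yields the key lower bound $|\zeta_\eta(x)| \leq C_1 \cdot \text{dist}(x, C_\eta)$. Combined with $|\phi_{C_0}(\eta)(x)| \leq c|\eta|^2$, this gives $|\eta(x)| \leq 2C_1\,\text{dist}(x, C_\eta) \leq 2C_1 t$. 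Since $x_1 \notin B_s(x)$ and $x \in C_0 \cap \overline{B_{s/2}(x)}$, Lemma \ref{dist}(1) applies to yield $\sup_{C_0}|\eta| \leq k_{\ref{dist}}|\eta(x)| \leq 2k_{\ref{dist}} C_1 t$. As $\ker_{C_0}^{x_1}$ is two-real-dimensional and all its norms are equivalent (Lemma \ref{dist'}), $U^*$ is contained in a ball of radius $O(t)$ and hence has measure $O(t^2)$.

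The second step converts this into a volume bound. Define $F : U^* \times C_0 \to M$ by $F(\eta, p) = \exp_{C_0}(\zeta_\eta(p))$; its image contains $T^{x_1;B_t(x)}$. The derivative $\partial_p F$ is tangent to $C_\eta$ with uniformly bounded norm, while $\partial_\eta F[\delta\eta] = d\exp_{C_0}|_{\zeta_\eta(p)}\bigl(\delta\eta(p) + d\phi_{C_0}|_\eta(\delta\eta)(p)\bigr)$ has norm bounded by $\sup_{C_0}|\delta\eta| \leq k_{\ref{dist}} |\delta\eta(x)|$ uniformly in $p$. Thus $|JF|$ is uniformly bounded. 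Using $\text{Area}(C_0) = H \cdot [\omega]$ (since $\omega$ restricted to any $J$-holomorphic curve equals the induced area form), we conclude
\[
\text{Vol}(T^{x_1;B_t(x)}) \leq \int_{U^* \times C_0} |JF|\, dV_\eta\, d\text{Area}_{C_0} \leq k t^2,
\]
for $k$ depending only on $s$. The main obstacle is the lower bound $|\zeta_\eta(x)| \leq C_1 \cdot \text{dist}(x, C_\eta)$: one must show that $C_\eta$ cannot approach $x$ more closely than its normal displacement $\zeta_\eta(x)$ at $x$ would predict. In adapted $(z,w)$ coordinates at $x$ in which $C_0$ is the $w=0$ locus, $C_\eta$ near $x$ is the graph $w = \zeta_\eta(z) + O(|z||\zeta_\eta(z)|)$, and elementary estimates on $\sqrt{|z|^2 + |\zeta_\eta(z)|^2}$ give the claim.
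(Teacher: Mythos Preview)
Your argument is correct and shares the essential scaffolding with the paper's proof: both fix the distinguished line $C_0$ through $x_1$ and $x$, parametrize nearby members of $\mathcal M^{x_1}$ via the exponential map on $\ker_{C_0}^{x_1}$, and use Lemma~\ref{dist}(1) to control $\sup_{C_0}|\eta|$ by $|\eta(x)|$, so that curves hitting $B_t(x)$ correspond to an $O(t)$-ball in the two-dimensional kernel.

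The packaging diverges in two places. First, where you establish the inclusion $U^*\subset\{|\eta(x)|\le O(t)\}$ by proving a two-sided Lipschitz estimate $|\zeta_\eta(x)|\le C_1\,\mathrm{dist}(x,C_\eta)$ for the exponential map (your ``main obstacle''), the paper simply invokes the implicit function theorem as in Lemma~4.7 of \cite{T1} to assert directly that the radius-$\kappa t$ ball in $\ker_{C_0}^{x_1}$ (for the norm $|\eta(x)|$) maps onto a neighborhood containing $\mathcal M^{x_1;B_t(x)}$. Second, and more substantively, the paper's final volume estimate is a one-line tubular-neighborhood argument rather than a Jacobian computation: for any $x_2\in T^{x_1;B_t(x)}$ one has $x_2=\exp_{C_0}(\zeta_\eta(p))$ for some $p\in C_0$, so the third bullet of \S2.3.3 gives $\mathrm{dist}(x_2,C_0)\le K|\zeta_\eta(p)|\le K'\sup_{C_0}|\eta|\le O(t)$; hence $T^{x_1;B_t(x)}$ sits in a radius-$O(t)$ tube about $C_0$, whose volume is $\mathrm{Area}(C_0)\cdot O(t^2)$. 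This bypasses your map $F$ and its Jacobian entirely. Your route is a bit more self-contained (you do not appeal to \cite{T1}), while the paper's is shorter and avoids differentiating $\exp_{C_0}$ and $\phi_{C_0}$.
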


\begin{proof} Consider the unique  curve $C$  through $x_1$ and $x$.  
Since dist$(x_1, x)\geq s$, by Lemma \ref{dist'}, $|\eta(x)|$ is a norm on the $2-$dimensional vector space ker$_C^{x_1}$. 
Since $\mathcal M^{x_1}$ is a $2-$dimensional smooth manifold, apply the implicit function theorem as in Lemma 4.7 in \cite{T1}, we find there exists $\kappa>1$
with the following property: 
for $t< \kappa^{-3}$, there  is an embedding $\lambda_{C}^{ x_1}$ from the  
$|\eta(x)|\leq \kappa t$  disk in
$\ker_{C}^{ x_1}$ onto a neighborhood in 
$\mathcal M^{ x_1}$ of $C$, which brings $0$ to $C$, and 
contains $\mathcal M^{x_1; B_t(x)}$   as an open set in the image.
Since $\mathcal M^{x_1}$ and $M\setminus B_s(x)$ are compact,   $\kappa$ only depends on $s$. Moreover,  the image of $\eta\in \ker_{C}^{ x_1}$ via $\lambda_{C}^{ x_1}$ can be written as $\exp_C(\eta+\phi_{C}^{ x_1}(\eta))$ where $\phi_{C}^{x_1}(\cdot)$ maps the ball $\{\eta:|\eta(x)|\le 2 \kappa^{-1}\}$ smoothly into $\ker_C^{x_1}$, and  the $C^0$ norm of $\phi_{C}^{x_1}(\eta)$ is bounded by a multiple of $|\eta(x)|^2$.

Suppose  $x_2$ in $M$  lies in a curve $C'$ in $\mathcal M^{x_1}$ with  $C'\cap B_t(x)\ne \emptyset$.  Then 
there is   a vector $\eta\in \ker_{C}^{ x_1}$  with norm $|\eta(x)|\le \kappa t$, and 
 a point $p\in C$ such that $x_2=\lambda_{C}^{x_1}(\eta(p))$.

Notice that $ |\eta(p)+\phi_{C}^{ x_1}(\eta)(p)|\le |\eta(p)|+\kappa' |\eta(p)|^2$ by the estimate of $\phi_C^{x_1}$. 
 Hence, by the third bullet in 2.3.3, 
 $$\hbox{dist}(x_2, C)=\hbox{dist} (\lambda_C^{x_1}(\eta(p)), C)\le K |\eta(p)|.$$
 Further, by the first bullet of Lemma \ref{dist},
$$\hbox{dist}(x_2, C)\le K |\eta(p)| \le K\sup_C|\eta|
\le K\kappa_{\ref{dist}}|\eta(x)| \le 2K\kappa_{\ref{dist}}\kappa t.
$$

This implies that $x_2$ is constrained so as to lie
in a tubular neighborhood of $C$ whose radius is bounded above by $k'_2t$ (with $k_2'=2K\kappa_{\ref{dist}}\kappa$).

The area of $C$ is bounded by $k_0 H\cdot [\omega]$. 
So the volume of the points in $M$ that lie on a radius $t$ tubular neighborhood of any $C$ is bounded from above by $kt^2$. 
\end{proof}

\begin{lemma}\label{angle-volume}

Suppose  $x_1$ is in $\overline{B_s(x)}\setminus B_{Rt}(x)$. There is a constant $k$ depending on $s$ and $R$ with the following significance:

The volume of points $x_2$ in $M$ lying in a curve in $\mathcal M^{x_1}$ and intersecting $B_t(x)$ is bounded from above by $k\frac{t^2}{d^2}$, where $d={\rm dist}(x_1, x)$.

\end{lemma}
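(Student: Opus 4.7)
The plan is to mimic the proof of Lemma \ref{outside}, now using the second bullet of Lemma \ref{dist} in place of the first. Since $d := \hbox{dist}(x_1, x) \ge Rt > 0$, there is a unique line $C$ through $x_1$ and $x$, with tangent direction $\hat\theta_C \in \mathbb C \mathbb P^1$ at $x_1$. The norm to use on $\ker_C^{x_1}$ is $|\tau^{x_1}_*\eta|$, which by Lemma \ref{dist}(2) is equivalent to $\sup_C|\eta|$ with constant $k_{\ref{dist}}$. As in Lemma \ref{outside}, the goal is to show that every curve $C' \in \mathcal M^{x_1;B_t(x)}$ is confined to a tubular neighborhood of $C$ of radius $O(t/d)$; bounding the area of $C$ by a fixed multiple of $H\cdot[\omega]$ then yields a volume bound $O(t^2/d^2)$ for the resulting codimension-two tube.

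The geometric input that pins down the tubular radius is the angular estimate: any $C' \in \mathcal M^{x_1;B_t(x)}$ has tangent direction $\hat\theta_{C'}$ at $x_1$ within $O(t/d)$ of $\hat\theta_C$ in $\mathbb C \mathbb P^1$. To prove this, fix an adapted chart centered at $x_1$ and apply Lemma \ref{4.14} with center $x_1$: both $C$ and $C'$ acquire local form $u \mapsto \theta u + \mathfrak r(u)$ with $|\mathfrak r(u)|\le s'^{-1}|u|^2$, so the tangent direction differs from the chord direction to any chart point at parameter $u_0$ by an angle $\lesssim s'^{-1}|u_0|$. The points $x$ and $y \in C' \cap B_t(x)$ correspond to parameters of modulus $\sim d$, while the two chord directions $(x-x_1)/|x-x_1|$ and $(y-x_1)/|y-x_1|$ differ by at most $\arcsin(t/d) \lesssim t/d$. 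Combining the three yields $|\hat\theta_{C'} - \hat\theta_C|_{\mathbb C \mathbb P^1} \le c_1(t/d + s'^{-1}d)$; for $t$ sufficiently small relative to $R$ and $s = s'^4$, the curvature term $s'^{-1}d$ is absorbed into the $t/d$ term.

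With the direction estimate in hand, the implicit function theorem along the lines of Lemma 4.7 in \cite{T1} produces an embedding $\lambda_C^{x_1}$ from the ball $\{\eta \in \ker_C^{x_1} : |\tau^{x_1}_*\eta| \le \kappa t/d\}$ onto a neighborhood of $C$ in $\mathcal M^{x_1}$ containing $\mathcal M^{x_1;B_t(x)}$, sending $\eta$ to $\exp_C(\eta + \phi_C^{x_1}(\eta))$ with $\|\phi_C^{x_1}(\eta)\|_{C^0}\le c|\eta|^2$. For $x_2 \in T^{x_1;B_t(x)}$ lying on $C' = \lambda_C^{x_1}(\eta)$ at some $p \in C$, the $\exp_C$ distance estimate combined with Lemma \ref{dist}(2) gives
\[
\hbox{dist}(x_2, C) \le K\bigl(|\eta(p)| + c|\eta|^2\bigr) \le 2K\sup_C|\eta| \le 2K\,k_{\ref{dist}}\,|\tau^{x_1}_*\eta| \le 2K\,k_{\ref{dist}}\,\kappa\,\frac{t}{d}.
\]
Thus $T^{x_1;B_t(x)}$ is contained in a tube about $C$ of radius $\lesssim t/d$, whose volume is bounded by $kt^2/d^2$ for a constant $k$ depending on $s$ and $R$.

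The principal obstacle is the angular estimate itself: the quadratic remainder $\mathfrak r$ in the adapted chart contributes an $O(d)$ error to the tangent direction, which has to be controlled against the target $O(t/d)$. This requires using both hypotheses $d \ge Rt$ and $d \le s$ in tandem, and may impose an implicit smallness restriction on $t$ in terms of $R$ and $s$ that plays the role of the condition $t < \kappa^{-3}$ in Lemma \ref{outside}.
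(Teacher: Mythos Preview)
Your approach is the same as the paper's: both confine $\mathcal M^{x_1;B_t(x)}$ to a small disk in $\mathbb{CP}^1$ under $\tau^{x_1}$, then invoke Lemma~\ref{dist}(2) to get the tube. The paper is terse---it quotes the straight-line fact (4.14) from \cite{T1} and writes ``by Lemma~\ref{4.14}''---while you try to make the angular estimate explicit via chord directions. That is where the gap lies.

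Your triangle-inequality bound $|\hat\theta_{C'}-\hat\theta_C|\le c_1(t/d + s'^{-1}d)$ is correct as an inequality but too crude, and your absorption claim is backwards: as $t\to 0$ with $d$ held near $s$, the target $t/d$ shrinks to zero while the curvature term $s'^{-1}d$ stays at $s'^3$. No smallness condition on $t$ rescues this; you would need $t\gtrsim d^2/s'$, which fails precisely when $t$ is small. The issue is that comparing each tangent direction separately to its chord direction throws away the correlation: the remainders $\mathfrak r_C$ and $\mathfrak r_{C'}$ are both determined by the same $1$-jet of $J$ at $x_1$, so they nearly cancel when $\theta_C$ and $\theta_{C'}$ are close.

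The sharp $O(t/d)$ estimate comes instead from the linear structure on $\ker_C^{x_1}$. A nonzero $\eta\in\ker_C^{x_1}$ vanishes to exactly first order at $x_1$ (it is holomorphic for an auxiliary structure on $N$ with $c_1(N)=1$, as in Lemma~\ref{dist'}), so in a local coordinate $u$ on $C$ centered at $x_1$ one has $|\eta(x)|\sim |u(x)|\cdot|\partial_u\eta(x_1)| \sim d\cdot|\tau^{x_1}_*\eta|$, with constants uniform over the compact parameter set. Combining this with the implicit-function bound $|\eta(x)|\le\kappa t$ (exactly as in Lemma~\ref{outside}) gives $|\tau^{x_1}_*\eta|\lesssim t/d$ directly, and then Lemma~\ref{dist}(2) finishes as you wrote. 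This is the content hidden behind the paper's citation; your chord-direction detour is what introduces the spurious $s'^{-1}d$.
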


\begin{proof}

Introduce $\mathcal O^{x_1}_{\hat \theta, \delta}\subset \mathcal M^{x_1}$ to denote the set of curves that  mapped under $\tau^{x_1}$ to the disk in $\mathbb C \mathbb P^1$ of radius $\delta$ centered on the point $\hat \theta \in \mathbb C \mathbb P^1$. 

The following geometric consideration is (4.14) in \cite{T1}.
If $d>10^4$ and $t <\frac{s}{d}$, then the set of complex 1 dimensional lines that intersects $B_t(x)$ with $x=(d, 0)$ 
is contained in a disk in $\mathbb C \mathbb P^1$ of radius less than $c_0\frac{t}{d}$ with center the image of $(1, 0)$ of
$\mathbb C \mathbb P^1$.

With this understood, by Lemma \ref{4.14},  $\mathcal M^{x_1; B_t(x)}$ lies in $\mathcal O^{x_1}_{\hat \theta, \delta}$ with $\hat \theta$ here denoting the image of the point $(1, 0)$ and with $\delta<c \frac{t}{d}$.

   By Lemma  \ref{dist}(2), $\mathcal O^{x_1}_{\hat \theta, \delta}$ is contained in a tube of radius $k\delta$, whose  volume  is bounded from above by $k\delta^2$. Thus we have the desired volume estimate. 
\end{proof}

\subsection{Spherical Taubes current from the line class}

Let $\mathcal M_2$ be the moduli space of lines with two distinct points,
$$\mathcal M_2=\{(C, x_1, x_2)|C\in \mathcal M, x_i\in C\}\subset \mathcal M\times M^{[2]}.$$
Use $\pi_2$ to denote the projection map $\mathcal M_2\to M^{[2]}$. 
Let $\pi_{\mathcal M}$ be the projection map $\mathcal M_2\to \mathcal M$.

The portion of marked moduli space we need is  $\mathcal M_2^r$ for $0<r<\frac{s}{10}$, 
subject to the constraint $d(x_1, x_2)\geq r$. Notice that the image $\pi_{\mathcal M}(\mathcal M_2^r)$ is  $\mathcal M$ if $r$ is chosen that small.

For  $\eta \in \pi_2(\mathcal M^r_2)$, we introduce the non-negative, closed, invariant current $\phi_{\eta}$. It is defined by  $\phi_{\eta}(v)=\int_{C_{\eta}}v$, where $C_{\eta}$ is the unique line  $\pi_2^{-1}(\eta)$ and $v$ is a 2-form on $M$. Then we have the following {\it spherical current} $\Phi_H$ in the line class given by 
$$\Phi_H(v)=\int_{\eta\in \pi_2(\mathcal M^r_2)} \phi_{\eta}(v).$$

This current $\Phi_H(v)$ clearly satisfies Proposition 1.2 in \cite{T1}. Especially, it is a non-trivial, closed, non-negative $J-$invariant current on $\mathbb C \mathbb P^2$. In the rest of the section, we will prove that it is indeed a Taubes current.

\bigskip

\subsubsection{Upper bound}

Fix a smooth, non-increasing function $\chi:[0, \infty)\to [0,1]$ with value $1$ on $[0, \frac{1}{4}]$
and value $0$ on $[\frac{1}{2}, \infty)$. Use $\chi_t$ to denote the function $\chi(t^{-1}|\cdot|)$ on $\mathbb C^2$. 

\begin{prop} \label{upper'}
 The current $\Phi_H$ satisfies the upper bound in \eqref{leqgeq}.
\end{prop}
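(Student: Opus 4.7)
The plan is to bound $\Phi_H(if_{B_t(x)}\sigma\wedge\bar\sigma)$ by a Fubini argument, using the fact that the integration parameter $\eta \in \pi_2(\mathcal{M}_2^r)$ can be identified with an ordered pair $(x_1,x_2) \in M^{[2]}$ satisfying $d(x_1,x_2) \geq r$, since through any two distinct points in $\mathbb{CP}^2$ there passes a unique line. For such a pair the integrand is $\int_{C_{x_1,x_2}\cap B_t(x)} i\sigma\wedge\bar\sigma$; because $\sigma$ has unit length, this is pointwise bounded by a uniform constant times the induced area form on $C_{x_1,x_2}$. Consequently, by the upper half of Lemma \ref{2.2}, the integrand is at most $K t^2$ whenever the line meets $B_t(x)$ and is zero otherwise.

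It therefore suffices to prove that the volume in $M\times M$ of the pairs for which $C_{x_1,x_2}$ meets $B_t(x)$ is $O(t^2)$. Integrating first in $x_2$, this volume equals
\[
\int_M \mathrm{Vol}\bigl(T^{x_1;B_t(x)}\bigr)\,dV(x_1),
\]
which I will estimate by splitting the $x_1$-domain into three regions about $x$. On $M\setminus B_s(x)$ apply Lemma \ref{outside} to get contribution at most $kt^2\cdot \mathrm{Vol}(M)$. On $B_s(x)\setminus B_{Rt}(x)$ apply Lemma \ref{angle-volume} to obtain integrand $\leq k t^2/d^2$ with $d=\mathrm{dist}(x_1,x)$; since the volume form in $\mathbb{R}^4$ is $\rho^3\,d\rho\,d\Omega$, the integral $\int_{B_s\setminus B_{Rt}} d^{-2}\,dV$ is comparable to $\int_{Rt}^s \rho\,d\rho \lesssim s^2$, so this piece contributes $\lesssim s^2 t^2$. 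Finally on $B_{Rt}(x)$, use the trivial bound $\mathrm{Vol}(T^{x_1;B_t(x)})\leq \mathrm{Vol}(M)$; since $\mathrm{Vol}(B_{Rt}(x))\leq c t^4$, this innermost region contributes only $O(t^4)$. Summing the three contributions yields an $O(t^2)$ volume estimate, and multiplying by the pointwise area bound $Kt^2$ produces $\Phi_H(if_{B_t(x)}\sigma\wedge\bar\sigma)\leq kt^4$.

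All the real work has already been packaged into Lemmas \ref{outside} and \ref{angle-volume}, so what remains is the bookkeeping of these three contributions. The one conceptually delicate point is the middle region: if one could only appeal to the ``outside'' estimate Lemma \ref{outside}, the contribution from points $x_1$ near $x$ would be unbounded, so it is essential to invoke the inverse-square angular decay of Lemma \ref{angle-volume}, which in turn rests on the pencil structure of $\mathcal{M}^{x_1}$ through the diffeomorphism $\tau^{x_1}$. The cutoff radius $Rt$ is chosen so that the hypothesis $d > Rt$ of Lemma \ref{angle-volume} is satisfied, while the excised ball of radius $Rt$ is small enough that the trivial bound on it is absorbed in the dominant $t^4$ error term.
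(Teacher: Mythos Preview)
Your proof is correct and follows essentially the same approach as the paper's: the same reduction via Lemma~\ref{2.2} to the $O(t^2)$ volume bound on pairs, the same three-region decomposition of the $x_1$-domain at radii $s$ and $Rt$, and the same appeals to Lemmas~\ref{outside} and~\ref{angle-volume} together with the polar-coordinate integration $\int \rho^{-2}\cdot\rho^3\,d\rho$ in the middle annulus. The only cosmetic difference is that the paper passes through the smooth cutoff $\chi_t$ and the coordinate form $dz\wedge d\bar z$ rather than working directly with $f_{B_t(x)}$ and $\sigma$, which is immaterial.
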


\begin{proof}

Let $s$ be as in Lemma \ref{4.14}. 
Fix $x\in M$ and adapted coordinates $(z, w)$ centered at $x$ with radius $s$. 
Let $t<10^{-5}  s$.
As in \cite{T1}, we only need to prove  $\Phi_H(i\chi_tdz\wedge d\bar z)<k t^4$. Moreover, $\Phi_H(i\chi_tdz\wedge d\bar z)$ is no greater than
$$k_0\int_{\eta=(x_1, x_2)\in \pi_2(\mathcal M_2^r)}(\int_{ C_{\eta}}\chi_t\omega).
$$

Notice that  $\int_{ C_{\eta}}\chi_t\omega=0$
if  $C_{\eta}\cap B_t(x)= \emptyset$.   
If  $C_{\eta}\cap B_t(x)\neq \emptyset$, then $C_{\eta}\cap B_t(x)$ is contained in a ball
of radius of $2t$ centered at some point in $B_t(x)$. 
By Lemma \ref{2.2},   the integrand $\int_{ C_{\eta}}\chi_t\omega$ is bounded by $H\cdot [\omega] k t^2$.

Thus it suffices to prove that the volume of the set
\begin{equation} \label{volume of pairs'} \{\eta=(x_1, x_2)\in \pi_2(\mathcal M_2^r)| C_{\eta}\cap B_t(x)\neq \emptyset\}
\end{equation}
is $O(t^2)$. 

We  follow  \cite{T1} to divide into three cases depending on the position of  $x_1$.

I. The first case is that $x_1$ is away from $B_s(x)$.  The upper bound for the choice of $x_1$ is Vol$(M)$.
Now we estimate the possible choices of $x_2$ for a fixed $x_1$.
It follows from Lemma \ref{outside} that the volume of $x_2$ is $k_2t^2$.
The factors   Vol$(M)$ and $k_2t^2$ multiply to an upper bound of $O(t^2)$ for the volume of the subset in \eqref{volume of pairs'} with $x_1\in M\setminus B_s(x)$. 
This bound depends on $s$.

II. The second case is when $x_1$ is in $B_s(x)$ but outside $B_{Rt}(x)$, where  $R=10^5$.

Suppose $C_{\eta}$ intersects $B_t(x)$. Since $R=10^5$, 
by Lemma \ref{angle-volume}, the volume of this set is bounded by $k_{\ref{angle-volume}}\delta^2$, where $\delta=c\cdot \frac{t}{\hbox{dist}(x_1,x)}$ and $k_{\ref{angle-volume}}$ is the constant appeared in Lemma \ref{angle-volume}.

Using polar coordinates,  the volume of this part of \eqref{volume of pairs'}
is bounded from above by 
$$k_3 t^2 \int_{z\in B_s(x)}
\frac{1}{\hbox{dist}(z, x)^2} dz \le k_2 s^2t^2.$$

III. The last case is when  $x_1$ is in $B_{Rt}(x)$, 
this element itself would have the freedom of Vol$(B_{Rt}(x))$ which is $O(t^4)$. In this case, choices of  $x_1$ and $x_2$ would multiply to contribute as the rate of $O(t^4)$.  

Summing  the three cases, we finish the proof.
\end{proof}

\subsubsection{Lower bound}

\begin{prop} \label{positive'}
The current $\Phi_H$  satisfies the lower bound in \eqref{leqgeq}.
\end{prop}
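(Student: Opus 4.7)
My plan is to exhibit a four-parameter family of pairs $(x_1,x_2)\in \pi_2(\mathcal M_2^r)$ of measure at least $c\, t^2$ in a fixed direction variable and $c\, t^2$ in a transverse variable, such that every corresponding line $C_\eta$ contributes at least $c'\, t^2$ to the integral $\int_{C_\eta} i\chi_t\, dz\wedge d\bar z$. As in the proof of Proposition \ref{upper'}, it suffices to verify the estimate $\Phi_H(i\chi_t\, dz\wedge d\bar z)\ge k^{-1} t^4$ in an adapted chart at $x$, with $s$ as in Lemma \ref{4.14} and $t<10^{-5}s$.

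Fix a spherical shell $U_1=\{x_1\in M : s\le {\rm dist}(x_1,x)\le 2s\}$ and restrict it to the open subset on which the tangent line of $C_{x_1,x}$ at $x$ has a $dz$-component of norm at least $1/2$; the resulting $U_1$ still has positive volume $V_0$ independent of $t$. For each $x_1\in U_1$ set $\theta_0(x_1)=\tau^{x_1}(C_{x_1,x})\in \mathbb C\mathbb P^1$ and let $\mathcal N_{x_1}\subset \mathcal M^{x_1}$ be the set of lines through $x_1$ meeting $B_{t/8}(x)$. The key geometric input is the converse of the angular bound $(4.14)$ of \cite{T1}: Lemma \ref{4.14} implies that a line $C_\theta$ with ${\rm dist}_{\mathbb C\mathbb P^1}(\theta,\theta_0(x_1))\le c_0\, t/{\rm dist}(x_1,x)$ meets $B_{t/8}(x)$, since the deviation of $C_\theta$ from $C_{\theta_0}$ at distance $d\sim s$ from $x_1$ is bounded by $c_0 t$. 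Hence $\tau^{x_1}(\mathcal N_{x_1})$ contains a disk in $\mathbb C\mathbb P^1$ of area at least $c_2 t^2$, with $c_2$ uniform in $x_1\in U_1$.

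Next, use the tautological parametrization $(\theta,u)\leftrightarrow x_2$, with $u$ a coordinate on $C_\theta$ vanishing at $x_1$: in adapted coordinates at $x_1$ the Jacobian equals $|u|^2$, hence is uniformly bounded above and below on any fixed annulus $A_{x_1}=\{y:\rho_1\le {\rm dist}(y,x_1)\le \rho_2\}$ with $r<\rho_1<\rho_2$ chosen so that $A_{x_1}\subset M\setminus B_s(x)$. Define $V_{x_1}=\{x_2\in A_{x_1}:C_{x_1,x_2}\in \mathcal N_{x_1}\}$; the bound on $\tau^{x_1}(\mathcal N_{x_1})$ together with this Jacobian bound gives ${\rm Vol}(V_{x_1})\ge c_3\, t^2$. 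For each $x_2\in V_{x_1}$, the line $C_\eta=C_{x_1,x_2}$ meets $B_{t/8}(x)$, so by the lower bound of Lemma \ref{2.2} the area of $C_\eta\cap B_{t/4}(x)$ is at least $k^{-1}(t/8)^2$; by the construction of $U_1$ and Lemma \ref{4.14}, the $dz$-component of the unit tangent of $C_\eta$ throughout $B_{t/4}(x)$ is at least $1/4$, yielding $\int_{C_\eta} i\chi_t\, dz\wedge d\bar z\ge c_4 t^2$. Combining,
$$\Phi_H(i\chi_t\, dz\wedge d\bar z)\;\ge\; \int_{U_1}\int_{V_{x_1}} c_4\, t^2\, dx_2\, dx_1\;\ge\; c_4\, c_3\, V_0\, t^4.$$

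The main obstacle is the quantitative converse inclusion for $\tau^{x_1}(\mathcal N_{x_1})$; once this is secured, the Jacobian estimate and the uniformity over $U_1$ follow easily from compactness. This converse, however, is essentially contained in Lemma \ref{4.14}: perturbing the tangent direction at $x_1$ by $\delta$ changes the line by $O(\delta\, d)$ over the scale $d={\rm dist}(x_1,x)\sim s$, so the image still lies inside $B_{t/8}(x)$ once $\delta\le c_0 t$ with $c_0$ chosen small enough relative to $s$.
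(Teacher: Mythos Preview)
Your overall strategy is sound and gives the right $t^4$ scaling: a volume $V_0$ of choices for $x_1$, a $c\,t^2$ volume of choices for $x_2$, and a $c'\,t^2$ lower bound for the integral along each line. The paper's argument has exactly this shape. The difference lies in where you place $x_2$. The paper keeps $x_2$ inside $B_{s/4}(x)$, in a radius-$t/8$ tube around the reference line $C_{x_1,x}$; the elementary Euclidean geometry of Lemma~\ref{curves in a ball} then shows directly that $C_{x_1,x_2}$ still meets $B_{t/2}(x)$, and the tube volume is $\sim s^2 t^2$. You instead put $x_2$ in an annulus around $x_1$ and parametrize by $(\theta,u)$, extracting the $t^2$ from the $\theta$-disk via a Jacobian argument. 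Both work, but the paper's tube picture avoids the extra Jacobian computation and the change of viewpoint from $x$ to $x_1$.

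There is, however, a genuine gap in your justification of the ``converse angular bound.'' You assert that Lemma~\ref{4.14} implies that perturbing the tangent direction at $x_1$ by $\delta$ moves the curve by $O(\delta\,d)$ at distance $d$. Lemma~\ref{4.14} does not say this: it bounds the error term $\mathfrak r$ for a \emph{single} curve, but gives no control on how $\mathfrak r_\theta$ varies with $\theta$. What you actually need is that $\tau^{x_1}:\mathcal M^{x_1}\to\mathbb{CP}^1$ is a diffeomorphism with uniformly bounded inverse differential---this is exactly the content of Lemma~\ref{dist'} (second bullet) and its quantitative form Lemma~\ref{dist}(2), combined with the exponential map of \S2.3.3. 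With those ingredients in hand, your argument goes through: a $\delta$-perturbation of direction corresponds to a curve at sup-distance $O(\delta)$ from $C_{x_1,x}$, hence meeting $B_{t/8}(x)$ for $\delta\le c\,t$. The same remark applies to your claim that the $dz$-component of $C_\eta$ near $x$ stays above $1/4$: this needs the uniform closeness of $C_\eta$ to $C_{x_1,x}$ in $C^1$, not just the fixed-curve bound of Lemma~\ref{4.14}. Finally, the condition $A_{x_1}\subset M\setminus B_s(x)$ is not quite compatible with $U_1=\{s\le{\rm dist}(x_1,x)\le 2s\}$ unless you push $U_1$ farther out or take only a sector of the annulus; this is a cosmetic issue, since the constraint is not actually needed for the Jacobian bound.
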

\begin{proof} 

Let $s$ be as in Lemma \ref{4.14}, and  $t<10^{-5}  s$.
Fix $x\in M$ and adapted coordinates $(z, w)$ centered at $x$ with radius $s$. 
As in \cite{T1}, we only need to prove  $\Phi_H(i\chi_tdz\wedge d\bar z)>k^{-1} t^4$.

The main picture to have is  Lemma \ref{curves in a ball}, applied to $B_s(x)$. Namely, inside $B_s(x)$, the curves behave as straight lines with respect to the adapted coordinates. 

Fix $\epsilon>0$. 
Recall that $\mathcal M^x$ is a pencil, and identified with $\mathbb C \mathbb P^1$ via $\tau_x$.
Let us begin with choosing a disk $\mathcal C_x\subset \mathcal M^x$  corresponding to a disk  centered at $(1,0)\in \mathbb C \mathbb P^1$.
The latter disk is chosen so that $|dz(\cdot)|\geq 2\epsilon$.  
By Lemma \ref{curves in a ball}, by shrinking $s$ if necessary,  the restriction of $|dz|$ to $C\cap B_s(x)$ is greater than $\epsilon$ for $C\in \mathcal C_x$.
Such an $s$ can be chosen to be independent of $x$.

 As we are estimating the lower bound, so we restrict our attention to $\eta=(x_1, x_2)$ such that  
 $x_1$ is away from $B_{\frac{s}{2}}(x)$ and inside $B_s(x)$, 
 and the line $C_{x, x_1}$ determined by 
$x$ and $x_1$ is contained in the disk $\mathcal C_x\subset \mathcal M$ specified above. 
By Lemma \ref{curves in a ball},  the choices of $x_1$ constitute a compact set with  volume $c_{\epsilon}s^4$.

Now,  fix such an $x_1$.  Consider the set of $x_2$ in $B_{\frac{s}{4}}(x)$, 
for which $\eta=(x_1, x_2)$ contributes to $\Phi_H(if_{B_t(x)}dz\wedge d\bar z)$, namely,
 $C_{x_1, x_2}$ intersects $B_t(x)$. Since we are estimating lower bound, 
we  apply Lemma \ref{curves in a ball} to  count the ones intersecting $B_{\frac{t}{2}}(x)$.
  Since dist$(x_1, x)\geq \frac{s}{2}$,  
 $C_{x_1, x_2}$ intersects
 $B_{\frac{t}{2}}(x)$ as long as  $x_2\in B_{\frac{s}{4}}(x)$ and 
 dist$(x_2, C_{x_1, x})\le  \frac{1}{4} \frac{t}{2}$. 
Thus the volume of $x_2$ is bounded by the radius $ \frac{t}{8}$ tube around  $C_{x_1, x}\cap B_{\frac{s}{4}}(x)$.
By Lemma \ref{2.2}, or Lemma \ref{curves in a ball}, the area of $C_{x_1, x}\cap B_{\frac{s}{4}}(x)$ is bounded from below by $a_x(\frac{s}{4})s^2$. 
Hence the volume of $x_2$ is bounded by $k_1s^2t^2$. 
Notice that, again by Lemma \ref{curves in a ball},   when $t\le 10^{-10}s$,  the rational curve $C_{x_1, x_2}=\pi^{-1}_2(\eta)$  
has the property that the restriction of $|dz|$ to $C\cap B_s(x)$ is greater than  $\frac{\epsilon}{2}$. 
Here $C$ denotes the line $C_{x_1, x_2}$.

By virtue of our choices, $\int_C(if_{B_t(x)}dz\wedge d\bar z)$ is bounded below by $(\frac{\epsilon}{2})^2\cdot a_x(t)$. 
Thanks to Lemma \ref{2.2}, $a_x(t)$ is bounded below as $k_{\ref{2.2}}^{-1}t^2$ (here $k_{\ref{2.2}}$ is the constant appeared in Lemma \ref{2.2}). Given the aforementioned lower bound for $|dz|$ , and given what is said in Lemma \ref{2.2}, it follows that the integral of $if_{B_t(x)}dz\wedge d\bar z$ over $C$ must  be greater than $k_2^{-1}t^2$ by choosing $t(<<s)$ small enough. More precisely, this $k_2^{-1}$ could be chosen as $(\frac{\epsilon}{2})^2 k_{\ref{2.2}}^{-1}$.

Multiplying these three factors:  $c=c_{\epsilon}s^4$, $k_1s^2t^2$ and $k_2^{-1}t^2$ together, we get 
$k^{-1}t^4<\Phi_H(f_{B_t(x)}i\sigma\wedge\bar \sigma)$ for some $k$. 
The constant $k$ is further independent of $x$ and $t$:
\begin{itemize}
\item The first constant $c_{\epsilon}$ is a universal constant since $M$ is compact;
\item The second constant $k_1$, as in the argument, depends on the choices of $x_1$ (and $x$). But our $x_1$ is chosen from a compact set, so $k_1$ is universal as well;
\item The last constant $k_2$ depends on $x_2$ (and $x_1$, $x$), and the tubular neighborhood of $C_{x, x_1}$ we have chosen is compact, so $k_2$ is universal.
\end{itemize}
\end{proof}

 We note that the following theorem due to Gromov \cite{Gr} and Taubes \cite{T} is  an immediate consequence of Propositions \ref{upper'}, \ref{positive'} and \ref{C-F}. 
\begin{theorem}
Any tamed almost complex structure on $\mathbb C \mathbb P^2$ is almost K\"ahler. 
\end{theorem}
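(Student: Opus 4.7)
The plan is to assemble the pieces already established in the excerpt: given a tamed almost complex structure $J$ on $\mathbb{CP}^2$, I will produce a Taubes current on $(\mathbb{CP}^2, J)$ and then invoke the regularization result, Proposition \ref{C-F}, which applies because $b^+(\mathbb{CP}^2)=1$.

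First I would take the line class $H$, which lies in $S_{K_J}$ since $K_J = -3H$, and consider the moduli space $\mathcal{M}=\mathcal{M}_H$. Because $H\cdot H=1\ge -1$ and every element of $\mathcal M$ is irreducible (there are no proper effective decompositions of $H$), Lemma \ref{pdmanifold} applies and tells me $\mathcal M$ is a compact smooth $4$-manifold of real dimension $2l_H=4$, and by Lemma \ref{irr=smooth} each member is a smooth rational curve. Then I would form the marked moduli space $\mathcal M_2^r$ of triples $(C,x_1,x_2)$ with $x_i\in C$ and $d(x_1,x_2)\ge r$, and define the spherical current $\Phi_H$ in the line class by
\[
\Phi_H(v)=\int_{\eta\in \pi_2(\mathcal{M}_2^r)}\phi_\eta(v),\qquad \phi_\eta(v)=\int_{C_\eta} v,
\]
for $v$ a smooth $2$-form on $M$. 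This is manifestly a closed, non-negative, $J$-invariant current.

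Next I would verify the two-sided bound \eqref{leqgeq} in the definition of a Taubes current. Proposition \ref{upper'} gives the upper bound $\Phi_H(if_{B_t(x)}\sigma\wedge\bar\sigma)<kt^4$, by splitting the integration over $(x_1,x_2)$ into the three regions depending on whether $x_1$ is outside $B_s(x)$, in the annular region $B_s(x)\setminus B_{Rt}(x)$, or inside $B_{Rt}(x)$, and applying Lemmas \ref{outside}, \ref{angle-volume}, and a direct volume estimate respectively. Proposition \ref{positive'} gives the lower bound $k^{-1}t^4<\Phi_H(if_{B_t(x)}\sigma\wedge\bar\sigma)$, by restricting to a specific compact family of anchor points $x_1$ in $B_s(x)\setminus B_{s/2}(x)$ whose associated pencils through $x$ sweep a definite range of directions, and using Lemma \ref{curves in a ball} together with the local area bound Lemma \ref{2.2} to force each contributing line to meet $B_t(x)$ in an arc of area at least $k^{-1}t^2$ on which $|dz|$ remains uniformly bounded below.

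Finally, with $\Phi_H$ established as a Taubes current, I would invoke Proposition \ref{C-F}: since $b^+(\mathbb{CP}^2)=1$ and $J$ is tamed, there is an almost K\"ahler form $\alpha$ with $[\alpha]=[\Phi_H]$, proving that $J$ is almost K\"ahler. Since the upper and lower bounds were already supplied by Propositions \ref{upper'} and \ref{positive'}, there is no real obstacle remaining at this stage; the theorem follows by combining these three results. The substantive work was concentrated in the bound propositions, where the main delicacy was the pencil-based estimate for $x_1$ close to but not coincident with $x$, controlled via the differential of the tangent map $\tau^{x_1}$ (Lemma \ref{dist'}) and the resulting angular volume bound in Lemma \ref{angle-volume}.
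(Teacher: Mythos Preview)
Your proposal is correct and takes essentially the same approach as the paper: the paper's proof is simply the one-line observation that the theorem is an immediate consequence of Propositions \ref{upper'}, \ref{positive'} and \ref{C-F}, which is exactly what you assemble. Your write-up merely expands on how those three pieces fit together.
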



\section{Pencils in a big   $J-$nef class}

Let $M$ be a rational manifold and $e$ a class in $S_{K_J}^{+}$. By Proposition \ref{existence'}, $\mathcal M_e$ is non-empty. 
But if $e$ is not 
the  line class on the projective space, then  $\mathcal M_e$ always contains reducible subvarieties, and in fact,  it could entirely consist of
 reducible varieties. 
To guarantee that there are smooth rational curves, we need to restrict to $J-$nef classes in $S_{K_J}^{+}$
(see Remark \ref{no smooth curves}).
\subsection{$J-$nef classes}

Let $J$ be a fixed almost complex structure on $M$.

\begin{definition} \label{nef}
A  homology class $e\in H_2(M; \mathbb Z)$ is said to be $J-$nef if it pairs non-negatively with   any $J-$effective class.

\end{definition}

\subsubsection{Big $J-$nef and $J-$ample classes}

\begin{definition} \label{locus} A $J-$nef class $e$ is said to be  big if $e\cdot e$ is positive.

The vanishing locus $Z(e)$ of a big $J-$nef class $e$  is the union of irreducible subvarieties $D_i$ such that $e\cdot e_{D_i}=0$. Denote  the complement of the vanishing locus of $e$  by $M(e)$.

A  big $J-$nef class $e$  is said to be $J-$ample if $M(e)=M$.
\end{definition}

The following lemma immediately follows from the positivity of intersections of distinct irreducible subvarieties.

\begin{lemma}\label{irr=nef}
If $e$ is   represented by an irreducible $J-$holomorphic subvariety and $e\cdot e>0$, then $e$ is  a big $J-$nef class.

\end{lemma}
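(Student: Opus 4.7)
The plan is to unwind the two defining conditions of Definition \ref{locus}: bigness and $J$-nefness. Bigness is free since $e\cdot e>0$ is part of the hypothesis, so the real content is establishing the nefness clause from Definition \ref{nef}: given any $J$-effective class $e'$, show $e\cdot e'\geq 0$.

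First, by assumption there is an irreducible $J$-holomorphic subvariety $C$ with $e_C=e$. Take an arbitrary $J$-effective class $e'$ and pick some $\Theta'\in\mathcal M_{e'}$, written as a finite formal sum $\Theta'=\sum_i (C_i,m_i)$ with each $C_i$ an irreducible $J$-holomorphic subvariety, each $m_i$ a positive integer, and $e'=\sum_i m_i e_{C_i}$. Then
\[
e\cdot e' \;=\; \sum_i m_i\,(e_C\cdot e_{C_i}).
\]
I would split the terms into those with $C_i\neq C$ (as subsets of $M$) and the at most one index with $C_i=C$.

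For each index $i$ with $C_i\neq C$, the two subvarieties $C$ and $C_i$ are distinct irreducible $J$-holomorphic subvarieties, so by McDuff's positivity of intersections all local intersection numbers at their finitely many common points are strictly positive, whence $e_C\cdot e_{C_i}\geq 0$. If there is an index $i_0$ with $C_{i_0}=C$ as subsets, then $e_C\cdot e_{C_{i_0}}=e\cdot e>0$ by hypothesis, so that term contributes $m_{i_0}(e\cdot e)>0$. Adding the nonnegative and possibly positive contributions, we conclude $e\cdot e'\geq 0$, so $e$ is $J$-nef; combined with $e\cdot e>0$, this gives that $e$ is big $J$-nef.

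There is no real obstacle here; the only subtlety worth flagging is that one must be careful to allow $C$ itself to appear among the components of $\Theta'$, since positivity of intersections only applies to \emph{distinct} irreducible subvarieties, and this is precisely why the hypothesis $e\cdot e>0$ (rather than merely $e\cdot e\geq 0$) is needed to handle the self-intersection term.
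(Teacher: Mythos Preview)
Your proof is correct and is exactly the argument the paper has in mind: the paper simply states that the lemma ``immediately follows from the positivity of intersections of distinct irreducible subvarieties,'' and you have spelled out the details of that one-line justification. One small expository quibble: the hypothesis $e\cdot e>0$ is needed for \emph{bigness}, not to handle the self-intersection term in the nefness argument (where $e\cdot e\geq 0$ would already suffice), so your closing remark slightly misattributes the role of the strict inequality.
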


\subsubsection{List of main results}
We now list main properties of $J-$nef classes in $S_{K_J}$.
The first  one is Theorem \ref {emb-comp}, proved in \cite{LZrc}. As mentioned in the introduction, for rational manifolds, it plays a fundamental role  to remove a number  of the genericity assumptions in \cite{T1}.

Together with Lemma \ref{pdmanifold}, we have the following important  consequence, also established in   \cite{LZrc}.
\begin{prop}\label{reducible-dim}
Suppose $e\in S_{K_J}$ is a $J-$nef class.
If $\Theta=\{(C_i,m_i), 1\leq i\leq n\}\in \mathcal M_{red, e}$ is connected, then
\begin{equation}\label{red-dim'}
\sum _{(C_i, m_i)\in \Theta} m_i l_{e_{C_i}}\leq l-1.
\end{equation}
In particular, 
\begin{equation}\label{red-dim}
\sum _{(C_i, m_i)\in \Theta}  l_{e_{C_i}}\leq l-1.
\end{equation}

\end{prop}
This  is an analogue of Proposition 3.4 in \cite{T1},
but valid for an arbitrary tamed almost complex structure.
It follows that, similar to Proposition 1.1 in \cite{T1}, $J-$nef classes  have the following property:

\begin{prop} \label{1.1} Suppose $e$ is a  $J-$nef class in $S_{K_J}$ with $e\cdot e\geq 0$.
The map $\pi_l: \mathcal M_{irr, e, l}\rightarrow M^{[l]} $ is onto the complement
of a compact, measure zero subset. In particular, $e$ is represented by  a smooth rational curve. 

\end{prop}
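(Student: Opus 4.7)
My strategy is to use Lemma \ref{existence} to get surjectivity of $\pi_l$ on the full moduli $\mathcal M_{e,l}$, and then argue by a dimension count that the reducibles contribute only a measure zero set in $M^{[l]}$.

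First, from $e\in S_{K_J}$ and $e\cdot e\geq 0$ one has $l=\iota_e=e\cdot e+1\geq 1$, and Lemma \ref{existence} (applicable since $e\in S_{K_J}^+\cup S_{K_J}^0$) gives that the projection $\pi_l:\mathcal M_{e,l}\to M^{[l]}$ is surjective. Since $\mathcal M_{e,l}=\mathcal M_{irr,e,l}\sqcup \mathcal M_{red,e,l}$, it suffices to show that $\pi_{red,l}(\mathcal M_{red,e,l})$ is a compact subset of $M^{[l]}$ of Lebesgue measure zero.

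For compactness, I would use Proposition \ref{compactness} (the full moduli $\mathcal M_e$ is compact) together with the closedness of $\mathcal M_{red,e}\subset \mathcal M_e$: a Gromov-type limit of reducible configurations remains reducible, because the number of irreducible components counted with multiplicities is lower semicontinuous and cannot drop to one. The incidence conditions $x_i\in|\Theta|$ are also closed, so $\mathcal M_{red,e,l}$ is compact and hence so is its continuous image.

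For the measure zero statement, the tameness of $J$ allows me to apply Theorem \ref{emb-comp}, so each $\Theta=\{(C_i,m_i)\}\in \mathcal M_{red,e}$ is connected with smooth rational components $C_i$. Automatic transversality (Lemma \ref{pdmanifold}) makes each stratum of $\mathcal M_{red,e}$ parametrising configurations of a fixed combinatorial type a smooth manifold of real dimension $2\sum_i l_{e_{C_i}}$; by Proposition \ref{reducible-dim} this is at most $2\sum_i m_i l_{e_{C_i}}\leq 2(l-1)$. Imposing $l$ marked points, each confined to a real $2$-dimensional component, contributes at most $2l$ more dimensions, giving
\[
\dim \mathcal M_{red,e,l}\;\leq\; 2(l-1)+2l\;=\;4l-2\;<\;4l\;=\;\dim M^{[l]}.
\]
Since Proposition \ref{compactness} bounds the number of possible decomposition types of $e$, there are only finitely many strata, and Sard's theorem then ensures $\pi_{red,l}(\mathcal M_{red,e,l})$ has Lebesgue measure zero.

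Combining the above gives $\pi_l(\mathcal M_{irr,e,l})\supseteq M^{[l]}\setminus \pi_{red,l}(\mathcal M_{red,e,l})$, the complement of a compact measure zero set, which is the first claim. Since this complement is nonempty (in fact dense), we obtain some $\Omega\in M^{[l]}$ with $\mathcal M_{irr,e}^\Omega\neq\emptyset$, and any such member is a smooth rational curve by Lemma \ref{irr=smooth}. I expect the most delicate point to be the closedness of $\mathcal M_{red,e}$ together with the finiteness of its decomposition types; this is exactly where Theorem \ref{emb-comp} is indispensable, for without that structural control of reducible subvarieties (connectedness plus a finite list of tree configurations) the clean splitting into finitely many smooth strata of the required codimension would fail.
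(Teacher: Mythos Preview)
Your approach is exactly the paper's: surjectivity of $\pi_l$ on the full moduli from Lemma~\ref{existence}, followed by the dimension bound on $\mathcal M_{red,e,l}$ coming from Proposition~\ref{reducible-dim} (the paper phrases this via Proposition~\ref{reducible-dim'}), giving real dimension at most $4l-2<4l$ and hence a measure zero image.

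One point to fix: lower semicontinuity of the component count would give $n(\Theta)\le\liminf n(\Theta_k)$, which does \emph{not} prevent the limit from being irreducible; and in fact $n$ is not upper semicontinuous either, since smooth curves can bubble. The clean argument for closedness of $\mathcal M_{red,e}$ is that $\mathcal M_{irr,e}$ is open: by Lemma~\ref{irr=smooth} every irreducible representative is a smooth embedded sphere, and near such a curve Theorem~\ref{manifold} together with Gromov compactness shows every element of $\mathcal M_e$ is again smooth and irreducible. (Alternatively, since there are finitely many decomposition types $\Xi$, apply compactness componentwise: a limit of curves in classes $e_1,\dots,e_n$ with $\sum m_ie_i=e$ stays in those classes.) Also, what you invoke as Sard is really just that a smooth map from a manifold of strictly smaller dimension has measure zero image, which is immediate.
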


Fix an orthonormal frame for $T_{1,0}M|_x$ to identify the space of complex $1-$dimensional subspaces with $\mathbb C \mathbb P^1$. Consider the map $$\tau^{x, \Omega}:\mathcal M_e^{x, \Omega}\to  \mathbb C \mathbb P^1,        \quad  C\mapsto T_xC.$$ The next result is very useful to understand this map.

\begin{prop}\label{H}
For any  $J-$nef class $e$ in $S_{K_J}^{+}$,
we can choose  a $J-$nef class  $H_e$ in  $S_{K_J}^{\geq 0} $ such that 
$H_e\cdot e=1$ or $2$, and $H_e\cdot e=2$ only if $H_e$ is proportional to $e$.  
\end{prop}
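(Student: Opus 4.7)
The plan is to exhibit the class $H_e$ explicitly through a case analysis on the diffeomorphism type of $M$ and on the primitivity of $e$ in $H_2(M;\mathbb{Z})$. The governing numerical input is the adjunction identity $K_J\cdot e=-e^2-2$ (from $g_J(e)=0$), together with its analogue for any prospective $H_e\in S_{K_J}^{\geq 0}$. These constraints, combined with the $J$-nefness of the given $e$, sharply restrict the admissible forms of both classes.

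First I handle the non-primitive case. If $e=ne'$ with $e'\in H_2(M;\mathbb{Z})$ primitive and $n\geq 2$, then $J$-nefness of $e$ immediately transfers to $e'$. Imposing $g_J(ne')=0$ with $(ne')^2>0$, and using $g_J(e')\geq 0$, rules out $n\geq 3$ and forces $(e')^2=1$ when $n=2$. Setting $H_e=e'$ gives $H_e\cdot e=2$ with $H_e$ proportional to $e$. For the minimal rational models: on $\mathbb{CP}^2$ one has $S_{K_J}^{+}\subseteq\{H,2H\}$ and $H_e=H$ works; on $S^2\times S^2$, in the fiber basis $F_1,F_2$ the identity $g_J(e)=(a-1)(b-1)=0$ forces (up to swap) $a=1$, and $H_e=F_2$ satisfies $H_e\cdot e=1$, with $J$-nefness of $F_2$ coming from its $J$-effectivity (Proposition~\ref{existence'}) and positivity of intersections.

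The substantive case is $M=\mathbb{CP}^2\#k\overline{\mathbb{CP}^2}$ with $k\geq 1$ and $e$ primitive. Passing to a $K_J$-reduced basis $H,E_1,\ldots,E_k$ in which $K_J=-3H+\sum E_i$, write $e=aH-\sum b_iE_i$ with
\begin{equation*}
a\geq b_1\geq\cdots\geq b_k\geq 0,\qquad a\geq b_i+b_j\text{ for }i\neq j,
\end{equation*}
the second being the reducedness condition of~\cite{LL1}. The equation $(a-1)(a-2)=\sum_i b_i(b_i-1)$ from $g_J(e)=0$, together with $e^2>0$ and $\gcd(a,b_1,\ldots,b_k)=1$, pin down $e$ strongly. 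For each admissible $e$ I propose the candidate $H_e$ of smallest degree with the correct pairing: typically $H_e=H-E_1\in S_{K_J}^{0}$ when $a-b_1=1$; a reduced class of the form $\alpha H-(\alpha-1)E_1-\sum_{i\geq 2}\gamma_iE_i$ with $\alpha\geq 2$, chosen so that $H_e\cdot e=1$ and $g_J(H_e)=0$, when $a-b_1\geq 2$; and the self-proportional choice $H_e=e$ in the exceptional case $e^2=2$. The existence of such an $H_e$ with $H_e\cdot e=1$ reduces to a diophantine lemma in the reduced region, solved by induction on $a-b_1$.

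The principal obstacle is the verification that the chosen $H_e$ is $J$-nef. Proposition~\ref{existence'} provides that $H_e\in S_{K_J}^{\geq 0}$ is $J$-effective. Pairings with irreducible subvarieties distinct from a representative of $H_e$ are non-negative by positivity of intersections, so what remains is to rule out the possibility that a reducible degeneration of $H_e$ carries a component $C_j$ with $H_e\cdot e_{C_j}<0$. Here Theorem~\ref{emb-comp} is decisive: every reducible $\Theta\in\mathcal{M}_{red,H_e}$ has smooth rational tree components, and Proposition~\ref{reducible-dim} bounds the aggregate $l$-invariant by $l_{H_e}-1$. Combining this rigidity with the $J$-nefness of the ambient class $e$ (which gives $e\cdot e_{C_j}\geq 0$ for every such component) one transfers non-negativity from $e$ to $H_e$. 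This transfer step, leveraging the interplay between $e$ and $H_e$ in the reduced basis, is the crux of the argument.
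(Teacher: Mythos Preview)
Your proposal has two genuine gaps, both in the blow-up case $M=\mathbb{CP}^2\#k\overline{\mathbb{CP}^2}$.

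First, the existence of an $H_e\in S_{K_J}^{\geq 0}$ with $H_e\cdot e=1$ is not established. You write that it ``reduces to a diophantine lemma in the reduced region, solved by induction on $a-b_1$,'' but no such lemma is stated or proved, and your description of the candidate (``a reduced class of the form $\alpha H-(\alpha-1)E_1-\sum_{i\geq 2}\gamma_iE_i$'') does not specify the $\gamma_i$ or verify $g_J(H_e)=0$. The paper avoids this entirely by invoking the Cremona classification from \cite{LBL}: every class in $S_{K_J}^{+}$ is Cremona equivalent to one of $H$, $2H$, $(n+1)H-nE_1$, or $(n+1)H-nE_1-E_2$, and for each of these the choice $H_e=H$ or $H_e=H-E_1$ (under the same Cremona transformation) is immediate.

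Second, and more seriously, your argument for $J$-nefness of $H_e$ is circular. You invoke Theorem~\ref{emb-comp} and Proposition~\ref{reducible-dim} applied to $\mathcal{M}_{red,H_e}$, but both of those results have ``$J$-nef'' as a hypothesis on the class in question; you cannot use them on $H_e$ to prove $H_e$ is $J$-nef. Moreover, even granting their conclusions, your ``transfer step''---deducing $H_e\cdot e_{C_j}\geq 0$ from $e\cdot e_{C_j}\geq 0$---is asserted but not argued; there is no mechanism given for why nefness of $e$ propagates to nefness of $H_e$ in general. (Relatedly, for $S^2\times S^2$ you claim $J$-nefness of $F_2$ follows from $J$-effectivity and positivity of intersections, but effectivity alone does not guarantee an \emph{irreducible} representative, which is what positivity of intersections needs.) The paper's route is entirely different and non-circular: having reduced via Cremona to explicit $e$ and $H_e=H-E_1$, one supposes an irreducible $C$ with $(H-E_1)\cdot e_C<0$, uses the adjunction inequality to force the $H$-coefficient of $e_C$ to be $\leq 0$, and then uses $e\cdot e_C\geq 0$ (the $J$-nefness of $e$, applied directly to $C$) together with the explicit form of $e$ to derive a numerical contradiction. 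No structure theorem on $\mathcal{M}_{H_e}$ is needed.
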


For any $\Theta=\{(C_1, m_1),\cdots, (C_n, m_n)\}\in \mathcal M^x_{red, e}$ with $x\in C_1$,
given  $\Omega\in M^{[l-2]}$, let 
$w_i$ be  the cardinality of $\Omega_i=\Omega\cap C_i$. 

\begin{definition}\label{pretty generic}
Fix a point $x\in M(e)$.   $\Omega\in M^{[l-2]}$ is called {\it pretty generic} with respect to $e$ and $x$ if
\begin{itemize}
\item $x$ is distinct from any entry of $\Omega$;
\end{itemize}
For each $\Theta=\{(C_1, m_1),\cdots, (C_n, m_n)\}\in \mathcal M^x_{red, e}$ with $x\in C_1$;
\begin{itemize} 

\item $x$ is not in $C_i$ for any $i\geq 2$; 

\item  $\Omega_i\cap \Omega_j=\emptyset$;

\item  
$1+w_1\ge m_1e\cdot e_1(\ge l_{e_1})$, and  $w_i\ge m_ie\cdot e_i(\ge l_{e_i})$ for $i\geq 2$. 
\end{itemize}
Let $G_{e}^x$ be the set of pretty generic $l-2$ tuples with respect to $e$ and $x$. 
\end{definition}

\begin{prop}\label{complement}
Suppose $e$ is a big $J-$nef class in $S_{K_J}$ and $x\in M(e)$.
Then the complement of  $G_{e}^x$ has complex codimension at least one in $M^{[l-2]}$.
\end{prop}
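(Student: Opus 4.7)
My approach is to stratify the complement of $G_e^x$ by which condition in Definition~\ref{pretty generic} fails and to show each stratum has complex codimension at least one in $M^{[l-2]}$. The stratum $\{\Omega:x\in\Omega\}$ trivially has complex codimension $2$. All remaining points of the complement record the existence of some reducible $\Theta\in\mathcal{M}^{x,\Omega}_{red,e}$ violating one of the three remaining constraints. By Theorem~\ref{emb-comp} each such $\Theta$ is a connected tree $C_1\cup\cdots\cup C_n$ of smooth rational curves with $x\in C_1$, and Proposition~\ref{compactness} forces only finitely many combinatorial types $\alpha=((e_1,m_1),\dots,(e_n,m_n))$ to appear. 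It therefore suffices to fix such an $\alpha$ and bound the three remaining failure loci one at a time.

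For each fixed $\alpha$, I work with the incidence variety
\[ I^\alpha=\{(\Theta,\Omega)\in\mathcal{M}^x_\alpha\times M^{[l-2]}:\Omega\subset|\Theta|\} \]
and its projection $\pi_\alpha:I^\alpha\to M^{[l-2]}$. Lemma~\ref{pdmanifold} applied componentwise, after subtracting the two real dimensions used by $x\in C_1$, gives $\dim_{\mathbb R}\mathcal{M}^x_\alpha\le 2\sum_i l_{e_i}-2$; and since each entry of $\Omega$ must lie on the real two-dimensional set $|\Theta|$, it contributes real dimension two rather than four, yielding $\dim_{\mathbb R} I^\alpha\le 2\sum_i l_{e_i}-2+2(l-2)$. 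Proposition~\ref{reducible-dim}'s universal bound $\sum_i l_{e_i}\le l-1$ simplifies the right-hand side to $4l-8=\dim_{\mathbb R}M^{[l-2]}$, so $\pi_\alpha$ can a priori be surjective; codimension must then come entirely from the specific failure condition.

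The three failure modes each cut this count by at least two real dimensions, which is exactly what is needed. $(i)$ Requiring $x\in C_i$ for a second component $C_i$ imposes two additional real constraints on $\mathcal{M}^x_\alpha$, so the image has real codimension at least two. $(ii)$ Forcing one of the marked points of $\Omega$ to be a node pins it to the finite set $C_i\cap C_j$, dropping its contribution from two to zero. $(iii)$ If the $w_i$-inequalities fail then, because $\sum_i w_i=l-2$ in the no-node case while $\sum_i l_{e_i}\le l-1$, arithmetic forces some component $C_j$ to be overfull, i.e.\ $w_j>l_{e_j}$ (or $w_1\ge l_{e_1}$ when $j=1$); Lemma~\ref{uniqueness} then pins $C_j$ as the unique rational curve through $l_{e_j}$ of its incident points (augmented by $x$ when $j=1$), so the remaining $w_j-l_{e_j}\ge 1$ incident points of $\Omega$ must land on this specific curve -- a real codimension at least two constraint on $\Omega$. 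Unioning over the finitely many $\alpha$ and the three failure modes yields complex codimension at least one for the complement of $G_e^x$. The most delicate step is case $(iii)$, where converting the combinatorial overfullness into an honest codimension statement requires combining Proposition~\ref{reducible-dim} with Lemma~\ref{uniqueness}; the bookkeeping when $\Omega$ also carries a node of $\Theta$ can be absorbed by first applying $(ii)$ and then arguing on the reduced configuration.
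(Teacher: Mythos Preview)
Your stratification into four failure loci and the incidence-variety bookkeeping match the paper's proof essentially line for line; the paper calls the strata $V_1,V_2,V_3,V_4$ and handles $V_4$ by the dual ``underfull'' observation (the projection $\pi_{e_i,|\Omega_i|}$ has positive-dimensional fibres) rather than your ``overfull'' pigeonhole, but these are equivalent.

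There is, however, a genuine gap. Your bound $\dim_{\mathbb R}\mathcal{M}^x_\alpha\le 2\sum_i l_{e_i}-2$ is obtained by ``subtracting the two real dimensions used by $x\in C_1$'', but this subtraction is only valid when $e_1\cdot e_1\ge 0$. If $e_1\cdot e_1<0$ then $l_{e_1}=0$ and $C_1$ is rigid, so the constraint $x\in C_1$ cuts nothing: $\dim_{\mathbb R}\mathcal{M}^x_\alpha=2\sum_{i\ge 2}l_{e_i}=2\sum_i l_{e_i}$, and your count for $I^\alpha$ overshoots $\dim_{\mathbb R}M^{[l-2]}$ by $2$. The same problem recurs in your case~(i): if the second component $C_i$ containing $x$ also has negative square, the condition $x\in C_i$ is closed of codimension zero, not two. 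This is precisely where the hypothesis $x\in M(e)$ enters, and your argument never invokes it. The paper's Proposition~\ref{diresphere} and Lemma~\ref{nodal} use $x\in M(e)$ to force $e\cdot e_1>0$ (and $e\cdot e_2>0$ in the nodal case), which via the estimate~\eqref{3terms} upgrades Proposition~\ref{reducible-dim} to $\sum_j l_{e_j}\le l-2$ (respectively $l-3$), restoring the needed dimension drop. Your case~(iii) is unaffected since the overfull argument bounds the image directly, but cases~(i) and~(ii) need this repair.
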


Further, big $J-$nef classes have the following property.
We would like to show that if we fix $x$ and choose $\Omega$ pretty generic,
we always have smooth rational curves passing through $x$ and $\Omega$. Moreover, a generic complex direction in $T_{x}M$ would be tangent to these curves at $x$.

\begin{prop}\label{gendirect} Suppose $e$ is big $J-$nef. For $x\in M(e)$ and $\Omega\in G_e^x$, 
The map $\tau^{x, \Omega}$ is well defined and in fact  it is a homeomorphism.

Moreover, $\tau^{x, \Omega}$ is a diffeomorphism away from the reducible curves. 
\end{prop}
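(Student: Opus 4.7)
The plan is to verify the four properties of $\tau^{x,\Omega}$ in turn: well-definedness, continuity, bijectivity, and smoothness on the irreducible locus. Well-definedness is immediate from the structure results: any irreducible $\Theta \in \mathcal{M}_e^{x,\Omega}$ is a smooth rational curve by Lemma \ref{irr=smooth}, while a reducible $\Theta = \{(C_1,m_1),\ldots,(C_n,m_n)\}$ has each $C_i$ smooth rational by Theorem \ref{emb-comp}, and the pretty generic hypothesis on $\Omega$ combined with $x \in M(e)$ forces $x$ to lie only in the component labeled $C_1$; we set $\tau^{x,\Omega}(\Theta) := T_xC_1$, viewed as a point in the $\mathbb C \mathbb P^1$ of complex lines in $T_xM$. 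Continuity follows from Lemma \ref{curves in a ball}: in an adapted chart centered at $x$, the relevant component of a nearby subvariety admits a uniform expansion $u \mapsto \theta' u + O(|u|^2)$ whose leading vector $\theta'$ controls the tangent direction.

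For surjectivity, given $\hat\theta \in \mathbb C \mathbb P^1$, pick a sequence $y_k \to x$ whose secant directions $(y_k-x)/|y_k-x|$ project to $\hat\theta$. For $k$ large, $\{x,y_k\} \cup \Omega$ is an $l$-tuple of distinct points, so Lemma \ref{existence} produces $\Theta_k \in \mathcal{M}_e^{x,y_k,\Omega}$, and Proposition \ref{compactness} extracts a subsequential limit $\Theta_\infty \in \mathcal{M}_e^{x,\Omega}$. Pretty genericity ensures $\Theta_\infty$ has a unique component $C_1$ through $x$; Lemma \ref{curves in a ball} then forces $y_k$ to sit on the corresponding component of $\Theta_k$ for $k$ large, and the local expansion above yields $(y_k-x)/|y_k-x| \to T_xC_1$, so $\tau^{x,\Omega}(\Theta_\infty) = \hat\theta$. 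Injectivity uses positivity of intersections: if $\Theta \neq \Theta'$ had the same $\tau$-image and their components through $x$ were distinct, they would contribute multiplicity at least $2$ at $x$ from the shared tangency plus $1$ at each of the $l-2$ common points of $\Omega$ by the pretty generic $\Omega$-splitting, pushing the total intersection strictly above $e\cdot e = l-1$; if the components through $x$ coincide, strip them off and iterate on the residual subvarieties, which again satisfy the pretty generic splitting by Definition \ref{pretty generic} and Proposition \ref{reducible-dim}.

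Since $\mathcal{M}_e^{x,\Omega}$ is a closed subset of the compact space $\mathcal{M}_e$, it is compact, and the continuous bijection $\tau^{x,\Omega}$ is therefore a homeomorphism. On the open subset $\mathcal{M}_{irr,e}^{x,\Omega}$, Lemma \ref{pdmanifold} provides a smooth $2$-dimensional manifold structure, and the differential of $\tau^{x,\Omega}$ at an irreducible $C$ sends a tangent vector $\eta$ to its $\partial$-jet at $x$, exactly as in the proof of Lemma \ref{dist'}. Viewing $\eta$ as a holomorphic section of $N(x \cup \Omega)$, which has Chern number $e \cdot e - (l-1) = 0$ over the rational curve $C$, vanishing of this $\partial$-jet would produce a nontrivial holomorphic section of a trivial line bundle with a zero of order at least $2$, a contradiction; hence $d\tau^{x,\Omega}$ is injective between spaces of the same dimension and is therefore an isomorphism. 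The main obstacle is the care required at reducible limits: confirming that $y_k$ sits on the distinguished component of $\Theta_k$ rather than on some other piece of the support, and running the intersection book-keeping cleanly when $\Theta$ and $\Theta'$ share irreducible components.
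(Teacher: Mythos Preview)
Your argument is essentially sound and reaches the same conclusion, but the route is genuinely different from the paper's. The paper never proves surjectivity of $\tau^{x,\Omega}$ directly. Instead, Proposition~\ref{H'} identifies the \emph{domain} $\mathcal M_e^{x,\Omega}$ with $S^2$ by intersecting each subvariety with a fixed smooth rational curve $S$ in an auxiliary $J$-nef class $H_e$ satisfying $H_e\cdot e\le 2$ (supplied by Proposition~\ref{H}); the resulting map $S\to \mathcal M_e^{x,\Omega}$ is a homeomorphism by positivity of intersections and compactness. Once the domain is known to be $S^2$, only injectivity and continuity of $\tau^{x,\Omega}$ are needed, and the paper invokes the topological fact that a continuous injection $S^2\to S^2$ is automatically a homeomorphism. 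Your secant-limit argument is the direct generalization of the paper's treatment of the line class (\S\ref{pencilcp2}), and it trades the case-by-case construction of $H_e$ for more analytic work at reducible limits.

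Two points deserve tightening. First, your appeal to Lemma~\ref{curves in a ball} for the surjectivity step is not quite the right citation: that lemma controls curves in a neighborhood within $\mathcal M_{e_C,h}$ for a \emph{fixed} class, whereas when irreducible $\Theta_k\in\mathcal M_e$ converge to a reducible $\Theta_\infty$ the limiting component $C_1^\infty$ through $x$ lies in a different class $e_1$. The uniform quadratic error you need comes instead from Gromov compactness (smooth convergence away from nodes), and the finiteness of $\mathcal M_{red}^{x,\Omega}$ (Lemma~\ref{finite}) lets you pass to a constant subsequence when $\Theta_k$ is reducible. Second, the ``strip off and iterate'' step in your injectivity argument is too vague: the residual class after removing a shared component need not inherit a pretty-generic condition. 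The cleaner fix, implicit in Lemma~\ref{finite}, is that for each combinatorial type $\Xi$ there is a \emph{unique} $\Theta\in\mathcal M_{\Xi}^{x,\Omega}$ because the fourth bullet of Definition~\ref{pretty generic} forces every component $C_i$ to be the unique rational curve through its share $\Omega_i$ (and $x$ for $i=1$); so two reducible elements sharing $C_1$ but with $\Theta\neq\Theta'$ must differ in some other component class, and then the distinct-component positivity count you already gave applies to those.
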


Hence, for every complex direction in $T_xM=\mathbb C^2$, 
there is a (possibly reducible) rational curve tangent to it and passing through $\Omega$ and $x$. 
Moreover, except only finitely many directions, this rational curve is smooth.

Propositions \ref{1.1},  \ref{H},  \ref{complement},  \ref{gendirect} will be  proved in the next four subsections respectively.


\subsection{Existence of smooth curves}

In this subsection we prove Proposition \ref{1.1}. For this purpose, we need to estimate the 
dimension of the space of reducible curves.

\subsubsection{Dimension of the moduli space of reducible curves}
For  $$\Theta=\{(C_1, m_1),\cdots, (C_n, m_n)\}\in \mathcal M_{e},$$
let  $\Xi_{\Theta}=(e_{C_1}, \cdots, e_{C_n})$. 
Set  $\chi_e=\{\Xi_{\Theta}|\Theta\in \mathcal M_{red, e}
\}.$
Proposition \ref{compactness} guarantees that $\chi_e$ is a finite set.
Given $\Xi\in \chi_e$, 
the subspace of  reducible curves $\mathcal M_{\Xi}\subset \mathcal M_{e, red}$  corresponding to  $\Xi$
is naturally identified with  $\times_{e'\in \Xi} \mathcal M_{e', irr}$.

\begin{prop}\label{reducible-dim'}
Suppose $e$ is a  $J-$nef class in $S_{K_J}$. Then
$\mathcal M_{red,e}=\cup_{\Xi\in \chi_e}\mathcal M_{\Xi}$ is a finite union of manifolds, each with  complex dimension  at most $l-1$.
\end{prop}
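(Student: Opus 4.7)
The plan is to decompose $\mathcal M_{red,e}$ into the pieces $\mathcal M_\Xi$ indexed by $\Xi \in \chi_e$, show that each piece carries a manifold structure via the given product identification, and then bound its complex dimension using the connectedness of reducible representatives in a $J$-nef class.

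First I would invoke Proposition \ref{compactness}: the moduli space $\mathcal M_e$ is compact, so only finitely many homology classes arise as the class of an irreducible component of some $\Theta \in \mathcal M_{red,e}$. This immediately forces $\chi_e$ to be a finite set, and reduces the proposition to a uniform dimension bound for a single $\mathcal M_\Xi$.

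Next, fix $\Xi = (e_{C_1}, \ldots, e_{C_n}) \in \chi_e$ and use the stated identification $\mathcal M_\Xi \cong \prod_i \mathcal M_{irr, e_{C_i}}$. Since $J$ is tamed, Theorem \ref{emb-comp} (second bullet) says every reducible $\Theta$ in class $e$ is connected, and then (first bullet) each of its irreducible components is a smooth rational curve; in particular each $e_{C_i}$ lies in $S_{K_J}$ and is represented by a smooth rational curve. For each factor I would split on the self-intersection. When $e_{C_i} \cdot e_{C_i} \geq -1$, Lemma \ref{pdmanifold} gives that $\mathcal M_{irr, e_{C_i}}$ is a smooth manifold of complex dimension $l_{e_{C_i}}$. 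When $e_{C_i} \cdot e_{C_i} \leq -2$, so that $l_{e_{C_i}} = 0$, the first bullet of Lemma \ref{uniqueness} shows $\mathcal M_{irr, e_{C_i}}$ is a single point, again a manifold of complex dimension $l_{e_{C_i}} = 0$. Taking the product yields the manifold structure on $\mathcal M_\Xi$ with complex dimension $\sum_i l_{e_{C_i}}$.

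Finally, I would apply Proposition \ref{reducible-dim} to bound this sum. Its hypothesis that $\Theta$ be connected is again delivered by the tamed assumption via Theorem \ref{emb-comp}, and the estimate \eqref{red-dim} gives $\sum_i l_{e_{C_i}} \leq l - 1$, completing the argument. The one minor bookkeeping point is the case where two of the classes in $\Xi$ coincide: the product description then really parametrizes ordered tuples of pairwise distinct irreducible subvarieties, which is an open subset of $\prod_i \mathcal M_{irr, e_{C_i}}$ and hence has the same dimension. The main substantive inputs are Theorem \ref{emb-comp}, which simultaneously supplies rationality of components (making each factor a manifold of the expected dimension) and connectedness (unlocking Proposition \ref{reducible-dim}); everything else is formal.
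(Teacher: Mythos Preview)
Your proof is correct and follows essentially the same route as the paper: invoke Theorem \ref{emb-comp} for rationality and connectedness, use Lemma \ref{pdmanifold} (and, as you carefully note, Lemma \ref{uniqueness} for components of self-intersection $\leq -2$) to see each $\mathcal M_\Xi$ is a manifold of complex dimension $\sum_i l_{e_{C_i}}$, then bound by $l-1$ via Proposition \ref{reducible-dim}. Your treatment is in fact slightly more thorough than the paper's terse version, which cites only Lemma \ref{pdmanifold} without separating out the $\leq -2$ case.
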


\begin{proof}
By Lemma \ref{pdmanifold} and Theorem \ref{emb-comp}, for each $\Xi\in \chi_e$, 
$\mathcal M_{\Xi}$
 is a manifold of complex dimension $\sum _{(C, m)\in \Theta} l_{e_C}$.
Thus the  assertion follows from Proposition \ref{reducible-dim}.
\end{proof}

\begin{remark}\label{top stratum} In \cite{LZrc} we  completely determine the possible configuration with $l= 1+\sum_{i=1}^n l_{e_i}$.

$l= 1+\sum_{i=1}^n l_{e_i}$  if and only if  each multiplicity is  $1$, and

$\Theta$ is one of the following configurations: 

$\bullet$ If $\Theta_-$ is empty then 
$n=2$,   $e_1\cdot e_2=1$,  $e_i\cdot e_i\geq 0$.

$\bullet$ If $\Theta_{-}$ is not empty and there is no $-1$ curves, then 
$\Theta_{-}$ consists of a unique element $(C_1, 1)$ with $e_1\cdot e_1=1-n\leq -2$, and 
$\Theta_{+}$  consists of  at least $n-1\geq 2$ elements, $e_i=\cdots= e_{n}$ and $e_i\cdot e_i=0$ for $i\geq 2$.
Moreover, $e_1\cdot e_2=1$. In short,   
it is a comb like configuration.  

$\bullet$ If $\Theta_{-}$ contains a $-1$ curve, then  either  $\Theta$ is a successive infinitely near  blow-up of
a  smooth rational curve with non-negative self-intersection,  
Equivalently,  it means that, starting from the second blow-up, we only blow up at a point in a component with negative self-intersection.

$\bullet$ or a successive infinitely near  blow-up
of a comb like configuration at points  in $C_1$.  
Here infinitely near blow up means that all the blow ups, from the second one on, occur at some  point 
not lying in the proper transform of the original configuration. Equivalently,  it means that  we successively blow up in the union of  components of negative self-intersection.

\end{remark}

\subsubsection{Proof of Proposition \ref{1.1}}.

\begin{proof} 
It follows from Proposition \ref{existence} 
that $\pi_{irr, l}$ is surjective. On the other hand, 
by Proposition \ref{reducible-dim'}, the image of $\pi_{red, l}$ is of codimension at least 2. 
The  assertion follows. 
\end{proof}


\subsection{Intersection properties}
In this subsection we establish Proposition \ref{H}.

\subsubsection{Intersection properties of $K_J-$spherical classes}
\begin{lemma}\label{pairS+}The following intersection properties hold:
\begin{itemize}

\item $S_{K_J}^{+}$ pairs positively with $S_{K_J}^{\geq 0}$.

\item $S_{K_J}^{+}$ pairs non-negatively with $\mathcal E_{K_J}$.

\item $\mathcal E_{K_J}$ pairs non-negatively with $\mathcal E_{K_J}$.
\end{itemize}
\end{lemma}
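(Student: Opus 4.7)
The plan is to deduce all three bullets from Proposition \ref{existence'} ($J$-effectivity of every class in $S_{K_J}^+ \cup S_{K_J}^{\geq 0} \cup \mathcal E_{K_J}$) together with the positivity of intersection between distinct irreducible $J$-holomorphic subvarieties, plus, for the first bullet, the Hodge index / light cone structure on $H^2(M;\mathbb R)$ afforded by $b^+(M)=1$ (which holds since $M$ is rational).

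For the first bullet, take $e_1 \in S_{K_J}^+$ and $e_2 \in S_{K_J}^{\geq 0}$. Both classes are $J$-effective, so for any taming form $\omega$ we have $\omega^2 > 0$, $\omega \cdot e_1 > 0$ and $\omega \cdot e_2 > 0$. Since the intersection form has Lorentzian signature $(1,b^-)$, the taming form $\omega$ lies in one component $P^+$ of the positive cone, and the sign conditions above force both $e_1$ and $e_2$ to lie in $\overline{P^+}$. The Lorentzian (reverse) Cauchy--Schwarz inequality $(e_1 \cdot e_2)^2 \geq e_1^2\,e_2^2$ together with the fact that $e_1 \in P^+$ (open positive cone) then yields strict positivity $e_1 \cdot e_2 > 0$.

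For the second bullet, fix $e_1 \in S_{K_J}^+$ and $e_2 = [D] \in \mathcal E_{K_J}$; by Lemma \ref{uniqueness}, $e_2$ is represented by a unique irreducible $-1$-curve $D$. Given any $J$-holomorphic representative $\Theta$ of $e_1$, decompose $\Theta = mD + \Theta'$, where $\Theta'$ has no irreducible component equal to $D$; positivity of intersection between distinct irreducible subvarieties gives $[\Theta'] \cdot D \geq 0$, hence $e_1 \cdot D \geq -m$. One then needs to arrange that $m=0$ for some choice of $\Theta$. I would exploit that $\iota_{e_1}=e_1^2+1\geq 2$: Proposition \ref{existence'} produces a representative of $e_1$ through any $\iota_{e_1}$ generic points, and by choosing these points in $M\setminus D$ together with the structural description of reducible configurations in Theorem \ref{emb-comp}, one rules out $D$ appearing as a component of $\Theta$. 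For the third bullet, two distinct classes $e_1, e_2 \in \mathcal E_{K_J}$ are represented by distinct irreducible $-1$-curves by Lemma \ref{uniqueness}, and positivity of intersection immediately gives $e_1 \cdot e_2 \geq 0$.

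The main obstacle is the second bullet: $e_1 \in S_{K_J}^+$ is not assumed $J$-nef, so one cannot invoke Proposition \ref{1.1} to get a smooth irreducible representative, and a priori every $J$-holomorphic representative of $e_1$ could contain $D$ as a component. Overcoming this requires carefully combining Gromov--Taubes existence through generic points off $D$ with the sharp combinatorial restriction on connected reducible configurations provided by Theorem \ref{emb-comp} in order to force the $D$-multiplicity $m$ to vanish.
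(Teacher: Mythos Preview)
Your treatment of the first bullet is correct and matches the paper's light cone argument.

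The second and third bullets, however, have a genuine gap. You invoke Lemma \ref{uniqueness} to assert that a class $e_2\in\mathcal E_{K_J}$ is represented by a unique irreducible $-1$ curve $D$ for the given $J$. But Lemma \ref{uniqueness} only says that \emph{if} such a smooth rational curve exists then it is unique; Proposition \ref{existence'} guarantees merely that $e_2$ is $J$-effective, and for a non-generic $J$ every representative of $e_2$ may well be reducible. The same issue undermines your third bullet. Your proposed workaround for the second bullet---choosing points off $D$ and appealing to Theorem \ref{emb-comp}---does not close the gap either: Theorem \ref{emb-comp} requires the class $e_1$ to be $J$-nef, which you have already (correctly) observed is not assumed, and even when it applies it does not by itself preclude $D$ from appearing as a component.

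The paper's proof avoids all of this with a single observation you are missing: the intersection numbers $e_1\cdot e_2$ are purely homological and the sets $S_{K_J}^+$, $S_{K_J}^{\geq 0}$, $\mathcal E_{K_J}$ depend only on the canonical class $K_J$, not on $J$ itself. Hence one may replace $J$ by a \emph{generic} tamed $J'$ with $K_{J'}=K_J$. For such $J'$, every class in $S_{K_J}^+$ and in $\mathcal E_{K_J}$ is represented by a smooth irreducible $J'$-holomorphic curve (this is where the genericity enters, via standard Gromov--Taubes transversality), and then positivity of intersection of distinct irreducible subvarieties gives bullets two and three immediately. This trick of passing to a generic almost complex structure with the same canonical class is the key idea you should incorporate.
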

\begin{proof} Since $b^+=1$, and $S_{K_J}^{\geq 0}$ pairs positively with any $J-$tamed symplectic form $\omega$, $S_{K_J}^{+}$ pairs positively with $S_{K_J}^{\geq 0}$ by light cone lemma.
 
Any element in $\mathcal E_{K_J}$ is $J'-$effective for any tamed $J'$ with $K_{J'}=K_J$ by Proposition \ref{existence'}. For a generic tamed $J'$, $e\in S_{K_J}^{+}$ could be represented by smooth irreducible $J-$holomorphic curves. The second statement then follows from the positivity of intersections of distinct irreducible subvarieties.

The last item follows because any two elements $e_1$ and $e_2$ in $\mathcal E_{K_J}$ have irreducible representations for a generic tamed $J'$ with $K_{J'}=K_J$.
\end{proof}

\begin{lemma}\label{H''}

For any  $e$ in $S_{K_J}^{+}$,
we can choose  a class  $H_e$ in  $S_{K_J}^{\geq 0} $ such that 
 $H_e\cdot e=1$ or $2$, and   $H_e\cdot e=2$ only if $H_e$ is proportional to $e$.  

\end{lemma}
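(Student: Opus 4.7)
The plan is to split into cases according to whether $e$ is primitive in the lattice $H_2(M;\mathbb{Z})$, and to handle each case by a different strategy.

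First I would treat the non-primitive case.  If $e=nA$ for primitive $A$ and $n\geq 2$, then $g_J(e)=0$ rearranges to $(n-1)(n\, A\cdot A-2)=0$, which since $n\geq 2$ forces $n=2$ and $A\cdot A=1$.  Plugging back into $g_J(2A)=0$ gives $K_J\cdot A=-3$, so $g_J(A)=0$ and $A\in S_{K_J}^{+}$.  My choice would be $H_e=A$: then $H_e\cdot e=2$ and $H_e=\tfrac{1}{2}e$ is proportional to $e$, satisfying the conclusion.

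Next, for primitive $e$ with $e\cdot e\in\{1,2\}$, I would simply take $H_e=e$; the pairing is $e\cdot e\in\{1,2\}$ and $H_e$ is trivially proportional.  The substantive case is primitive $e$ with $e\cdot e\geq 3$, where the aim is to produce $H_e\in S_{K_J}^{\geq 0}$ with $H_e\cdot e=1$.  Such an $H_e$ is automatically non-proportional to $e$: $H_e=ce$ would force $c=1/(e\cdot e)$ and $H_e\cdot H_e=1/(e\cdot e)$, which is not a nonnegative integer.

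To construct $H_e$ in this case, I would use that $M$ is rational to choose a $K_J$-adapted basis $\{H,E_1,\ldots,E_k\}$ of $H_2(M;\mathbb{Z})$ with $K_J=-3H+\sum_i E_i$, write $e=aH-\sum_i b_iE_i$, and apply Weyl-group reflections to reach a normal form with $b_1\geq\cdots\geq b_k\geq 0$ and $a\geq b_1+b_2+b_3$ when $k\geq 3$.  The condition $g_J(e)=0$ then becomes $\sum_i b_i(b_i-1)=(a-1)(a-2)$.  My candidates for $H_e$ would be the standard families of nonnegative-square spherical classes -- $H-E_i$, $2H-\sum_{\ell=1}^{j}E_{i_\ell}$ with $j\in\{3,4\}$, $3H-2E_{i_0}-\sum_{\ell=1}^{j}E_{i_\ell}$, and higher-degree analogs -- whose pairings with $e$ are explicit linear functions of the $b_i$.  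A finite case analysis should then show that at least one candidate yields pairing $1$ with $e$: if some $b_i=a-1$, take $H_e=H-E_i$; otherwise the diophantine identity together with the normal-form inequalities forces a configuration handled by one of the higher-degree candidates.

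The main obstacle will be making this case analysis uniform over all admissible $(a,b_1,\ldots,b_k)$.  The structural reason it should work is that the Weyl-group action on $S_{K_J}$ reduces any primitive spherical class to one of a short list of normal forms, for each of which $H_e$ can be read off; nevertheless enumerating these normal forms and checking each is the tedious part.  A conceptually cleaner route -- producing $H_e$ via Bezout and then correcting to land in $S_{K_J}^{\geq 0}$ -- is tempting, but maintaining the constraint $g_J(H_e)=0$ under such corrections appears to be the harder step.
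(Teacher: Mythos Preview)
Your strategy---reduce $e$ to a normal form via the $K_J$-preserving Cremona/Weyl action and then read off $H_e$ from a short list of candidates---is exactly the paper's approach, so you are on the right track.  The paper organizes the argument by the underlying manifold rather than by primitivity, but the substantive step is the same.  The main difference is efficiency: instead of carrying out the case analysis you describe in the $e\cdot e\geq 3$ range, the paper simply invokes the classification from \cite{LBL} that every class in $S_{K_J}^{\geq 0}$ on $\mathbb{CP}^2\#k\overline{\mathbb{CP}^2}$ with $k\geq 2$ is Cremona-equivalent to one of $H$, $2H$, $(n+1)H-nE_1$, or $(n+1)H-nE_1-E_2$.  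From this list $H_e$ is immediate (take $H_e=H$ in the first two cases and $H_e=H-E_1$ in the last two, except for $2H-E_1-E_2$ where $H_e=e$ works).  What you call the ``tedious part'' is precisely this classification; you do not need to reprove it.

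Two small gaps to patch.  First, your choice of basis $\{H,E_1,\ldots,E_k\}$ tacitly assumes $M=\mathbb{CP}^2\#k\overline{\mathbb{CP}^2}$, but a rational manifold may also be $S^2\times S^2$; the paper handles this separately (for $e=A_l$ take $H_e=B_0$, and symmetrically).  Second, in your non-primitive case you conclude $A\in S_{K_J}^{+}$ from $g_J(A)=0$ and $A\cdot A=1$, but membership in $S_{K_J}$ requires $A\in S$, i.e.\ that $A$ is represented by a smoothly embedded sphere, which does not follow from a genus computation alone.  This is again most cleanly dispatched via the \cite{LBL} classification (the orbit of $H$ under the Cremona action), so once you cite that result both issues disappear.
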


\begin{proof}
  A rational manifold is either $\mathbb C \mathbb P^2\#k\overline{\mathbb C \mathbb P^2}$
or $S^2\times S^2$.

$\bullet$ $S^2\times S^2$

Denote the two factors classes by $H_1$ and $H_2$ such that $K_J=-2H_1-2H_2$. 
For the class $aH_1+bH_2$ of an irreducible curve, the adjunction formula implies
\begin{equation} \label {ad1}\begin{array} {ll} &(aH_1+bH_2)^2+(-2H_1-2H_2)(aH_1+bH_2)+2\cr
=&2ab-2b-2a+2=2(1-a)(1-b)\geq 0.\cr
\end{array}
\end{equation} is clear that   $S_{K_J}$ is contained in the following two sequences of  classes:
$$A_l=H_1+lH_2, \quad  B_l=lH_1+H_2, \quad l\in \mathbb Z.$$
The sequence $A_l$ is in $S_{K_J}^{+}$
if $l>0$, and $A_l\cdot A_l=0$ if $l=0$. The same is true for the sequence $B_l$.

For $e=A_l$ with $l>0$, choose $H_e=B_0=H_2$,
and for $e=B_l$ with $l>0$ choose $H_e=A_0=H_1$.

For $\mathbb C \mathbb P^2\#k\overline{\mathbb C \mathbb P^2}$ there exists a basis of spherical classes $H, E_1, ..., E_k$ with $H\cdot H=1, E_i\cdot E_i=-1$ such that 
$K_J=-3H+E_1+\cdots +E_k$. 

$\bullet$  $\mathbb C \mathbb P^2$

  $H$ and $2H$  are the only classes in $S_{K_J}$.

$\bullet$    $\mathbb C \mathbb P^2\#\overline{\mathbb C \mathbb P^2}$

For $\mathbb C \mathbb P^2 \# \overline{\mathbb C \mathbb P^2}$ and class $\alpha H+ \beta E$, the adjunction formula is of the form
\begin{equation} \label {ad2}\begin{array} {ll}
(\alpha-1)(\alpha-2)- \beta(\beta+1)
\geq 0\cr
\end{array}
\end{equation}
 $S_{K_J}$ is a subset of the following sequence $$D_s= sH+(1-s)E, \quad s\in \mathbb Z.$$

Choose  $H_e=H-E$.

$\bullet$    $\mathbb C \mathbb P^2\#k\overline{\mathbb C \mathbb P^2}$ with $k\geq 2$

When $k\geq 2$, it is hard to explicitly describe the classes in $S_{K_J}^{\geq 0}$.
We invoke the classification  up to Cremona equivalence (see \cite{LBL}). 
As noted in \cite{LBL}, 
any  class is in $S_{K_J}^{\geq 0}$ is Cremona equivalent to one of the following classes
\begin{enumerate}
 \item $2H$, $H$, 
 \item $(n+1)H-nE_1, n\ge 1$,
 \item $(n+1)H-nE_1-E_2, n\ge 1$.
\end{enumerate}

Case (1).  If $e$ is equivalent to $2H$ or $H$, we choose $H_e=\frac{e}{\sqrt{e\cdot e}}$. The class $H_e$ is $J-$nef since it is proportional to the $J-$nef class $e$. 

Case (2). If $e$ is equivalent to $(n+1)H-nE_1, n\geq 1$, we choose $H_e$ to be $H-E_1$ under the same equivalence. Notice that  $H-E_1\in S_{K_J}^0$ and $H_e\cdot e=1$.

Case (3).  When the class $e$ is equivalent to $2H-E_1-E_2$,  then  we could also choose $H_e=e$.

Case (4) If $e$ is equivalent to $(n+1)H-nE_1-E_2, n\geq 2$, we again choose $H_e$ to be $H-E_1$ under the same equivalence. Notice that  $H_e\cdot e=1$.
\end{proof}

\subsubsection{$J-$nef classes on $S^2-$bundles over $S^2$}
For $S^2\times S^2$, 
the negative self-intersection classes must be of the form $aH_1+bH_2$ with $ab<0$.

It follows from (\ref{ad1})
that class of any irreducible curve satisfies $(1-a)(1-b)\geq 0$.
Thus the  only possible negative square irreducible $J-$curves are in the classes $A_p$ with $p<0$ or $B_p$ with $p<0$. 

Moreover, given $J$,  there is at most one such curve by the positivity of intersections.

Case (i). There are irreducible $J-$curves with  negative self-intersection. 

Case (ii).  $A_p$ is $J-$effective for some $p< 0$.

Case (iii).   $B_p$ is $J-$effective for some  $p< 0$.

The negative self-intersection  classes must be of the
 form $aH+bE$ with $|a|< |b|$.
Then the only possible negative square irreducible $J-$curves are in the classes
$$ (1-s)H+sE_1, \quad s>0.$$
These classes are in $S_{K_J}$. 
Moreover, there is at most one such curve due to positivity of intersections.

In summary, 

\begin{lemma} \label{s2bundle-nef}
For $S^2\times S^2$, 

In case (i),  both  $A_l$ and  $B_l$ are $J-$nef if $l\geq 0$, and $J-$ample if $l>0$. 

In case (ii),   $A_l$  is $J-$nef when $l\geq -p$, and $J-$ample if $l\geq -p+1$. $B_0$ is $J-$nef.

In case (iii),  $B_l$    is $J-$nef  when $l\geq -p$, and ample if $l\geq -p+1$. $A_0$ is $J-$nef.

For  $\mathbb C \mathbb P^2\#\overline{\mathbb C \mathbb P^2}$,
$H-E$ is $J-$nef for any $J$. If $ sH+(1-s)E_1$ is $J-$effective for some $s\leq 0$, then $D_l$ is $J-$nef for
$l\geq 1-s$, and $J-$ample for $l\geq 2-s$.
\end{lemma}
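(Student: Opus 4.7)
My plan is to exploit the equivalence implicit in Definition \ref{nef}: a class $e$ is $J$-nef iff $e \cdot e_C \geq 0$ for every irreducible $J$-holomorphic subvariety $C$, since every $J$-effective class is a positive integer combination of such $e_C$; the $J$-ample condition of Definition \ref{locus} is the corresponding strict inequality. I will therefore first enumerate the possible classes of irreducible $J$-curves on each manifold using adjunction together with the taming condition $\omega \cdot e_C > 0$, and then verify the pairings against $A_l$, $B_l$, $H-E$, and $D_l$ directly.

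For $S^2 \times S^2$, the adjunction identity \eqref{ad1} forces any irreducible class $aH_1+bH_2$ to satisfy $(1-a)(1-b)\ge 0$; combined with $\omega\cdot e_C>0$ for a taming form $\omega=\alpha H_1+\beta H_2$ (which may be chosen with $\alpha,\beta>0$), this narrows the negative self-intersection irreducible classes to $\{A_p:p<0\}\cup\{B_p:p<0\}$, while all other irreducible classes lie in the first quadrant $a,b\ge 0$. A key input is the mutual exclusivity of cases (ii) and (iii): if both $A_p$ and $B_q$ with $p,q\le -1$ were $J$-effective, then $\omega\cdot A_p>0$ would give $\beta>|p|\alpha\ge\alpha$ while $\omega\cdot B_q>0$ would give $\alpha>|q|\beta\ge\beta$, an immediate contradiction. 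Granted this, case (ii) forces every irreducible class distinct from $A_p$ to satisfy $b\ge |p|a$ by positivity of intersection against $A_p$; the rest is then direct evaluation: $A_l\cdot A_p=l+p$ and $A_l\cdot(aH_1+bH_2)=la+b$, together with $B_0\cdot A_p=1$ and $B_0\cdot(aH_1+bH_2)=a$, which yields the claimed thresholds. Case (i) is simpler (the negative square classes are absent) and case (iii) follows by interchanging $H_1$ and $H_2$.

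For $\mathbb{CP}^2\#\overline{\mathbb{CP}^2}$, writing an irreducible class as $aH+bE$, direct computation gives $(H-E)\cdot(aH+bE)=a+b$ and $D_l\cdot D_s=l+s-1$. Since $H-E\in S^0_{K_J}$ and is $J$-effective by Proposition \ref{existence'}, its pairing with any distinct irreducible class is non-negative by positivity of intersection, and the self-pairing vanishes. The $D_l$ claim then reduces to $D_l\cdot D_s=l+s-1\ge 0$ iff $l\ge 1-s$ (strictly iff $l\ge 2-s$), together with non-negativity against every other irreducible class; the latter is again enforced by positivity against the hypothesized irreducible curve in class $D_s$ combined with the quadratic adjunction constraint \eqref{ad2}. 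The main obstacle throughout is precisely this enumeration step: adjunction and taming must be combined carefully to systematically exclude exotic higher-genus irreducible classes (e.g.\ a priori candidates like $-H+mE$ on the blowup), and the case-exclusivity observation on $S^2\times S^2$ is essential to rule out cross-contributions that would otherwise destroy nefness. Once the classification of irreducible classes is in hand, the intersection computations that finish the argument are entirely routine.
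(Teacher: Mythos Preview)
Your approach is correct and is precisely the paper's: the lemma appears in the paper as ``In summary,'' immediately after the discussion classifying the possible irreducible curve classes via the adjunction identities \eqref{ad1}, \eqref{ad2} together with $\omega$-positivity, so the content is exactly the enumeration-then-pairing argument you outline. Your treatment of the mutual exclusivity of cases (ii) and (iii) on $S^2\times S^2$ via the taming class $\alpha H_1+\beta H_2$ is in fact more explicit than the paper's one-line appeal to positivity of intersections (note that $A_p\cdot B_q=1+pq>0$ for $p,q\le -1$, so positivity of intersection between those two curves alone does not immediately exclude coexistence; your $\omega$-argument closes this cleanly).

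There is one small gap worth tightening. For $H-E$ on $\mathbb{CP}^2\#\overline{\mathbb{CP}^2}$ you argue that since $H-E$ is $J$-effective, positivity of intersection gives $(H-E)\cdot e_C\ge 0$ for every irreducible $C$. But Proposition~\ref{existence'} only yields a possibly reducible subvariety $\Theta$ in the class $H-E$; if $C$ happens to be a component of $\Theta$ with $e_C^2<0$, positivity does not apply. The clean fix is to stay within your own classification: the only negative-square irreducible classes are $D_t=tH+(1-t)E$ with $t\le 0$, and $(H-E)\cdot D_t = t+(1-t)=1>0$; for non-negative-square irreducibles $\alpha H+\beta E$ one has $\alpha\ge 0$ and $|\beta|\le\alpha$ (from $\alpha^2\ge\beta^2$ and $\omega$-positivity), whence $(H-E)\cdot(\alpha H+\beta E)=\alpha+\beta\ge 0$. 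The same direct computation handles the ``non-negativity against every other irreducible class'' step you mention for $D_l$.
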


Notice that for an  $S^2-$bundle over $S^2$, there is always a $J-$ample class,
and there is always a $J-$nef class with self-intersection $0$.

\subsubsection{A criterion for $H-E_i$ to be $J-$nef}
For $\mathbb C \mathbb P^2\#k\overline{\mathbb C \mathbb P^2}$ 
we need the following observation.

\begin{lemma} Suppose there is an irreducible curve class pairing negatively with $H-E_1$, say $e_C=aH-b_1E_1-...-b_nE_n$. Then $a\leq 0$.  
The same is true for $H-E_i$. 
\end{lemma}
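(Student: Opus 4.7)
The plan is to argue by contradiction via McDuff's adjunction inequality for irreducible pseudoholomorphic subvarieties. Using the standard intersection pairing $H^2=1$, $H\cdot E_i=0$, $E_i\cdot E_j=-\delta_{ij}$, the hypothesis $(H-E_1)\cdot e_C<0$ translates immediately to $a-b_1<0$, so $b_1\geq a+1$. Suppose toward contradiction that $a\geq 1$, which forces $b_1\geq 2$.

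Next I extract what adjunction gives. Since $C$ is an irreducible $J$-holomorphic subvariety, $g_J(e_C)\geq 0$, equivalently $e_C\cdot e_C+K_J\cdot e_C+2\geq 0$. With $K_J=-3H+E_1+\cdots+E_k$, direct computation yields
\[ e_C\cdot e_C=a^2-\sum_{i=1}^n b_i^2, \qquad K_J\cdot e_C=-3a+\sum_{i=1}^n b_i, \]
so the adjunction inequality rearranges to the convenient form
\[ (a-1)(a-2)\geq \sum_{i=1}^n b_i(b_i-1). \]

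The key observation is integrality: for every integer $b_i$, $b_i(b_i-1)\geq 0$ (with equality iff $b_i\in\{0,1\}$). Retaining only the $i=1$ term and using $b_1\geq a+1\geq 2$ together with the monotonicity of $x(x-1)$ on $[1,\infty)$,
\[ \sum_{i=1}^n b_i(b_i-1)\geq b_1(b_1-1)\geq (a+1)a. \]
Combining with the adjunction bound gives $(a-1)(a-2)\geq (a+1)a$, which simplifies to $4a\leq 2$, contradicting $a\geq 1$. Hence $a\leq 0$, as claimed. The statement for $H-E_i$ in place of $H-E_1$ follows verbatim after relabeling.

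There is no real obstacle here: the whole argument is an elementary consequence of adjunction together with the integrality of the coefficients $b_i$. If anything, the only point worth flagging is that the inequality is driven entirely by the single ``offending'' term $b_1(b_1-1)$, with the remaining terms being non-negative ballast that can simply be discarded.
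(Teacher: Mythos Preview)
Your proof is correct and follows essentially the same approach as the paper: both argue by contradiction from $a\geq 1$, use the adjunction inequality $e_C\cdot e_C+K_J\cdot e_C\geq -2$, discard the non-negative terms $b_i(b_i-1)$ for $i\geq 2$, and exploit $b_1\geq a+1$ to reach a contradiction. Your rearrangement $(a-1)(a-2)\geq \sum_i b_i(b_i-1)$ is a slightly cleaner way to organize the same computation the paper carries out.
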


\begin{proof} To see this, observe that $(H-E_1)\cdot e_C<0$ means 
$a<b_1$.  If $a>0$ then $b_1\geq 2$. 
Now the $K_J-$adjunction number 
$$e_C\cdot e_C+K_J\cdot e_C\le a^2-b_1^2-3a+b_1\le (b_1-1)^2-3(b_1-1)-b_1^2+b_1=-4b_1+4$$
is less than $-2$, which is impossible. 
\end{proof}

\subsubsection{Proof of Proposition \ref{H}}

\begin{proof}
 Suppose $e$ is $J-$nef. 
It suffices to show that we can further choose $H_e$ in Lemma \ref{H''} to be $J-$nef. 

For $S^2\times S^2$, 
if $e=A_l$ with $l>0$, $H_e=B_0=H_2$ is $J-$nef. 
The second case is similar. 

For  $\mathbb C \mathbb P^2$,  $H$ and $2H$  are $J-$nef for any tamed $J$. 
 In both cases, we choose $H_e$ to be $H$. 

For $\mathbb C \mathbb P^2\# k\overline{\mathbb C \mathbb P^2}$, cases (1) and (3) are clear. For cases (2) and (4) we will show that $H_e=H-E_1$ is $J-$nef.

Suppose there is an irreducible curve class pairing negatively with $H-E_1$, say $e_C=aH-b_1E_1-...-b_nE_n$. 
By the lemma above, $a\leq 0$. 

If   $e=(n+1)H-nE_1, n\geq1,$ is $J-$nef, then $H\cdot e_C\ge -n(H-E_1)\cdot e_C>0$. This implies that $a>0$.

If  $e=(n+1)H-nE_1-E_2, n\ge 2$ is $J-$nef, then  $$(H-E_2)\cdot e_C\ge - n(H-E_1)\cdot e_C\ge n.$$ This means $a\ge b_2+n$. Since $a\le 0$, we have $b_2<a\le 0$. 
Thus the $K_J-$adjunction number $$e_C\cdot e_C+K_J\cdot e_C\leq a^2-b_2^2-3a+b_2\le -2n\leq -4,$$ which is  impossible. 
\end{proof}


\subsection{Vanishing locus and reducible curves}

Suppose $e$ is a big $J-$nef class  in $S_{K_J}$, especially $l=e^2+1\geq 2$. 
Recall the complement of the vanishing locus of $e$ is denoted by $M(e)$. Fix $x\in M(e)$. 

\subsubsection{Reducible rational curves  through a point in a big $J-$nef class}

Consider $\Theta=\{(C_1, m_1),\cdots, (C_n, m_n)\}\in \mathcal M^x_{red, e}$
and assume $x\in C_1$. 
Denote $e_{C_i}$ by $e_i$.

Set  $\chi^x_e=\{\Xi_{\Theta}|\Theta\in \mathcal M^x_{red, e}\}.$
Given $\Xi\in \chi^x_e$, 
the subspace of  reducible curves $\mathcal M^x_{\Xi}\subset \mathcal M^x_{red, e}$  corresponding to  $\Xi$
is naturally identified with $\mathcal M^x_{irr, e_1} \times \times_{i\ge 2}  \mathcal M_{irr, e_i}$.

We  estimate the dimension of $\mathcal M^x_{red, e}$.

\begin{prop}\label{diresphere}
Suppose $e$ is a big $J-$nef class  in $S_{K_J}$. For any $x\in M(e)$,
$\mathcal M_{red,e}^x$ is a union of manifolds with
 complex dimension at most $l-2$.

\end{prop}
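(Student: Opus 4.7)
The strategy is to stratify $\mathcal M^x_{red,e}$ by configuration type and bound each stratum's dimension. By Proposition \ref{compactness} there are only finitely many class data $\Xi = (e_1, m_1; \ldots; e_n, m_n)$ with $\sum m_i e_i = e$ appearing in $\mathcal M^x_{red,e}$, and by Theorem \ref{emb-comp} together with tameness of $J$, each such $\Theta$ is a connected tree of smooth rational curves with every irreducible class $e_i \in S_{K_J}$ of self-intersection at least $-1$. After reordering so that $x \in C_1$, the stratum $\mathcal M^x_\Xi$ embeds in $\mathcal M^x_{irr,e_1} \times \prod_{i \geq 2} \mathcal M_{irr,e_i}$; Lemma \ref{pdmanifold} (together with the automatic transversality of Theorem \ref{auto-tran}) identifies these factors as smooth manifolds of complex dimensions $l_{e_1}-1$ (when $l_{e_1}\geq 1$) and $l_{e_i}$, respectively.

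Combined with Proposition \ref{reducible-dim}, the case $l_{e_1} \geq 1$ is then immediate:
\begin{equation*}
\dim_{\C} \mathcal M^x_\Xi \leq (l_{e_1} - 1) + \sum_{i \geq 2} l_{e_i} \leq \left(\sum_i m_i l_{e_i}\right) - 1 \leq l - 2.
\end{equation*}
The main obstacle is the remaining case $l_{e_1} = 0$, i.e.\ $C_1$ is a $-1$ curve through $x$. By Lemma \ref{uniqueness}, $\mathcal M_{irr,e_1}$ is a single point, so imposing $x \in C_1$ produces no dimension reduction, and Proposition \ref{reducible-dim} alone only yields the insufficient bound $\dim \leq l - 1$.

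To close this gap I would exploit the hypothesis $x \in M(e)$: since $C_1$ is an irreducible subvariety through $x$, it cannot lie in the vanishing locus, so $e \cdot e_1 \geq 1$. I would then appeal to the classification of the equality case $\sum_i m_i l_{e_i} = l - 1$ recorded in Remark \ref{top stratum}: a direct inspection of the four configurations listed there shows that every $-1$ curve occurring in such a $\Theta$ is an exceptional divisor and satisfies $e \cdot e_i = 0$ (in Bullets 3 and 4 the class $e$ coincides with the ``root'' class, which pairs trivially with every exceptional divisor or difference of exceptional divisors; Bullets 1 and 2 contain no $-1$ curves at all). Consequently no $-1$ curve through a point of $M(e)$ can appear in an equality configuration, so $l_{e_1} = 0$ together with $x \in M(e)$ forces the strict inequality $\sum_i m_i l_{e_i} \leq l - 2$. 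This yields $\dim_{\C} \mathcal M^x_\Xi \leq \sum_{i \geq 2} m_i l_{e_i} \leq l - 2$ in Case 2 as well, and assembling over the finitely many $\Xi \in \chi^x_e$ completes the proof.
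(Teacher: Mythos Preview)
Your overall strategy---stratify $\mathcal M^x_{red,e}$ by configuration type $\Xi$ and bound each stratum---matches the paper, and your treatment of the case $l_{e_1}\ge 1$ is identical to the paper's. The divergence is in the case $l_{e_1}=0$, and here there is a genuine gap.

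You assert, via Theorem~\ref{emb-comp}, that every component class $e_i$ has self-intersection at least $-1$. That theorem says only that each $C_i$ is a smooth rational curve, hence $e_i\in S_{K_J}$; it gives no lower bound on $e_i\cdot e_i$. Indeed the very Remark~\ref{top stratum} you invoke exhibits, in its second bullet, an equality configuration whose component $C_1$ has $e_1\cdot e_1=1-n\le -2$. Consequently ``$l_{e_1}=0$'' does \emph{not} mean $C_1$ is a $-1$ curve, and your inspection of Remark~\ref{top stratum}---which is phrased entirely in terms of $-1$ curves and explicitly dismisses Bullets 1 and 2 as containing no $-1$ curves---does not cover the possibility that the component through $x$ is a curve of self-intersection $\le -2$. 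Your argument could be repaired by checking that in \emph{every} equality configuration the negative-square components (of any self-intersection) pair trivially with $e$; this turns out to be true (for the comb in Bullet~2 one computes $e\cdot e_1=(1-n)+(n-1)=0$), but you have not done it, and it requires a more careful unpacking of the blow-up language in Bullets~3 and~4 than ``direct inspection'' suggests.

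The paper takes a shorter and more self-contained route: rather than invoking the equality classification from \cite{LZrc}, it recomputes $l-1=e\cdot e$ directly as in \eqref{3terms}, writing $e\cdot e=\sum_j m_j e_j\cdot e$ and using connectedness ($e_j\cdot(e-m_je_j)\ge 1$ for $j\ge 2$) together with $m_1\,e\cdot e_1\ge 1$ (which follows from $x\in M(e)$) to obtain $l-1\ge \sum_j m_j l_{e_j}+m_1\,e\cdot e_1\ge \sum_j m_j l_{e_j}+1$, whence $\sum_{j\ge 2} l_{e_j}\le l-2$ immediately. This avoids Remark~\ref{top stratum} entirely and handles all negative-square $C_1$ at once.
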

\begin{proof} Given $\Theta=\{(C_1, m_1),\cdots, (C_n, m_n)\}\in \mathcal M^x_{red, e}$, let $\Xi=\Xi_{\Theta}$. 

By Lemma \ref{pdmanifold} and Theorem \ref{emb-comp},  $\mathcal M_{\Xi}^x$
has complex  dimension 

$$\dim_{\mathbb C}  \mathcal M^x_{irr, e_1}+  \sum_{i=2}^n l_{e_i}.$$

First  suppose that  $e_{1}\cdot e_{1}\geq 0$. By Lemma \ref{pdmanifold},
$\mathcal M_{irr, e_{1}}^x$ has complex dimension $l_{e_1}-1$.
Thus  we need to show $$l_{e_1}-1+\sum_{i= 2}^n l_{e_i}\leq l-2,$$
 which is the same as \eqref{red-dim}.

Now suppose that 
$e_1\cdot e_1<0$. Then we need to show $\sum_{i=2}^n l_{e_i}\leq l-2$.

We assume $l_{e_j}=0$ when $j\le k$, and $l_{e_j}> 0$ when $j\ge k+1$.

By Lemma 2.7 in \cite{LZrc},  $\Theta$ is connected, so $e_j\cdot (e-m_je_j)\geq 1$ for each $j\geq 2$.
Therefore $l$ can be estimated as follows:
\begin{equation} \label{3terms}
\begin{array}{lll}
 l-1&=&e\cdot e\\
 &&\\
 &=&(\sum_{j=2}^n m_j {e_j}+m_1e_1)\cdot e\\
 &&\\
 &\ge&\sum_{j=k+1}^n ( m_j {e_j} \cdot ( m_je_j+(e-m_je_j))+      m_1e_1\cdot e\\
 &&\\
  &=&\sum_{j=k+1}^n(m_j^2e_j\cdot e_j+ m_je_j\cdot (e-m_je_j))  +  m_1e_1\cdot e\\
 &&\\
 &\geq&\sum_{j=k+1}^n m_jl_{e_j}+  m_1e_1\cdot e\\
 &&\\
 &=& \sum_{j=1}^n m_jl_{e_j} +  m_1e_1\cdot e.\\
 \end{array}
 \end{equation}

Recall that we assume $x\in C_1$ and  
$x\in M(e)$,  therefore $e\cdot e_1>0$. 
Hence in this case, we have also shown that   $\mathcal M^x_{\Xi}$ has complex dimension 
at most $l-2$.

\end{proof}

This can be viewed as a version of Lemma 4.7 in \cite{T1}.

Now consider the subset $\mathcal M_{red,e}^x(x\hbox{ nodal})$ of
$\mathcal M_{red,e}$ where $x\in C_1$ and $x\in C_i$ for some $i\geq 2$. 

\begin{lemma}\label{nodal}
Suppose $e$ is a big $J-$nef class  in $S_{K_J}$. For any $x\in M(e)$,
$\mathcal M_{red,e}^x(x\hbox{ nodal})$ is a union of manifolds with
 complex dimension at most $l-3$.

\end{lemma}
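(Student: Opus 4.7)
My plan is to follow the template of Proposition \ref{diresphere}, treating the extra incidence $x\in C_i$ with $i\geq 2$ as one more codimension-one constraint. Fix a stratum $\mathcal M_\Xi^x\subset \mathcal M_{red,e}^x(x\text{ nodal})$ corresponding to $\Theta=\{(C_1,m_1),\dots,(C_n,m_n)\}$ with $x\in C_1\cap C_i$; relabel so that $i=2$. By Theorem \ref{emb-comp} every $C_j$ is a smooth rational curve, so the automatic transversality of Lemma \ref{pdmanifold} (via Theorem \ref{auto-tran}) gives
\[
\dim_{\mathbb C}\mathcal M_{irr,e_j}^x =
\begin{cases} l_{e_j}-1 & \text{if }l_{e_j}\geq 1,\\ 0 & \text{if }l_{e_j}=0,\end{cases}
\]
where the second case occurs exactly when $e_j$ is a $-1$ class and, by Lemma \ref{uniqueness}, $\mathcal M_{irr,e_j}$ is the single point $\{C_j\}$. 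Hence each $\mathcal M_\Xi^x(x\text{ nodal})$ is a manifold of complex dimension $\dim\mathcal M_{irr,e_1}^x + \dim\mathcal M_{irr,e_2}^x + \sum_{j\geq 3} l_{e_j}$, and the total space is a finite union of such strata by Proposition \ref{compactness}.

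The target bound $l-3$ then splits into cases according to how many of $l_{e_1},l_{e_2}$ are positive. If both are $\geq 1$, each incidence saves one complex dimension and \eqref{red-dim} directly gives $\sum_j l_{e_j}-2\leq l-3$.

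The delicate cases are those in which $C_2$, or additionally $C_1$, is a $-1$ curve through $x$: the point constraint then does not lower the moduli dimension of that component, so the missing codimension must come from a sharpening of \eqref{red-dim}. The extra input is the hypothesis $x\in M(e)$, which combined with $x\in C_j$ forces $e\cdot e_j\geq 1$ for every nodal index $j$. I would revisit the chain \eqref{3terms} in the proof of Proposition \ref{diresphere}, keeping $m_j(e\cdot e_j)\geq m_j l_{e_j}$ on every non-nodal index (this follows from connectedness and $J$-nefness as before) and replacing the weaker zero lower bound on each nodal $-1$ component by $m_j(e\cdot e_j)\geq m_j\geq 1$. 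Summing,
\[
e\cdot e \;\geq\; \sum_j m_j l_{e_j}+k \;\geq\; \sum_j l_{e_j}+k,
\]
where $k\in\{1,2\}$ counts the $-1$ curves in $\{C_1,C_2\}$. Hence $\sum_j l_{e_j}\leq l-1-k$, and substituting back yields $\dim\leq (l-1-k)-(2-k)=l-3$ in each remaining subcase.

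The main obstacle is precisely this sharpening. The bound \eqref{red-dim} absorbs only the incidence losses from components with positive-dimensional moduli, so the argument hinges on the fact that the definition of $M(e)$ genuinely strictly improves the naive $J$-nefness inequality $e\cdot e_j\geq 0$ on each nodal $-1$ component containing $x$. Once this strict positivity is in hand the rest is the same bookkeeping as in Proposition \ref{diresphere}.
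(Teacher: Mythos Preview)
Your approach is essentially the same as the paper's: split the stratum $\mathcal M_\Xi^x(x\text{ nodal})$ according to whether each of $e_1,e_2$ has non-negative or negative self-intersection, use \eqref{red-dim} directly when both are non-negative, and for each negative one recover the missing codimension from the strict inequality $e\cdot e_j\geq 1$ forced by $x\in M(e)\cap C_j$, exactly as in the chain \eqref{3terms}. Your unified final computation $\dim\leq(l-1-k)-(2-k)=l-3$ packages the paper's four cases (I--IV) neatly.

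There is one terminological slip you should fix: you write that $l_{e_j}=0$ ``occurs exactly when $e_j$ is a $-1$ class''. This is false. Since $l_{e_j}=\max\{e_j\cdot e_j+1,0\}$, the case $l_{e_j}=0$ covers \emph{all} smooth rational curves with $e_j\cdot e_j\leq -1$, and Theorem \ref{emb-comp} does not rule out components of self-intersection $\leq -2$. Fortunately your argument never actually uses the $-1$ value: Lemma \ref{uniqueness} gives $\dim_{\mathbb C}\mathcal M_{irr,e_j}=0$ for every $e_j\cdot e_j\leq -1$, and the strict positivity $e\cdot e_j\geq 1$ from $x\in M(e)$ likewise holds regardless of the exact negative square. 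So replace ``$-1$ class'' and ``$-1$ curve'' throughout by ``class with $e_j\cdot e_j\leq -1$'' (equivalently ``$l_{e_j}=0$'') and the proof goes through unchanged.
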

\begin{proof}
The proof is similar to that of Proposition \ref{diresphere}. 

Given $\Theta=\{(C_1, m_1),\cdots, (C_n, m_n)\}\in \mathcal M^x_{red, e}(x\hbox{ nodal})$, let $\Xi=\Xi_{\Theta}$. Assume without loss of generality that $x\in C_2$. 

By Lemma \ref{pdmanifold} and Theorem \ref{emb-comp},  $\mathcal M_{\Xi}^x(x\in C_2)$
has complex  dimension 

$$\dim_{\mathbb C}  \mathcal M^x_{irr, e_1}+ \dim_{\mathbb C}  \mathcal M^x_{irr, e_2}+  \sum_{i=3}^n l_{e_i}.$$

We need to divide into four cases: I. $e_1\cdot e_1\geq 0, e_2\cdot e_2\geq 0$.
II. $e_1\cdot e_1<0, e_2\cdot e_2<0$.
III. $e_1\cdot e_1\geq 0, e_2\cdot e_2<0$.
IV. $e_1\cdot e_1<0, e_2\cdot e_2\geq 0$.

In Case I,  $$\dim \mathcal M^x_{\Xi}(x\in C_2)= l_{e_1}-1+l_{e_2}-1+\sum_{i= 3}^n l_{e_i}\leq l-3$$ by Lemma \ref{pdmanifold} and 
 \eqref{red-dim}. 
 
 In Case II, we also assume $l_{e_j}=0$ when $j\le k$, and $l_{e_j}> 0$ when $j\ge k+1$. By Lemma 2.7 in \cite{LZrc},  $\Theta$ is connected, so $e_j\cdot (e-m_je_j)\geq 1$ for each $j\geq 3$.
Therefore $l$ can be estimated as follows:
\begin{equation} \nonumber
\begin{array}{lll}
 l-1&=&e\cdot e\\
 &&\\
 &=&(\sum_{j=3}^n m_j {e_j}+m_1e_1+m_2e_2)\cdot e\\
 &&\\
 &\ge&\sum_{j=k+1}^n ( m_j {e_j} \cdot ( m_je_j+(e-m_je_j))+      m_1e_1\cdot e + m_2e_2\cdot e\\
 &&\\
  &=&\sum_{j=k+1}^n(m_j^2e_j\cdot e_j+ m_je_j\cdot (e-m_je_j))  +  m_1e_1\cdot e + m_2e_2\cdot e\\
 &&\\
 &\geq&\sum_{j=k+1}^n m_jl_{e_j}+  m_1e_1\cdot e + m_2e_2\cdot e\\
 &&\\
 &=& \sum_{j=1}^n m_jl_{e_j} +  m_1e_1\cdot e+ m_2e_2\cdot e.\\
 \end{array}
 \end{equation}

Recall that we assume $x\in C_1\cap C_2$ and  
$x\in M(e)$,  therefore $e\cdot e_1>0$, $e\cdot e_2>0$. 
Hence in this case, we have also shown that   $\mathcal M^x_{\Xi}$ has complex dimension 
at most $l-3$. 

Cases III and IV are similar, we only prove Case IV. In this case, we need to show $\sum_{i=2}^n l_{e_i}\leq l-2$, which is exactly the second case of Proposition \ref{diresphere}. 
\end{proof}
\subsubsection{Pretty generic $l-2$ tuple}

\begin{lemma} \label{finite} For any  $x\in M(e)$ and  $\Omega\in G_e^x$,
any two elements in  $\mathcal M_e^{x, \Omega}$
 intersect only at the $l-1$ points 
$x, \Omega$. So it is like a pencil in algebraic geometry. Moreover, 
\begin{itemize}

\item Given $z$ distinct from $x$ and $\Omega$, $\mathcal M^{z, x, \Omega}$ consists of unique curve.

\item There is a unique curve in $e$ passing through 
the $l-1$ points $x, \Omega$ and a given direction at one of these points.

\item $\mathcal M^{x, \Omega}_{red}$ is a finite set.
Moreover,  these reducible curves cannot be tangent to each others at $x$.

\item We can define the map
$\tau^{x, \Omega}:\mathcal M^{x, \Omega}\to  \mathbb C \mathbb P^1,        \quad  C\mapsto T_xC.$

\end{itemize}
\end{lemma}
\begin{proof} 
Suppose $\Theta, \Theta'$ are two elements in $\mathcal M^{z, x, \Omega}$. If $\Theta, \Theta'$ have no common components, then the the first bullet follows from local positivity of intersection. Hence we assume they have common components.

We rewrite two subvarieties $\Theta, \Theta' \in \mathcal M_e$, allowing $m_i=0$ in the notation, such that they have the same set of irreducible components formally, i.e. $\Theta=\{(C_i, m_i)\}$ and $\Theta'=\{(C_i, m'_i)\}$. Then for each $C_i$, if $m_i\le m'_i$, we change the components to $(C_i, 0)$ and $(C_i, m'_i-m_i)$. Apply this process to all $i$ and discard finally all components with multiplicity $0$ and denote them by $\Theta_0,\Theta'_0$ and still use $(C_i, m_i)$ and $(C_i, m'_i)$ to denote their components. Notice they are homologous, formally have homology class $e-\sum m_{k_i}[C_{k_i}]-\sum m'_{l_j}[C_{l_j}]$ with pairwise different $C_{k_i}$ and $C_{l_j}$.
For each $C_{k_i}$, we know that there should be originally at least $m'_{k_i}e\cdot e_{k_i}$ points on it.  Notice $\sum m_i e\cdot e_i=\sum m'_i e\cdot e_i=e^2=l-1$, and none of $x, \Omega$ are nodal points of $\Theta$ and $\Theta'$. Hence either $e\cdot e_{k_i}=0$, or $e\cdot e_{k_i}=1$ and $m'_{k_i}-m_{k_i}=1$. And the latter case could  happen at most once. Similarly for $C'_{l_j}$. Therefore we know there are at least $e\cdot e_{\Theta_0}$ intersections (among $x, \Omega$) of $\Theta_0, \Theta'_0$.

Notice  $e\cdot e_{\Theta_0}> e_{\Theta_0}^2$ since original $\Theta, \Theta'$ have at least one common component. Hence there are more intersections  than the homology intersection number $e_{\Theta_0}^2$ of our new subvariety $\Theta_0$ and $\Theta_0'$. Then our claim follows from the local positivity of intersection.

The second bullet follows by the same argument.

For the third bullet, first notice  $\chi^{x, \Omega}_{red, e}$ is a finite set by Proposition \ref{compactness}. Hence we could fix the type  $\Xi=(e_i)\in \chi^{x, \Omega}_{red, e}$. Apply the same process, the removed components should have the same multiplicities since $l-1=\sum m_ie\cdot e_{C_i}$.  And if $\Theta\neq \Theta_0$, the above argument claims contradiction. Hence, it implies there are no common components of $\Theta, \Theta'$. Especially, it implies there are no spheres of negative self-intersection as components. Hence, it is the situation that Corollary 2 in \cite{Sik} applies, which says that $\mathcal M_{red}^{x, \Omega}$ is isolated in the compact space $\mathcal M^{x, \Omega}$.
Hence $\mathcal M_{red}^{x, \Omega}$ is a finite set.

Since $x$ is not a nodal point of any $\Theta$, and thanks to the second bullet, the map $\tau^{x, \Omega}$ is well-defined.
\end{proof}

\subsubsection{Proof of Proposition \ref{complement}}

\begin{proof}
The complement of $G_e^x$ is the union of the  four sets $V_i, i=1,2,3,4$:  $V_i$ is the set of points in $M^{[l-2]}$  violating the $i-$th item of Definition \ref{pretty generic}, but not the previous items.  

It is easy to see that $V_1$  has complex codimension $2$.

To estimate the dimensions of $V_2, V_3, V_4$, consider the map 
$$\pi^x_{red,l-2}:\mathcal M^{x}_{ red, e, l-2}\to M^{[l-2]}.$$

We first  deal with $V_2$.  For each $\Xi=(e_i) \in \chi_{e}^x$, let $V_2(\Xi)$ be the image of the map $\pi^x_{red,l-2}$ restricted to  $\mathcal M^x_{\Xi, l-2}(x \hbox{ nodal})$.
Here $\mathcal M^{x}_{\Xi, l-2}(x \hbox{ nodal})\subset \mathcal M^{x}_{\Xi, l-2}$  
consists of 
$\Theta= \{(C_i, m_i)\}\times \Omega\in   \mathcal M^{x}_{\Xi} \times M^{[ l-2]}$  with 
$x\in C_1$,  and   $x\notin \Omega$.

Clearly,  $V_2$ is the union of $V_2(\Xi)$ over $\Xi=(e_i) \in \chi_{e}^x$.
By Lemma \ref{nodal}, $$\dim_{\mathbb C} \mathcal M^x_{\Xi, l-2}(x\hbox{ nodal})\leq (l-3)+(l-2)=2(l-2)-1.$$
 Thus $V_2$ has  complex codimension at least $1$ in $M^{[l-2]}$. 

For the set $V_3$, it is similarly the union of $V_3(\Xi)$ over $\Xi=(e_i) \in \chi_{e}^x$, where $V_3(\Xi)$ is the 
image of  the map $\pi^x_{red,l-2}$ restricted to  $\mathcal M^x_{\Xi, l-2}(\Omega \hbox{ nodal})$.
Here $\mathcal M^{x}_{\Xi, l-2}(\Omega \hbox{ nodal})\subset \mathcal M^{x}_{\Xi, l-2}$  
consists of 
$\Theta=\{(C_i, m_i)\}\times \Omega\in   \mathcal M^{x}_{\Xi} \times M^{[ l-2]}$  with 
$x\in C_1$,  and 

$\bullet$ $x$ is not in $ C_i$ for any $i\geq 2$,

$\bullet$ there exists $i\ne  j$ and   $y\in  M$ such that $y\in \Omega\cap C_i$ and $y\in \Omega\cap C_j$, i.e. $y\in \Omega$ is a nodal point of  $\Theta$.

By  Proposition \ref{diresphere}, $\dim_{\mathbb C} \mathcal M^x_{\Xi}\le l-2$. Given $\{(C_i, m_i)\}\times \Omega\in \mathcal M^x_{\Xi, l-2}(\Omega \hbox{ nodal})$, 
there is a nodal   point  $y$ in $\Omega$. Observe that such a nodal point is in the intersection of $C_i$ and $C_j$, so it  has $0$ dimensional freedom.   
Hence 
$$\dim_{\mathbb C}\mathcal M^x_{\Xi, l-2}(\Omega \hbox{ nodal})\le (l-2)+(l-3)=2(l-2)-1.$$ Thus $V_3$ has  complex codimension at least $1$.

Finally, we deal with $V_4$. For each $\Xi=(e_i) \in \chi_{e}^x$, define
$\mathcal M^{x}_{\Xi, l-2}(\Omega_i \hbox{ non-generic})\subset \mathcal M^{x}_{\Xi, l-2}$ 
consisting of 
$\Theta=\{(C_i, m_i)\}\times \{\Omega_i\}\in   \mathcal M^{x}_{\Xi} \times M^{[ l-2]}$  with 
$x\in C_1$,  and 

$\bullet$ $x$ is not in $ C_i$ for any $i\geq 2$, 

$\bullet$ $\Omega_i\subset C_i$,  

$\bullet$ $\Omega_i\in M^{[|\Omega_i|]}$, $\Omega_i\cap \Omega_j=\emptyset$, and $\sum |\Omega_i|=l-2$,

$\bullet$ $1+|\Omega_1|< l_{e_1}$ or $|\Omega_i |< l_{e_i}$ for $i\geq 2$.

Clearly, under $\pi^x_{red, l-2}$, the union of the  image of $\mathcal M^{x}_{\Xi, l-2}(\Omega_i \hbox{ non-generic})$ over $\Xi=(e_i) \in \chi_{e}^x$  is the rest part of the complement of  $G_{e}^x$.

By the estimate \eqref{3terms}, $\dim_{\mathbb C} \mathcal M^{x}_{\Xi}\le l-2$ if $x\in M(e)$.
If $\dim_{\mathbb C} \mathcal M^{x}_{\Xi}<l-2 $, then already
$$\dim_{\mathbb C} \mathcal M^{x}_{\Xi, l-2} \le (l-3)+(l-2)=2(l-2)-1.$$

So we assume that $\dim_{\mathbb C} \mathcal M^{x}_{\Xi}=l-2 $. In this case, $\sum_i l_{e_i}=l-1$. And for any $\{(C_i, m_i)\}\times \{\Omega_i\}\in \mathcal M^{x}_{\Xi, l-2}(\Omega_i \hbox{  non-generic})$,
either $1+|\Omega_1|< l_{e_1}$, or $|\Omega_j |< l_{e_j}$ for $j\geq 2$.

Observe that $\pi^x_{red,l-2}$ restricted to $ \mathcal M^{x}_{\Xi, l-2}$ is of the form:
$$ \times \pi_{e_i, |\Omega_i|}: \mathcal M_{e_1}^{x, \Omega_1}\times_{i\ge 2} \mathcal M^{\Omega_i}_{e_i}\to \times_i M^{[|\Omega_i|]}. $$
The source $ \mathcal M^{x}_{\Xi, l-2}$ has total complex dimension $2(l-2)$. But when some $|\Omega_i|<l_{e_i}$ (or $1+|\Omega_1|< l_{e_1}$), 
$ \pi_{e_i, |\Omega_i|}$  drops dimension since $$\dim_{\mathbb C}  \mathcal M_{e_i, l_{e_i}-p}= l_{e_i}+ l_{e_i}-p>  \dim_{\mathbb C}  M^{l_{e_i}-p}= 2(l_{e_i}-p).$$

\end{proof}

\subsection{Abundance of pencils} \label{pencilbignef}
In this subsection we establish Proposition \ref{gendirect}.

\subsubsection{$\mathcal M_e^{x, \Omega}$ is homeomorphic to $S^2$}

\begin{prop}\label{H'}
Let  $e$ be a big $J-$nef  class in $S_{K_J}$. 
Fix a point $x\in M(e)$ and choose $\Omega\in  G_e^x$. Then $\mathcal M_e^{x, \Omega}$ is homeomorphic to $S^2$.
\end{prop}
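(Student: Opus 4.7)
The strategy is to show that the tangent map $\tau^{x,\Omega}:\mathcal M_e^{x,\Omega}\to \mathbb{CP}^1$ from Lemma \ref{finite} is a continuous bijection from a compact Hausdorff space onto $\mathbb{CP}^1$, hence a homeomorphism. First, note that $\mathcal M_e^{x,\Omega}$ is compact (a closed subset of the compact $\mathcal M_e$) and Hausdorff (it inherits a metric from $\varrho$), and $\tau^{x,\Omega}$ is well-defined by Lemma \ref{finite}, since $x$ is a smooth point of every element of $\mathcal M_e^{x,\Omega}$. Continuity of $\tau^{x,\Omega}$ at an irreducible $C$ is immediate from the local description in Lemma \ref{curves in a ball}: nearby curves in the moduli topology are graphs in adapted coordinates at $x$ of the form $u\mapsto \theta'u+\nu'$ with $\theta'$ close to $\theta$. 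At a reducible $\Theta=\{(C_i,m_i)\}$ with $x\in C_1$, the pretty-genericity condition ensures $x\notin C_i$ for $i\ge 2$, so any $\Theta_n\to \Theta$ has its $x$-component converging to $C_1$ in a neighborhood of $x$, and the same chart argument applies.

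Injectivity is a consequence of positivity of intersections. If $C\ne C'$ in $\mathcal M_e^{x,\Omega}$ had $\tau^{x,\Omega}(C)=\tau^{x,\Omega}(C')$, they would share a tangent line at $x$, which contributes at least $2$ to the local intersection number. Since $\Omega$ consists of $l-2$ pairwise distinct points lying on both curves (each contributing at least $1$), the total intersection is at least $2+(l-2)=l$, contradicting $e\cdot e=l-1$. This applies uniformly to pairs involving reducible elements, using that under pretty-genericity $x$ lies on only one component of any reducible member of $\mathcal M_e^{x,\Omega}$.

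For surjectivity, fix $\hat\theta\in \mathbb{CP}^1$ and choose a sequence of points $y_n\to x$ in an adapted chart at $x$ with $y_n/|y_n|\to \hat\theta$, selected outside the measure-zero set excluded by Proposition \ref{1.1} so that there exists a smooth irreducible curve $C_n\in \mathcal M_{irr,e}^{x,\Omega,y_n}$. By the compactness of $\mathcal M_e$ (Proposition \ref{compactness}), after passing to a subsequence, $C_n$ converges to some $\Theta\in \mathcal M_e^{x,\Omega}$. Writing $C_n$ near $x$ in adapted coordinates as $u\mapsto \theta_n u+\mathfrak r_n(u)$ with $|\mathfrak r_n(u)|\le s'^{-1}|u|^2$ and solving $\theta_n u_n+\mathfrak r_n(u_n)=y_n$, one sees $u_n\to 0$ and $\theta_n\to \hat\theta$. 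Continuity of $\tau^{x,\Omega}$ then gives $\tau^{x,\Omega}(\Theta)=\hat\theta$.

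Combining these steps, $\tau^{x,\Omega}$ is a continuous bijection from the compact Hausdorff space $\mathcal M_e^{x,\Omega}$ onto the Hausdorff space $\mathbb{CP}^1$, and therefore a homeomorphism. The main technical point I expect to be delicate is the continuity/limit analysis near reducible members of $\mathcal M_e^{x,\Omega}$: one must verify that convergence in the moduli topology truly forces convergence of the tangent line at $x$ even when the limit has multiple components or nonreduced structure, which relies essentially on pretty-genericity isolating $x$ on a single component (Definition \ref{pretty generic}) and on the uniform local model provided by Lemma \ref{curves in a ball}.
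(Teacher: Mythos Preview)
Your approach is correct in outline but genuinely different from the paper's, and in fact collapses Proposition~\ref{H'} and Proposition~\ref{gendirect} into a single argument. The paper instead invokes the auxiliary $J$-nef class $H_e$ from Proposition~\ref{H}: it picks a smooth rational curve $S$ in the class $H_e$ (with $H_e\cdot e\in\{1,2\}$), chosen to avoid $\Omega$ (and to pass through $x$ when $H_e\cdot e=2$), and defines $h:S\to\mathcal M_e^{x,\Omega}$ by sending $z\in S$ to the unique element through $x,z,\Omega$. Bijectivity follows from $H_e\cdot e\le 2$ and positivity of intersections, and the crucial advantage is that continuity of $h^{-1}$ (take the intersection point with $S$) is immediate from the first defining condition of the moduli topology. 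This entirely sidesteps the tangent-direction analysis near reducible members that you correctly flag as delicate; your route trades an external geometric input (the curve $S$) for that local analysis.

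One caution on your injectivity step: two distinct \emph{reducible} elements of $\mathcal M_e^{x,\Omega}$ may share an irreducible component (different decompositions $\Xi$ of $e$ can overlap), in which case the bound $e\cdot e=l-1$ does not control their geometric intersection and your counting argument breaks down. You should instead cite the second bullet of Lemma~\ref{finite} directly, which already asserts uniqueness through $x,\Omega$ and a prescribed direction and is proved from the definition of $G_e^x$ prior to this proposition.
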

\begin{proof}

By Lemma \ref{H},  there is another $J-$nef  class $H_e$ in $S_{K_J}$  such that $H_e\cdot e=1$ or $2$.
We prove that $\mathcal M_e^{x, \Omega}$ is homeomorphic to a smooth representative of $H_e$. 

Let us first assume that $H_e\cdot e=1$.

By the first item of Proposition  \ref{1.1}, we can choose a smooth  rational curve  $S$ representative of $H_e$ such that it does not pass through any entry of $\Omega$ and $x$. This is possible since $H_e$ is $J-$nef and 
the space of reducible $H_e-$curves is of codimension at least 1 by Proposition \ref{reducible-dim'}. Moreover, 
the space of irreducible $H_e-$curves containing $x$ or any entry of $\Omega$ is of codimension 1 by Proposition \ref{pdmanifold}.

Given any  $z\in S$, $z$ is distinct from $x$ or any entry of $\Omega$. 
By the first bullet of Lemma \ref{finite}, 
there is a unique (although possibly reducible) rational curve $C_{x, z, \Omega}$ in class $e$ passing through $x$, $z$ and $\Omega$.
Thus we obtain a  map $h:z\mapsto C_{x, z, \Omega} $ from $S$ to $\mathcal M_e^{x, \Omega}$. 

The map $h$ is  surjective since $H_e\cdot e\ne 0$. Since $S$ is also $J-$holomorphic and $H_e\cdot e=1$
any curve in $\mathcal M_e^{x, \Omega}$ intersects with $S$ at a unique point by the positivity of intersection. 
Therefore $h$  is also one-to-one.   

Now let us show that $h$ is a homeomorphism, namely both $h$ and $h^{-1}$ are continuous. 
Since $S=S^2$ is Hausdorff  and $\mathcal M_e^{x, \Omega}$ is compact, 
if we can  show that $h^{-1}:\mathcal M_e^{x, \Omega}\to S$ is continuous, it follows that $h$ is also continuous.
 To show $h^{-1}$  is continuous, consider a sequence $C_i \in \mathcal M_e^{x, \Omega}$ approaching to its Gromov-Hausdorff limit $C$. Let the intersection of $C_i$ (resp. $C$) with $S$ be $p_i$ (resp. $p$). Then $p_i$ has to approach $p$ by the first item of the definition of topology on $\mathcal M_e$.
 Therefore
$h$ is a homeomorphism.  

The case that  $H_e\cdot e=2$ is similar. 
We choose the  smooth rational curve $S$ representative of $H_e$ such that it passes $x$ but not any entry of $\Omega$. 
This is achieved by Proposition \ref{diresphere} and  Proposition \ref{pdmanifold} applied to the $J-$nef class $H_e$. 
Here we also need to use the fact that in this case $H_e$ is proportional to $e$ and hence $x$ is also in $M(H_e)$.

Then we vary $z$ in $S$. If $z\neq x$, we choose the rational curve $C_{x, z, \Omega}$ in 
class $e$ passing through $x$, $z$ and $\Omega$. If $z=x$, we choose the rational curve 
$C_{x, x, \Omega}$ in class $e$ passing through $x$, $\Omega$ and tangent to $S$ at $x$. 
The sphere $C_{x, z, \Omega}\in \mathcal M_e^{x, \Omega}$ is unique by the second bullet of Lemma \ref{finite}. 
We thus again obtain a map $h:z\mapsto C_{x, z, \Omega} $  from $S^2$ to $\mathcal M_e^{x, \Omega}$. This map is clearly surjective. Since $S$ is $J-$holomorphic with $x\in S$ and $H_e\cdot e=2$,
any curve in $\mathcal M_e^{x, \Omega}$ either intersects with $S$ at a unique point other than $x$ or is tangent to $S$ at $x$ by the positivity of intersection. 
Therefore $h$  is also one-to-one.   
Now we show that this map is a homeomorphism. As before, we only need to show that $h^{-1}:\mathcal M_e^{x, \Omega}\to S$ is continuous. Again,  consider a sequence $C_i \in \mathcal M_e^{x, \Omega}$ approaching to its Gromov-Hausdorff limit $C$. Let the intersection of $C_i$ (resp. $C$) with $S$ be $p_i$ and $x$ (resp. $p$ and $x$). If $C_i$ (or $C$) tangent to $S$, let $p_i$ (or $p$) be $x$. Then $p_i$ has to approach $p$ by the first item of the definition of topology on $\mathcal M_e$. Therefore $h$ is a homeomorphism.  
\end{proof}

\subsubsection{Proof of Proposition \ref{gendirect}}

\begin{proof} Fix an orthonormal frame for $T_{1,0}M|_x$ to identify the space of complex $1-$dimensional 
subspaces with $\mathbb C \mathbb P^1$. 
Consider the map $$\tau^{x, \Omega}:\mathcal M_e^{x, \Omega}\to  \mathbb C \mathbb P^1,        \quad  C\mapsto T_xC.$$

By Proposition \ref{H'},  $\tau^{x, \Omega}$ 
is a map from $S^2$ to $S^2$.
An injective continuous map from $S^2$ to $S^2$ has to be a homeomorphism.

By the second assertion of Lemma \ref{uniqueness}, these curves cannot be tangent to each other at $x$  if one of them is irreducible. 
Moreover, by the third bullet of   Lemma \ref{finite} there are finitely many reducible curves in this family. In fact, we have shown that $|\mathcal M^{x, \Omega}_{red}|$ is bounded
by $|\chi_{red,e}|$, which only depends on $e$ and $J$. 
We also know that these reducible curves cannot be tangent to each others. To summarize, the map is injective.
\end{proof}


\section{Spherical Taubes currents from big $J-$nef classes}

 \subsection{Weak Taubes currents}

We begin with introducing the notion of spherical current   from a big $J-$nef class.

\subsubsection{Spherical currents}\label{Scurrent}
We continue to assume $J$ is a tamed almost complex structure. Suppose $e$ is a big $J-$nef sphere class  in $S_{K_J}$.  Then $l_e=\iota_e\ge 2$.
Fix $x\in M$. Let us first choose a constant $r_0$. Let $\mathcal A$ be the measure zero set of non-pretty-generic points with respect to $x$ when $x\in M$ if $M(e)=M$ or $x$ belongs to a compact subset $K\subset M(e)$ if $M(e)\subsetneq M$. Choose a small open neighborhood $\mathcal{OB}(\mathcal A)$ with Vol$(\mathcal{OB}(\mathcal A))< 10^{-5l}$Vol$(M)$.  By Proposition \ref{gendirect} and the third item of Lemma \ref{finite}, $\mathcal M_{red}^{x, \Omega}$ are finite points in $\mathcal M^{x, \Omega}=S^2$ if $\Omega\in G_e^x$. Then we choose $r_0$ small enough such that $\mathcal M_{irr}^{x, \Omega}\cap \mathcal M_{irr}^{r_0}\neq \emptyset$ if $\Omega$ is chosen from the complement of $\mathcal{OB}(\mathcal A)$. 

Lemma \ref{4.14} is still valid in this situation with $\mathcal M$ replaced by $\mathcal M_{irr}^{r_0}$, which is Lemma 4.12 in \cite{T1}. We choose the constant $k_{r_0}$ as in Lemma 4.12 of \cite{T1} (or $s'_{r_0}=k_{r_0}^{-1}$ as in Lemma \ref{4.14}) and $s=k_{r_0}^{-4}$. Let $B_s(x)$ be the ball of radius $s$ centered at $x$.

We define a current $\Phi_e$ in the following manner.
Recall $$\mathcal M_{irr, l}=\{(C, x_1, \cdots, x_l)|C\in \mathcal M_{irr}, x_i\in M\}\subset \mathcal M\times M^{[l]}.$$

Use $\pi_l$ to denote the projection map $\mathcal M_{irr, l}\rightarrow M^{[l]}$. 
The portion of marked moduli space we choose is $\mathcal M^{r_0, r}_{irr,l}$,
 consisting of the set of marked curves with distance at least $r_0$ to 
$\mathcal M_{red}$ and $d(x_i, x_j)\ge r$ for any $i\neq j$. Here we suppose $r<\frac{s}{10}$. 

We first define $\phi_{\eta}(v)=\int_Cv$. Here $\eta \in \pi_l(\mathcal M^{r_0, r}_{irr,l})$, $C$ is the unique rational curve in $\pi_l^{-1}(\eta)$ and $v$ is a 2-form on $M$. Then we have the following {\it spherical current}
$$\Phi_e(v)=\int_{\eta\in \pi_l(\mathcal M^{r_0, r}_{irr,l})} \phi_{\eta}(v).$$

The spherical current $\Phi_e$ defined clearly satisfies Proposition 1.2 in \cite{T1}. Especially, it is a non-trivial, closed, non-negative $J-$invariant current on $M$.

\subsubsection{Estimates of the pencil  $\mathcal M^{ x_1, \Omega}$} \label{localpencil}

Fix $x_1$ and $\Omega\in G_e^{x_1}$. We write $\Omega=(x_3, \cdots, x_l)$. By Proposition \ref{gendirect}, $\mathcal M^{ x_1, \Omega}$ is a pencil. Moreover, by removing an open neighborhood of these finite directions corresponding to reducible curves, we can suppose the remaining directions correspond to the the curves with distance at least $r_0$ from $\mathcal M_{red}$.

We now assume $x_1$ and any entry of $\Omega$ are chosen from $M\setminus B_s(x)$ (but $\Omega$ not necessarily belongs to $G_e^x$). 
When the curve $C_{x, x_1, \Omega}$ is in $\mathcal M^{x_1, \Omega, r_0}$, with $x_1$ and $\Omega$  chosen from the compact set above (i.e. $d(x_i, x_j)\ge r$, $d(x_i, x)\ge s$ and $C_{x, x_1, \Omega} \in \mathcal M^{x_1, \Omega, r_0}$), there is a number $T>0$ such that for any $z\in \overline{B_T(x)}$, the sphere $C_{z, x_1, \Omega}$ is smooth and in $\mathcal M^{\frac{r_0}{2}}_{irr}$.
In other words, the part of $\mathcal M^{ x_1, \Omega}$ intersecting $\overline{B_T(x)}$ is a pencil of smooth curves. Clearly, this is also true for any $t\le T$. Let us denote this set by $\mathcal M^{x_1, \Omega ; B_t(x)}$. Notice the first defining condition of $\mathcal M_{irr, l}^{r_0, r}$ guarantees dist$(x, M(e))\ge r_0$ if $M(e)\neq \emptyset$.

Given a smooth curve $C$ in this pencil with normal bundle $N$, the tangent space to $\mathcal M^ {x_1, \Omega}_{irr}$ at $C$ can be identified with 
the vector space $\ker_{C, x_1, \Omega}\subset \Gamma(N)$ that consists of the sections in the kernel of $D_C$ 
that vanish at $x_1$ and $\Omega$.


On this two dimensional space, there are several norms. Let $\nu\in \ker_{C, x_1, \Omega}$.
\begin{itemize}
\item The $L^2$ norm  $||\nu||_2$;

\item The sup norm $\sup_C |\nu|$;

\item For $z\ne x_1$ or any entry of $\Omega$, the pointwise norm $|\nu(z)|$;

\item By choosing $x_1$, $\Omega$ as above, we could still define $\tau^{x_1, \Omega}$ or $\tau^{\Omega, x_1}$ by taking the complex direction $T_{x_1}C$ (or $T_{x_3}C$). Let $u$ denote the direction $T_{x_1} C$ in $\mathbb C \mathbb P^1$, 
$\tau_*^{x_1, \Omega}:T_C\mathcal M^{x_1, \Omega}\to T_u \mathbb C \mathbb P^1$
is an isomorphism. 
We thus could speak of the pointwise norm of  $\tau_*^{x_1, \Omega}\nu $ as a vector in $T_u \mathbb C \mathbb P^1$. 
\end{itemize}

For a fixed curve  $C$, these norms are equivalent. 
Since $\mathcal M_{irr, l}^{r_0, r}$ is compact, if we have compact families of choices of $x_1, \Omega, z$, 
we have uniform constants as in Lemma \ref{dist}.

\begin{lemma}\label{distl} 
Let   $x$, $x_1$, $\Omega$ and $C$ be as stated as above. There is a constant $k_{\ref{distl}}>1$, depending only on $r_0$, $r$, $s$ and $T$,  ensuring the following inequalities for $\nu\in \ker_{C, x_1, \Omega}$:
\begin{enumerate}
\item 
$\sup_C|\nu|\le k_{\ref{distl}}  |\nu(z)|$ if  $x_1$ and any entry of $\Omega$ are not in $B_s(x)$ and $z\in\overline{B_{\frac{s}{2}}(x)}$;
\item $\sup_C|\nu|\le k_{\ref{distl}} |\tau^{x_1, \Omega}_{*} \nu|$  if  $x_1\in \overline{B_s(x)}$, and $\sup_C|\nu|\le k_{\ref{distl}} |\tau^{\Omega, x_1}_{*} \nu|$  if  $x_3\in \overline{B_s(x)}$.

\end{enumerate}

\end{lemma}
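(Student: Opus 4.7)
The plan is to follow the template of Lemma \ref{dist} (and its underlying Lemma \ref{dist'}), now applied to the pencil $\mathcal M^{x_1,\Omega}_{irr}$ produced by Proposition \ref{gendirect}. First, I observe that $\ker_{C,x_1,\Omega}$ is two-dimensional. Since $e\in S_{K_J}$ is big, $c_1(N)=e\cdot e=l-1$, and after imposing vanishing at the $l-1$ distinct points $\{x_1\}\cup\Omega$ the twisted line bundle $N(x_1+\Omega)$ has degree $0\geq -1$, so Theorem \ref{auto-tran} together with Lemma \ref{Barraud'} give $\mathrm{coker}(D_C\oplus ev^{x_1,\Omega})=0$ and hence $\dim_{\mathbb R}\ker_{C,x_1,\Omega}=2$.

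The second step is to show that $\nu\mapsto |\nu(z)|$ (for $z\notin\{x_1\}\cup\Omega$) and $\nu\mapsto |\tau^{x_1,\Omega}_*\nu|$ actually define norms on this two-dimensional space. Invoking the HLS new-complex-structure observation used in Lemma \ref{dist'}, any nonzero $\nu\in\ker D_C$ is a holomorphic section of $N$ with respect to a modified $\bar\partial$-operator on $N$. Since $\nu$ vanishes at $\{x_1\}\cup\Omega$ and the total zero count equals $c_1(N)=l-1$, these exhaust all its zeros and each is simple; thus $\nu(z)\neq 0$ for any other $z$. For the tangent functional, vanishing of $\tau^{x_1,\Omega}_*\nu$ would force $\nu$ to vanish to order $\geq 2$ at $x_1$ by the adapted-chart description recalled in the proof of Lemma \ref{dist'}, producing $l$ zeros with multiplicity and contradicting $c_1(N)=l-1$; the argument for $\tau^{\Omega,x_1}_*$ is identical with $x_3$ in place of $x_1$.

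Any two norms on a fixed two-dimensional real vector space are equivalent, so for each tuple $(x,x_1,\Omega,z,C)$ the asserted inequality holds with some constant. To extract a single $k_{\ref{distl}}$, I use compactness: $x$ ranges over $M$ (or a fixed compact subset of $M(e)$); the points $x_1$ and entries of $\Omega$ range over the compact set $\{y\in M\setminus B_s(x): d(x_i,x_j)\geq r\}$ in case (1), or over $\overline{B_s(x)}$ in case (2); $z$ ranges over $\overline{B_{s/2}(x)}$; and by the setup of section \ref{localpencil}, $C$ lies in the compact subspace of $\mathcal M^{r_0}_{irr}$ passing through $x_1,\Omega$ and meeting $\overline{B_T(x)}$. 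Via the exponential-map model of section 2.3.3 together with the surjectivity established above, the spaces $\ker_{C,x_1,\Omega}$ assemble into a continuous real rank-two vector bundle over this compact parameter base; the ratio of the two norms in question is then a continuous positive function on its unit sphere bundle, and its maximum supplies the desired uniform $k_{\ref{distl}}$.

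The main obstacle is verifying the compactness and continuity claims at the boundary of the allowed region: one must check that $\tau^{x_1,\Omega}_*$ remains an isomorphism and the kernel bundle remains of constant rank as $(x_1,\Omega)$ approach the walls $d(x_i,x_j)=r$ or $\partial B_s(x)$, and as $C$ degenerates toward the reducible locus. The constancy of rank is controlled by the automatic transversality above, which persists so long as no two marked points collide; the finiteness of $\mathcal M^{x_1,\Omega}_{red}$ from Lemma \ref{finite} and the definition of $\mathcal M^{r_0}_{irr}$ keep us a definite distance from reducible degenerations; and Proposition \ref{compactness} provides the overall Gromov compactness. This is exactly why the resulting $k_{\ref{distl}}$ depends on all four of $r_0,r,s,T$ and not merely on $J$.
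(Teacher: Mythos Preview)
Your proof is correct and follows essentially the same approach as the paper: verify that the pointwise evaluation and the tangent map each define norms on the two-dimensional space $\ker_{C,x_1,\Omega}$ (via the HLS holomorphic-structure argument and zero counting against $c_1(N)=l-1$), then invoke compactness of the parameter space to upgrade pointwise norm equivalence to a uniform constant. The paper's own proof is extremely terse---it simply lists the relevant compact sets $M$, $M\setminus B_s(x)$, $M\setminus B_r(x_i)$, $\overline{B_{s/2}(x)}$, $\mathcal M^{x_1,\Omega;B_T(x)}$ for (1) and observes that $(x_1,\Omega)$ ranges over a compact subset of $M^{[l-1]}$ for (2)---so you have essentially written out what the paper leaves implicit, including the rank-two bundle picture and the role of $r_0$ in keeping $C$ away from the reducible locus.
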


\begin{proof}
 The constant in (1) can be chosen to be independent of $x$, $x_1$,  $\Omega$, $z$, and $C$ since $M$, $M\setminus B_s(x)$, $M\setminus B_r(x_i)$,  $\overline{B_{\frac{s}{2}}(x)}$ and $\mathcal M^{x_1, \Omega; B_T(x)}$
are compact.

The constant in (2) is uniform because $(x_1, \Omega)$ is chosen from a compact set in $M^{[l-1]}$.
\end{proof}

Similarly, Lemmas \ref{outside} and \ref{angle-volume} are also valid with apparent modification in the statement.

Let $T^{x_1, \Omega;B_t(x)}$ denote the set of points $x_2$ in $M$ that lies in a curve in $\mathcal M^{x_1, \Omega}$ and intersecting $\overline{B_t(x)}$.

\begin{lemma} \label{outside'}
Suppose $x_1$ and each entry of $\Omega$ are not in $B_s(x)$. 
There are constants $k$ and $\kappa$ 
depending on $s$ with the following significance:
For $t<\kappa^{-3}$, 
the volume of $T^{x_1, \Omega;B_t(x)}$  is bounded from above by $kt^2$. 

\end{lemma}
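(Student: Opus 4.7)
The plan is to mimic the proof of Lemma \ref{outside} almost verbatim, replacing the single marked point $x_1$ by the $(l-1)$-tuple $(x_1,\Omega)$ and invoking the pencil structure of $\mathcal{M}^{x_1,\Omega}$ provided by Proposition \ref{gendirect} together with the norm comparison in Lemma \ref{distl}(1). Under the assumptions of subsection \ref{localpencil}, the unique curve $C=C_{x,x_1,\Omega}$ in the pencil passing through $x$ is a smooth rational curve in $\mathcal{M}_{irr}^{r_0}$. The twisted line bundle on which $D_C^{x_1,\Omega}$ acts has degree $e\cdot e-(l-1)=0\geq -1$, so Theorem \ref{auto-tran} yields $\operatorname{coker} D_C^{x_1,\Omega}=0$, and hence $\mathcal{M}^{x_1,\Omega}$ is a smooth one-complex-dimensional manifold near $C$ whose tangent space $\ker_{C,x_1,\Omega}$ has real dimension $2$.

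First I would observe that, since $x$, $x_1$ and each entry of $\Omega$ are pairwise at distance at least $s$, Lemma \ref{distl}(1) implies that $\nu\mapsto|\nu(x)|$ is a norm on $\ker_{C,x_1,\Omega}$ uniformly equivalent to $\sup_C|\nu|$ with constant $k_{\ref{distl}}$. Applying the exponential-map and implicit-function argument of 2.3.3 (Lemma 4.7 in \cite{T1}) would then produce $\kappa>1$ such that for $t<\kappa^{-3}$ the map $\lambda_C^{x_1,\Omega}\colon\nu\mapsto\exp_C\bigl(\nu+\phi_C^{x_1,\Omega}(\nu)\bigr)$ embeds the disk $\{|\nu(x)|\leq\kappa t\}$ in $\ker_{C,x_1,\Omega}$ onto an open subset of $\mathcal{M}^{x_1,\Omega}$ that contains $\mathcal{M}^{x_1,\Omega;B_t(x)}$, with $\|\phi_C^{x_1,\Omega}(\nu)\|_{C^0}\leq c|\nu(x)|^2$. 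Given any $x_2\in T^{x_1,\Omega;B_t(x)}$, writing $x_2=\exp_C\bigl((\nu+\phi_C^{x_1,\Omega}(\nu))(p)\bigr)$ for a suitable $\nu$ and $p\in C$ and combining the exponential-map distance bound of 2.3.3 with the norm comparisons yields
\[\operatorname{dist}(x_2,C)\leq K\bigl(\sup_C|\nu|+c\sup_C|\nu|^2\bigr)\leq 2Kk_{\ref{distl}}\kappa\,t.\]
Thus $T^{x_1,\Omega;B_t(x)}$ sits in a tubular neighborhood of $C$ of radius at most $k't$; combined with the uniform area bound $\operatorname{area}(C)\leq k_0\,(e\cdot[\omega])$ supplied by Lemma \ref{2.2}, this tube has volume at most $kt^2$, which is the desired estimate.

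The only genuinely technical point is the uniformity of the constants $\kappa$, $c$, $K$ and $k_{\ref{distl}}$ as $(x,x_1,\Omega)$ varies. This will follow from compactness: the admissible configurations $(x,x_1,\Omega)$ with $x_1,\Omega\in M\setminus B_s(x)$, pairwise distances $\geq r$ (and $\geq s$ to $x$), and $C_{x,x_1,\Omega}\in\mathcal{M}_{irr}^{r_0/2}$ form a compact subset of $M\times M^{[l-1]}$, while the associated curves lie in the compact moduli space $\mathcal{M}_{irr}^{r_0/2}$. Continuous dependence of the exponential map, of the twisted-kernel data, and of the inverses of the normal operators on these base data then forces all the constants above to be uniform, depending only on $s$ (via $r_0$, $r$, $T$ and the fixed $J$), as asserted.
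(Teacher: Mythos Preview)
Your proposal is correct and follows essentially the same approach as the paper's own proof, which also reduces to the argument of Lemma~\ref{outside} via the norm $|\nu(x)|$ on $\ker_{C,x_1,\Omega}$ and the tubular-neighborhood estimate $\operatorname{dist}(x_2,C)\le 2K k_{\ref{distl}}\kappa t$. You have simply filled in more of the details (the automatic transversality justifying smoothness of the pencil near $C$, and the compactness argument for uniformity of constants) that the paper leaves implicit.
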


\begin{proof}
The proof is similar to that of Lemma \ref{outside}. Let $C=C_{x, x_1, \Omega}$.   
Since dist$(x_1, x)\geq s$ and dist$(\Omega, x)\ge s$, $|\nu(x)|$ is a norm on the $2-$dimensional vector space ker$_{C, x_1, \Omega}$. 
Now $\mathcal M^{x_1, \Omega; B_t(x)}$ is a $2-$dimensional smooth compact manifold. As argued in Lemma \ref{outside}, $$\hbox{dist}(x_2, C) \le  2K\kappa_{\ref{distl}}\kappa t.
$$
 Then the volume of $T^{x_1, \Omega;B_t(x)}$  is bounded from above by $kt^2$.
\end{proof}

\begin{lemma}\label{angle-volume'}

Suppose  $w=x_3$ is in $\overline{B_s(x)}\setminus B_{Rt}(x)$. There is a constant $k$ depending on $s$ and $R$ with the following significance:

The volume of points $x_2$ in $M$ lying in a curve in $\mathcal M^{x_1, \Omega, r_0}$ and intersecting $B_t(x)$ is bounded from above by $k\frac{t^2}{d^2}$, where $d=dist(w, x)$.

\end{lemma}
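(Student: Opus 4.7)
The plan is to mimic the argument for Lemma \ref{angle-volume} in the $\mathbb{CP}^2$ case, replacing the role of $\tau^{x_1}$ with $\tau^{\Omega, x_1}$, since now it is the marked point $w=x_3 \in \Omega$ (rather than $x_1$) that lies close to $x$. The key point is that because $(x_1,\Omega)$ is chosen from a compact set with $\mathrm{dist}(x_i,x_j)\ge r$ and at least one of the marked points stays away from $B_s(x)$, the pencil $\mathcal{M}^{x_1,\Omega,r_0}$ is a compact $2$-dimensional family of smooth curves for which the constants in Lemma \ref{distl}(2) are uniform.

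First I would fix an adapted coordinate chart centered at $w=x_3$ identifying a neighborhood of $w$ with a ball in $\mathbb{C}^2$. Any irreducible subvariety in $\mathcal{M}^{x_1,\Omega,r_0}$ through $w$ which meets $B_t(x)$ must have its tangent line at $w$ very close to the direction from $w$ to $x$: by the straight-line approximation of Lemma \ref{4.14} (applied to $\mathcal{M}^{r_0}_{irr}$ as noted in section \ref{localpencil}), and the elementary geometric consideration (4.14) of \cite{T1}, the image $\tau^{\Omega,x_1}(\mathcal{M}^{x_1,\Omega;B_t(x)})$ is contained in a disk $\mathcal{O}^{\Omega,x_1}_{\hat\theta,\delta}\subset \mathbb{CP}^1$ of radius $\delta\le c_0 t/d$ centered at the image of the line from $w$ toward $x$.

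Next I would translate this into a volume bound on the tube $T^{x_1,\Omega;B_t(x)}\subset M$. Let $C=C_{x_1,\Omega,x}$ denote the unique smooth curve in $\mathcal{M}^{x_1,\Omega,r_0}$ passing through $x$, which lies in $\mathcal{M}^{x_1,\Omega;B_T(x)}$ by the setup of section \ref{localpencil}. Using the exponential map $\exp_C$ and the implicit function-theorem argument recalled in 2.3.3 (exactly as in the proofs of Lemma \ref{outside} and Lemma \ref{outside'}), a neighborhood of $C$ in $\mathcal{M}^{x_1,\Omega}$ is diffeomorphic to a ball in $\ker_{C,x_1,\Omega}$ via $\eta\mapsto \exp_C(\eta+\phi_{C,x_1,\Omega}(\eta))$, with the quadratic error controlled by the sup norm. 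Combined with Lemma \ref{distl}(2), each curve in $\mathcal{O}^{\Omega,x_1}_{\hat\theta,\delta}$ lies within distance $k'\delta$ of $C$ for a constant $k'$ depending on $r_0,r,s,T$ only, so $T^{x_1,\Omega;B_t(x)}$ is contained in a tubular neighborhood of $C$ of radius $\le k'\delta$.

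Finally, since the area of $C$ is bounded above by $k_{\ref{2.2}}(e\cdot[\omega])$ by Lemma \ref{2.2}, the volume of this tubular neighborhood is at most $k(k'\delta)^2\le k\, t^2/d^2$, with $k$ depending only on $s$, $R$, $r_0$, $r$, and $T$. The main technical point to check is that the constants from Lemma \ref{distl} and from the $\exp_C$ parameterization are genuinely uniform over the admissible choices of $(x_1,\Omega)$ and over $w\in \overline{B_s(x)}\setminus B_{Rt}(x)$; this is the step I would expect to be the main obstacle, but it follows from compactness of $\mathcal{M}^{r_0,r}_{irr,l}$ together with the fact that $d\ge Rt$ keeps the ``secant direction'' from $w$ to $B_t(x)$ in a compact region of $\mathbb{CP}^1$, so that Lemma \ref{4.14} applies uniformly.
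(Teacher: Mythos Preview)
Your proposal is correct and follows essentially the same approach as the paper: the paper's proof simply says to repeat the argument of Lemma \ref{angle-volume} with $\tau^{\Omega,x_1}$ in place of $\tau^{x_1}$ and Lemma \ref{distl}(2) in place of Lemma \ref{dist}(2), which is precisely what you do. Your write-up is more detailed than the paper's one-line proof, but the logic---adapted chart at $w=x_3$, the (4.14) secant-direction estimate giving $\delta\le c_0 t/d$, and the tube bound via Lemma \ref{distl}(2)---matches exactly.
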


\begin{proof}
The proof is identical to that of Lemma \ref{angle-volume}, 
with the discussion on the map $\tau^{\Omega, x_1}$ (notice $w=x_3$ is 
the first entry of the superscript) and  Lemma \ref{distl}(2) in place of Lemma \ref{dist}(2).
\end{proof}

\subsubsection{Upper bound for a big $J-$nef class}
Now we denote the center of $B$ in definition \ref{tcdef} by $x$ where $x$ is any point in $M$ and denote the ball by $B_t(x)$.

\begin{prop} \label{upper}
Let  $e$ be a big $J-$nef  class in $S_{K_J}$.  
The current $\Phi_e$  satisfies the upper bound in \eqref{leqgeq}.
\end{prop}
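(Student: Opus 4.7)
The plan is to adapt the three-regime measure estimate from the line-class case (Proposition \ref{upper'}) to the higher-dimensional setting, leveraging the abundance of pencils provided by Proposition \ref{gendirect} and the volume estimates from Section \ref{localpencil}.

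First, I would fix $x \in M$, choose adapted coordinates $(z,w)$ centered at $x$, and reduce, exactly as in the proof of Proposition \ref{upper'}, to showing that $\Phi_e(i\chi_t\, dz\wedge d\bar z) \leq k t^4$ for sufficiently small $t$. Unpacking the definition of $\Phi_e$, this quantity is at most
\[
k_0 \int_{\eta \in \pi_l(\mathcal{M}_{irr,l}^{r_0,r})} \Bigl(\int_{C_\eta} \chi_t\,\omega\Bigr).
\]
The inner integral vanishes unless $C_\eta \cap B_t(x) \neq \emptyset$; when nonzero, Lemma \ref{2.2} bounds it by $(e\cdot [\omega])\, k\, t^2$. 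Thus it suffices to bound by $O(t^2)$ the Lebesgue measure of the set
\[
V_t := \bigl\{\eta = (x_1,\ldots,x_l) \in \pi_l(\mathcal{M}_{irr,l}^{r_0,r}) : C_\eta \cap B_t(x) \neq \emptyset \bigr\}.
\]

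To estimate $\mathrm{Vol}(V_t)$, I treat $x_2$ as the varying point, regard $(x_1, x_3, \ldots, x_l)$ with $\Omega := (x_3,\ldots,x_l)$ as pencil-defining data, and observe that $C_\eta \cap B_t(x) \neq \emptyset$ if and only if $x_2 \in T^{x_1,\Omega;B_t(x)}$. I then split the integration over $(x_1,\Omega)$ into three regimes, in exact parallel with the line-class proof:
\begin{itemize}
\item (Far regime) All of $x_1, x_3, \ldots, x_l$ lie in $M \setminus B_s(x)$: Lemma \ref{outside'} gives $\mathrm{Vol}(T^{x_1,\Omega;B_t(x)}) \leq k_1 t^2$, and integrating the pencil-defining points over $M$ contributes only a constant factor, yielding an $O(t^2)$ contribution.
\item (Intermediate regime) Some $x_j$ (with $j \in \{1,3,\ldots,l\}$) lies in $B_s(x) \setminus B_{Rt}(x)$: by relabeling so that $x_j$ plays the role of $w$, Lemma \ref{angle-volume'} yields $\mathrm{Vol}(T^{x_1,\Omega;B_t(x)}) \leq k_2 t^2 / d^2$ with $d = \mathrm{dist}(x_j,x)$. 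Polar integration over $B_s(x)$ gives $\int_{B_s(x)} d^{-2}\,dV \leq k_3 s^2$, and the remaining $l-2$ points contribute bounded factors, leading to $O(t^2)$.
\item (Close regime) Some $x_j$ lies in $B_{Rt}(x)$: then already $\mathrm{Vol}(B_{Rt}(x)) = O(t^4)$, and the remaining integrations are over bounded sets, so this regime contributes $O(t^4)$.
\end{itemize}
Summing, $\mathrm{Vol}(V_t) = O(t^2)$, and combining with the $O(t^2)$ per-curve bound yields $\Phi_e(i\chi_t\,dz\wedge d\bar z) \leq k t^4$, as required.

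The principal obstacle I anticipate is the intermediate regime. Lemma \ref{angle-volume'} is stated with a single distinguished point $w = x_3$, so to exploit it for an arbitrary $x_j$ in $B_s(x) \setminus B_{Rt}(x)$ I must verify that the norm equivalences in Lemma \ref{distl} and the pencil diffeomorphism $\tau$ are symmetric in the roles of the $l-1$ pencil-defining points. This in turn requires that the pretty-generic constraint is stable under permuting which point plays the basepoint role, which follows from Proposition \ref{complement} (the non-generic locus has positive codimension) and the fact that $\pi_l(\mathcal{M}_{irr,l}^{r_0,r})$ is $\mathrm{Sym}_l$-invariant up to a measure-zero discrepancy. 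A secondary technical point, which I would handle at the outset, is that the parameter $r_0$ in the definition of $\Phi_e$ keeps all pencils involved bounded away from $\mathcal{M}_{red}$, so the local pencil picture of Section \ref{localpencil} applies uniformly in the above case analysis.
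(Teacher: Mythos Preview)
Your proposal is correct and follows essentially the same approach as the paper: the same reduction to a volume estimate for $V_t$, the same three-regime decomposition (far, intermediate, close) according to the position of the pencil-defining points relative to $B_s(x)$ and $B_{Rt}(x)$, and the same invocations of Lemmas \ref{outside'} and \ref{angle-volume'}. Your discussion of the relabeling needed to apply Lemma \ref{angle-volume'} to an arbitrary $x_j$ is in fact more explicit than the paper's own treatment, which simply writes ``say $w$'' and refers back to part II of Proposition \ref{upper'}.
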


\begin{proof} 
Choose $s$ as in the beginning of this section.

Fix $x\in M$ and adapted coordinates $(z, w)$ centered at $x$ with radius $s$. 

Let $0<t<10^{-5}  s$.

As in \cite{T1}, we only need to prove  $\Phi_e(i\chi_tdz\wedge d\bar z)<kt^4$. 
 Let us denote and group the $l$ points by $x_1$ and $x_2$ and $\Omega=(x_3, \cdots, x_l)$. Moreover, $\Phi_e(i\chi_tdz\wedge d\bar z)$ is no greater than
$$k_0\int_{\eta=(x_1, x_2, \Omega)\in \pi_l(\mathcal M_{irr, l}^{r_0, r})}(\int_{ C_{\eta}}\chi_t\omega)
$$

Notice that  $\int_{ C_{\eta}}\chi_t\omega=0$
if  $C_{\eta}\cap B_t(x)= \emptyset$.   
If  $C_{\eta}\cap B_t(x)\neq \emptyset$, then $C_{\eta}\cap B_t(x)$ is contained in a ball
of radius of $2t$ centered at some point in $B_t(x)$. 
By Lemma \ref{2.2},   the integrand $\int_{ C_{\eta}}\chi_t\omega$ is bounded by $H\cdot [\omega] k t^2$.

Thus it suffices to prove that the volume of the set
\begin{equation} \label{volume of pairs} \{\eta=(x_1, x_2, \Omega)\in \pi_l(\mathcal M_{irr, l}^{r_0, r}),  C_{\eta}\cap B_t(x)\neq \emptyset\}
\end{equation}
is $O(t^2)$.

We choose $\Omega$, $x_1$ and $x_2$ in turns. 

We have three cases depending on the positions of $\Omega$ and $x_1$. 

I. The first case is that $x_1$ and each element of $\Omega$ are all away from $B_s(x)$. 

Since our moduli space for integration is $\mathcal M_{irr, l}^{r_0, r}$, and $x_1\in M\setminus B_s(x)$, $\Omega\in \{(x_3, \cdots, x_l)| x_i\in M\setminus B_s(x)\}$.  The corresponding upper bounds for these two factors are $\hbox{Vol}(M)$ and $\hbox{Vol}(M)^{l-2}$ respectively.

For those $\Omega$ and $x_1$ contributing to the integration, 
$\mathcal M^{\Omega, x_1, x'}\subset \mathcal M^{r_0}_{irr}$ for some $x'\in \overline{B_t(x)}$. 
 Thus by choosing $t<T$, for any $z\in \overline{B_t(x)}$, the unique smooth rational curve $C_{\Omega, x_1, z}\in \mathcal M^{\frac{r_0}{2}}_{irr}$.

Now we estimate the possible choices of $x_2$. 
By the above picture, the part of $\mathcal M^{\Omega, x_1; B_t(x)}$ is a pencil.
By Lemma \ref{outside'}, the volume of $x_2$ is bounded from above by $k_1t^2$. This constant $k_1$ could be chosen uniformly since the possible set of $\Omega$ and $x_1$ is a closed, then compact, subset of $M^{[l-1]}$. The factors $\hbox{Vol}(M)^{l-2}$, $\hbox{Vol}(M)$ and $k_1t^2$, multiply to an upper bound of $O(t^2)$ for the volume of the subset in (\ref{volume of pairs}) with $x_1$ and each element of $\Omega$ in $M\setminus B_s(x)$.

II. The second case is when $x_1$ or some entry of $\Omega$, say $w$, satisfies $Rt<\hbox{dist}(w,x)<s$, where  $R=10^5$. The proof is almost identical to part II of Proposition \ref{upper'} but invoking Lemma \ref{angle-volume'} instead.

III. The last case is when any entry of $\Omega$ or $x_1$ is in $B_{Rt}(x)$. This is exactly the last case of Proposition \ref{upper'}. 

Summing  the three cases, we finish the proof.
\end{proof}



\subsubsection{Taubes current from  a $J-$ample class}


\begin{prop} \label{positive} Let $e$ be a $J-$ample  class $e$ in $S_{K_J}$. 
Then  the current $\Phi_e$ is a Taubes current, i.e. it  satisfies \eqref{leqgeq}.
Consequently,  there is an almost K\"ahler form in the same class.
\end{prop}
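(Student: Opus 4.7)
The upper bound in \eqref{leqgeq} is already provided by Proposition \ref{upper}, which applies to any big $J$-nef class in $S_{K_J}$. The almost K\"ahler conclusion then follows from Proposition \ref{C-F} once \eqref{leqgeq} is established, since rational manifolds have $b^+=1$. So the only task is to verify the lower bound $\Phi_e(if_{B_t(x)}\sigma\wedge\bar\sigma)>k^{-1}t^4$, with $k$ independent of $x\in M$ and of small $t$. The plan is to carry out a direct analogue of the argument in Proposition \ref{positive'}, replacing the role of the two marked points by $(x_1,x_2,\Omega)$ with $\Omega=(x_3,\dots,x_l)$ a pretty-generic $(l-2)$-tuple, and exploiting the abundance of pencils established in Section 4.

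First I would fix $x\in M$ and an adapted chart $(z,w)$ centered at $x$ of radius $s$, as in the setup of Section 5.1. The $J$-ample hypothesis gives $M(e)=M$, so Propositions \ref{complement} and \ref{gendirect} apply at every $x\in M$; by compactness of $M$, the requisite constants $r_0,r,s,T$ may be chosen uniformly. I would then select $\Omega=(x_3,\dots,x_l)$ from a compact subset of $G_e^x$ whose entries all lie in $M\setminus B_s(x)$ and are pairwise separated by at least $r$; Proposition \ref{complement} guarantees such a compact family of positive measure, bounded below by $c_0\operatorname{Vol}(M)^{l-2}$ uniformly in $x$. For such $\Omega$ the moduli space $\mathcal M_e^{x,\Omega}$ is a pencil, parametrized by $\mathbb{CP}^1$ via $\tau^{x,\Omega}$ (Propositions \ref{H'} and \ref{gendirect}), and finitely many reducible curves are removed by passing to a disk $\mathcal C_{x,\Omega}\subset\mathcal M_e^{x,\Omega}$ corresponding to a neighborhood in $\mathbb{CP}^1$ centered at the image of $(1,0)$, chosen small enough that $|dz|\ge\epsilon$ on $C\cap B_s(x)$ for every $C$ in the disk (using Lemma \ref{curves in a ball}), and such that the corresponding curves are at distance $\ge r_0$ from $\mathcal M_{red}$.

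Next I would select $x_1$ so that the distinguished curve $C_{x,x_1,\Omega}\in\mathcal M_e^{x,\Omega}$ lies in $\mathcal C_{x,\Omega}$ and so that $x_1\in B_s(x)\setminus B_{s/2}(x)$; again Lemma \ref{curves in a ball} shows that the set of admissible $x_1$ has volume bounded below by $c_\epsilon s^4$ uniformly. With $x_1$ and $\Omega$ fixed, the pencil $\mathcal M^{x_1,\Omega}$ contains the subfamily $\mathcal M^{x_1,\Omega;B_{t/2}(x)}$ of smooth irreducible curves through $x_1,\Omega$ meeting $B_{t/2}(x)$; since $\mathrm{dist}(x_1,x)\ge s/2$ and $\Omega$ is away from $B_s(x)$, the pointwise norm $|\nu(x)|$ is a norm on $\ker_{C,x_1,\Omega}$ (Lemma \ref{distl}(1)), and the implicit-function argument of Section 3 shows that any $x_2\in B_{s/4}(x)$ with $\mathrm{dist}(x_2,C_{x,x_1,\Omega})<t/8$ lies on some $C\in\mathcal M^{x_1,\Omega;B_{t/2}(x)}$. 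Thus the set of valid $x_2$ has volume bounded below by $k_1s^2t^2$ by the lower area bound of Lemma \ref{2.2} applied to $C_{x,x_1,\Omega}\cap B_{s/4}(x)$. Finally, because every corresponding curve $C=C_{x_1,x_2,\Omega}$ still has $|dz|\ge\epsilon/2$ on $C\cap B_s(x)$ (Lemma \ref{curves in a ball}), the inner integral obeys $\int_C if_{B_t(x)}dz\wedge d\bar z\ge(\epsilon/2)^2a_x(t)\ge k_2^{-1}t^2$ by Lemma \ref{2.2}. Multiplying the four factors yields $\Phi_e(if_{B_t(x)}dz\wedge d\bar z)\ge k^{-1}t^4$.

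The main obstacle I anticipate is ensuring that every constant above ($r_0,r,s,T,\epsilon,c_\epsilon,k_1,k_2$) is truly independent of the base point $x$. This reduces to three compactness assertions: compactness of $M$ itself (for the pretty-generic choice of $\Omega$ via Proposition \ref{complement}), uniformity of the constant in Lemma \ref{curves in a ball} (already noted in Section 5.1.2 using compactness of the relevant closed subsets of $\mathcal M_{irr,l}^{r_0,r}$), and the fact that when $e$ is $J$-ample every choice can be made at every $x$ without degeneration of the pencil. The $J$-ampleness hypothesis is precisely what makes this uniformity step routine rather than delicate; in the merely big $J$-nef case the constants would blow up as $x$ approaches the vanishing locus $Z(e)$, which is why that case requires the ``weak spherical current'' refinement treated later in Section 5.
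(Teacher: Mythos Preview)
Your proposal is correct and follows essentially the same approach as the paper's proof: cite Proposition \ref{upper} for the upper bound, then establish the lower bound by choosing $\Omega$ pretty generic away from $B_s(x)$, then $x_1\in B_s(x)\setminus B_{s/2}(x)$ so that $C_{x,x_1,\Omega}$ lies in a prescribed compact disk $\mathcal C_{x,\Omega}\subset\mathcal M^{x,\Omega}_{irr}$, then $x_2$ in a tube about $C_{x,x_1,\Omega}$, multiply the four volume/integral factors, and finish with Proposition \ref{C-F}. Your explicit discussion of the uniformity of constants via compactness (and the role of $J$-ampleness in making $M(e)=M$) is slightly more detailed than the paper's, but the argument is the same.
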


\begin{proof}

Thanks to Proposition \ref{upper}, we
  only prove the lower bound $k^{-1}t^4<\Phi_e(f_{B_t(x)}i\sigma\wedge \bar\sigma)$ here. As in \cite{T1}, we prove  $k^{-1}t^4<\Phi_e(if_{B_t(x)}dz\wedge d\bar z)$. Let us denote the $l$ points by $x_1$, $x_2$ and $\Omega=(x_3,\cdots, x_l)$. If $l=2$, we only have $x_1$ and $x_2$. 
  Let  $0<t<10^{-5}s$.

The main picture to have is Lemma \ref{curves in a ball}, applied to $B_s(x)$. Namely, inside $B_s(x)$, the curves behave as straight lines with respect to the adapted coordinates. 

Since we are estimating the lower bound, in addition to choosing $\Omega$ outside a 
small open neighborhood $\mathcal{OB}(\mathcal A)$ of the measure zero set $\mathcal A$,
 we also know that each entry of $\Omega$ is away from $B_{\frac{s}{2}}(x)$ and each entry
 of $\Omega$ is at least of distance $r$ from each others. This set of $\Omega$ is compact in 
$M^{[l-2]}$. To summarize, by choosing $s$ small, all such $\Omega$ constitute a compact set of volume no smaller than $$(\hbox{Vol}(M)-(l-2)\max_{x\in M} \hbox{Vol}(B_{\frac{s}{2}}(x)))^{l-2}-\hbox{Vol}(\mathcal{OB}(\mathcal A))>(\frac{\hbox{Vol}(M)}{2})^{l-2}.$$

 Before making choices of $x_1$ and $x_2$, we digress to choose a compact submanifold $\mathcal C_{x, \Omega}\subset \mathcal M^{x,\Omega}_{irr}$.
 
 By Proposition \ref{gendirect} and Lemma \ref{finite}, for a pretty generic $\Omega$, except for finitely many complex directions in $T_xM$, 
there is a smooth rational curve passing through this direction and $\Omega$. Recall that $\mathcal M^{x, \Omega}$ is a pencil, and identified with $\mathbb C \mathbb P^1$ via $\tau^{x, \Omega}$. Then the set $\mathcal C_{x, \Omega}$ is characterized by the following two properties: 
\begin{itemize}
\item Its image under $\tau^{x, \Omega}$ is contained in a disk $|dz|(\cdot)\ge 2\epsilon$ in $\mathbb C \mathbb P^1$;
\item Any curve $C\in \mathcal C_{x, \Omega}$ has distance at least $r$ from $\mathcal M_{red, e}$.
\end{itemize}

It is a (nonempty) compact submanifold of $\mathcal M^{x, \Omega, r_0}_{irr}$ (of real dimension two). Recall we choose $r_0$ small enough such that $\mathcal M^{x, \Omega, r_0}_{irr}=\mathcal M_{irr}^{x, \Omega}\cap \mathcal M_{irr}^{r_0}\neq \emptyset$. Moreover, for $C$ chosen from $\mathcal C_{x, \Omega}$, the restriction of $|dz|$ to $C\cap B_s(x)$ is greater than $\epsilon$ when $s$ is chosen sufficiently small.

Now, let us choose $x_1$. Again, we choose $x_1\in B_s(x)$ away from $B_{\frac{s}{2}}(x)$ and $B_{\frac{s}{2}}(x_i)$ where $x_i$'s are entries of $\Omega$. Additionally, we choose $x_1$ such that the rational curve $C_{x_1, x, \Omega}$ determined by $x$, $\Omega$ and $x_1$ is contained in the compact submanifold $\mathcal C_{x, \Omega}\subset \mathcal M^{x, \Omega}_{irr}$ specified above.  By Lemma \ref{finite},  the choices of $x_1$ constitute a compact set with nonzero volume, say $c_{\Omega}s^4$.

Now, with  $x_1$ fixed, we consider the set of $x_2$ in $B_{\frac{s}{4}}(x)$, 
for which $\eta=(x_1, x_2, \Omega)$ contributes to $\Phi_e(if_{B_t(x)}dz\wedge d\bar z)$,
 namely $C_{x_1, x_2, \Omega}$ intersects $B_t(x)$. This part of argument is 
identical to the corresponding part in Proposition \ref{positive'} with $C_{x_1, x_2, \Omega}$ and $\pi_l$ 
appearing in  place of $C_{x_1, x_2}$ and $\pi_2$. We have the lower bound $k_1s^2t^2$. 

Let $C=C_{x_1, x_2, \Omega}$, then $\int_C(if_{B_t(x)}dz\wedge d\bar z)\ge k_2^{-1}t^2$ by Lemma \ref{2.2} as in Proposition \ref{positive'}.

Multiplying these four factors: $(\frac{\hbox{Vol}(M)}{2})^{l-2}$, $c_{\Omega}s^4$, $k_1s^2t^2$ and $k_2^{-1}t^2$ together, we get $k^{-1}t^4<\Phi_e(if_{B_t(x)}dz\wedge d\bar z)$. These constants 
$c_{\Omega}$, $k_1, k_2$ are  independent of $t$ as one can check from the proof. 
The constant $k$ could be chosen universal by the same reasoning in Proposition \ref{positive'}.

The last statement follows from Proposition \ref{C-F}.
\end{proof}


\subsubsection{Weak Taubes current from a big $J-$nef class}

If $e$ is big $J-$nef but not $J-$ample, $\Phi_e$ is not a Taubes current, since 
   no irreducible curves in class
$e$ pass through  points in the vanishing locus $Z(e)$.  
Nonetheless, the following observation will be very useful.

\begin{prop} \label{non-negative} Let  $e$ be a big $J-$nef class.
Then the current $\Phi_e$ is non-negative, and over any ($4-$dimensional) compact submanifold $K$ of the complement $M(e)$, it satisfies \eqref{leqgeq} for a constant $k>1$ depending only on $K$. 
\end{prop}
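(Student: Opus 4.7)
Non-negativity of $\Phi_e$ is immediate from its construction in subsection \ref{Scurrent}. Each $\phi_\eta$ is the current of integration over the irreducible $J$-holomorphic curve $C_\eta = \pi_l^{-1}(\eta)$, hence a closed, non-negative, $J$-invariant current; and $\Phi_e$ is obtained by integrating these over $\eta \in \pi_l(\mathcal M_{irr, l}^{r_0, r})$, an operation which preserves non-negativity. The upper bound in \eqref{leqgeq}, valid for all $x \in M$ with a constant $k$ depending only on $J$, $\omega$, $s$, $r_0$, $r$ (and not on $x$), is precisely the conclusion of Proposition \ref{upper}.

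The remaining task is to establish the lower bound $k^{-1} t^4 < \Phi_e(i f_{B_t(x)} \sigma \wedge \bar\sigma)$ for $x \in K$ with $k$ depending only on $K$. The plan is to rerun the argument of Proposition \ref{positive}, which treated the $J$-ample case, and to verify that every constant entering the proof can be chosen uniformly in $x \in K$. Recall that this proof produces a lower bound as a product of four factors: the volume $(\tfrac{1}{2}\mathrm{Vol}(M))^{l-2}$ of admissible pretty generic tuples $\Omega$, the volume $c_\Omega s^4$ of admissible base points $x_1$, the volume $k_1 s^2 t^2$ of $x_2$ inside a narrow tube around the curve $C_{x_1, x, \Omega}$, and the integral $k_2^{-1} t^2$ of $i \chi_t\, dz \wedge d\bar z$ along that curve. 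For $x$ ranging over the compact subset $K \subset M(e)$, each of these steps makes sense: the pencil description $\mathcal M_e^{x, \Omega} \cong S^2$ from Proposition \ref{H'}, the tangent map $\tau^{x, \Omega}$ from Proposition \ref{gendirect}, and the set $G_e^x$ from Proposition \ref{complement} all depend continuously on $x$ as long as $x$ stays away from the vanishing locus $Z(e)$. Since $K \cap Z(e) = \emptyset$ and $K$ is compact, one can choose common parameters $s, r, r_0, T, \epsilon$ and common compact families of admissible $\Omega$ and $x_1$ across all $x \in K$, so that the four factors above admit uniform lower bounds.

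The main obstacle is precisely this uniformity. Specifically, the non-pretty-generic set $\mathcal A_x \subset M^{[l-2]}$ of Definition \ref{pretty generic} varies with $x$, and one must check that a single open neighborhood $\mathcal U \supset \bigcup_{x \in K} \mathcal A_x$ can be chosen with $\mathrm{Vol}(\mathcal U) < 10^{-5l} \mathrm{Vol}(M)$. This reduces, via Proposition \ref{complement} and the upper-semicontinuity of $\mathcal M_{red, e}^x$ in $x$, to a standard compactness argument over $K$. Granted this uniform choice, the same compactness establishes that the constant $c_\Omega$ in $\mathrm{Vol}(\{x_1\}) \geq c_\Omega s^4$ and the constants $k_1, k_2$ in the subsequent estimates are bounded below and above uniformly in $x \in K$, yielding the claimed lower bound with $k$ depending only on $K$ (and on $J$, $e$).
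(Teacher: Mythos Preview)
Your proposal is correct and follows essentially the same approach as the paper: non-negativity by construction, the upper bound from Proposition \ref{upper}, and the lower bound by rerunning the proof of Proposition \ref{positive} with uniformity in $x\in K$ obtained by compactness. The paper makes one point slightly more concrete than you do: rather than arguing abstractly about ``common compact families of admissible $\Omega$ and $x_1$,'' it fixes an auxiliary $4$-dimensional compact submanifold $K'\subset M(e)$ with $K\subsetneq K'$ and $B_s(p)\subset K'$ for every $p\in K$, and then simply draws $\Omega$ from $K'^{[l-2]}$ and $x_1$ from $K'$ in place of $M^{[l-2]}$ and $M$; this single device simultaneously handles the uniformity of all four factors you list, and in particular absorbs your discussion of $\bigcup_{x\in K}\mathcal A_x$ (which the construction in \S\ref{Scurrent} already anticipates by defining $\mathcal A$ relative to a compact $K\subset M(e)$).
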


\begin{proof} $\Phi_e$ is a non-negative current 
by definition, and the upper bound is from Proposition \ref{upper}.  
We only need to prove that it is bounded from below by $k^{-1}t^4$ 
on any compact submanifold $K\subset M(e)$. The proof  is almost identical to the proof of 
Proposition \ref{positive}. 
 Notice that all the relevant results in Section $4$ are 
established for $x\in M(e)$. Hence the proof goes almost verbatim as that of Proposition \ref{positive}
 when $x$ is chosen from a compact submanifold $K\subset M(e)$, and $\Omega$, $x_1$ chosen from 
$K'^{[l-2]}$, $K'$ (instead of from $M^{[l-2]}$ and $M$) respectively. Here $K'$ is another ($4-$dimensional) compact submanifold of $M(e)$ such that $K\subsetneq K'$, which satisfies $B_s(p)\subset K'$ if $p\in K$.
\end{proof}

We call a current in Proposition \ref{non-negative} a {\it weak Taubes current}. By summing up weak Taubes currents with disjoint zero locus, we obtain Taubes currents.

\begin{prop}\label{form}
Let $e_i$ be big $J-$nef  classes in $S_{K_J}$ and denote $Z_i$ the zero locus of $e_i$. 
If $\cap Z_i =\emptyset$, then there is a Taubes current in the class $e=\sum_i a_ie_i$, with $a_i>0$. In turn, 
we obtain an almost K\"ahler form in the  class $e$.
\end{prop}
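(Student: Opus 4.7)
The plan is to form the candidate current
\[
T = \sum_i a_i\, \Phi_{e_i}
\]
and show, using the hypothesis $\cap_i Z_i = \emptyset$, that it satisfies both bounds in \eqref{leqgeq}; then invoke Proposition \ref{C-F} to promote $T$ to an almost K\"ahler form. The $a_i$ will be adjusted by a bounded positive factor to guarantee $[T] = e$, which is possible because each $\Phi_{e_i}$ is, by its construction as an integration over the moduli space in \ref{Scurrent}, a non-zero positive multiple of $e_i$ in cohomology; closedness, non-negativity, and $J$-invariance of $T$ are then immediate from the corresponding properties of each $\Phi_{e_i}$.

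For the upper bound, I apply Proposition \ref{upper} to each summand: there are constants $k_i$ with $\Phi_{e_i}(i f_{B_t(x)}\sigma\wedge\bar\sigma) < k_i t^4$ uniformly in $x$ and for small $t$, so
\[
T(i f_{B_t(x)}\sigma\wedge\bar\sigma) < \Bigl(\sum_i a_i k_i\Bigr) t^4
\]
for all $x\in M$ and small $t$. This step is routine once $T$ is written down.

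The real issue is the lower bound, since no single $\Phi_{e_i}$ has one on all of $M$: each fails exactly on $Z_i$. Here is where the hypothesis $\cap_i Z_i = \emptyset$ enters. Since the index set is finite (only finitely many $e_i$ appear) and each $M(e_i)$ is open, the condition $\cap Z_i = \emptyset$ says the open sets $\{M(e_i)\}_i$ form a cover of the compact manifold $M$. Shrinking slightly, I choose compact $4$-dimensional submanifolds-with-boundary $K_i \subset M(e_i)$ whose interiors still cover $M$, and then choose $\delta>0$ (by a Lebesgue number argument) so that every ball $B_\delta(x)\subset M$ lies inside some $K_{i(x)}$. Proposition \ref{non-negative} applied to the compact set $K_i$ (with a slightly larger $K'_i \subset M(e_i)$ as in its proof) supplies constants $k_i'>1$ with $\Phi_{e_i}(if_{B_t(x)}\sigma\wedge\bar\sigma) > (k_i')^{-1} t^4$ whenever $x\in K_i$ and $t$ is small. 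Taking $k = \max_i k_i'/a_i$ and using that every other $\Phi_{e_j}$ contributes non-negatively, one gets $T(if_{B_t(x)}\sigma\wedge\bar\sigma) \geq a_{i(x)} (k_{i(x)}')^{-1} t^4 \geq k^{-1} t^4$ uniformly in $x$ for $t< \delta$ (and small enough to fit the local estimates of Proposition \ref{non-negative}).

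With both bounds in hand, $T$ is a Taubes current in class $e$. Since rational manifolds have $b^+ = 1$, Proposition \ref{C-F} produces an almost K\"ahler form in the same cohomology class $e$. The main obstacle is the lower bound step; the subtlety is not conceptual but organizational, namely ensuring the local constants from Proposition \ref{non-negative} can be combined into a single global constant, which is exactly what the finite cover by the $K_i$ and the Lebesgue number $\delta$ are designed to deliver.
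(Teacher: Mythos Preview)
Your proposal is correct and follows exactly the approach the paper intends: the paper states Proposition~\ref{form} without a detailed proof, merely remarking beforehand that ``by summing up weak Taubes currents with disjoint zero locus, we obtain Taubes currents.'' Your argument is a faithful and complete fleshing-out of that sentence, including the rescaling to hit the exact class $e$, the use of Proposition~\ref{upper} for the upper bound, and the Lebesgue-number/compact-cover argument to assemble the local lower bounds from Proposition~\ref{non-negative} into a global one.
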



\subsection{Tamed versus compatible}


\subsubsection{Proof of Theorem \ref{s2bundle-1}}

\begin{proof} Given any  $J$ on $S^2-$bundle over $S^2$ 
it follows from Lemma \ref{s2bundle-nef} that there is always a $J-$ample class. Now the conclusion follows from 
Proposition \ref{positive}. 
\end{proof}
\subsubsection{Tameness and foliation}

Now we have shown that every tamed $J$ on an $S^2-$bundle over $S^2$ is almost K\"ahler,
a related question is when  $J$ is tamed.  

Suppose $M$ is an $S^2-$bundle over $S^2$ and $J$ is an almost complex structure on $M$, not necessarily tamed. 
Then there is a $J-$nef class in $S^0_{K_J}$.  
If $J$  is tamed, by  Proposition \ref{existence},  $e$  is $J-$effective, and moreover, there is a $J-$holomorphic
foliation by smooth rational curves. We would like to know whether the converse is also true.

\begin{question}
For an $S^2-$bundle over $S^2$,  suppose   there is a $J-$holomorphic
foliation by smooth rational curves.    
Is $J$ tamed?

\end{question}

Note that if we assume further that $J$ is fibred, Gompf's construction in  \cite{Gpf} produces a tamed symplectic form.  

Here is an analogous question for $\mathbb C \mathbb P^2$.

\begin{question}
For $\mathbb C \mathbb P^2$,  
$J$ is tamed if and only if there is a pencil of smooth rational curves?
\end{question}


\subsubsection{$\mathbb C \mathbb P^2\# k\overline {\mathbb C \mathbb P^2}$ with $k\geq 2$}
From now on  we sometimes denote  $\mathbb C \mathbb P^2\# k\overline {\mathbb C \mathbb P^2}$ by $M_k$.
For $M_k$ with $k\geq 2$, there are no $J-$ample classes in $S_{K_J}$.  So we apply  Proposition \ref{form} to construct Taubes currents.

Let $H, E_i$ be an orthogonal basis of $H_2(M_k;\mathbb Z)$ with
$H^2=1, E_i^2=-1$. Such a basis is called a standard basis.

Given any tamed almost complex structure $J$, there is a standard basis 
such that $K_J=-3H+\sum E_i$, called a standard basis adapted to $J$. 
This follows from the uniqueness of symplectic canonical classes, up to diffeomorphisms. 
Notice that  $E_i\in\mathcal E_{K_J}$ and  $H\in S^+_{K_J}$.
Hence $H$ is $J-$effective. And it pairs positively with any $J-$tamed symplectic form. 

Suppose $J$ is tamed and  there is a configuration  of $k$ disjoint  $-1$ curves $C_i$.
Notice that we have a standard basis $(H, E_1, \cdots, E_i)$ adapted to $J$, with $E_i=[C_i]$ and 
$H$  the unique square 1 class with $H\cdot K_J<0$ and $H\cdot E_i=0, i=1, \cdots, k$.

\begin{lemma} \label{package}
Suppose $J$ is tamed and  there is a configuration  of $k$ disjoint  $-1$ curves $C_i$.
Then  the classes $H$, $2H$, $nH-(n-1)E_i$,  $nH-(n-1)E_i-E_j$ and $H-E_i$ in $S_{K_J}$ are $J-$nef.
\end{lemma}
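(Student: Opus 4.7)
The plan is to reduce everything to nefness of the two basic families $H$ and the $H-E_i$, and then establish those by combining positivity of intersections with the $-1$ curves, the $K_J$-adjunction observation from Section 4.4.3, and tameness of $J$.

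For the reduction, one computes
\begin{equation*}
2H = 2\cdot H,\qquad nH-(n-1)E_i = H + (n-1)(H-E_i),
\end{equation*}
\begin{equation*}
nH-(n-1)E_i-E_j = (H-E_j) + (n-1)(H-E_i)\quad (i\neq j),
\end{equation*}
so each of the remaining classes is a non-negative integer combination of $H$ and the $H-E_i$'s, and non-negative combinations of $J$-nef classes are $J$-nef. Thus it suffices to treat $H$ and each $H-E_i$.

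To treat these, I would fix an irreducible $J$-holomorphic subvariety $C$ with class $e_C=aH-\sum_j b_j E_j$ and split into two cases. First, if $e_C=E_{j_0}$ for some $j_0$, then $H\cdot e_C=0$ and $(H-E_i)\cdot E_{j_0}=\delta_{ij_0}\geq 0$, so both pairings are automatically non-negative. Otherwise $C\neq C_j$ for every $j$, and positivity of intersections of distinct irreducible subvarieties yields $b_j=[C]\cdot[C_j]\geq 0$ for all $j$. In this second case I argue by contradiction: if $H\cdot e_C<0$, then $a<0$ directly; if $(H-E_i)\cdot e_C<0$, then the adjunction-based observation in Section 4.4.3 (which rules out $a>0$ under these conditions via the bound $e_C\cdot e_C + K_J\cdot e_C\geq -2$) forces $a\leq 0$. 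Now pick any taming form $\omega$ and write $[\omega]=\alpha H-\sum_j \beta_j E_j$. Since $H$ is $J$-effective by Proposition \ref{existence'} and each $C_j$ represents $E_j$, tameness yields $\alpha=H\cdot[\omega]>0$ and $\beta_j=E_j\cdot[\omega]>0$. Combining with $a\leq 0$, $b_j\geq 0$, and $e_C\neq 0$, one obtains
\begin{equation*}
[\omega]\cdot e_C = a\alpha - \sum_j b_j\beta_j < 0,
\end{equation*}
contradicting the fact that $\omega$ tames the $J$-holomorphic curve $C$.

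The main obstacle is only bookkeeping. The case $e_C=E_j$ must be handled by hand since the positivity-of-intersections step does not apply when $C=C_j$, and the final inequality must be genuinely strict, which is guaranteed by $e_C\neq 0$ (so either $a<0$ or some $b_j>0$). The deeper ingredients -- the $K_J$-adjunction criterion and the effectivity of $H$ and the $E_j$'s -- are already in place in Section 4.4.3 and Proposition \ref{existence'}, so no new analytic input is needed.
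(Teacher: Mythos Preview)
Your proof is correct. Both arguments use the same ingredients---positivity of intersections with the $C_j$, the adjunction inequality, and positivity of $[\omega]$ on effective classes---but organize them differently. The paper first shows directly that for any irreducible $C\neq C_j$ with $[C]=aH-\sum b_iE_i$ one has $a>0$ (from taming) and then $a\geq b_i$ for every $i$ (from adjunction, via the identity $K_J\cdot C+C^2=(a-b_1)(a+b_1-1)-2a-\sum_{i\geq 2}(b_i^2-b_i)$), after which each listed class is checked by hand. You instead reduce everything to the two generators $H$ and $H-E_i$ and argue by contradiction, quoting the lemma of Section~4.4.3 as a black box for the $H-E_i$ case. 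Your reduction is clean and avoids redoing the adjunction computation; the paper's route has the advantage of establishing the uniform inequality $a\geq b_i$, which is a slightly stronger intermediate fact and is reused later (e.g.\ in the description of the vanishing locus $Z(2H-E_i)$ in the proof of Theorem~\ref{k curves}).
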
 
\begin{proof} Let $C$ be an irreducible curve distinct from any $C_i$, and suppose 
$[C]=aH-\sum_ib_iE_i$.
Since  $C_i$ is  irreducible,  $b_i\ge 0$ by the positivity of intersection.  Since any $J-$tamed $\omega$ is positive on $H$, 
 $a>0$.

Clearly $H, 2H$ are $J-$nef since $a>0$. 

Since   $C$  is  an irreducible curve, $g_J(e_C)\geq 0 $, so we have

  \begin{equation}\label{general}\begin{array}{lll}
-2&\leq& 2g_J(e_C)-2\cr
&=&K_J\cdot C+C^2\cr
&=&-3a+\sum_ib_i+a^2-\sum_i b_i^2\cr
&= &(a-b_1)(a+b_1-1)\cr
&&-2a\cr
&&-\sum_{i\geq 2} (b_i^2-b_i).\cr
\end{array}
\end{equation}
The second term is at most $-2$.
The third term is non-positive. 
Therefore $a\geq b_1$. 
In fact, the same argument shows that  $a\geq b_i$ for any $i$. 

It follows that  $C$ cannot  pair  negatively with $nH-(n-1)E_i$,  $H-E_i$. 

If  $C$  pairs  negatively with $nH-(n-1)E_i-E_j$, then 
$$na-(n-1)b_i-b_j<0,$$ which implies that either $a<b_i$ or $a<b_j$. 
But this is impossible so $nH-(n-1)E_i-E_j$ is also $J-$nef. 
\end{proof}

\begin{remark}\label{no smooth curves}
Under the assumption of Lemma \ref{package}, it is not true that  any class in $S_{K_J}^{\geq 0}$ is  $J-$nef. Consider the holomorphic  blow up 
of 3 points on a line $l$  in $\mathbb C \mathbb P^2$ and let $C_1, C_2, C_3$ be the exceptional curves. 
Then the class $2H-E_1-E_2-E_3$ in $S_{K_J}$ is not $J-$nef since it pairs negatively 
with the class of the proper transform $l'$  of the line $l$, which is  $H-E_1-E_2-E_3$.  
In particular, there are no smooth rational  curves in the  class $2H-E_1-E_2-E_3$.
\end{remark}

\begin{remark}\label{singular foliation}
Notice that, under the assumption of Lemma \ref{package}, there is at least one $J-$nef class in $S_{K_J}^0$.  
It is natural to wonder whether this is true for any tamed almost complex structure.
If so, by Proposition \ref{1.1},   every tamed rational $4-$manifold has a  rational curve `foliation', with only finitely many reducible leafs. 
\end{remark}

We are ready to prove  Theorem \ref{k curves}.

\begin{proof}

  Observe that,  for each $i$, $2H-E_i$ is a  $K_J-$spherical class with square $3$, and it is $J-$nef by Lemma \ref{package}.

Let $Z_i=Z(2H-E_i)$ be the zero locus of $2H-E_i$.
If   $C\ne C_i$  is an irreducible curve   in $Z_1$,  then $b_1=2a>a$. 
This is impossible by the proof of Lemma \ref{package}.
Therefore
$$Z_i=\{C_1,  \cdots,  C_{i-1}, C_{i+1},\cdots\}.$$
Clearly, $\cap_{1\leq i\leq k} Z_i =\emptyset$. 

Therefore 
 there is an almost K\"ahler form in the class $\sum_{i=1}^k (2H-E_i)$ by Proposition \ref{form}.
 In particular,  $J$ is almost K\"ahler.
\end{proof}

 \begin{cor}\label{tcgood}
If $J$ is in $\mathcal J_{top}$ or $\mathcal J_{good}$, then $J$ is almost K\"ahler. 
\end{cor}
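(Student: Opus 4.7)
The plan is to reduce the corollary to the preceding results of this section. The reduction is immediate in the small cases: for $M=\mathbb{CP}^2$, $S^2\times S^2$, or $\mathbb{CP}^2\#\overline{\mathbb{CP}^2}$, any tamed $J$ is almost Kähler by Theorem \ref{s2bundle-1} (together with the Gromov--Taubes result for $\mathbb{CP}^2$), independently of any stratification. So the real work is for $M=M_k=\mathbb{CP}^2\# k\overline{\mathbb{CP}^2}$ with $k\geq 2$, where I want to apply Theorem \ref{k curves}. This requires exhibiting a configuration of $k$ pairwise disjoint $J$-holomorphic $-1$ curves.

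To produce that configuration, I would fix a standard basis $(H, E_1, \ldots, E_k)$ adapted to $J$. By Proposition \ref{existence'} each $E_i$ is $J$-effective, so pick $\Theta_i=\sum_j m_{ij}(C_{ij},\cdot)\in\mathcal{M}_{E_i}$. Since $E_i^2=-1$, expanding $(\sum m_{ij}[C_{ij}])^2=-1$ and using positivity of intersections of distinct irreducible components forces $\sum_j m_{ij}^2 [C_{ij}]^2\le -1$, so at least one component $C_{ij}$ has strictly negative self-intersection. For $J\in\mathcal{J}_{top}$ this component is already a $-1$ curve. For $J\in\mathcal{J}_{good}$ the same conclusion holds once one checks that the offending component is genus zero: since $g_J(E_i)=0$, the additivity of the $J$-genus forces each component to be a smooth rational curve, and condition (ii) of $\mathcal{J}_{good}$ then applies.

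Having produced one irreducible $-1$ curve class $F\in\mathcal{E}_{K_J}$, I would change the standard basis by a Weyl-group element preserving $K_J$ so that $F$ replaces one of the basis classes, and then iterate the argument on the remaining $E_i'$'s (equivalently, perform a symbolic blow-down of the $-1$ curve to reduce to $M_{k-1}$, apply induction, and pull the curves back). The output is a standard basis in which every $E_i$ is represented by an irreducible $-1$ curve $C_i$. Since $E_i\cdot E_j=0$ for $i\neq j$ and the $C_i$'s are distinct irreducibles, positivity of intersections forces $C_i\cap C_j=\emptyset$. With this configuration of $k$ disjoint $-1$ curves, Theorem \ref{k curves} concludes that $J$ is almost Kähler.

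The main obstacle is the inductive step: producing a single $-1$ curve class from the $\mathcal{J}_{top}$ (resp.\ $\mathcal{J}_{good}$) hypothesis is straightforward from positivity of intersections, but arranging all $k$ such classes to be simultaneously pairwise orthogonal requires either a Weyl-group bookkeeping argument on the $(-1)$-class lattice, or equivalently an inductive almost-complex blow-down that preserves the stratum hypothesis. Once that configuration is in place, everything else is a direct citation of Theorem \ref{k curves}.
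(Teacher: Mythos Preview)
Your reduction to Theorem \ref{k curves} is exactly right, and for $\mathcal J_{top}$ your extraction of a negative-square component from each $\Theta_i$ is fine. But there are two genuine gaps.

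First, the $\mathcal J_{good}$ case does not follow from ``additivity of the $J$-genus.'' The identity $g_J(e_1+e_2)=g_J(e_1)+g_J(e_2)+e_1\cdot e_2-1$ shows that $g_J(E_i)=0$ does \emph{not} by itself force every irreducible component of $\Theta_i$ to have $J$-genus zero; in particular, condition (ii) of $\mathcal J_{good}$ only constrains genus-zero subvarieties, and for $k\ge 10$ the smooth genus-one anti-canonical curve has negative self-intersection and is a perfectly legal negative-square component as far as your argument goes. The paper closes this gap differently: it first checks that $E$ pairs nonnegatively with every irreducible curve $C$ whose class is not $E$ (the only negative-square possibilities under the $\mathcal J_{good}$ hypothesis are $-1$ curves, handled by Lemma \ref{pairS+}, and the anti-canonical curve, where $E\cdot(-K_J)=1$ by adjunction). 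This puts $\Theta$ in the range of Proposition \ref{connected}, which then gives both connectedness and rationality of all components.

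Second, your inductive Weyl-group step is unnecessary and, as you note, not actually carried out. Once one knows (via Proposition \ref{connected}) that $\Theta$ is a connected configuration of smooth rational curves in a class of square $-1$, reducibility would force a component of square $\le -2$, contradicting the hypothesis on $J$. Hence each $E_i$ in the \emph{original} standard basis is already represented by an irreducible $-1$ curve, and the $k$ disjoint curves required by Theorem \ref{k curves} come for free from $E_i\cdot E_j=0$. No basis change or blow-down induction is needed.
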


\begin{proof}

For any tamed $J$, by Proposition \ref{existence}, each $E$  in $\mathcal E_{K_J}$ is represented by a $J-$holomorphic subvariety $\Theta$.
We claim that $E$ pairs non-negatively with any irreducible subvariety $C$ whose class is not $E$.
This is clear if $C$ has non-negative self-intersection. Suppose $C$ has negative self-intersection. Then by our assumption on $J$, 
$C$ is either a $-1$ curve or or the anti-canonical curve. 
The $-1$ curve case follows from 
Lemma \ref{pairS+} (3), and the anti-canonical curve case follows from the adjunction formula.

The following result is proved in \cite{LZrc} as Proposition 4.25:
\begin{prop}\label{connected}
Suppose $J$ is tamed, $e\in S_{K_J}$ and $\Theta=\{(C_i, m_i)\}\in \mathcal M_e$. If $e\cdot e_{C_i}\ge 0$, then $\Theta$ is connected and each component $C_i$ is a smooth rational curve.
\end{prop}

Our $\Theta$ satisfies the condition, so $\Theta$ is connected and  each component is a smooth rational curve. 
  Since $\Theta$  is connected,  if it is reducible,  it must contain an irreducible component $F$ with self-intersection at most $-2$.
But by our assumption on $J$ there are no smooth rational curves of self-intersection less than $-1$.

We have shown that each $E$  in $\mathcal E_{K_J}$ is represented by a $-1$ curve. In particular, 
given a standard basis $\{E_i\}$ adapted to $J$,  there are $k$ disjoint $-1$ curves in the $k$ classes $E_i$.  
Now apply Theorem \ref{k curves}. 
\end{proof}

\begin{cor}\label{standardform} Suppose  $h_l=lH-\sum_{i=1}^k E_i$ with $l^2>\max\{k,9\} $.
 If $J$ is an almost complex structure on $M_k$   tamed by a symplectic form in the class $h_l$, then 
 $J$ is almost K\"ahler. Moreover, there is an almost K\"ahler form in the class $h_l$. \end{cor}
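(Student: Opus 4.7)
The plan is to reduce the corollary to Theorem \ref{k curves} by producing $k$ disjoint $-1$ curves, and then obtain a compatible form in the class $h_l$ from the tame--compatible equality \eqref{tc} available in the $b^+=1$ case. First, I would fix a standard basis $H, E_1, \dots, E_k$ adapted to $J$ so that $K_J = -3H + \sum_i E_i$. The hypothesis $l^2 > \max\{k, 9\}$ ensures $h_l \cdot h_l = l^2 - k > 0$, while the elementary computation $h_l \cdot E_i = 1$ for every $i$ is the arithmetic fact that will drive the whole argument.

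The central step will be to show that each exceptional class $E_i$ is represented by a $-1$ curve. Since $E_i \in \mathcal E_{K_J}$ with $E_i\cdot E_i=-1$, Proposition \ref{existence'} yields a subvariety $\Theta = \{(C_j, m_j)\} \in \mathcal M_{E_i}$. Pairing with $h_l = [\omega]$, where $\omega$ is the given taming form, gives
\[
 1 \;=\; h_l \cdot E_i \;=\; \sum_j m_j\,(h_l \cdot e_{C_j}),
\]
and each $h_l \cdot e_{C_j}$ is a strictly positive integer, because any taming form evaluates positively on every $J$-effective class. The sum therefore has exactly one term, with $m_j=1$ and $e_{C_j} = E_i$, so $E_i$ is represented by an irreducible subvariety $C$. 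Since $g_J(E_i)=0$, the sharp form of the adjunction inequality (equality between $g_J(E_i)$ and the model genus forces smoothness) implies $C$ is a smooth rational curve, hence a $-1$ curve $C_i$. Positivity of intersections then forces $C_i\cap C_j=\emptyset$ for $i\neq j$, since $E_i\cdot E_j=0$.

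With these $k$ pairwise disjoint $-1$ curves in hand, Theorem \ref{k curves} (in the case $k\geq 2$) immediately concludes that $J$ is almost K\"ahler; the edge cases $k=1$ and $k=0$ are covered by Theorem \ref{s2bundle-1} and the classical Gromov--Taubes result on $\mathbb C \mathbb P^2$ respectively. For the \emph{moreover} assertion, note that $b^+(M_k)=1$; thus once $J$ is known to be almost K\"ahler, equation \eqref{tc} yields $\mathcal K_J^c = \mathcal K_J^t$. The hypothesis places $h_l \in \mathcal K_J^t$, whence $h_l \in \mathcal K_J^c$ and there is a compatible (hence almost K\"ahler) form in the class $h_l$.

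I expect the main obstacle to be the verification that each $E_i$ is represented by a \emph{smooth} $-1$ curve. It requires both the multiplicity-one consequence of $h_l \cdot E_i = 1$ to rule out decomposable or multiply covered representatives, and the sharp form of the adjunction inequality to rule out singular irreducible ones. Once this is in place, everything else is a clean application of results already established in the paper.
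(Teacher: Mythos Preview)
Your proposal is correct and follows essentially the same route as the paper: the area identity $h_l\cdot E_i=1$ forces any $J$-holomorphic representative of $E_i$ to be irreducible (hence smooth by adjunction), yielding $k$ disjoint $-1$ curves, whence Theorem~\ref{k curves} applies and \eqref{tc} furnishes the compatible form in $h_l$. Your write-up simply spells out the steps (integrality of the pairing, disjointness via positivity of intersections, the edge cases $k\le 1$) that the paper leaves implicit.
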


\begin{proof} Since the $h_l$ area of $E_i$ is 1, a subvariety representing  $E_i$ is  irreducible, and hence it is smooth. Thus $J$ is almost K\"ahler by Theorem \ref{k curves}. 

Since $h_l$ is in the  $J-$tamed cone $\mathcal K_J^t$ and $J$ is almost K\"ahler, the last claim follows from the equality   \eqref{tc} between $\mathcal K_J^t$ and the almost K\"ahler cone. 
\end{proof}

We call a tamed almost complex structure $J$   del Pezzo if
 it is tamed by a symplectic form in the class $-K_J$. By Corollary \ref{standardform}, del Pezzo $J$ is almost K\"ahler.

\begin{remark} It is  observed by    Pinsonnault \cite{Pinn}  
 that for  any tamed $J$ on $M_k$, there exists   at least one (smooth) $-1$ curve. 
 \end{remark}


\subsection{Almost K\"ahler cone}

We first introduce an open convex cone associated to $J$. 
\begin{definition}
For a tamed almost complex structure $J$ on $M_k$ (resp. $S^2\times S^2$), the open convex  cone $\mathcal S_J$ is defined to be the interior of the  convex 
cone generated by big $J-$nef classes in $S_{K_J}$ if it is of dimension $k+1$ (resp. $2$). Otherwise, it is defined as $\emptyset$. 
\end{definition}

For an almost K\"ahler $J$, we have the following

\begin{lemma}\label{convenient}
If $J$ is almost K\"ahler, 
then $\mathcal S_J$
is contained in  $\mathcal K_J^c(M)$.
\end{lemma}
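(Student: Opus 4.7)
The plan is to reduce the statement to Proposition \ref{form} by expressing any $e\in \mathcal S_J$ as a positive linear combination $e=\sum a_i e_i$ of big $J$-nef classes $e_i\in S_{K_J}$ in such a way that $\bigcap_i Z(e_i)=\emptyset$. Once such a decomposition is found, Proposition \ref{form} immediately yields an almost K\"ahler form in the class $e$, and therefore $e\in \mathcal K_J^c(M)$.

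The first step is to produce the decomposition with the spanning property. By the very definition of $\mathcal S_J$, it is the interior of a top-dimensional convex cone in $H^2(M;\mathbb R)$ generated by the big $J$-nef classes in $S_{K_J}$; in particular, these classes span $H^2(M;\mathbb R)$ as a real vector space. Starting from any initial representation of $e$ as a positive combination of big $J$-nef classes in $S_{K_J}$, I would enlarge the generating set: whenever the current set fails to span, one finds a big $J$-nef class $g\in S_{K_J}$ outside its span, uses the fact that $e$ lies in the open cone $\mathcal S_J$ to write $e-\epsilon g$ as a positive combination of big $J$-nef classes for some small $\epsilon>0$, and then adds $\epsilon g$ back. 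Since $H^2(M;\mathbb R)$ is finite-dimensional, after finitely many such steps I obtain finitely many big $J$-nef classes $e_1,\dots,e_N\in S_{K_J}$ that span $H^2(M;\mathbb R)$ together with positive coefficients $a_1,\dots,a_N>0$ with $e=\sum_i a_i e_i$.

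The second step is to verify $\bigcap_i Z(e_i)=\emptyset$. If an irreducible $J$-holomorphic subvariety $D$ lay in this intersection, then $e_i\cdot[D]=0$ for every $i$. Because $\{e_i\}$ spans $H^2(M;\mathbb R)$ and the pairing between $H^2(M;\mathbb R)$ and $H_2(M;\mathbb R)$ is non-degenerate, this would force $[D]=0$. But $D$ has positive area, so its integral against any $J$-taming symplectic form is positive, contradicting $[D]=0$. Hence no such $D$ exists and the intersection of the zero loci is empty. Applying Proposition \ref{form} to the decomposition $e=\sum a_i e_i$ then produces an almost K\"ahler form in the class $e$, completing the proof.

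The only delicate point is the spanning positive decomposition of the first step; the non-degeneracy of the intersection form used in the second step and the invocation of Proposition \ref{form} in the last step are essentially mechanical. The almost K\"ahler hypothesis on $J$ is used implicitly in ensuring that $\mathcal K_J^c(M)$ is the target (via Proposition \ref{C-F} incorporated into Proposition \ref{form}).
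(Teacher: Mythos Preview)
Your argument is correct and takes a genuinely different route from the paper's. The paper exploits the almost K\"ahler hypothesis directly: given an almost K\"ahler class $u$, for any $e\in\mathcal S_J$ one has $e-tu\in\mathcal S_J$ for small $t>0$ by openness, so $e-tu=\sum a_ie_i$ with $a_i>0$ and $e_i$ big $J$-nef; then the sum of the weak Taubes currents $\sum a_i\Phi_{e_i}$ together with $t$ times the almost K\"ahler form is a genuine Taubes current in the class $e$, and Proposition~\ref{C-F} finishes. No control on the zero loci $Z(e_i)$ is needed because the almost K\"ahler form supplies the lower bound everywhere. Your approach instead forces $\bigcap_i Z(e_i)=\emptyset$ by arranging the $e_i$ to span $H^2(M;\mathbb R)$ and then invokes Proposition~\ref{form}. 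A pleasant byproduct is that your argument never uses the almost K\"ahler hypothesis (Propositions~\ref{form} and~\ref{C-F} only require $J$ tamed with $b^+=1$), so you in fact prove the slightly stronger statement that $\mathcal S_J\subset\mathcal K_J^c(M)$ for any tamed $J$; your closing remark about where the almost K\"ahler hypothesis enters is therefore not quite right.

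One small point in your first step deserves tightening. Your iterative scheme replaces the current generating set by $\{g\}\cup\{\text{generators of a new decomposition of }e-\epsilon g\}$, and nothing prevents the span of this new set from being \emph{smaller} than the span of the old one (the new decomposition need not reuse the old $e_i$). A clean one-shot fix: since $\mathcal S_J$ has nonempty interior, choose once and for all big $J$-nef classes $g_1,\dots,g_n\in S_{K_J}$ spanning $H^2(M;\mathbb R)$; by openness $e-\epsilon\sum_j g_j\in\mathcal S_J$ for small $\epsilon>0$, hence $e-\epsilon\sum_j g_j=\sum_k b_kf_k$ with $b_k>0$ and $f_k$ big $J$-nef, and then $e=\sum_j\epsilon g_j+\sum_k b_kf_k$ is a positive decomposition whose generators include the spanning set $\{g_j\}$.
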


\begin{proof}
Let $\mathcal S_J\neq \emptyset$. Suppose $u$ is the class of an almost K\"ahler form. 
Observe that given any class $e \in \mathcal S_J$, $e-tu$ is in $\mathcal S_J$ for $t$ small. Thus $e-tu=\sum a_ie_i$ with $a_i>0$ and $e_i$ big $J-$nef . Hence $e=(e-tu)+tu\in \mathcal K_J^c(M)$ by 
Propositions \ref{non-negative} and \ref{C-F}.
\end{proof}

\subsubsection{$S^2-$bundles over $S^2$}

 \begin{proof}[ Proof of 
 Theorem  \ref{s2bundle-2}]  
 By Lemma \ref{convenient} it suffices to show that
  $$\mathcal S_J=\mathcal P_J$$
 
 We establish this by describing explicitly the curve cone $A_J$. 
 We have mentioned that there is always
 a foliation by smooth rational curves. 
  One boundary of the curve cone is generated by
 the class of such a foliation. 
 The other boundary is generated by a transversal class.
 
 We start with  $S^2\times S^2$.

Given any $J$, denote the class of a foliation by $H_2$. 
The curve cone $A_J$ is generated by  $H_2$ and a transversal class $H_1-lH_2$ for some $l\geq 0$.

The $\geq 0-$dual of $A_J$ is generated by $A=H_1+lH_2$ and $B=H_2$.

If $l=0$, then $A=H_1$ and it is approximated by the sequence of big $J-$nef classes  in $S_{K_J}$,  $pH_1+H_2$. $B$ is approximated by the sequence of big $J-$nef classes  in $S_{K_J}$, $H_1+qH_2$. Therefore
$$\mathcal S_J=\mathcal P_J=\{aA+bB|a>0, b>0\}=\{xH_1+yH_2|x>0, y>0\}.$$

If $l>0$, then $A$ itself is a big $J-$nef class in $S_{K_J}$, and $B$ is approximated by the sequence of big $J-$nef classes  in $S_{K_J}$, $H_1+qH_2$, $q\geq l$.
Therefore $$\mathcal S_J=\mathcal P_J=\{aA+bB|a> 0, b> 0\}=\{xH_1+yH_2|y> lx>0\}.$$

For $M=\mathbb C \mathbb P^2 \# \overline{\mathbb C \mathbb P^2}$, the proof is similar.

Given any $J$, the unique class of foliation is  $H-E$. 
The curve cone $A_J$ is generated by   $H-E$ and a transversal class $D_{-l}=-lH+(l+1)E$ for some $l$.

The  $\geq 0-$dual of $A_J$ is generated by $B=H-E$ and $A=(l+1)H-lE$.

For each $l$, the class  $A$ is a big $J-$nef class in $S_{K_J}$,    and $B$ is approximated by the  sequence of big $J-$nef classes  in $S_{K_J}$,      
$(p+1)H-pE, p\geq l$.
Thus $$\mathcal S_J=\mathcal P_J=\{aA+bB|a> 0, b>0\}=\{ xH-yE|\dfrac{l+1}{l}y> x>y>0\}.$$

\end{proof}

\subsubsection{$\mathbb C \mathbb P^2\# k\overline {\mathbb C \mathbb P^2}$}
In this case we will  again apply Lemma \ref{convenient}  to probe the almost K\"ahler cone.

In fact, for $J\in  \mathcal J_{top}$, Question \ref{aknm} takes a particular simple form, which we now explain. 
Among all $J\in \mathcal J^t$, $J$ in the top stratum  $ \mathcal J_{top}$ has the maximal $\mathcal P_J$. In fact, when $b^+(M)=1$ and $J\in \mathcal J_{top}$, $\mathcal P_J$ is 
 equal to  the  $K_J-$symplectic cone $\mathcal C_{ K_J}$ introduced in \cite{LL}:
  \begin{equation}\label{Kcone}
\mathcal C_{ K_J}=\{e\in H^2(M;\mathbb R)|e=[\omega] \hbox{ for some $\omega$ with $K_{\omega}=K_J$}\}.
\end{equation}
Here $K_{\omega}$ is the symplectic canonical class of $\omega$. 
Thus  the almost K\"ahler Nakai-Moishezon criterion for  $J\in  \mathcal J_{top}$
 is the same as   $$\mathcal K^c_J(M)=
\mathcal C_{ K_J}.$$

Although we cannot verify the almost K\"ahler Nakai-Moishezon criterion for  $J\in  \mathcal J_{top}$ when $k\geq 10$, we have  the following partial  result. 

 \begin{theorem} \label{sc}
Suppose $M$ is a rational manifold.
 If  $J\in \mathcal J_{top}$, then $\mathcal K^c_J(M)\supset 
\mathcal C_{ K_J}\cap \{e|e\cdot K_J<0\}.$
\end{theorem}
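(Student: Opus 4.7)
Fix $e\in\mathcal{C}_{K_J}\cap\{e\cdot K_J<0\}$. The aim is to produce a compatible form for $J$ in class $e$, by combining numerical positivity of $e$ with the spherical Taubes current machinery of the paper and Taubes' regularization.

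\emph{Step 1: $e\in\mathcal{P}_J$.} Let $C$ be an irreducible $J$-holomorphic subvariety. If $C^2\ge 0$, then $[C]$ lies in the closure of the forward positive cone that contains $\mathcal{C}_{K_J}$, and the Lorentzian reverse Schwarz inequality in $H^2(M;\mathbb{R})$ (signature $(1,n)$) yields $e\cdot[C]>0$. If $C^2<0$, Definition \ref{smallcone} forces $C$ to be a $-1$ curve; its class is then a symplectic $-1$ class for the canonical class $K_J$, so by Taubes--Seiberg--Witten theory it pairs strictly positively with every element of $\mathcal{C}_{K_J}$, in particular with $e$. Hence $e\in\mathcal{P}_J$.

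\emph{Step 2: Build a closed positive $J$-invariant current in class $e$.} Openness of the positivity in Step 1 lets me perturb $e$ to a rational class and then clear denominators to assume $e$ is integral. On a rational manifold, the hypothesis $e\cdot K_J<0$ places $e$ in the anti-canonical half of $\mathcal{C}_{K_J}$, which is generated by sphere classes in $S_{K_J}$ together with $-1$ classes; combined with the $J$-holomorphicity of every $-1$ class furnished by Taubes--Seiberg--Witten for $J\in\mathcal{J}_{top}$, this produces a decomposition
\[
  e \;=\; \sum_{j}\lambda_j\, e_0^{(j)} \;+\; \sum_{i} a_i\, [E_i], \qquad \lambda_j,a_i>0,
\]
where each $e_0^{(j)}\in S_{K_J}$ is a $J$-nef sphere class of positive self-intersection and each $E_i$ is a $J$-holomorphic $-1$ curve. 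For each $e_0^{(j)}$, the construction of Section \ref{Scurrent}, with the reducible moduli $\mathcal{M}_{red}$ controlled by Theorem \ref{emb-comp} and Proposition \ref{reducible-dim'} and the slice analysis executed via the pencils of Proposition \ref{gendirect}, yields a weak spherical Taubes current $T_j$ whose degeneracy locus is a finite union of $-1$ curves. Choosing the $e_0^{(j)}$ so that these loci are pairwise disjoint, the sum $\sum_j\lambda_j T_j$ becomes an honest spherical Taubes current. Adding the integration currents along the $E_i$ with weights $a_i$ gives a closed, positive, $J$-invariant $2$-current $T$ in class $e$.

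\emph{Step 3: Regularize.} Apply Taubes' two-step procedure: smooth $T$ to a $J$-tamed form whose $J$-invariant part dominates, then adjust to the compatible subspace in the same cohomology class. The output is an almost K\"ahler form on $(M,J)$ representing $e$.

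\emph{Main obstacle.} The decisive step is the cohomological decomposition in Step 2: realizing $e$ as a positive combination of $J$-nef sphere classes in $S_{K_J}$ and $J$-holomorphic $-1$ classes, while simultaneously arranging that the associated weak-current degeneracy loci are pairwise disjoint. For $\mathbb{C}\mathbb{P}^2\#k\overline{\mathbb{C}\mathbb{P}^2}$ with large $k$, the effective cone is intricate and the supply of $J$-nef sphere classes of positive self-intersection is constrained; the hypothesis $e\cdot K_J<0$ is essential precisely because it confines $e$ to the half-cone spanned by exactly those classes to which the spherical Taubes current construction applies, together with $-1$ classes. This is also why the almost K\"ahler Nakai--Moishezon criterion for $\mathcal{J}_{top}$ remains open in the complementary half when $k\ge 10$. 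Arranging disjoint degeneracy loci is a secondary technical point that I expect to handle by perturbing the $e_0^{(j)}$ within the $J$-nef sphere cone, exploiting the tree structure of reducible configurations given by Theorem \ref{emb-comp}.
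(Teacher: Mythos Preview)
Your Step 2 contains a genuine error: the integration current along a $-1$ curve $E_i$ violates the Taubes current upper bound in \eqref{leqgeq}. For $x\in E_i$, the pairing of $[E_i]$ with $if_{B_t(x)}\sigma\wedge\bar\sigma$ is essentially the area of $E_i\cap B_t(x)$, which by Lemma \ref{2.2} behaves like $t^2$, not $t^4$. Adding such currents to a would-be Taubes current destroys the $O(t^4)$ upper bound, so Proposition \ref{C-F} no longer applies. The $-1$ curves cannot be handled as raw integration currents.

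More fundamentally, the paper's route avoids both this problem and your ``secondary technical point'' (arranging disjoint degeneracy loci) in one stroke, and your proposal misses the key mechanism. The argument is: first, $J\in\mathcal J_{top}$ is already almost K\"ahler by Corollary \ref{tcgood} (via Theorem \ref{k curves}), so some compatible class $u$ exists. Next, Proposition \ref{S=P} identifies $P_{K_J}=\mathcal C_{K_J}\cap\{e\cdot K_J<0\}$ with the interior $\mathcal S_{K_J}^+$ of the cone generated by $S_{K_J}^+$ alone --- no $-1$ classes appear in the decomposition at all. For $J\in\mathcal J_{top}$ every class in $S_{K_J}^+$ is $J$-nef (Lemma \ref{J=K}), so $\mathcal S_J=\mathcal S_{K_J}^+$. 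The final step is Lemma \ref{convenient}: given $e\in\mathcal S_J$, write $e-tu=\sum a_ie_i$ with each $e_i$ big $J$-nef for small $t>0$; the current $\sum a_i\Phi_{e_i}$ is closed, non-negative, $J$-invariant, and satisfies the upper bound everywhere (Proposition \ref{upper}), while adding the genuine almost K\"ahler form in class $tu$ supplies the lower bound uniformly. No control whatsoever on the intersection of the vanishing loci $Z(e_i)$ is required. Your assessment that the disjoint-loci arrangement is secondary underestimates it: it is nowhere established for a general decomposition, and the paper's method sidesteps it entirely by spending a small multiple of an existing compatible form.
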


To establish Theorem \ref{sc} we introduce several open convex cones associated to $K_J$.

\begin{definition}\label{sym-spherecone}
For a tamed almost complex structure $J$, the open convex cone   $\mathcal S^+_{K_J}$  is the interior of the  convex cone generated by classes in $S^+_{K_J}$.
It is called the positive $K_J-$sphere cone. 
\end{definition}

Notice that
 $\mathcal S^+_{K_J}$ contains  $\mathcal S_{J'}$ 
for any tamed $J'$ with $K_{J'}=K_J$.

According to \cite{LL},  
the  $K_J-$symplectic cone $\mathcal C_{ K_J}$ has the following characterization, 
\begin{equation}\label{Kcone'}
\mathcal C_{K_J}=\{e\in \mathcal P|e\cdot E>0 \hbox{ for any }  E\in \mathcal E_{M, K_J}\}.
\end{equation}
Finally, introduce the open subcone of $\mathcal C_{K_J}$,
\begin{equation}\label{p-k}
P_{K_J}:=\{e\in \mathcal C_{K_J}|e\cdot (-K_J)> 0\}
\end{equation}

\begin{prop} \label{S=P} For any $J$ on $\mathbb C \mathbb P^2\# k\overline {\mathbb C \mathbb P^2}$, 
the positive $K-$sphere cone $\mathcal S^+_{K_J}$ coincides with $P_{K_J}$. 
\end{prop}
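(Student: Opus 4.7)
The plan is to prove the two inclusions $\mathcal{S}^+_{K_J} \subseteq P_{K_J}$ and $P_{K_J} \subseteq \mathcal{S}^+_{K_J}$ separately.

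For the forward inclusion, take any $e \in S^+_{K_J}$. Since $g_J(e) = 0$, identity \eqref{J-genus} yields $-K_J \cdot e = 2 + e \cdot e > 0$, and Lemma \ref{pairS+} yields $e \cdot E \geq 0$ for every $E \in \mathcal{E}_{K_J}$. Combined with $e \cdot e > 0$, which places $e$ in the positive cone $\mathcal{P}$, this shows $e \in \overline{P_{K_J}}$, so the convex cone generated by $S^+_{K_J}$ lies in $\overline{P_{K_J}}$. Because $P_{K_J}$ is an open convex cone, its interior equals the interior of its closure, hence $\mathcal{S}^+_{K_J} \subseteq P_{K_J}$.

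For the reverse inclusion, I would show that $\overline{P_{K_J}}$ is contained in the closure of the convex cone generated by $S^+_{K_J}$; this implies $P_{K_J} \subseteq \mathcal{S}^+_{K_J}$ by taking interiors. The strategy is to analyze the extremal rays of $\overline{P_{K_J}}$, which fall into three families: rays on the null cone $\{e \cdot e = 0\}$, rays orthogonal to some exceptional class $E \in \mathcal{E}_{K_J}$, and (when $k \leq 9$) rays on the hyperplane $\{e \cdot K_J = 0\}$. For each family I would exhibit $S^+_{K_J}$ classes approximating the wall: null-cone rays arise as limits of rescaled sequences such as $(n+1)H - n E_i$ with $n \to \infty$; $E$-orthogonal walls are supported by classes of the form $nH - (n-1)E_i - E_j$ drawn from the Cremona-orbit classification in the proof of Lemma \ref{H''}; and the $K_J$-orthogonal rays are handled by Cremona transforms of these canonical generators.

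The main obstacle is verifying that this family of $S^+_{K_J}$ generators is rich enough to cover all extremal rays of $\overline{P_{K_J}}$ for arbitrary $k$, since $\mathcal{E}_{K_J}$ can be infinite and combinatorially intricate when $k$ is large. I would exploit the Cremona group action on $H^2(M;\mathbb{R})$, which preserves $K_J$, the set $S^+_{K_J}$, and the cone $P_{K_J}$, so that it suffices to check the approximation for a single representative in each Cremona orbit. By the classification recalled in the proof of Lemma \ref{H''}, there are only four basic orbit types, reducing the reverse inclusion to a finite verification. The range $k \geq 10$, where $-K_J$ lies outside $\mathcal{P}$ and the $K_J$-orthogonal wall genuinely bounds $P_{K_J}$ from inside $\mathcal{C}_{K_J}$, requires particular care because the anticanonical direction no longer supplies a preferred interior class.
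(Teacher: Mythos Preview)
Your forward inclusion $\mathcal S^+_{K_J}\subset P_{K_J}$ is correct and matches the paper's argument.

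The reverse inclusion plan has a concrete gap. Your trichotomy of extremal rays of $\overline{P_{K_J}}$ is wrong on two counts. First, by Lemma~\ref{face}(1) (due to Friedman--Morgan), $P_{K_J}$ has \emph{no} round boundary: the null cone $\{e\cdot e=0\}$ contributes nothing to $\partial\overline{P_{K_J}}$, so your ``null-cone rays'' do not exist and the sequences $(n+1)H-nE_i$ are not aimed at any actual face. Second, the $K_J$-wall appears when $k\ge 9$, not $k\le 9$; your own final paragraph suggests you are aware of this, but the earlier statement is backwards. More seriously, the extremal rays of a polyhedral cone lie at intersections of many facets, not ``on'' a single wall, so for large $k$ identifying them and showing each is approximated by $S^+_{K_J}$ classes is exactly the hard combinatorics you were hoping to avoid. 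The Cremona reduction you propose classifies orbits of classes in $S^+_{K_J}$, but it does not classify the extremal rays of $\overline{P_{K_J}}$, and there is no transitive action on those rays that would reduce the problem to a finite check.

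The paper bypasses all of this with an induction on $k$ via Lemma~\ref{face}(2): each facet $F_E$ of $P_{M_k,K}$ orthogonal to $E\in\mathcal E_{K}$ is canonically identified with $P_{M_{k-1},K}$. For $k\le 8$ every facet is of this type, so any $e\in P_{M_k,K}$ is a positive combination of points on such facets, each already in $\mathcal S^+_{M_{k-1},K}$ by induction. For $k\ge 9$ there is one additional facet, the $K$-wall; the paper handles it by first intersecting $P_K$ with a hyperplane $V_a^\perp$ (where $V_a=aH-\sum E_i$, $a<3$ chosen so $e\in V_a^\perp$) that is disjoint from the $K$-wall, and then running the same facet argument inside that slice. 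This avoids any direct analysis of extremal rays or Cremona orbits.
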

We will defer the proof 
of Proposition \ref{S=P} to the next subsection, where we need to review $P-$cell in \cite{FM} and $K-$symplectic cone in \cite{LL}.

\begin{lemma}\label{J=K}
If $J$ is in $\mathcal J_{top}$ or $\mathcal J_{good}$, then $\mathcal S_J=\mathcal S_{K_J}^+$.
\end{lemma}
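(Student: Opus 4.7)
The plan is a two-way inclusion, with the reverse inclusion carrying the main work. The forward inclusion $\mathcal S_J\subseteq \mathcal S_{K_J}^+$ is immediate from the definitions: every big $J$-nef class in $S_{K_J}$ has positive self-intersection, so belongs to $S_{K_J}^+$, and hence the convex cone generated by the former lies inside that generated by the latter. For the reverse, Proposition \ref{S=P} identifies $\mathcal S_{K_J}^+ = P_{K_J}$, reducing the task to showing $P_{K_J}\subseteq \mathcal S_J$.

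The pivotal step is the characterization: for $J\in \mathcal J_{top}$ or $\mathcal J_{good}$ and $e\in S_{K_J}^+$, $e$ is $J$-nef (automatically big, since $e^2>0$) if and only if $e\cdot E\geq 0$ for every $E\in \mathcal E_{K_J}$, equivalently $e\in \overline{P_{K_J}}$. The forward direction uses the proof of Corollary \ref{tcgood}: under the hypothesis on $J$, each $E\in \mathcal E_{K_J}$ is represented by a $(-1)$-curve for $J$. For the converse, given any irreducible $J$-holomorphic curve $C$: if $e_C^2\geq 0$, the light cone lemma yields $e\cdot e_C>0$; if $e_C^2<0$, the hypothesis forces $C$ to be either a $(-1)$-curve (so $e_C\in \mathcal E_{K_J}$ and the pairing is non-negative by assumption) or, in the $\mathcal J_{good}$ case with $(-K_J)^2<0$, the anti-canonical elliptic curve, and then $e\cdot(-K_J)=e^2+2>0$ from the $g_J(e)=0$ identity $K_J\cdot e=-2-e^2$.

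Given the characterization, every class in $S_{K_J}^+\cap \overline{P_{K_J}}$ is a big $J$-nef class in $S_{K_J}$, hence a generator of $\mathcal S_J$. To finish, I would show that this subset of $S_{K_J}^+$ already generates a convex cone with interior $P_{K_J}$. The strategy is to exhibit big $J$-nef generators accumulating along each boundary face of $P_{K_J}$: the light-cone rays $H-E_i$ are scaling limits of $(n+1)H-nE_i$; the face $\{e\cdot(H-E_i-E_j)=0\}$ contains $(n+1)H-nE_i-E_j$; and more generally, higher-degree spherical classes $(n+1)H-nE_{i_0}-E_{i_1}-\cdots-E_{i_{2n}}$ (all $J$-nef by the key claim) cover the faces cut out by larger $E\in \mathcal E_{K_J}$. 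A symmetric averaging over Weyl-group orbits of $\mathcal E_{K_J}$ then expresses any $e\in P_{K_J}$ as a positive combination of these generators, giving $e\in \mathcal S_J$.

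The main obstacle is this final combinatorial spanning step. Lemma \ref{package} alone does not suffice: for instance, $e=2H-\tfrac{1}{2}\sum_{i=1}^9 E_i\in P_{K_J}$ on $\mathbb C \mathbb P^2\#9\overline{\mathbb C \mathbb P^2}$ is not a positive combination of only Lemma \ref{package} generators, but it is a symmetric combination of the higher-degree $J$-nef classes $4H-3E_{i_0}-E_{i_1}-\cdots-E_{i_6}$, whose $J$-nefness comes solely from the key claim. For $k\leq 9$ the set $\mathcal E_{K_J}$ is finite and the spanning is a finite polyhedral calculation; for $k\geq 10$ one must argue via density of the $S_{K_J}^+\cap \overline{P_{K_J}}$ directions along the (possibly non-polyhedral) boundary of $P_{K_J}$.
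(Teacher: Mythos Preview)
Your pivotal step is correct and is in fact the heart of the paper's argument. But you then take an unnecessary detour that you yourself leave unfinished. The point you are missing is that your characterization, combined with Lemma~\ref{pairS+}(2), already shows that \emph{every} class in $S_{K_J}^+$ is big $J$-nef: Lemma~\ref{pairS+}(2) says $S_{K_J}^+$ pairs non-negatively with $\mathcal E_{K_J}$, so the condition $e\cdot E\geq 0$ for all $E\in\mathcal E_{K_J}$ holds automatically for every $e\in S_{K_J}^+$. Hence $S_{K_J}^+$ coincides with the set of big $J$-nef classes in $S_{K_J}$, and the two cones $\mathcal S_J$ and $\mathcal S_{K_J}^+$ have the \emph{same} generating set. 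Equality follows immediately.

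This is exactly the paper's one-line proof: ``any class in $S_{K_J}^+$ is $J$-nef.'' Your route through Proposition~\ref{S=P} and the ``combinatorial spanning step'' is superfluous; the obstacle you worry about (expressing points of $P_{K_J}$ as positive combinations of specific $J$-nef spherical classes) never arises, because \emph{all} of $S_{K_J}^+$ is already available as generators. In particular, the issue you raise for $k\geq 10$ about density along a non-polyhedral boundary is a phantom: you do not need to approach the boundary of $P_{K_J}$ at all, since $\mathcal S_{K_J}^+$ is by definition generated by $S_{K_J}^+$ and you have shown each such class is big $J$-nef.
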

\begin{proof}

When $J$ is in $\mathcal J_{top}$ or $\mathcal J_{good}$, by Lemma \ref{package} and the proof of Corollary \ref{tcgood},    any class in $S_{K_J}^+$ is $J-$nef.

\end{proof}

\begin{proof}[Proof of  Theorem \ref{sc} and Theorem \ref{good}]  
Both claims  follow
 from Lemmas \ref{convenient}, \ref{J=K}, Proposition \ref{S=P}, and \eqref{p-k}.
\end{proof}

 It can be easily shown that  if $J$ is del Pezzo then  $J\in \mathcal J_{top}$, thus we have

\begin{cor}\label{cor:del Pezzo}
 If $J$ is del Pezzo, then $\mathcal K_J^c(M)=\mathcal C_{K_J}$.
 In other words,  the almost K\"ahler cone either does not contain $-K_J$, or is equal to $\mathcal C_{K_J}$.
\end{cor}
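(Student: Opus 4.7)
The plan is to reduce to Theorem \ref{sc} by first verifying that a del Pezzo $J$ belongs to $\mathcal J_{top}$, and then to upgrade the half-space inclusion supplied by that theorem to a full equality via the light cone lemma. The first step is routine: since $J$ is tamed by a form in the class $-K_J$, every $J$-effective class $e$ satisfies $(-K_J)\cdot e>0$, equivalently $K_J\cdot e<0$. Now suppose $C$ is an irreducible $J$-holomorphic subvariety of genus $g$ with $[C]^2<0$. The adjunction inequality $K_J\cdot[C]+[C]^2\ge 2g-2$, combined with $K_J\cdot[C]\le -1$ and $[C]^2\le -1$, forces equality throughout, so $g=0$, $[C]^2=-1$, and $C$ is smooth. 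Hence $J\in \mathcal J_{top}$.

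Next I would exploit that del Pezzo forces $-K_J\in \mathcal K_J^c$. Indeed, arguing as in the proof of Corollary \ref{standardform} (since the $-K_J$-area of each exceptional class $E_i$ equals one, forcing $E_i$ to be represented by a smooth $-1$ sphere), Theorem \ref{k curves} yields that $J$ is almost K\"ahler, and by \eqref{tc} we have $\mathcal K_J^t=\mathcal K_J^c$; in particular $-K_J\in \mathcal C_{K_J}$ and $K_J^2>0$. The inclusion $\mathcal K_J^c\subset \mathcal C_{K_J}$ is automatic, so only the reverse inclusion needs proof. By Theorem \ref{sc} applied to $J\in \mathcal J_{top}$, we already have $\mathcal K_J^c\supset \mathcal C_{K_J}\cap\{e\mid e\cdot K_J<0\}$; hence it suffices to check that every $e\in \mathcal C_{K_J}$ satisfies $e\cdot K_J<0$. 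For this I invoke the light cone lemma: the cone $\mathcal C_{K_J}$ is a convex subset of the positive cone $\mathcal P$ containing both $e$ and $-K_J$, so these two classes lie in the same component of $\mathcal P$, and the lemma yields $e\cdot(-K_J)>0$, i.e.\ $e\cdot K_J<0$. This completes the proof of $\mathcal K_J^c=\mathcal C_{K_J}$.

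The ``in other words'' reformulation is then a tautology: $-K_J\in \mathcal K_J^c$ means precisely that there is a compatible, hence taming, symplectic form in the class $-K_J$, which is the definition of $J$ being del Pezzo. I do not anticipate any serious obstacle in this scheme; the only point requiring a moment of care is the light-cone step, where I need that $-K_J$ is an actual interior element of $\mathcal C_{K_J}$ (rather than on its boundary) and that $\mathcal C_{K_J}$ is convex, both of which are immediate from the del Pezzo hypothesis and the characterization \eqref{Kcone'}.
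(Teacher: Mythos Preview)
Your proof is correct and follows essentially the same approach as the paper: show that del Pezzo implies $J\in\mathcal J_{top}$, then invoke the results just proved. The paper's one-line argument (``It can be easily shown that if $J$ is del Pezzo then $J\in \mathcal J_{top}$, thus we have\ldots'') leaves the passage from $\mathcal J_{top}$ to $\mathcal K_J^c=\mathcal C_{K_J}$ implicit; your light-cone step makes it explicit, showing directly that $-K_J\in\mathcal C_{K_J}$ forces $e\cdot K_J<0$ for every $e\in\mathcal C_{K_J}$, so Theorem~\ref{sc} already gives the full inclusion. An alternative reading of the paper's ``thus'' is via Theorem~\ref{good}: del Pezzo forces $(-K_J)^2=9-k>0$, hence $k\le 8$, so $\mathcal J_{top}=\mathcal J_{good}$ and the Nakai--Moishezon criterion applies; your route via Theorem~\ref{sc} avoids this detour and is arguably cleaner.
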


This result also follows from the $J-$inflation approach in  \cite{Z}.
The $J-$inflation along a smooth $J-$holomorphic subvariety,  which is due to  McDuff  \cite{Mc2} and extended by Buse \cite{Bus}, can be effectively 
applied to probe the $J-$tamed cone 
of a tamed almost complex structure $J$,
$$
\mathcal K_J^{t}=\{[\omega]\in H^2(M;\mathbb R)|\hbox{$\omega$  tames $J$}\}.
$$
Clearly,  $\mathcal K_J^c\subset \mathcal K_J^t$.
When $b^+=1$, it was shown in \cite{LZ} that
 if  $J$ is almost K\"ahler, then
 \begin{equation}\label{tc}
 \mathcal K_J^c=\mathcal K_J^t.
 \end{equation}
 The equality \eqref{tc} combined with the calculation of the tamed cone in \cite{Z} 
 gives an alternative proof of Corollary \ref{cor:del Pezzo}. In fact, 
   in  \cite{Z}  we use this approach to establish the almost K\"ahler Nakai-Moishezon criterion  for minimal ruled manifolds  and  all rational manifolds
with $b^-\leq 8$.


\subsection{$K-$sphere cone and $K-$symplectic cone}
In this subsection we establish Proposition \ref{S=P}. 
\subsubsection{$P-$cells}

Suppose $M$ is an oriented closed manifold with odd intersection form, $b^+=1$, $b^-=n$
and no torsion in $H^2(M;{\mathbb Z})$. A basis
$(x, \alpha_1, \cdots, \alpha_n)$ for $H^2(M;{\mathbb Z})$ is called
standard if $x^2=1$, and $\alpha_i^2=-1$ for each $i=1, \cdots, n$.
Let $$\begin{array}{ll} {\mathcal P}&=\{e\in H^2(M;{\mathbb R})|e\cdot e>0\}\cr
{\mathcal B}&=\{e\in H^2(M;{\mathbb R})|e\cdot e=0\}\cr
{\overline {\mathcal P}}&=\{e\in H^2(M;{\mathbb R})|e\cdot e\geq 0\}.\cr
\end{array}$$
For each class $x\in H^2(M;{\mathbb Z})$ with $x^2<0$,
we define $x^{\perp}\in H^2(M;{\mathbb R})$ to be the orthogonal
subspace to $x$ with respect to the cup product, and we call
$(x^{\perp})\cap {\mathcal P}$ the wall in ${\mathcal P}$ defined by
$x$.
Let ${\mathcal W}_1$ be the set of walls in ${\mathcal P}$ defined by
integral classes with square $-1$. A chamber for ${\mathcal W}_1$
is the closure in ${\mathcal P}$ of a connected component of
${\mathcal P}-\cup_{W\in {\mathcal W}_1}W.$

Any point $x\in {\mathcal P}$ with square 1  at which  $n$ mutually perpendicular
walls of ${\mathcal W}_1$ meet is called a corner.
Any corner is an integral class (see Lemma 2.2
in \cite{FM}).
Suppose $C$ is a chamber for ${\mathcal W}_1$. If $x$ is a corner
in $C$, a standard basis $(x, \alpha_1,\cdots, \alpha_n)$
for $H^2(M;{\mathbb Z})$ is called a standard basis adapted to $C$ if
$\alpha_i\cdot C\geq 0$ for each $i$. The canonical class
of the pair $(x, C)$ is defined
to be $\kappa(x, C)=3x-\sum_i \alpha_i$.
Suppose $C$ is a chamber for ${\mathcal W}_1$
 and $x$ is a corner in $C$,  we define
$$P(x, C)=C\cap \{e\in {\mathcal P}|\kappa(x, C)\cdot e\ge 0\}.$$
Any subset of ${\mathcal P}$ of the form $P(x, C)$ is called a $P-$cell.

\subsubsection{$\mathcal C_K$ and $P_K$} 
We are back to the situation that  $M=M_k$  and $J$ is a tamed almost complex structure.  Denote $K_J$ by $K$.

By  Lemma 2.4 in \cite{LBL},
 $\overline P_K$, the closure of $P_K$,
is a $P-$cell and
$\kappa(P_K)=-K$.

\begin{lemma}\label{face}
1. $P_K$ is an open convex polytope in $\mathcal P$. Each wall of $P_K$ is 
either a wall of a class in $\mathcal E_{M, K}$, or the wall of $K$ if $k\geq 9$.

2. The face $F_{E_k}$ of $P_{M_k, K}$ corresponding to  $E_k$ is naturally identified with $P_{M_{k-1}, K}$.
\end{lemma}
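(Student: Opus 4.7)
The plan is to deduce both parts by combining the $P$-cell description of $\overline{P_K}$ in Lemma 2.4 of \cite{LBL} with the orthogonal decomposition of cohomology along $E_k$. First I would recall the description of $\mathcal C_K$ given in \eqref{Kcone'}: a class $e\in\mathcal P$ lies in $\mathcal C_K$ if and only if $e\cdot E>0$ for every $E\in\mathcal E_{M,K}$. Consequently $P_K$ is the intersection of $\mathcal P$ with the open half-spaces $\{e\cdot E>0\}$, $E\in\mathcal E_{M,K}$, and $\{e\cdot(-K)>0\}$. Since each $E\in\mathcal E_{M,K}$ has $E^2=-1$ and is $K$-effective, its defining hyperplane is a wall in $\mathcal W_1$ supporting the chamber adapted to $K$; hence $\overline{P_K}$ is automatically contained in the associated $P$-cell. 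The reverse containment is the content of Lemma 2.4 of \cite{LBL}, which gives $\overline{P_K}=P(x,C)$ for a corner $x$ with $\kappa(x,C)=-K$. Convexity is then immediate, and the polytope (finiteness of walls) property follows from the corresponding statement for $P$-cells established in \cite{FM}. This identifies every wall of $P_K$ as either the hyperplane of some $E\in\mathcal E_{M,K}$ or the $K$-wall.

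Next I would determine when the $K$-wall actually bounds $P_K$. Using $(-K)^2=9-k$, the light cone lemma says: if $k\le 8$ then $-K\in\mathcal P$, and since $-K\cdot E=1>0$ for all $E\in\mathcal E_{M,K}$, $-K$ lies in $\mathcal C_K$; then for any $e$ in the same component of $\mathcal P$ we have $-K\cdot e>0$ automatically, so the $K$-inequality is redundant. For $k=9$ one has $(-K)^2=0$, so $-K$ lies on $\partial\mathcal P$ and the $K$-wall appears only in a limiting (projective) sense as required in \cite{LBL}; and for $k\ge 10$ the $K$-wall cuts $\mathcal P$ transversally. This yields the dichotomy stated in Part~1.

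For Part 2 I would use the orthogonal decomposition $H^2(M_k;\R)=\langle E_k\rangle\oplus E_k^{\perp}$, with $E_k^{\perp}=\langle H,E_1,\ldots,E_{k-1}\rangle$ naturally identified with $H^2(M_{k-1};\R)$ preserving the intersection form. Since $K_{M_k}=K_{M_{k-1}}+E_k$ under this identification, for any $e\in E_k^{\perp}$ we have $e\cdot K_{M_k}=e\cdot K_{M_{k-1}}$, so the $K$-inequality restricts correctly. It then remains to match up the $\mathcal E$-inequalities: I would check that a class $E\in\mathcal E_{M_k,K}$ with $E\cdot E_k=0$, written $E=aH-\sum_{i<k}b_iE_i$, is $K_{M_{k-1}}$-effective (this follows because its Seiberg--Witten invariant on $M_{k-1}$ is computed by the same wall-crossing / inflation argument that produces it on $M_k$, and an irreducible $J$-holomorphic representative disjoint from $E_k$ descends), and conversely that every $E\in\mathcal E_{M_{k-1},K}$, viewed in $E_k^{\perp}$, lies in $\mathcal E_{M_k,K}$ and is orthogonal to $E_k$. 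With this bijection, the defining inequalities of $P_{M_k,K}\cap E_k^{\perp}$ coincide with those of $P_{M_{k-1},K}$, identifying $F_{E_k}$ with $\overline{P_{M_{k-1},K}}$.

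The main obstacle will be the bijective matching of $\mathcal E$-classes in Part~2: one must show that the exceptional classes of $M_k$ orthogonal to $E_k$ account for exactly the exceptional classes of $M_{k-1}$, with no extras surviving and none disappearing. This reduces to a (standard but nontrivial) comparison of $K$-effective $-1$ classes under the blow-down/blow-up correspondence, which I would carry out via Seiberg--Witten invariance together with the $\mathcal E$-cone description used to prove \eqref{Kcone'} in \cite{LL}.
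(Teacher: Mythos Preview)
Your treatment of Part 1 is essentially the paper's: invoke the $P$-cell identification $\overline{P_K}=P(x,C)$ from \cite{LBL} and the Friedman--Morgan finiteness, then sort out by $(-K)^2=9-k$ when the $K$-wall is active.

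For Part 2 there is a genuine gap. You reduce the identification $F_{E_k}=P_{M_{k-1},K}$ to a bijection between $\mathcal E_{M_{k-1},K}$ and $\{E\in\mathcal E_{M_k,K}:E\cdot E_k=0\}$, and then assert that ``the defining inequalities of $P_{M_k,K}\cap E_k^{\perp}$ coincide with those of $P_{M_{k-1},K}$''. But $P_{M_k,K}$ carries one inequality $u\cdot E>0$ for \emph{every} $E\in\mathcal E_{M_k,K}$, including those with $E\cdot E_k>0$. Restricted to $E_k^{\perp}$ these do not disappear: for $u\in E_k^{\perp}$ the pairing $u\cdot E$ equals the pairing of $u$ with the orthogonal projection of $E$, which in general has square $\neq -1$ and is not one of your $\mathcal E_{M_{k-1},K}$ inequalities. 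So your bijection, even if established, does not show $P_{M_{k-1},K}\subset F_{E_k}$; you still owe an argument that each such extra inequality is redundant.

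The paper handles precisely this missing step with a short light-cone trick: for $E\in\mathcal E_{M_k,K}$ with $E\cdot E_k\geq 1$ (non-negativity from Lemma~\ref{pairS+}), one computes
\[
\bigl(E+(E\cdot E_k)E_k\bigr)^2=-1+(E\cdot E_k)^2\geq 0,
\]
and since $u\cdot E_k=0$ gives $u\cdot E=u\cdot\bigl(E+(E\cdot E_k)E_k\bigr)$, the light-cone lemma (both classes pair positively with a taming symplectic form, and $u^2>0$) forces $u\cdot E>0$. This is the key idea you are missing. Incidentally, once you have this, the bijection itself becomes a simple numerical observation rather than a Seiberg--Witten comparison.
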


\begin{proof}
 Statement 1 is due to Friedman and Morgan. It states that $P_K$ does not have round boundary, i.e.
 boundary contributed by $\mathcal B$, although $\mathcal C_{K}$ always has a round boundary when $k>9$.
It is easy to see that 
when $k\leq 8$, $P_K$ is just a chamber. In other words, each wall of it is a wall of a class in $\mathcal E_{M, K}$. When $k>  9$, the class $K$ does contribute a wall to $P_K$.

For 2, $E_k$ is orthogonal to all classes in $\mathcal E_{M_{k-1}, K}$. In fact
$$\mathcal E_{M_{k-1}, K}=\{e\in \mathcal E_{M_{k}, K}|e\cdot E_k=0\}.$$

Suppose  $u$ is in the interior of the face $F_{E_k}$. Then  $u$ is positive on  $\mathcal E_{M_{k-1}, K}\subset
\mathcal E_{M_{k}, K}$. If we consider the expansion of $u$ with respect to 
the standard basis,  then $u$ has no $E_k$ coefficient. 
Hence $F_{E_k}\subset P_{M_{k-1}, K}$.

Conversely, if $u \in P_{M_{k-1}, K}$, then $u$ is orthogonal to $E_k$, and
$u$ is positive on $\mathcal E_{M_{k-1}, K}$. Moreover, for any class $e\in \mathcal E_{M_{k}, K}$
with nonzero $E_k$ coefficient, $u\cdot e= u(e+ (e\cdot E_k) E_k)$. Notice that  $u^2>0$ and
$$(e+(e\cdot E_k) E_k)^2= -1+(e\cdot E_k)^2\geq 0.$$
Further, since $e\cdot E_k>0$,  $u$ and $e+ (e\cdot E_k) E_k$ pairs positively with any symplectic form. By the light cone lemma, we have
$u\cdot e>0$. This proves that  $P_{M_{k-1}, K}\subset F_{E_k}$.
\end{proof}

\subsubsection{Proof of Proposition \ref{S=P}}


 \begin{proof} 
First of all,  $$\mathcal S_K^+\subset P_K.$$
This follows from \eqref{Kcone'}, \eqref{p-k}, the positive pairing  between $S_K^+$ and $\mathcal E_K$ by Lemma \ref{pairS+} (2), 
and the positive pairing between $S_K^+$ and  $K$ by the adjunction formula.

So now we start to prove $P_K\subset \mathcal S_K^+$.

For $k=1$, it is clear and is essentially contained in the proof of Theorem \ref{s2bundle-2}.
 
For   $2\le k\leq 8$, we do induction.
Suppose we have done the case when $k<l\le 8$, we want to argue that 
for $M_l=\mathbb C \mathbb P^2 \#l \overline{\mathbb C \mathbb P^2}$, $\mathcal S_K^+=P_K$. 

In this case, by Lemma \ref{face} (1),  $P_K$ is an open polytope with each face of the boundary a wall
 of a class in $\mathcal E_{M_l,K}$. Hence $e$ could be written as a finite 
combination $\sum_{i=1}^qa_ie_i$ with  $e_i$ in a boundary face and $a_i>0$. 
Notice each boundary face $F_{E'_i}$ (with $E'_i\in \mathcal E_{M_l, K}$) of $P_{M_l, K}$
 corresponds to $P_{M_{l-1}, K}$ by Lemma \ref{face} (2). Then by induction assumption, 
each $e_i\in \mathcal S_{M_{l-1}, K}^+$. Hence $e\in \mathcal S_K^+$ as well by definition.

When $k\ge 9$, we still do induction. However, in this situation, 
$P_K$ does have a wall contributed by the class $-K$. 

Consider a class of the form $V_a=aH-\sum_{i=1}^l E_i$. 
Given $e\in P_K$, 
we can find  $a<3$ such that $e\cdot (aH-\sum E_i)=0$. 
This is because $V_3=-K$ pairs positively
with $e$, and when $a=0$, $V_0$ pairs negatively with $e$.

Notice that
$V_a\cdot V_3<V_3\cdot V_3\leq 0$, so 
 the hypersurface of $V_a$  
does not intersect the wall of $-K$. Choose a generic line $L$ in this hypersurface 
such that $e\in L$ and $L$ 
intersects the polytope inside the interior of  the boundary faces $F_1, F_2$ at $e_1$ and $e_2$.

Then $e=a_1e_1+a_2e_2$ with $a_i>0$. 
Each class $e_i$ lies in the interior of the
 face $F_i$, which corresponds to $P_{M_{l-1}, K}$ by Lemma \ref{face} (2). By induction assumption, $e_i\in \mathcal S_K^+$. This finishes the proof.
\end{proof}

\subsection{Remark on the connection with Kodaira embedding} 
 Finally we would like to provide a heuristic comparison of the  genus zero subvariety-current-form construction 
with the Kodaira embedding theorem. 

Unlike linear systems in algebraic geometry,  the moduli spaces of pseudo-holomorphic subvarieties generally 
have no natural  linear structure. 
In algebraic geometry, we obtain a linear system of divisors from the vector space of  sections of a holomorphic line bundle. 
For almost complex structures, there is no such   direct global correspondence. In general, only locally  there are some traces
of  linearity.
Via the implicit function theorem, Taubes in \cite{T1} uses the linear structure on the kernel of the normal operator $D_C$ to linearize  the moduli space near  a subvariety $C$.  Another instance is that, inside a very  small ball, subvarieties behave like lines in the standard $\mathbb C^2$
(Lemma \ref{curves in a ball}).

However, it strikes us again and again  that the moduli spaces of genus zero  pseudo-holomorphic subvarieties  possess   local as well as semi-global linearity properties.
  Notably, in Proposition \ref{gendirect} we show that there are plenty of pencils of genus zero subvarieties in a big $J-$nef class $e$. 
To find a pencil,  we usually fix $e\cdot e$ points suitably, then look at the local or global moduli space passing through these points. 
We are thus led to the following global question. 

 \begin{question} \label{proj-space}Suppose $J$ is a tamed almost complex structure on a rational manifold, and $e$ is represented by a smooth $J-$holomorphic 
 genus zero
 subvariety. 
 Does $\mathcal M_e$ 
 have the structure of  a complex projective space of dimension $e\cdot e+1$?
 \end{question}

In fact, this has a positive answer when $e\cdot e\leq 0$, and when $e$ is the line class of $\mathbb {CP}^2$.
If we can answer positively Question \ref {proj-space} in general, 
it is tempting to further link  the construction of an almost K\"ahler form via genus zero subvarieties on a rational manifold to 
the construction of a K\"ahler form on a Hodge manifold via Kodaira embedding. Given a  $J-$ample  class $e$ in $S_{K_J}$, when $J$ is integrable and hence K\"ahler, 
holomorphic sections of $L$ provide a holomorphic embedding $\tau_L:M\rightarrow \mathbb{CP}^N$, where $L$ is  line bundle corresponding to the class $e$ and $N=h^0(L)-1=e\cdot e+1$. The pullback 
$\tau_L^*\omega_{FS}$ of the standard Fubini-Study form gives us a K\"ahler form in the class $e$. 
When $J$ is not integrable, one could heuristically   think that the family of  $J-$tamed forms with dominating $J-$invariant part  in the construction
come from  a family of embeddings  $\tau_e(\epsilon): M\rightarrow \mathcal M_e=\mathbb{CP}^N$, which are  close to being holomorphic.



\begin{thebibliography}{99}
\bibitem{B} J. Barraud, \textit{Nodal symplectic spheres in $\mathbb{CP}^2$ with positive self-intersection},  Internat. Math. Res. Notices 1999, no. 9, 495--508. 

\bibitem {Br} R. L. Bryant, \textit{ Submanifolds and special structures on the octonians}, J. Differential Geom. 17 (1982), no. 2, 185--232.

\bibitem{Bus} O. Buse, \textit{Negative inflation and stability in symplectomorphism groups of ruled surfaces}, Journal of Symplectic Geometry, Vol. 9, No. 2, 1--14, 2011.\bibitem{D} S. K. Donaldson, \textit{Two-forms on
four-manifolds and elliptic equations}, Inspired by S. S. Chern,
153--172, Nankai Tracts Math., 11, World Sci. Publ., Hackensack, NJ,
2006.

\bibitem{DLZ} T. Draghici, T.J. Li, W. Zhang, \textit{Symplectic forms and cohomology decomposition of almost complex 4-manifolds},  Int. Math. Res. Not. IMRN 2010, no. 1, 1--17.

\bibitem{DLZ2} T. Draghici, T.J. Li, W. Zhang, \textit{On the anti-invariant cohomology of almost complex $4$-manifolds},  Q. J. Math. 64 (2013), no. 1, 83--111.

\bibitem{FM} R. Friedman, J. Morgan, \textit{On the diffeomorphism types of certain algebraic surfaces. I}, J. Diff.
Geom. 27(1988), 297--369.

\bibitem{Gpf} R.E. Gompf, \textit{Locally holomorphic maps yield symplectic structures}, Comm. Anal. Geom. 13 (2005), no. 3, 511--525.

\bibitem{Gr} M. Gromov, \textit{Pseudoholomorphic curves in symplectic manifolds},  Invent. Math.  82  (1985),  no. 2, 307--347.

\bibitem {HLS97} H. Hofer, V. Lizan, and J.-C. Sikorav, \emph{On
genericity for holomorphic curves in four-dimensional almost-complex
manifolds}, J. Geom. Anal. 7 (1997), no. 1, 149--159.




\bibitem {IS99} S. Ivashkovich and V. Shevchishin, \emph{Structure of
the moduli space in a neighborhood of a cusp-curve and meromorphic hulls},
Invent. Math. 136 (1999), no. 3, 571--602.



\bibitem{Lej} M. Lejmi, \textit{Strictly nearly K\"ahler $6-$manifolds are not compatible with symplectic forms},
C. R. Math. Acad. Sci. Paris 343 (2006), no. 11-12, 759--762.

\bibitem{LBL} B.-H. Li, T.J. Li, \textit{Symplectic genus, minimal genus and diffeomorphisms}, Asian J. Math. 6 (2002), no. 1, 123--144.

\bibitem{L} T.J. Li, \textit{Smoothly embedded spheres in symplectic $4-$manifolds}, Proc. Amer. Math. Soc. 127 (1999),
609--613.

\bibitem{LL1} T.J. Li, A.-K. Liu, \textit{General wall crossing formula}, Math. Res. Lett. 2 (1995), no. 6, 797--810. 

\bibitem{LL} T.J. Li, A.-K. Liu, \textit{Uniqueness of symplectic canonical class,
surface
cone and symplectic cone of 4-manifolds with $B\sp +=1$}, J.
Differential Geom. 58 (2001), no. 2, 331--370.

\bibitem{LT} T.J. Li, A. Tomassini, \textit{Almost K\"ahler structures on four dimensional unimodular Lie algebras},
 J. Geom. Phys. 62 (2012), no. 7, 1714--1731. 

\bibitem{LZ} T.J. Li, W. Zhang, \textit{Comparing tamed and compatible
symplectic cones and cohomological properties of almost complex manifolds}, Comm. Anal. Geom. 17 (2009), no. 4, 651--683.

\bibitem{LZrc} T.J. Li, W. Zhang, \textit{$J-$holomorphic curves in a nef class}, arXiv:1210.3337, to appear in Int. Math. Res. Not. 



\bibitem{McK} B. McKay, \textit{Almost complex rigidity of the complex projective plane}, Proc. Amer. Math. Soc. 135 (2007), no. 2, 597--603.

\bibitem{McD}  D. McDuff, \textit{The structure of rational and ruled symplectic
$4-$manifolds}, J. Amer. Math. Soc.  3 (1990), 679--712.

\bibitem{Mc2} D. McDuff,  \textit{Symplectomorphism groups and almost complex structures}, Essays on geometry and related topics, Vol. 1, 2, 527--556, Monogr. Enseign. Math., 38, Enseignement Math., Geneva, 2001.

\bibitem{MS} D. McDuff, D. Salamon, \textit{$J-$holomorphic curves and symplectic topology}, American Mathematical Society Colloquium Publications, 52. American Mathematical Society, Providence, RI, 2004.

 \bibitem{Pinn} M.  Pinsonnault, \textit {Maximal compact tori in the Hamiltonian group of $4-$dimensional symplectic manifolds}, J. Mod. Dyn. 2 (2008), no. 3, 431--455.

\bibitem{RT}  T. Rivi\`ere and G. Tian, \textit{ The singular set of J-holomorphic maps into projective algebraic varieties},   J. Reine Angew. Math. 570 (2004), 47--87.

\bibitem{Sik94} J.-C.  Sikorav, \textit{Some properties of holomorphic curves in almost complex manifolds}, Holomorphic curves in symplectic geometry, 165--189, Progr. Math., 117, Birkh\"auser, Basel, 1994.

\bibitem{Sik} J.-C.  Sikorav, \textit{The gluing construction for normally generic $J-$holomorphic curves}, Symplectic and contact topology: interactions and perspectives (Toronto, ON/Montreal, QC, 2001), 175--199, Fields Inst. Commun., 35, Amer. Math. Soc., Providence, RI, 2003.

\bibitem{T} C. Taubes, \textit{${\rm SW}\Rightarrow{\rm Gr}$: from the Seiberg-Witten equations to pseudo-holomorphic curves},  J. Amer. Math. Soc.  9  (1996),  no. 3, 845--918.

\bibitem{T1} C. Taubes, \textit{Tamed to compatible: Symplectic forms via
moduli space integration}, J. Symplectic Geom. 9 (2011), 161--250.

\bibitem{TW-survey}  V. Tosatti, B. Weinkove, \textit{The Calabi-Yau equation, symplectic forms and almost complex structures},
in Geometry and Analysis, Vol. I, 475--493, Adv. Lect. Math. (ALM)17, International Press, 2011.

\bibitem{Z} W. Zhang, \textit{The curve cone of almost complex $4$-manifolds}, arXiv:1501.06744.

\bibitem{Z2} W. Zhang, \textit{From Taubes currents to almost K\"ahler forms},  Math. Ann. 356 (2013), no. 3, 969--978.

\end{thebibliography}
\end{document}